\numberwithin{equation}{section}
\newtheorem{prop}{Proposition}
\newtheorem{lemma}[prop]{Lemma}
\newtheorem{thm}[prop]{Theorem}
\newtheorem{cor}[prop]{Corollary}
\newtheorem{conj}[prop]{Conjecture}
\numberwithin{prop}{section}
\theoremstyle{definition}
\newtheorem{defn}[prop]{Definition}
\newtheorem{ex}[prop]{Example}
\newtheorem{rmk}[prop]{Remark}
\providecommand{\customgenericname}{}
\newcommand{\newcustomtheorem}[2]{%
	\newenvironment{#1}[1]
	{%
		\renewcommand\customgenericname{#2}%
		\renewcommand\theinnercustomgeneric{##1}%
		\innercustomgeneric
	}
	{\endinnercustomgeneric}
}
\newcommand{\bgb}{\bar\beta}
\newcommand{\mc}{\mathcal}
\newcommand{\R}{\mathbb R}
\newcommand{\C}{\mathbb C}
\renewcommand{\Re}{\mathop{\mathrm{Re}}}
\newcommand{\la}[1]{\left< #1 \right>}
\newcommand{\del}{\partial}
\newcommand{\delb}{\bar{\partial}}\newcommand{\dt}{\frac{\partial}{\partial t}}
\newcommand{\brs}[1]{\left| #1 \right|}
\newcommand{\gD}{\Delta}
\newcommand{\gs}{\sigma}
\newcommand{\gl}{\lambda}
\newcommand{\gw}{\omega}
\newcommand{\ga}{\alpha}
\newcommand{\gb}{\beta}
\renewcommand{\ge}{\epsilon}
\newcommand{\N}{\nabla}
\newcommand{\GG}{\mathbb G}
\newcommand{\til}[1]{\widetilde{#1}}
\renewcommand{\bar}[1]{\overline{#1}}
\renewcommand{\i}{\sqrt{-1}}
\newcommand{\bga}{\bar{\ga}}
\newcommand{\hr}{regularity scale}
\newcommand{\bj}{\bar{j}}
\newcommand{\bl}{\bar{l}}
\newcommand{\bq}{\bar{q}}
\newcommand{\IP}[1]{\left<#1\right>}
\newcommand{\JJ}{\mathbb J}
\newcommand{\J}{\mathbb J}
\newcommand{\HH}{H^2}
\DeclareMathOperator{\Rc}{Rc}
\DeclareMathOperator{\Rm}{Rm}
\DeclareMathOperator{\tr}{tr}
\DeclareMathOperator{\Ker}{Ker}
\DeclareMathOperator{\Id}{Id}
\DeclareMathOperator{\im}{Im}
\DeclareMathOperator{\Ham}{Ham}
\DeclareMathOperator{\End}{End}
\begin{document}

\title[The generalized K\"ahler Calabi-Yau problem]{The generalized K\"ahler Calabi-Yau problem}

\begin{abstract} We formulate an extension of the Calabi conjecture to the setting of generalized K\"ahler geometry.  We show a transgression formula for the Bismut Ricci curvature in this setting, which requires a new local Goto/Kodaira-Spencer deformation result, and use it to show that solutions of the generalized Calabi-Yau equation on compact manifolds are classically K\"ahler, Calabi-Yau, and furthermore unique in their generalized K\"ahler class.  We show that the generalized K\"ahler-Ricci flow is naturally adapted to this conjecture, and exhibit a number of a priori estimates and monotonicity formulas which suggest global existence and convergence.  For initial data in the generalized K\"ahler class of a K\"ahler Calabi-Yau structure we prove the flow exists globally and converges to this unique fixed point.  This has applications to understanding the space of generalized K\"ahler structures, and as a special case yields the topological structure of natural classes of Hamiltonian symplectomorphisms on hyperK\"ahler manifolds.  In the case of commuting-type generalized K\"ahler structures we establish global existence and convergence with arbitrary initial data to a K\"ahler Calabi-Yau metric, which yields a new $\del\delb$-lemma for these structures.
\end{abstract}

\author{Vestislav Apostolov}
\address{Nantes Universit\'e\\BP 92208, 44322, Nantes, France, and Institute of Mathematics and Informatics, Bulgarian Academy of Sciences, Acad. Georgi Bonchev Str., Block 8, 1113 Sofia, Bulgaria}
\email{\href{mailto:Vestislav.Apostolov@univ-nantes.fr}{Vestislav.Apostolov@univ-nantes.fr}}
\author{Xin Fu}
\address{School of Science, Institute for Theoretic Sciences\\ Westlake University\\ Hangzhou 310030, China}
\email{\href{mailto:fux6@uci.edu}{fux6@uci.edu}}
\author{Jeffrey Streets}
\address{Rowland Hall\\
         University of California, Irvine\\
         Irvine, CA 92617}
\email{\href{mailto:jstreets@uci.edu}{jstreets@uci.edu}}
\author{Yury Ustinovskiy}
\address{Chandler Ullman Hall\\ Lehigh University, Bethlehem, PA, 18015}
\email{\href{mailto:yuu221@lehigh.edu}{yuu221@lehigh.edu}}

%\date{November 1, 2022}

\maketitle

\section{Introduction}

The Calabi-Yau Theorem \cite{YauCC} is a central result in mathematics with applications to complex analysis, symplectic and algebraic geometry, mathematical physics, and more.  Furthermore many questions on the fundamental structure of Calabi-Yau manifolds remain open and are actively pursued to this day.  In light of the success and applicability of this theorem, it is natural to seek extensions beyond the setting of K\"ahler geometry.  Note that one key application of the Calabi-Yau theorem is in the construction of string backgrounds \cite{CHSW}.  Despite their complexity, Calabi-Yau manifolds are actually the simplest of all possible string backgrounds, with more general geometric structures appearing in the general setting.  Indeed, it was this point of view that lead to the first appearance of \emph{generalized} K\"ahler geometry \cite{GHR}.  These structures were later rediscovered, and given this name, in the context of Hitchin's generalized geometry program \cite{HitchinGCY, GualtieriThesis,GualtieriGKG}, and in the ensuing decades it has become clear that generalized K\"ahler geometry is a deeply structured extension of K\"ahler geometry with novel implications for complex, symplectic and Poisson geometry.

In this paper we motivate a natural conjecture extending the Calabi-Yau Theorem to the simplest possible setting within generalized K\"ahler geometry, namely the case where the canonical bundles of the generalized complex structures are holomorphically trivial.  This means that the generalized complex structures are defined by global closed pure spinors (cf. Definition \ref{d:htcb}), a natural generalization of the existence of a holomorphic volume form on a K\"ahler manifold.  In this setting Gualtieri defined a \emph{generalized Calabi-Yau equation} \cite{GualtieriThesis}, which was later revealed to be locally equivalent to certain nonconvex fully nonlinear PDE \cite{HLRUZGCY}.  As a natural notion of generalized K\"ahler class has now emerged \cite{BGZ,gibson2020deformation,GualtieriHamiltonian}, it is natural to conjecture existence of a solution to the generalized Calabi-Yau equation in such a deformation class.  The main conjecture (Conjecture \ref{c:CYconj}) is then that there exists a unique generalized Calabi-Yau structure in every generalized K\"ahler class, which turns out to be classically K\"ahler Calabi-Yau (cf.\,Theorem \ref{t:rigidity}). 

Yau's original proof of the Calabi conjecture \cite{YauCC} used the methods of elliptic PDE, using the reduction to a Monge-Ampere equation for the canonically associated K\"ahler potential.  Later Cao \cite{CaoKRF} gave a parabolic analogue of this proof using the K\"ahler-Ricci flow.  A key feature of generalized K\"ahler geometry is the nonlinearity of the space of such structures, which renders a global description of a generalized K\"ahler metric in terms of a single potential function not possible (cf.\,\cite{BGZ,LindstromGKpotential} for results on \emph{local} generalized K\"ahler potentials).  For this reason an elliptic PDE approach to this conjecture seems less tractable, and we turn instead to a parabolic approach through the generalized K\"ahler-Ricci flow \cite{GKRF}.  We show that the flow is naturally adapted to this conjecture, and exhibit certain estimates and monotonicity formulas which support the conjecture (Theorem \ref{t:GKRFprops}).  Our main result shows global existence and convergence of the flow in the generalized K\"ahler class of a generalized Calabi-Yau geometry (Theorem \ref{t:longcon}).

This result gives in principle a complete description of all possible GKRF flow lines in this setting, although ultimately one wants to remove the hypothesis that there exists a fixed point of the flow in the given generalized K\"ahler class.  Nonetheless there are implications for the structure of the space of generalized K\"ahler structures on certain backgrounds.  Very little is known about the structure of this space, but we conjecture that the space of canonical deformations of a fixed generalized K\"ahler structure is contractible (cf.\,Definition \ref{d:canonicalfamily}, Conjecture \ref{c:contractibility}).  Our results confirm this conjecture in the case of a Calabi-Yau background  (Corollary \ref{cor:contractibility}).  While this general statement is fairly abstract, it has more concrete implications in some special cases. For instance, we show contractibility of certain spaces of Hamiltonian symplectomorphisms on hyperK\"ahler manifolds (Corollary \ref{c:HKcontract}), and a $\del\delb$-lemma for Calabi-Yau manifolds with commuting pairs of complex structures (Corollary \ref{c:commutingddbar}).

\subsection{Generalized K\"ahler geometry}

Generalized K\"ahler structures can be equivalently described in two ways, both of which will be relevant and used interchangeably in the sequel.  In both formulations, and throughout this paper, the relevant background is a  smooth manifold $M$ together with a closed three-form which will be denoted $H_0$.  The bihermitian description \cite{GHR} consists of a quadruple $(g, b, I, J)$ of a Riemannian metric with compatible integrable complex structures $I$, $J$, satisfying
\begin{align*}
- d^c_I \gw_I = H_0 + db = d^c_J \gw_J, \qquad d H_0 = 0.
\end{align*}
Later Gualtieri \cite{GualtieriThesis} gave an equivalent description in terms of the exact Courant algebroid $E \cong TM \oplus T^*M$ equipped with the $H_0$-twisted Dorfman bracket.  Specifically a generalized K\"ahler structure can be described as a pair of integrable complex structures $(\JJ_1, \JJ_2)$ on $E$ which further satisfy
\begin{align*}
[\JJ_1, \JJ_2] = 0, \qquad - \JJ_1 \JJ_2 = \GG,
\end{align*}
where $\GG$ denotes a generalized metric.

The integrability conditions described above do not immediately suggest natural ways to deform GK structures.  However, building upon work stretching back to Joyce \cite{AGG, ApostolovGualtieri, BGZ,gibson2020deformation, GualtieriHamiltonian}, a natural class of deformations of GK structures has emerged which generalizes the standard notion of K\"ahler class.  To briefly recall these deformations we recall that every GK structure comes equipped with a Poisson tensor \cite{AGG, PontecorvoCS, HitchinPoisson}
\begin{align}\label{f:poisson}
\gs = \tfrac{1}{2} g^{-1} [I, J].
\end{align}
Loosely speaking, on the symplectic leaves of $\gs$ we deform the complex structure by pulling $J$ back by a one-parameter family of $\gs$-Poisson deformations generated by functions $f_t$, whereas on the kernel of $\sigma$ we deform the metric by $d I d f_t$.  In the case where $\sigma$ is invertible, we may set $\Omega = \gs^{-1}$ and it follows that the deformation is determined by a family of $\Omega$-Hamiltonian diffeomorphisms acting on $J$.  This reveals the fundamentally nonlinear nature of the space of generalized K\"ahler structures, a point we return to below.  Following \cite{gibson2020deformation}, where these deformations were explicitly determined in full generality, we define a \emph{generalized K\"ahler class} as an equivalence class under one-parameter deformations as described above, and we review this in \S \ref{s:bckground}.  For a given generalized K\"ahler structure $(g,b,I,J)$ we refer to this generalized K\"ahler class by the notation $[(g,I,J,b)]$.

\subsection{The generalized K\"ahler Calabi conjecture}

In light of the discussion above it is natural to seek \emph{canonical representatives of generalized K\"ahler classes}.  Previous works in mathematics and physics literature \cite{GualtieriThesis, HLRUZGCY}, have suggested a natural equation for generalized K\"ahler structures generalizing the Calabi-Yau condition.  To describe this equation we briefly recall the spinor description of generalized complex structures.  

Elements of $TM \oplus T^*M$ act naturally on differential forms via
\begin{align*}
    (X + \xi) \cdot \psi = i_X \psi + \xi \wedge \psi.
\end{align*}
A spinor $\psi$ defines a generalized complex structure $\JJ$ on $TM \oplus T^*M$ if
\begin{align*}
    \Ker (\JJ - \i \Id) = \{ X + \xi \in (TM \oplus T^*M) \otimes \mathbb C\ |\ (X + \xi) \cdot \psi = 0 \}.
\end{align*}
Locally every generalized complex structure is described in this way, where $\psi$ is a nonvanishing section of the canonical bundle of $\JJ$.  We use the notation $\JJ_{\psi}$ to indicate a generalized complex structure of this form.  Furthermore, the integrability condition for $\JJ$ is equivalent to the existence of a section $X + \xi$ such that
\begin{align*}
    d_{H_0} \psi = (X + \xi) \cdot \psi, \qquad d_{H_0} := d + H_0 \wedge.
\end{align*}
The operator $d_{H_0}$ is the twisted Hodge differential for the background three-form defining the Courant algebroid structure, and furthermore defines a twisted cohomology theory \cite{RohmWitten} (cf.\,\cite{CavalcantiBook}) which plays a key role throughout this work.  A special case occurs when the defining spinor is \emph{closed}, i.e., $d_{H_0} \psi = 0$.  Note that it is not even true locally that every generalized complex structure can be defined by a closed spinor, for instance near points where the type is discontinuous.  If there is a closed spinor defining $\JJ$ globally then we say that $\JJ$ has \emph{holomorphically trivial canonical bundle}.  We emphasize here that we will use this terminology throughout all statements below, and it always refers to the canonical bundles of the \emph{generalized} complex structures $\JJ_i$, not the canonical bundles of the underlying classical complex structures, although we show here that in fact in our setting these bundles also admit flat metrics (cf.\,Proposition \ref{p:trivialcanonical}).

Thus, if we are given a generalized K\"ahler structure $(\JJ_1, \JJ_2)$ where the $\JJ_i$ are determined by global closed pure spinors $\psi_i$, we say that it is a \emph{generalized Calabi-Yau geometry} \cite{GualtieriThesis} if
\begin{align} \label{f:gCY}
\Phi := - \log \frac{(\psi_1, \bar{\psi}_1)}{(\psi_2, \bar{\psi}_2)} \equiv \gl.
\end{align}
In the formula above $(\cdot,\cdot)$ denotes the Mukai pairing
on spinors (see Definition~\ref{d:mukai}), and $\gl$ denotes a constant which is determined a priori from cohomological data (cf.\,Definition \ref{d:topCY}).  In the K\"ahler setting, the two spinors are $\psi_1 = \bar\Theta$, where $\Theta$ is a holomorphic volume form and $\psi_2 = e^{\i \gw}$, yielding
\begin{align*}
    \Phi = \log \left(\frac{2^n}{n!\i^{n^2}} \frac{\gw^n}{\Theta\wedge\bar{\Theta}}\right),
\end{align*}
the usual Ricci potential.  Given the background above, we can now state the main conjecture (cf. also \cite{hu2013hodge}):

\begin{conj} \label{c:CYconj} Let $(M^{2n}, g, b, I, J)$ be a compact generalized K\"ahler manifold with holomorphically trivial canonical bundles.  Then there exists a unique generalized Calabi-Yau geometry $(g_{CY}, b_{CY}, I, J_{CY}) \in [(g,b,I,J)]$, and furthermore $(g_{CY}, I)$ and $(g_{CY}, J_{CY})$ are both K\"ahler Ricci-flat.
\end{conj}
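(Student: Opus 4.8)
The plan is to follow the parabolic strategy pioneered by Cao in the classical setting, replacing the K\"ahler-Ricci flow by the generalized K\"ahler-Ricci flow (GKRF). The first observation is that the final two assertions---that the representative is classically K\"ahler Ricci-flat and that it is unique in its generalized K\"ahler class---reduce entirely to the rigidity result of Theorem \ref{t:rigidity}. Indeed, once we exhibit a generalized Calabi-Yau geometry $(g_{CY}, b_{CY}, I, J_{CY})$ lying in $[(g,b,I,J)]$, Theorem \ref{t:rigidity} immediately upgrades it to a K\"ahler Ricci-flat pair for both $I$ and $J_{CY}$ and forces uniqueness within the class. Thus the entire weight of the conjecture rests on the existence statement.

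To produce such a representative I would initialize the GKRF at the given structure $(g,b,I,J)$. By the structural properties recorded in Theorem \ref{t:GKRFprops}, the flow remains within the fixed generalized K\"ahler class $[(g,b,I,J)]$ for as long as it exists, and the defining quantity $\Phi = -\log \tfrac{(\psi_1, \bar{\psi}_1)}{(\psi_2,\bar{\psi}_2)}$ of \eqref{f:gCY} evolves by a heat-type equation whose equilibria are precisely the generalized Calabi-Yau geometries, with $\Phi \equiv \gl$. The transgression formula for the Bismut Ricci curvature is what makes this evolution tractable: it identifies the driving term in the flow of $\Phi$ with a cohomologically normalized Ricci potential, so that the monotonicity formulas of Theorem \ref{t:GKRFprops} control the oscillation of $\Phi$ and push it toward the constant $\gl$ fixed a priori by the topological data.

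The analytic heart of the argument is then a family of a priori estimates, uniform in time: first a zeroth-order bound $\| \Phi \|_{C^0} \le C$, then Calabi-type third-order and Evans--Krylov estimates yielding uniform $C^\infty$ control on the evolving structure. These feed into a standard parabolic continuation argument to obtain global existence, after which the monotone functional supplied by Theorem \ref{t:GKRFprops} allows one to extract a limit along a subsequence of times; a suitable gradient-type inequality or direct convexity argument would then promote subsequential convergence to convergence of the full flow to a fixed point, which by construction is the desired generalized Calabi-Yau geometry.

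The main obstacle---and the reason the conjecture remains open in full generality---is the uniform $C^0$ estimate on $\Phi$ in the absence of a known fixed point. In Yau's proof this step is the Moser iteration applied to the scalar Monge-Ampere equation satisfied by the K\"ahler potential, but generalized K\"ahler geometry admits no global potential (only the local potentials of \cite{BGZ, LindstromGKpotential}), so the reduction to a single convex equation fails and the passage from the integral monotonicity to pointwise bounds becomes genuinely harder, with possible collapsing behavior along the flow difficult to exclude. This is precisely why Theorem \ref{t:longcon} assumes at the outset that $[(g,b,I,J)]$ already contains a K\"ahler Calabi-Yau structure: that reference fixed point furnishes a comparison metric with controlled geometry, circumventing the standalone $C^0$ estimate. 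Removing this hypothesis---extracting the a priori estimates intrinsically from the generalized K\"ahler class alone---is the decisive remaining difficulty.
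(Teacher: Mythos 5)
The statement you were asked to prove is Conjecture \ref{c:CYconj}: the paper itself does \emph{not} prove it, and it remains open. What the paper does is exactly what you outline: reduce the K\"ahler Ricci-flatness and uniqueness assertions to Theorem \ref{t:rigidity}, recast existence as convergence of the GKRF (Conjecture \ref{c:CYconjGKRF}), and confirm this program only partially --- global existence when $(M,I)$ is K\"ahler (Proposition \ref{lteprop}), and convergence only when a generalized Calabi-Yau geometry is \emph{assumed} to exist in $[(g,b,I,J)]$ (Proposition \ref{p:conv}). So your proposal is a faithful reconstruction of the paper's own strategy, and you are right that it is not a proof; to your credit you say so explicitly. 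Since there is no complete argument to compare against, the only substantive question is whether you have located the missing step correctly.

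Here you are slightly off. The obstruction is \emph{not} a uniform $C^0$ estimate on $\Phi$: Theorem \ref{t:GKRFprops}(2) gives, unconditionally and by a direct maximum principle,
\begin{align*}
\sup_{M \times \{t\}} \left( \Phi^2 + t \brs{\N \Phi}^2 \right) \leq \sup_{M \times \{0\}} \Phi^2,
\end{align*}
so $\Phi$ is bounded and $\brs{\N \Phi}$ even decays a priori along the flow. The quantity playing the role of Yau's K\"ahler potential is not $\Phi$ (which in the flow setting is analogous to the prescribed Ricci potential, i.e.\ given data) but the metric itself, accessed in the paper through the $1$-form/scalar reduction $(\gb_t, f_t)$ of \S\ref{s:reducedflow}: the genuinely missing estimates are uniform parabolicity (two-sided metric bounds), which the paper obtains only on K\"ahler backgrounds and even then with bounds growing in time (so long-time existence but no compactness), and, separately, convergence. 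Moreover, your suggestion that the assumed fixed point in Theorem \ref{t:longcon}(2) merely ``furnishes a comparison metric'' understates its role: Proposition \ref{p:conv} is a continuity-method argument in which the fixed point is the \emph{target} of the dynamical stability theorem (Proposition \ref{stability}) combined with backward regularity (Proposition \ref{Backwardregularity}); there is no gradient-inequality or convexity argument promoting subsequential to full convergence, and no known functional whose critical points could be produced by a limiting argument without the stability input. So the decisive gaps are (a) uniform metric estimates off K\"ahler backgrounds and uniform-in-time estimates even on them, and (b) a convergence mechanism that does not presuppose the limit --- not the $C^0$ bound on $\Phi$, which the paper already supplies for free.
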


\subsection{Nondegenerate perturbations and transgression formula}

The equation (\ref{f:gCY}) defining generalized Calabi-Yau geometries is quite simple and natural from the point of view of the spinor formulation of generalized K\"ahler geometry.  On the other hand, it is not immediately clear how to connect this equation to classical curvature quantities as the spinors only implicitly define the associated generalized complex structures, which are then related to the classical bihermitian data via the delicate Gualtieri map \cite{GualtieriThesis}.  We recall that a generalized K\"ahler structure comes equipped with two Bismut connections $\N^I = \N + \tfrac{1}{2} g^{-1} H$, $\N^J = \N - \tfrac{1}{2} g^{-1} H$ associated to the pluriclosed structures $(g, I)$ and $(g, J)$, where $\nabla$ is the Riemannian connection of $g$.  These have curvature tensors $\Omega^I, \Omega^J$, and Ricci curvature forms
\begin{align*}
    \rho_I = \tfrac{1}{2} \tr \Omega^I \circ I, \qquad \rho_J = \tfrac{1}{2} \tr \Omega^J \circ J.
\end{align*}
Our first results are a series of identities relating certain curvature and torsion tensors to the underlying spinors.  Key examples are the formulas
\begin{align} \label{f:introtransgression}
    \rho_I = - \tfrac{1}{2} d J d \Phi, \qquad \rho_J = - \tfrac{1}{2} d I d \Phi,
\end{align}
extending the classic trangression formula for the Ricci forms in K\"ahler geometry to generalized K\"ahler geometry.

The proof of (\ref{f:introtransgression}), and related formulas (cf.\,Proposition \ref{p:transgression}), is suprisingly subtle.  It would seem that a natural approach would be to express the spinors $\psi_i$ in terms of bihermitian data by first obtaining the generalized complex structures through the Gualtieri map, then obtaining the spinors through the Darboux theorem for generalized complex structures.  
This runs into difficulties due to the fact that in general these two spinors will be defined using complex coordinates for the two distinct complex structures, with no easy relationship between the two.  This issue does not arise however in the case that both generalized complex structures are of symplectic type, so the generalized K\"ahler structure is \emph{nondegenerate} (cf.\,Definition \ref{d:nondeg}).  Thus the strategy of the proof is to show that, generically, it is possible to locally approximate any generalized K\"ahler structure by nondegenerate structures.  Thus after establishing the relevant formulas in the nondegenerate setting, we obtain the general result.

The perturbation result mentioned above is a useful tool in general for investigating the local structure of generalized K\"ahler manifolds, and we use it to establish many other formulas here, and further applications will appear in \cite{ASUScal}.  We record here an informal statement, referring to \S \ref{s:perturbations} for the precise statement.
\begin{customthm}{\ref{thm:nondeg_perturb}}
	\it{
    Generalized K\"ahler structures are generically smoothly approximable by nondegenerate generalized K\"ahler structures.
}
\end{customthm}
\noindent The proof is a local version of Goto's stability result \cite{GotoDef} (cf.\,also \cite{CavalcantiGoto}), a generalization of the classic Kodaira-Spencer stability theorem to the setting of generalized K\"aher geometry.  To prove Theorem \ref{thm:nondeg_perturb} we adapt this proof to the local setting, which requires us to develop Hodge theory for a certain twisted differential on manifolds with boundary.  Our deformations exploit Hitchin's original key observation that generalized complex structures can smoothly interpolate between complex and symplectic structures.  In particular, as a special case we deform a holomorphic volume form locally defining a complex structure by a symplectic form, precisely as explained in \cite{HitchinGCY} \S 4.2.  Then with the elliptic PDE results we produce a deformation of the second generalized complex structure which preserves the generalized K\"ahler condition.

\subsection{K\"ahler rigidity of compact generalized Calabi-Yau geometries}

Exploiting the transgression formulas described above, we next confirm the K\"ahler rigidity and uniqueness statements of Conjecture \ref{c:CYconj}.  We emphasize that while the equation (\ref{f:gCY}) does not admit any new solutions in the compact case, the assumption of holomorphically trivial canonical bundles makes this a special case of the more general existence and uniqueness question for generalized K\"ahler-Ricci solitons, and we briefly discuss this in \S \ref{s:introgen} below.  Furthermore, in the setting of holomorphically trivial canonical bundles, the generalized K\"ahler classes described above will preserve the data $I$ and $\sigma$, as well as the twisted cohomology classes $[\psi_1]$ and $[\psi_2]$ of the underlying spinors.  It is thus natural in this setting to give a specialized definition of a generalized K\"ahler class, which fixes these parameters (cf. Definition \ref{d:spinorGKclass}), and we refer to this space as $\mathcal {GK}^{I,\sigma}_{[\psi_1],[\psi_2]}$.  We prove uniqueness in this space up to exact $b$-field transformation, and strict uniqueness in the generalized K\"ahler class defined by canonical deformations described above.

\begin{customthm}{\ref{t:rigidity}}{\it Suppose $(g, b, I, J)$ is  a generalized Calabi-Yau geometry  on a compact manifold $M$.  Then:
\begin{enumerate}
    \item Both pairs $(g, I)$ and $(g, J)$ are K\"ahler, Ricci-flat.
    \item $(g, b, I, J)$ is the unique generalized Calabi-Yau geometry in  $\mathcal{GK}^{I,\sigma}_{[\psi_1],[\psi_2]}$, up to exact $b$-field transformation.
    \item $(g,b,I,J)$ is the unique generalized Calabi-Yau geometry in $[(g,b,I,J)]$.
\end{enumerate}
}
\end{customthm}
\noindent The K\"ahler rigidity has been previously observed in \cite[Proposition 2.6]{HuHuangHKC}.  For convenience we give a different proof here, hinging on the transgression formulas discussed above.  Applying maximum principle arguments to these identities gives the K\"ahler rigidity.  To obtain the uniqueness we first need the possible spinorial descriptions of a generalized K\"ahler structure which is actually K\"ahler, Calabi-Yau.  We achieve this in Proposition \ref{p:KCYspinors} using the Bochner method and the Beauville-Bogomolov decomposition theorem ~\cite{Beauville,bogomolov}.  In particular it will follow that the twisted deRham clases $[\psi_i]$ uniquely determine the underlying K\"ahler classes, finishing the proof of uniqueness.

\subsection{Generalized K\"ahler-Ricci flow and the Calabi conjecture}\label{flowsec}

In \cite{GKRF} a natural tool was introduced for constructing canonical metrics in generalized K\"ahler geometry, the \emph{generalized K\"ahler-Ricci flow} (GKRF).  A one-parameter family $(g_t, b_t, I_t, J_t)$ satisfies GKRF if and only if
\begin{gather} \label{f:GKRFintro}
\begin{split}
\dt g =&\ - 2 \Rc + \frac{1}{2} H^2, \qquad \dt b = - d^*_g H, \qquad H = H_0 + db,\\
\dt I =&\ L_{\theta_I^{\sharp}} I, \qquad \dt J = L_{\theta_J^{\sharp}} J,
\end{split}
\end{gather}
where $H^2(X, Y) = \IP{ i_X H, i_Y H}$, and $\theta_I = I d^*_g \gw_I, \theta_J = J d^*_g \gw_J$ are the Lee forms.  This system of equations is a special case of pluriclosed flow \cite{PCF}, which itself is a special case of generalized Ricci flow \cite{GRFbook}.  By pulling back by an appropriate family of diffeomorphisms, it is possible to fix the complex structure $I$, and we will make this choice throughout (cf.\,\S \ref{s:GKRFbackground}).  Our next main theorem gives a series of qualitative properties and universal estimates for GKRF in the setting of holomorphically trivial canonical bundles.  In the statement below the functions $\Psi_i = - \log \frac{(\psi_i, \bar{\psi}_i)}{dV_g}$ are partial Ricci potentials, and the scalar curvatures $R^{H,\Psi_i}$ are defined in \S \ref{ss:scalar}.

\begin{customthm} {\ref{t:GKRFprops}} {\it Let $(M^{2n}, g, b, I, J)$ be a compact generalized K\"ahler manifold with holomorphically trivial canonical bundles.  Let $(g_t, b_t, I, J_t)$ be the solution to generalized K\"ahler-Ricci flow with this initial data.  The following hold:
\begin{enumerate}
    \item The canonical bundles of $\JJ_i^t$ are holomorphically trivial for all time, defined by closed pure spinors $\psi_1^t \in [\psi_1^0] = \ga, \psi_2^t \in [\psi_2^0] = \gb$.  Furthermore
    \begin{align*}
    (g_t, b_t, I, J_t) \in [(g_0,b_0,I,J_0)] \subset \mathcal{GK}^{I,\sigma}_{\ga,\gb}.
    \end{align*}
    \item For any smooth existence time $t$ one has Ricci potential bounds
    \begin{align*}
        \sup_{M \times \{t\}} \left( \Phi^2 + t \brs{\N \Phi}^2 \right) \leq \sup_{M \times \{0\}} \Phi^2.
    \end{align*}
    \item For any smooth existence time $t$ one has scalar curvature bounds
    \begin{align*}
        \inf_{M \times \{0\}} R^{H,\Psi_1} \leq&\ R^{H,\Psi_1} (\cdot, t) \leq - \inf_{M \times \{0\}} R^{H,\Psi_2},\\
        \inf_{M \times \{0\}} R^{H,\Psi_2} \leq&\ R^{H,\Psi_2} (\cdot, t) \leq - \inf_{M \times \{0\}} R^{H,\Psi_1}.
    \end{align*}
    \item There exist Mabuchi-type functionals $\mathcal M_i := \int_M \Phi (\psi_i, \bar{\psi_i})$ whose only critical points are generalized Calabi-Yau geometries, and which are bounded and monotone along GKRF.
\end{enumerate}
}
\end{customthm}

Theorem \ref{t:GKRFprops} shows that the GKRF is naturally adapted to address Conjecture \ref{c:CYconj}.  The proof relies in a delicate way on key properties of the Ricci potential $\Phi$ and partial Ricci potentials $\Psi_i$.  In particular, we show that $\Phi$ satisfies the time-dependent heat equation along GKRF, leading to item (2).  Surprisingly, we show that the partial Ricci potentials $\Psi_i$ are both solutions of the \emph{dilaton flow} as defined in \cite{Streetsscalar}, which arises naturally from the renormalization group flow description of generalized Ricci flow.  As shown in \cite[Proposition 1.1]{Streetsscalar} these dilaton flows define natural weighted scalar curvature quantities $R^{H,\Psi_i}$ which are a priori bounded below, leading to item (3).  The functionals $\mathcal M_i$ reduce up to sign to the classic Mabuchi functional \cite{Mabuchi} in the case of K\"ahler metrics, and are related to the scalar curvature, suitably interpreted as a moment map for a natural symmetry action on $TM \oplus T^*M$ (\cite{goto-21, ASUScal}).  The bound for $\mathcal M_i$ is determined by $[\psi_i]$, and is conjecturally sharp.

Based on the formal properties of the flow from Theorem \ref{t:GKRFprops}, we make the following natural refinement of Conjecture \ref{c:CYconj}:

\begin{conj} \label{c:CYconjGKRF} Let $(M^{2n}, g, b, I, J)$ be a compact generalized K\"ahler manifold with holomorphically trivial canonical bundles.  Then the generalized K\"ahler-Ricci flow with this initial data preserves the generalized K\"ahler class, exists for all time, and converges to the unique generalized Calabi-Yau geometry $(g_{\infty}, b_{\infty}, I, J_{\infty})$ in this class, where $(g_{\infty}, I)$ and $(g_{\infty}, J_{\infty})$ are both K\"ahler Ricci-flat.
\end{conj}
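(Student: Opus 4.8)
The plan is to regard Conjecture \ref{c:CYconjGKRF} as a long-time-existence-and-convergence problem for GKRF, following the dynamic strategy by which Cao \cite{CaoKRF} reproved the Calabi-Yau theorem via K\"ahler-Ricci flow, but with the a priori estimates of Theorem \ref{t:GKRFprops} as the analytic backbone. The first assertion, preservation of the generalized K\"ahler class, is already supplied by Theorem \ref{t:GKRFprops}(1), so the genuine content is global existence and convergence. Short-time existence and uniqueness are available since GKRF is a special case of pluriclosed/generalized Ricci flow \cite{PCF, GRFbook, GKRF}, and the solution continues as long as the curvature stays bounded (the standard continuation criterion \cite{GRFbook}). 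Thus I would reduce long-time existence to a uniform curvature bound, and then exploit the monotone bounded functionals $\mathcal{M}_i$ of Theorem \ref{t:GKRFprops}(4), whose only critical points are generalized Calabi-Yau geometries, to drive convergence to a fixed point that Theorem \ref{t:rigidity} identifies as the unique K\"ahler Ricci-flat structure in the class.

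The decisive step, and the one I expect to be the main obstacle, is a uniform two-sided bound $C^{-1} g_0 \le g_t \le C g_0$ on the evolving metric. In Cao's setting this is Yau's $C^0$-estimate for the parabolic Monge-Amp\`ere equation satisfied by the K\"ahler potential; here the difficulty emphasized in the introduction is that the generalized K\"ahler class admits no global scalar potential, so there is no single equation to estimate. The inputs I would marshal are: the uniform $C^0$-bound and the decaying gradient bound on the Ricci potential $\Phi$ from Theorem \ref{t:GKRFprops}(2); the two-sided weighted scalar curvature bounds of Theorem \ref{t:GKRFprops}(3); the transgression formulas (\ref{f:introtransgression}), which express $\rho_I,\rho_J$ through $dJd\Phi$ and $dId\Phi$ and so control the Bismut Ricci once $\Phi$ is bounded in $C^2$; and Perelman-type noncollapsing monotonicity for generalized Ricci flow \cite{GRFbook}, yielding a lower volume bound. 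The program is to bound $\Phi$ in $C^2$ from the heat equation it satisfies together with a Bochner/maximum-principle estimate on $|\N \Phi|^2$, then to bootstrap through the scalar curvature bounds and noncollapsing so as to trap $g_t$ between uniform constants. I anticipate that forcing this through will require either a partial scalar reduction of the flow adapted to the fixed class $\mathcal{GK}^{I,\sigma}_{\ga,\gb}$, as is available in the commuting-type case, or an intrinsic entropy argument; identifying the correct substitute for Yau's estimate is the heart of the matter.

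Granting the metric equivalence, higher regularity should follow by standard parabolic machinery: Shi-type derivative estimates for generalized Ricci flow promote a curvature bound to uniform $C^\infty$ bounds on $(g_t, J_t)$, which simultaneously rules out finite-time singularities, giving long-time existence, and yields precompactness of the flow in $C^\infty$. Subsequential convergence then follows from Theorem \ref{t:GKRFprops}(4): since each $\mathcal{M}_i$ is bounded and monotone, its time derivative is integrable in $t$, so along a sequence $t_k \to \infty$ the quantity measuring failure of the critical-point equation tends to zero, and any smooth limit is a generalized Calabi-Yau geometry. By Theorem \ref{t:rigidity} such a limit is K\"ahler Ricci-flat and is the unique generalized Calabi-Yau geometry in the class.

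Finally, to upgrade subsequential convergence to convergence of the entire flow, I would combine the uniqueness from Theorem \ref{t:rigidity} with a {\L}ojasiewicz--Simon inequality for the analytic functional $\mathcal{M}_i$ near its critical set. Since the limit is unique and the flow is, up to gauge, the gradient flow of $\mathcal{M}_i$, the {\L}ojasiewicz argument excludes orbiting and produces convergence of $(g_t, J_t)$ to the fixed point $(g_\infty, b_\infty, I, J_\infty)$, indeed with a definite rate. This last stage mirrors the argument already used for Theorem \ref{t:longcon} in the Calabi-Yau class, the essential difference being that there a fixed point is \emph{assumed} to lie in the class, whereas here the flow itself must manufacture it; everything therefore hinges on the uniform metric estimate of the second step.
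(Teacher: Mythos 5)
There is a genuine gap, and indeed an unavoidable one: the statement you are trying to prove is Conjecture \ref{c:CYconjGKRF}, which the paper deliberately leaves \emph{open}. The paper only establishes it under additional hypotheses (Theorem \ref{t:longcon}): global existence assuming $(M,I)$ is K\"ahler, and convergence assuming a generalized Calabi-Yau geometry already exists in $[(g,b,I,J)]$. Your proposal does not close this gap, because the step you yourself identify as ``the heart of the matter'' --- a uniform two-sided metric bound replacing Yau's $C^0$ estimate --- is never supplied; you list candidate ingredients and say the right substitute must be ``identified.'' That is a research program, not a proof. Note also a circularity in your plan for this step: the transgression formula gives $\rho_I = -\tfrac{1}{2} dJ_t d\Phi$ with $J_t$ itself evolving, so even a full $C^2$ bound on $\Phi$ (which you do not obtain --- Theorem \ref{t:GKRFprops}(2) controls only $\Phi$ and $t|\N\Phi|^2$) does not bound the Bismut Ricci curvature without first controlling $J_t$, which is exactly what the missing metric estimate is supposed to deliver. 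This is why the paper's unconditional existence result requires the K\"ahler hypothesis on $(M,I)$: it is needed to run the $1$-form/scalar reduction of \S\ref{s:reducedflow} (Lemmas \ref{l:reducedestimates}, \ref{volumeformev}) and obtain uniform parabolicity via the quantities $W_1, W_2, W_3$ in Proposition \ref{lteprop}.

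Your convergence step also leans on tools that are not available. The paper nowhere shows that GKRF is (even up to gauge) the gradient flow of $\mathcal M_i$; Proposition \ref{p:Mabuchimon} gives monotonicity, $\tfrac{d}{dt}\mathcal M_1 = \int |d\Phi|^2 e^{-\Psi_1}dV_g$, but monotonicity of a functional along a flow is far weaker than the flow being its gradient flow, and no {\L}ojasiewicz--Simon inequality for $\mathcal M_i$ is established or cited. Moreover, your claim that this stage ``mirrors the argument already used for Theorem \ref{t:longcon}'' misreads the paper: the conditional convergence there (Proposition \ref{p:conv}) is proved by a continuity method along a path to the assumed fixed point, combining smooth dependence on initial data, the stability theorem for Hermitian curvature flow (Proposition \ref{stability}), and a backward regularity/point-picking argument (Proposition \ref{Backwardregularity}) that crucially exploits the a priori decay $|\N\Phi| = O(t^{-1/2})$ --- precisely \emph{because} no gradient-flow/{\L}ojasiewicz structure is at hand, and precisely \emph{because} the existence of the limit must be assumed rather than manufactured. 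Subsequential convergence from integrability of $\int|d\Phi|^2$ also fails without the missing compactness: bounded monotone $\mathcal M_i$ gives $\liminf$ information on $\|d\Phi\|_{L^2}$ with respect to a degenerating family of metrics, which extracts no smooth limit absent the uniform metric equivalence of your second step.
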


Our next result confirms the global existence portion of Conjecture \ref{c:CYconjGKRF} under the further hypothesis that the manifold $(M, I)$ is K\"ahler.  Also we show in this setting that the Mabuchi energies converge to their expected topological bounds.  If we furthermore assume that our initial data lies in the generalized K\"ahler class of a fixed point, we prove the convergence.

\begin{customthm}{\ref{t:longcon}}{\it Let $(M^{2n}, g, b, I, J)$ be a compact generalized K\"ahler manifold with holomorphically trivial canonical bundles.  
\begin{enumerate}
    \item Suppose $(M, I)$ is a K\"ahler manifold.  Then the solution to generalized K\"ahler-Ricci flow with initial data $(g, b, I, J)$ exists for all time, and the Mabuchi energies converge to their topologically determined extreme values.
    \item Suppose there exists a generalized Calabi-Yau geometry in $[(g, b, I, J)]$.  Then the solution to generalized K\"ahler-Ricci flow with initial data $(g, b, I, J)$ converges exponentially to this necessarily unique generalized Calabi-Yau geometry.
\end{enumerate}
}
\end{customthm}

\begin{rmk} Both extra hypotheses, namely that $(M, I)$ is K\"ahler, and that there exists a Calabi-Yau geometry in the generalized K\"ahler class of the initial data, would follow a fortiori from Conjecture \ref{c:CYconj}.  Thus these hypotheses are natural for the problem, although ultimately one aims to remove them.  In fact, given Conjecture \ref{c:CYconj}, Theorem \ref{t:longcon} describes all possible solutions to GKRF with initial data having holomorphically trivial canonical bundles.  We note that Theorem \ref{t:longcon} extends the main results of \cite{ASnondeg} in a few ways.  First, the work \cite{ASnondeg} addressed the specific case of nondegenerate GK structures, whereas here we assume the more general hypothesis of holomorphically trivial canonical bundles.  Also, in \cite{ASnondeg} some weak convergence results were obtained, whereas here in full generality we obtain smooth convergence within a generalized K\"ahler class, with new implications for the nondegenerate setting (cf. Corollary \ref{c:HKcontract}).
\end{rmk}

The proof of global existence relies on a series of delicate estimates coming from a certain $1$-form/scalar reduction for the solution to pluriclosed flow underlying GKRF in this setting, modeled after the strategy of \cite{ASnondeg}.  The hypothesis of a K\"ahler background removes certain background terms from these equations, yielding important a priori estimates.  Combined with the favorable evolution equations for the Ricci potential and its gradient, together with the transgression formula for Ricci curvature, we are able to establish uniform parabolicity of the flow.  The higher regularity then follows from general results for pluriclosed flow \cite{StreetsPCFBI, JordanStreets,JFS}.  The proof of convergence relies on a different strategy, inspired by \cite{WangCF}.  In particular, we connect a K\"ahler Calabi-Yau structure to some given initial data by a one-parameter family of GK structures.  Then by the continuity method we claim that the solution to GKRF with initial data chosen anywhere along this path will converge to the K\"ahler Calabi-Yau structure.  The openness follows from continuous dependence of the flow on its initial data, together with a general stability result for Hermitian curvature flows \cite{HCF}.  The closedness is more delicate, and relies crucially on the fact that the global existence has already been established unconditionally, as well as the fact that there is universal a priori decay of the gradient of the Ricci potential from Theorem \ref{t:GKRFprops}.  These estimates allow us to show a certain backwards regularity property for GKRF in this setting.  This in turn allows us to show that the solution must enter a small ball around a Calabi-Yau structure in a bounded amount of time, after which the stability result shows the convergence.

\subsection{Structure of the space of generalized K\"ahler metrics}

As discussed above, the topology of the space of generalized K\"ahler structures is potentially nontrivial, with no obvious linear structure.  We also note that, while the space of K\"ahler metrics compatible with a fixed complex structure is naturally a cone, and thus contractible, the overall space of all K\"ahler pairs $(g, J)$ on a given smooth manifold can be quite complicated, even having infinitely many connected components.  Thus it is natural to fix certain parameters as part of the background when discussing the space of generalized K\"ahler metrics.  Following the lead of many works \cite{AGG, BGZ, GualtieriBranes, GualtieriHamiltonian}, we consider all generalized K\"ahler structures compatible with the structure of a \emph{holomorphic Poisson manifold}.  In particular, given $(M^{2n}, I, \sigma)$ a holomorphic Poisson manifold, we let %$\mathcal {GK}$ denote the space of all generalized K\"ahler structures on $M$ and let
\begin{align*}
    \mathcal{GK}^{I,\sigma} = \left\{ (g,b, I, J)\ \mbox{generalized K\"ahler}\ |\ \tfrac{1}{2} g^{-1} [I, J] = \sigma \right\}.
\end{align*}

Given an element $m = (g,b,I,J) \in \mathcal{GK}^{I,\sigma}$, there is a natural class of deformations within $\mathcal {GK}^{I,\sigma}$, dubbed \emph{canonical deformations}, defined using families of closed two-forms (\cite{gibson2020deformation}, cf.\,Definition \ref{d:canonicalfamily} below).  The equivalence class under such deformations, denoted $\mathcal{GK}^{I,\sigma}(m)$, is a natural generalization of space of K\"ahler metrics on a fixed complex manifold to the GK setting.  Within this space are equivalence classes of \emph{exact canonical deformations}, which are the generalized K\"ahler classes described above.  We conjecture that $\mathcal{GK}^{I,\sigma}(m)$ is contractible:

\begin{conj} \label{c:contractibility} Let $(M^{2n}, I, \sigma)$ be a holomorphic Poisson manifold.  Given $m = (g,b,I,J) \in \mathcal {GK}^{I,\sigma}$, then
\begin{align*}
\mathcal{GK}^{I,\sigma}(m) \cong \star.
\end{align*}
\end{conj}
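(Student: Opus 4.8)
\emph{Proof proposal.} The plan is to realize generalized K\"ahler--Ricci flow as a deformation retraction of $\mathcal{GK}^{I,\sigma}(m)$ onto its locus of generalized Calabi--Yau geometries, and then to contract that locus. The starting observation is that, by Theorem~\ref{t:GKRFprops}(1), GKRF with $I$ fixed preserves the canonical deformation class $\mathcal{GK}^{I,\sigma}(m)$ --- in fact it stays inside the finer generalized K\"ahler class $[(g,b,I,J)]$ of the initial data --- so it defines a flow on the very space we wish to contract. Accordingly, I would first organize $\mathcal{GK}^{I,\sigma}(m)$ as the total space of a fibration $p \colon \mathcal{GK}^{I,\sigma}(m) \to \mathcal{C}$, where $\mathcal{C}$ records the twisted deRham classes $([\psi_1],[\psi_2]) = (\ga,\gb)$ of the defining spinors (Theorem~\ref{t:GKRFprops}(1)) and the fiber $p^{-1}(\ga,\gb)$ is identified with the generalized K\"ahler class inside $\mathcal{GK}^{I,\sigma}_{\ga,\gb}$; this last identification --- that two structures with the same twisted classes differ by an \emph{exact} canonical deformation --- is a $\del\delb$-type lemma in the twisted cohomology. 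If $p$ is a fibration with contractible base and contractible fibers, the total space is contractible, which is the claim.

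\emph{Contracting the fibers.} Each fiber is a generalized K\"ahler class $[(g,b,I,J)]$. Assuming a generalized Calabi--Yau geometry exists in this class --- the content of Conjecture~\ref{c:CYconj}, unconditional when $(M,I)$ is K\"ahler Calabi--Yau --- Theorem~\ref{t:longcon}(2) gives exponential convergence of GKRF to the unique such fixed point (unique by Theorem~\ref{t:rigidity}). Reparametrizing flow time from $[0,\infty]$ to $[0,1]$ then produces a homotopy $H(s,\cdot)$ from the identity at $s=0$ to the constant map onto the fixed point at $s=1$. Continuity of $H$ for $s<1$ is the continuous dependence of GKRF on initial data, while continuity at $s=1$ requires the exponential decay to be locally uniform over the class; both ingredients are already present in the proof of Theorem~\ref{t:longcon} and its stability input. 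This exhibits each fiber as contractible onto its Calabi--Yau representative.

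\emph{Contracting the base and combining.} The base $\mathcal{C}$ should be an open convex cone of admissible cohomological data, the generalized analogue of the K\"ahler cone, with convexity (hence contractibility) following from the fact that positivity of the generalized metric survives convex combinations of the spinor classes. For the fibration argument one must also establish local triviality of $p$, i.e.\ that a path in $\mathcal{C}$ lifts to a path of genuine GK structures; here the deformation-theoretic machinery behind Theorem~\ref{thm:nondeg_perturb}, together with the explicit canonical deformations of \cite{gibson2020deformation}, should furnish the required slices. Contracting base and fiber in turn --- first retracting $\mathcal{C}$ to a distinguished Calabi--Yau class and lifting, then running GKRF within that class --- yields a global deformation retraction onto a single generalized Calabi--Yau geometry, giving $\mathcal{GK}^{I,\sigma}(m) \cong \star$. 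In the Calabi--Yau background this is precisely Corollary~\ref{cor:contractibility}: there the fixed-point locus consists of K\"ahler Ricci-flat metrics (Theorem~\ref{t:rigidity}), parametrized by the K\"ahler cone via Yau's theorem and thus visibly contractible.

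\emph{Main obstacle.} The essential difficulty is the fiberwise step: contracting an arbitrary fiber presupposes a generalized Calabi--Yau geometry in \emph{every} class, which is exactly the open Conjecture~\ref{c:CYconj}. Hence the argument is unconditional only where a fixed point is already known (the K\"ahler Calabi--Yau background), and the general case of Conjecture~\ref{c:contractibility} is no easier than the existence problem it rests on. The secondary obstacles are genuine as well: verifying that $p$ is an honest fibration is delicate because $\mathcal{GK}^{I,\sigma}$ carries no linear structure and $J$ itself varies along the flow, and upgrading the flow homotopy to be continuous up to $s=1$ demands uniform-in-initial-data convergence rather than the pointwise statement recorded in Theorem~\ref{t:longcon}.
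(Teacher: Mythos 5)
You should note at the outset that the statement you were asked to prove is presented in the paper as a \emph{conjecture}: the paper does not prove it in general, and confirms it only when $m$ is a generalized Calabi-Yau geometry on a K\"ahler Calabi-Yau background (Corollary \ref{cor:contractibility}). Your proposal correctly diagnoses exactly this limitation --- the fiberwise contraction needs a generalized Calabi-Yau fixed point in every class, which is precisely Conjecture \ref{c:CYconj} --- so your conclusion that the argument is unconditional only on a Calabi-Yau background agrees with the paper's own stance, and your central mechanism (GKRF, reparametrized by $t = s/(1-s)$, as a deformation retraction onto the Calabi-Yau locus, with Theorem \ref{t:longcon}(2)/Proposition \ref{p:conv} supplying exponential convergence and Theorem \ref{t:rigidity} supplying uniqueness) is the same one the paper uses for the case it can prove.

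Where you diverge is in the packaging, and the extra machinery you import is genuinely unproven. The paper's proof of Corollary \ref{cor:contractibility} does not fiber $\mathcal{GK}^{I,\sigma}(m)$ over a space of cohomological data. It instead observes that a path in $\mathcal{GK}^{I,\sigma}(m)$ may change the Aeppli class of $\gw_I$ (the forms $K_t$ are only closed, not exact), that on a K\"ahler background Aeppli cohomology is canonically identified with deRham cohomology so each class contains a unique Calabi-Yau metric varying smoothly with the class, and then adapts the continuity-method argument of Proposition \ref{p:conv} so that the flow starting anywhere along the path converges to the relevant Calabi-Yau metric; the retraction therefore lands on the space of Calabi-Yau metrics on $(M,I)$, which is contractible. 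This direct homotopy sidesteps the two steps of your outline that have no support in the paper: (i) your identification of the fibers $p^{-1}(\ga,\gb)$ with generalized K\"ahler classes rests on a ``twisted $\del\delb$-lemma'' that is established nowhere --- indeed Theorem \ref{t:rigidity}(2) gives uniqueness in $\mathcal{GK}^{I,\sigma}_{\ga,\gb}$ only up to exact $b$-field transformation, and Definition \ref{d:spinorGKclass} records only the inclusion $[m] \subset \mathcal{GK}^{I,\sigma}_{\ga,\gb}$, not equality; and (ii) the local triviality of $p$ and the convexity of the base $\mathcal{C}$ are asserted rather than proved, and neither Theorem \ref{thm:nondeg_perturb} (a local, interior perturbation result) nor the deformation theory of \cite{gibson2020deformation} delivers them. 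If you drop the fibration scaffolding and run the flow on all of $\mathcal{GK}^{I,\sigma}(m)$ simultaneously, as the paper does, these gaps disappear and your argument becomes the paper's.
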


A mild extension of Theorem \ref{t:longcon} implies Conjecture \ref{c:contractibility} in the case of a K\"ahler Calabi-Yau structure:

\begin{customcor}{\ref{cor:contractibility}}{\it Let $(M^{2n}, g, I)$ be a K\"ahler Calabi-Yau manifold which is part of a generalized Calabi-Yau geometry $m = (g,b, I, J)$. Then
\begin{align*}
\mathcal{GK}^{I,\sigma}(m) \cong \star.
\end{align*} 
}
\end{customcor}

The result above has interesting and more classical interpretations in the extreme cases when $\gs$ is invertible and $\gs$ vanishes.  We first address the case when $\gs$ is invertible, which includes the setting of hyperK\"ahler backgrounds.  In particular, fixing a K\"ahler form $\gw_K$ coming from a hyperK\"ahler structure $(g, I, J, K)$, we show that the connected component of the identity in the space of \emph{positive} $\gw_K$-Hamiltonian symplectomorphisms is contractible:

\begin{customcor} {\ref{c:HKcontract}}
{\it
Let $(M^{4n}, g, I, J, K)$ be a hyperK\"ahler manifold.  Let
\begin{align*}
    \Ham^+(\gw_K) := \{ \phi \in \Ham(\gw_K)\ |\ \phi^*\gw_I(X,IX)>0\ \mbox{for nonzero }X\in TM\}.
\end{align*}
Then the connected component of the identity $\Ham^+_0(\gw_K)\subset \Ham^+(\gw_K)$ is contractible:
\begin{align*}
    \Ham^+_0(\gw_K) \cong \star.
\end{align*}
Furthermore, $\Ham^+_0(\gw_K)\cap \mathrm{Aut}(M,J)=\{\mathrm{id}\}$.
}
\end{customcor}
\noindent Both claims of the Corollary follow from the fact that GKRF provides a canonical homotopy flowing $\phi \in \Ham_0^+(\gw_K)$ to the identity map.

We now turn to the case $\gs = 0$.  Here there is a holomorphic splitting of $T^{1,0}_IM$ according to the eigenspaces of $Q = IJ$ \cite{ApostolovGualtieri}.  In this setting it follows that canonical families furthermore preserve $J$, and thus the splitting.  It then follows from the integrability conditions \cite{ApostolovGualtieri} that a positive linear combination of generalized K\"ahler metrics is again a generalized K\"ahler metric, and so Conjecture \ref{c:contractibility} holds in this setting.  Using this we give a complete picture of the existence and convergence of GKRF with vanishing Poisson tensor on K\"ahler Calabi-Yau manifolds.  This result recaptures Cao's result \cite{CaoKRF} on the global existence and convergence of the K\"ahler-Ricci flow when $c_1 = 0$.

\begin{customcor}{\ref{c:commutingcase}} {\it
Let $(M^{2n}, g, b, I, J)$ be a compact generalized K\"ahler manifold satisfying
\begin{enumerate}
    \item $\gs = 0$,
    \item $(M, I)$ is K\"ahler and $c_1(M, I)=0$.
\end{enumerate}
Then the solution to generalized K\"ahler-Ricci flow with initial data $(g, b, I, J)$ exists for all time and converges to a K\"ahler Calabi-Yau metric.
}
\end{customcor}

As a further application of Corollary \ref{c:commutingcase} we obtain a global $\partial\bar\partial$-lemma for generalized K\"ahler structures on $M$ compatible with fixed commuting complex structures $I, J$ on $M$, and with an Aeppli cohomology class in $H^{1,1}_{A}(M, I)$.  The precise statement is:

\begin{customcor} {\ref{c:commutingddbar}}
{\it Let $(M^{2n}, I)$ be a compact K\"ahler manifold with $c_1(M, I) = 0$ and suppose its tangent bundle  admits a holomorphic and involutive splitting $TM= T_+ \oplus T_-$.  Let  $J$ be the integrable complex structure which equals $I$ on $T_+$ and $-I$ on $T_-$.  Suppose $(g, b, I, J)$ and $(g',b',I,J)$ are generalized K\"ahler structures such that $[\gw_I]_A = [\gw'_I]_A$.  Then there exists $\phi \in C^{\infty}(M)$ such that
\begin{align*}
    \gw_I =&\ \gw_I' + \pi^{1,1}_I d J d \phi = \gw_I' + \i \left( \del_+ \delb_+ - \del_- \delb_- \right) \phi,
\end{align*}
where $\del_{\pm}, \delb_{\pm}$ are defined in terms of the $I$-invariant splitting  $TM=T_+ \oplus T_-$.}
%{\itLet $(M^{2n}, I)$ be a compact K\"ahler manifold with $c_1(M, I) = 0$.  Further suppose $J$ is an integrable complex structure such that $[I, J] = 0$.  Suppose $(g, b, I, J)$ and $(g',b',I,J)$ are generalized K\"ahler structures such that $[\gw_I]_A = [\gw'_I]_A$.  Then there exists $\phi \in C^{\infty}(M)$ such that \begin{align*} \gw_I =&\ \gw_I' + \pi^{1,1}_I d J d \phi = \gw_I' + \i \left( \del_+ \delb_+ - \del_- \delb_- \right) \phi. \end{align*}}
\end{customcor}

\noindent A general $\del\delb$-lemma for generalized K\"ahler structures was proved in \cite{GualtieriHodge} using Hodge theory for the twisted differential and generalized K\"ahler identities.  This result is phrased in terms of the underlying generalized complex structures, and is distinct from the result of Corollary \ref{c:commutingddbar}.

\subsection{Remarks on the general case} \label{s:introgen}

We emphasize that while the case of generalized K\"ahler structures admitting global closed pure spinors is of interest, and has nontrivial applications to understanding the structure of Calabi-Yau manifolds as described above, it should be considered a special case of the much more subtle existence and uniqueness question for compact generalized K\"ahler-Ricci solitons (GKRS).  Recently, in a series of papers due to the authors \cite{Streetssolitons, SU1, ASU3} we gave a complete classification of compact GKRS on complex surfaces.  Roughly speaking the proof consists of using results from the Kodaira classification and results on the structure of GK manifolds to show that non-K\"ahler GKRS can only exist on type 1 (diagonal) Hopf surfaces, and their finite quotients.  We construct solitons on these manifolds by imposing maximal symmetry \cite{Streetssolitons, SU1}, then show the uniqueness by establishing that maximal symmetry must hold a priori via variational arguments using a generalization of Aubin's $I$-functional to this setting \cite{ASU3}.  While this represents a complete answer in this dimension, it remains unclear how to formulate a plausible general conjecture for the existence and uniqueness of GKRS in higher dimensions.

\vskip 0.1in
\textbf{Acknowledgements:} The authors would like to thank Gil Cavalcanti, Vicente Cortes, Marco Gualtieri and Michael Taylor for helpful conversations.  The first named author was supported by an NSERC Discovery Grant and a Connect Talent fellowship of the Region des Pays de la Loire.  The second named author was  supported by National Key R\&D Program of China 2023YFA1009900. The third named author was supported by a Simons Fellowship and by the NSF via DMS-1454854 and DMS-2203536.

\section{Background on generalized K\"ahler geometry} \label{s:bckground}
\subsection{Fundamental definitions}

We recall here some basic definitions in generalized complex and K\"ahler geometry.  Our discussion is brief, and we refer to \cite{GualtieriGCG} for further detail.  The basic object  underpinning these constructions is an \emph{exact Courant algebroid}, which by a result of Severa \cite{Severa} is described by the following setup: fix $M$ a smooth manifold and let $E = TM \oplus T^*M$, equipped with the neutral inner product
\begin{align*}
    \IP{X + \xi, Y + \eta} = \tfrac{1}{2} \left( \xi(Y) + \eta(X) \right).
\end{align*}
Furthermore we endow this vector bundle with a (twisted) Dorfman bracket:
\begin{align*}
    [X + \xi, Y + \eta] = [X,Y] + L_X \eta - i_Y d \xi + i_Y i_X H_0,
\end{align*}
where $H_0$ is a closed three-form.  We note that the Dorfman bracket extends $\mathbb C$-linearly to the complexified bundle $(TM \oplus T^*M) \otimes \mathbb C$.  These structures are part of the background and will not be emphasized in the sequel.

\subsection{Generalized complex structures} \label{ss:GCS}

\begin{defn}
A \textit{generalized almost complex structure} on $(M,H_0)$ is a $\la{\cdot,\cdot}$-orthogonal endomorphism $\J\in\End(TM\oplus T^*M)$ such that $\J^2=-\mathrm{Id}$. A generalized almost complex structure $\J$ is \textit{integrable}, if its $\i$-eigenspace $L^{1,0}\subset (TM\oplus T^*M)\otimes\C$ is closed under the Dorfman bracket $[\cdot,\cdot]_{H_0}$. In this case we will call $\J$ a \textit{generalized complex structure}.
\end{defn}

Every generalized complex structure $\J$ defines a decomposition of the complexified generalized tangent bundle $(TM\oplus T^*M)\otimes\C$ into the direct sum of maximal isotropic $\pm\i$ eigenspaces:
\[
(TM\oplus T^*M)\otimes\C=L^{1,0}\oplus L^{0,1}.
\]
Similarly to the classical case, the integrability of $\J$ is equivalent to the vanishing of the  Nijenhuis tensor $N_{\J}\in \mathrm{Hom}(\Lambda^2(L^{1,0}),L^{0,1})$ defined by
\[
N_\J(e_1,e_2):=\pi_{L^{0,1}}([e_1,e_2]_{H_0}).
\]

Elements of $TM\oplus T^*M$ naturally act on the differential forms: $(X+\xi)\cdot \alpha=i_X\alpha+\xi\wedge \alpha$ for $X+\xi\in TM\oplus T^*M$ and $\alpha\in \Lambda^*(T^*M)$. For any $\varphi\in \Lambda^*(T^*M)_\C$, its kernel $\Ker(\varphi)\subset (TM\oplus T^*M)\otimes\C$ is an isotropic subspace. At a given point, the \emph{maximal} isotropic subspace $L^{1,0}\subset (TM\oplus T^*M)\otimes\C$ is defined by a \emph{pure} spinor $\varphi\in\Lambda^*(T^*M)_\C$:
\[
L^{1,0}=\Ker(\varphi)
\]
and this pure spinor is uniquely defined up to a non-zero complex multiple. In this case we will often write $\J=\J_{\varphi}$, implying that the generalized complex structure $\J_{\varphi}$ is determined by a spinor $\varphi\in\Lambda^*(T^*M)_\C$.

\begin{defn}\label{d:mukai}
Given $\varphi,\psi\in\Lambda^*(T^*M)_\C$, we define \textit{Mukai pairing} as
\begin{equation}\label{f:mukai}
    (\varphi,\psi):=(2\i)^{-n}[\varphi\wedge s(\psi)]_{\mathrm{top}},
\end{equation}
where $[\alpha]_{\mathrm{top}}$ denotes the top $2n$-degree component of a differential form $\alpha\in\Lambda^*(T^*M)_\C$, and $s(\alpha)$ is the Clifford involution defined on the decomposables as
\[
s(dx^1\wedge\dots \wedge dx^k)=dx^k\wedge\dots\wedge dx^1.
\]
\end{defn}
Note for a pure spinor $\phi$, since $L^{0,1}=\Ker(\bar{\varphi})$ and $L^{1,0}\cap L^{0,1}=\{0\}$, by \cite{Chevalley} we have
\begin{equation}\label{f:chevalley}
(\varphi,\bar{\varphi})\neq 0.
\end{equation}
With our normalization convention, for a closed pure spinor $\varphi$ defining a generalized complex structure, the pairing $(\varphi,\bar{\varphi})$ is a \emph{real} volume form.

Let us review two key examples of generalized complex structures on an \emph{untwisted background} $(M,H_0=0)$:
\begin{ex}[Generalized complex structure of symplectic type]\label{ex:gc_symp}
Let $\omega$ be a symplectic form on $M$. Then 
\[
\J:=\left(\begin{matrix}0 & -\omega^{-1}\\ \omega & 0\end{matrix}\right)
\]
is an integrable generalized complex structure on $(M,H_0=0)$. Its $\i$-eigenspace is
\[
L^{1,0}=\{X-\i\omega(X,\cdot)\ |\ X\in TM\},
\]
and there is a pure spinor
\[
\psi=e^{\i\omega}.
\]
In accordance with~\eqref{f:chevalley} we have $(e^{\i\omega},e^{-\i\omega})=\frac{\omega^n}{n!}\neq 0$.
\end{ex}
\begin{ex}[Generalized complex structure of complex type]\label{ex:gc_cpx}
Let $J$ be a usual integrable complex structure on $M$. Then
\[
\J:=\left(\begin{matrix}J & 0\\ 0 & -J^*\end{matrix}\right)
\]
is an integrable generalized complex structure on $(M,H_0=0)$. Its $\i$-eigenspace is
\[
L^{1,0}=T^{1,0}M\oplus \Lambda^{0,1}(M).
\]
If $\Theta$ is a local non-vanishing section of the canonical bundle $K_J=\Lambda^{n,0}(M)$, then there is a (locally defined) pure spinor
\[
\psi=\bar\Theta.
\]
If $\Theta=dz^1\wedge\dots\wedge dz^n$, where $z^i=x^i+\i y^i$ are local holomorphic coordinates, then $(\bar\Theta,\Theta)=dx^1\wedge dy^1\wedge\dots\wedge dx^n\wedge dy^n$.
\end{ex}

The collection of all the multiples of the pure spinors defined by $L^{1,0}$ over $x_0\in M$ determines a complex line bundle $K_\J\subset \Lambda^*(T^*M)_\C$ which is called \textit{the canonical line bundle of~$\J$}. The integrabilty condition for $\J$ can be read off from $K_\J$ as follows. Let $\varphi$ be any local smooth section of $K_\J$ such that $(\varphi,\bar\varphi)\neq 0$. Then there exists $e\in (TM\oplus T^*M)\otimes \C^*$ such that
\begin{equation}\label{f:spinor_integrability}
d_{H_0}\varphi=e\cdot \varphi.
\end{equation}

\begin{defn}[$b$-field transform]
Let $b\in \Lambda^2(T^*M)$ be a 2-form. A $b$-field transform is an operator
\[
e^b\colon TM\oplus T^*M\to TM\oplus T^*M,\quad e^b(X+\xi)=X+\xi+b(X,\cdot).
\]
This endomorphism preserves the $H_0$-twisted Dorfman  bracket if and only if $db=0$. In general, it transforms the $H_0$-twisted Dorfman bracket into the $(H_0+db)$-twisted Dorfman bracket:
\[
e^{-b}[e^b(X+\xi),e^b(Y+\eta)]_{H_0}=[X+\xi,Y+\eta]_{H_0+db}.
\]
\end{defn}

Note that $b$-field transformations act on the space of generalized complex structures.  Specifically, if $\J$ is an integrable generalized complex structure on $(M,H_0)$, then $e^{-b}\J e^b$ is an integrable generalized complex structure on $(M,H_0+db)$, in particular, once the 2-form $b$ is closed, $e^{-b}\J e^b$ is another generalized complex structure on the same background. If $\J$ has the $\i$-eigenspace $L^{1,0}$ and a local pure spinor $\varphi$, then $e^{-b}\J e^b$ has the eigenspace $e^{-b}L^{1,0}$ and a local pure spinor $e^b\wedge \varphi$.

\begin{defn}
Let $\J$ be a generalized complex structure on $M^{2n}$. The \textit{type} of $\J$ at a given point $x\in M$ is
\[
\mathrm{type}(\J)=n-\frac{1}{2}\mathrm{rank}(Q),
\]
where $Q:=\pi_T(\J|_{T^*M})\in \Lambda^2(TM)$. Equivalently, $\mathrm{type}(\J)$ is the lowest degree of a pure spinor defining $\J$ at $x\in M$. The type of $\J$ is an upper-semicontinuous function on $M$. The \textit{parity} of $\J$ is the parity of $\mathrm{type}(\J)$ and does not depend on the choice of $x\in M$.  Depending on the parity, a defining spinor will lie in the relevant subspace $\Lambda^{\mbox{\tiny{even/odd}}}(T^*M)_{\C}$.
\end{defn}

Whenever $\mathrm{type}(\J)$ is locally constant in a neighbourhood of $x\in M$, we can find a Darboux coordinate system and a $b$-field transform putting $\J$ into a canonical form.  This result is key to our perturbation results below.

\begin{thm}[{Normal form of a generalized complex structure, \cite[Thm.\,4.35]{GualtieriThesis}}]\label{thm:darboux}
    Let $\J$ be a generalized complex structure on $(M,H_0)$. Assume that $\mathrm{type}(\J)=k$ is constant in a neighbourhood of $x\in M$. Then there exists $b\in \Lambda^2(T^*M)$ with $db=H_0$ and a diffeomorphism
    \[
    f\colon U\to V
    \]
    between a neighbourhood of $x$ and an open subset $V\subset \C^k\times \R^{2n-2k}$ such that
    \[
    \J\big|_{U}=e^{-b}(f^*(\J_{\mathrm{cpx}}\oplus \J_{\mathrm{symp}}))e^{b},
    \]
    where $\J_{\mathrm{cpx}}$ is the generalized complex structure on $\C^k$ given by the usual complex structure, and $\J_{\mathrm{symp}}$ is the generalized complex structure on $\R^{2n-2k}$ given by the standard symplectic form.
\end{thm}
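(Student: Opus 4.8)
The plan is to reproduce Gualtieri's argument, whose geometric content is that a constant-type generalized complex structure is locally a product of a symplectic and a complex factor. First I would reduce to the untwisted background: since $H_0$ is closed and the statement is local, the Poincar\'e lemma provides a $2$-form $b_0$ near $x$ with $db_0 = H_0$, and by the transformation law for the Dorfman bracket recorded above the $b$-field transform $\J' := e^{b_0}\J e^{-b_0}$ is an integrable generalized complex structure on the untwisted background $(U,0)$. It then suffices to find a \emph{closed} $2$-form $b_1$ and a diffeomorphism $f$ splitting $\J'$; the $b$ in the statement is assembled from $b_0$ and $b_1$ and automatically satisfies $db = H_0$. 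So I assume $H_0 = 0$ henceforth.

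Second, I would extract the Poisson foliation. The bivector $Q = \pi_T(\J|_{T^*M})$ is a genuine Poisson structure (integrability of $\J$ forces $[Q,Q]=0$), and by hypothesis $\mathrm{rank}\,Q = 2(n-k)$ is constant near $x$. Hence $\Im(Q^\sharp)\subset TM$ is an integrable distribution of rank $2(n-k)$, and Weinstein's splitting theorem furnishes coordinates in which $Q = \sum_i \partial_{q_i}\wedge\partial_{p_i}$. This realizes $U$ as a product $S\times N$ of a $2(n-k)$-dimensional symplectic slice $S$ with a $2k$-dimensional transverse slice $N$ on which $Q\equiv 0$.

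Third, I would identify the two factors using a local pure spinor in its general constant-type normal form $\varphi = e^{B+\i\omega}\,\theta_1\wedge\cdots\wedge\theta_k$, where $\theta_1,\dots,\theta_k$ are complex $1$-forms spanning the conormal directions of the foliation and $\omega$ is nondegenerate along the leaves. The integrability condition $d\varphi = e\cdot\varphi$ (cf.\,\eqref{f:spinor_integrability}) forces the $\theta_j$ to be closed modulo themselves, hence proportional to holomorphic differentials $dz_1,\dots,dz_k$ that exhibit $N\cong\C^k$ with an integrable transverse complex structure, and simultaneously normalizes the leafwise part of $\omega$ to $\sum_i dq_i\wedge dp_i$, so that along $S$ the structure is of symplectic type.

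Finally, I would absorb the remaining data into a closed $b$-field. The off-diagonal coupling between $S$ and $N$, together with the real $2$-form $B$ and the non-leafwise component of $\omega$, assembles --- again via the integrability equation --- into a closed $2$-form that the admissible transform $e^{-b_1}$ removes, leaving exactly the split spinor defining $\J_{\mathrm{symp}}\oplus\J_{\mathrm{cpx}}$. I expect this last step to be the main obstacle: Weinstein's theorem only splits the underlying Poisson geometry, and the real work lies in showing that integrability of $\J$ upgrades this to a genuine \emph{generalized complex} product, i.e.\ that the transverse complex structure is leaf-independent and that the entire coupling is exact up to the already-normalized pieces, so that a single closed $b$-field transform composed with the coordinate change $f$ yields the canonical form.
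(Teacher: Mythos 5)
The first thing to note is that the paper does not prove this statement at all: it is imported verbatim as Theorem 4.35 of Gualtieri's thesis and used as a black box (its only role is in the proof of Proposition \ref{prop:symp_perturb}). So your proposal can only be measured against Gualtieri's original argument. Measured that way, your steps 1--3 form a sound skeleton, and in fact a sensible hybrid of two known routes. The reduction to $H_0=0$ via the Poincar\'e lemma and the transformation law of the twisted Dorfman bracket is exactly right. Passing through the Poisson bivector $Q$ and Weinstein splitting is not Gualtieri's route (he works entirely with the spinor and a Moser/Darboux argument with parameters along the leaves) but is the one taken in later refinements such as Abouzaid--Boyarchenko's splitting theorem; it buys you the leafwise normalization of $\omega$ to $\omega_0=\sum_i dq_i\wedge dp_i$ for free, since the leafwise restriction of $\omega$ is the inverse of the leafwise Poisson structure. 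Moreover, the transverse holomorphic coordinates $z_1,\dots,z_k$ produced by the complex Frobenius theorem automatically have $dz_i$ annihilating the leaf directions, hence are constant along leaves; so the ``leaf-independence'' of the transverse complex structure, which you list as part of the remaining work, is in fact already settled at that stage and compatible with the Weinstein coordinates.

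The genuine gap is the final absorption step, which you flag as ``the main obstacle'' but do not carry out --- and identifying the hardest step is not the same as closing it. After your normalizations the spinor is $e^{B+\i\omega}\wedge dz_1\wedge\cdots\wedge dz_k$, and what integrability actually yields, by matching graded pieces of $d\varphi=(X+\xi)\cdot\varphi$, is only that $d(B+\i\omega)\wedge dz_1\wedge\cdots\wedge dz_k=0$, i.e.\ $d(B+\i\omega)$ lies in the ideal generated by $dz_1,\dots,dz_k$. To finish, one must produce a single \emph{real, closed} $2$-form $b$ with $B+\i\omega\equiv b+\i\omega_0$ modulo that ideal. This is not a formal consequence of $d(B+\i\omega)$ lying in the ideal: the mixed components (of type $dq\wedge d\bar{z}$) and the antiholomorphic transverse components (of type $d\bar{z}\wedge d\bar{z}$) have to be corrected by hand, by solving a fibered Poincar\'e-lemma-type problem while preserving reality of $b$, and this computation is precisely what carries the weight of Gualtieri's proof. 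Your sentence asserting that the residual coupling ``assembles --- again via the integrability equation --- into a closed $2$-form'' states the desired conclusion rather than deriving it, so as written the proposal is a correct road map whose decisive segment is missing.
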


\begin{rmk} As in Examples~\ref{ex:gc_symp} and~\ref{ex:gc_cpx}, if $k = 0$ in Theorem \ref{thm:darboux}, then such $\J$ are referred to as \emph{symplectic type} (near $x$).  Analogously if $k = n$ then such $\J$ are referred to as \emph{complex type}.
\end{rmk}

\subsection{Generalized K\"ahler structures}

\begin{defn}
A \textit{generalized K\"ahler structure} on $M$ is a pair of commuting generalized complex structures $(\J_1,\J_2)$ such that the bilinear form
\[
\la{-\J_1\J_2\cdot,\cdot}
\]
on $T\oplus T^*$ is positive definite.  We denote the operator $-\J_1\J_2$ by $\GG$, which is a \emph{generalized metric} (cf.\,\cite{GualtieriThesis,GRFbook}).
\end{defn}

The fundamental result of Gualtieri states that a generalized K\"ahler structure $(\J_1,\J_2)$ on $(M,H_0)$ can be equivalently described by certain bihermitian data on $(M,H_0)$.

\begin{thm}[{\cite[Ch. 6]{GualtieriThesis}}]\label{thm:gualtieri_map}
    Let $(\J_1,\J_2)$ be a generalized K\"ahler structure on $(M,H_0)$. Then there exist a unique bihermitian structure $(g, b, I,J)$ such that
    \begin{equation}\label{f:gualtieri_map}
    \J_{1/2}=\frac{1}{2}e^b\left(
        \begin{matrix}I\pm J & -(\gw_I^{-1}\mp \gw_J^{-1})\\ \gw_I\mp\gw_J & - (I^*\pm J^*)\end{matrix}
    \right)e^{-b},
    \end{equation}
    where the $2$-forms $\gw_I=gI$ and $\gw_J=gJ$ satisfy
    \begin{equation}\label{f:gk_torsion}
    -d^c_I\gw_I=H_0+db=d^c_J\gw_J.
    \end{equation}
    Conversely, if the data $(g,b,I,J)$ satisfies~\eqref{f:gk_torsion}, then~\eqref{f:gualtieri_map} determines a generalized K\"ahler structure on $(M, H_0)$.
\end{thm}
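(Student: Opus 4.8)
The plan is to run the classical two-step reconstruction: first strip off the generalized metric to recover $(g,b)$, then read off $(I,J)$ from the way $\J_1$ and $\J_2$ act on its two eigenbundles, and finally match the Dorfman-integrability of $(\J_1,\J_2)$ with the classical integrability of $I,J$ together with the torsion identity \eqref{f:gk_torsion}. I would begin by noting that $\GG = -\J_1\J_2$ is a genuine generalized metric: the relation $[\J_1,\J_2]=0$ with $\J_i^2=-\Id$ gives $\GG^2=\Id$, orthogonality of each $\J_i$ makes $\GG$ orthogonal, and the positivity hypothesis makes $\langle\GG\cdot,\cdot\rangle$ positive definite. Hence $\GG$ splits $E=C_+\oplus C_-$ into its $\pm1$-eigenbundles, on which $\langle\cdot,\cdot\rangle$ is positive, resp.\ negative, definite; since $T^*M$ is isotropic, each $C_\pm$ meets it trivially and so projects isomorphically onto $TM$, giving $C_\pm=\{X+(b\pm g)(X,\cdot):X\in TM\}$ for a Riemannian metric $g$ and a $2$-form $b$. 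This produces $(g,b)$, and after the $b$-field transform $\J_i\mapsto e^{-b}\J_i e^b$ — which replaces the background $H_0$ by $H=H_0+db$, turns $C_\pm$ into the graph of $\pm g$, and peels off the outer $e^b$ in \eqref{f:gualtieri_map} — I may work with the untwisted matrix and the $H$-twisted bracket.

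Next I define $I$ and $J$. Because $\J_1,\J_2,\GG$ pairwise commute, both $\J_i$ preserve $C_\pm$; restricting $-\J_1\J_2=\GG$ to $C_+$ forces $\J_1=\J_2$ there, while on $C_-$ it forces $\J_1=-\J_2$. Transporting $\J_1|_{C_+}$ and $\J_1|_{C_-}$ to $TM$ through the projection isomorphisms yields endomorphisms $I,J$ with $I^2=J^2=-\Id$, and compatibility of $\GG$ with $\langle\cdot,\cdot\rangle$ shows $g$ is Hermitian for both, so that $\gw_I=gI$ and $\gw_J=gJ$ are honest $2$-forms. Writing out the $\pm\i$-eigenspaces of $\J_1$ and $\J_2$ in these coordinates and solving for the block form is then a finite linear-algebra verification reproducing the matrix in \eqref{f:gualtieri_map}.

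The technical heart — and the step I expect to be the main obstacle — is the integrability equivalence. I would decompose $L_1:=\ker(\J_1-\i)=\ell_+\oplus\ell_-$, where $\ell_+$ is the lift of $T^{1,0}_I$ into $C_+^{\mathbb C}$ and $\ell_-$ the lift of $T^{1,0}_J$ into $C_-^{\mathbb C}$; since $\J_2=\J_1$ on $C_+$ and $\J_2=-\J_1$ on $C_-$, one gets $L_2=\ell_+\oplus\bar{\ell}_-$. Requiring both $L_1$ and $L_2$ to be closed under the $H$-twisted Dorfman bracket reduces to checking $[\ell_+,\ell_+]$, $[\ell_-,\ell_-]$ and the mixed terms. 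Expanding the bracket on these explicit lifts and projecting onto the complementary eigenspaces, the $(0,2)$-type obstructions collapse to the vanishing of the Nijenhuis tensors of $I$ and $J$, while the remaining terms package into the single torsion identity $-d^c_I\gw_I=H=d^c_J\gw_J$. This bracket bookkeeping is the subtle part, and I would organize it by type decomposition with respect to $I$ and to $J$ to keep the numerous terms manageable.

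Finally, uniqueness is essentially free: $\GG$, and hence $(g,b)$, is canonically determined by $(\J_1,\J_2)$, and $I,J$ are read off the eigenbundle actions, so the bihermitian data is unique. The converse runs the same computation in reverse: given $(g,b,I,J)$ satisfying \eqref{f:gk_torsion}, define $\J_1,\J_2$ by \eqref{f:gualtieri_map}, verify algebraically that they are commuting orthogonal endomorphisms squaring to $-\Id$ with $\langle-\J_1\J_2\cdot,\cdot\rangle$ positive definite, and invoke the same bracket identities — now read as sufficient conditions — to conclude that both are integrable, so that $(\J_1,\J_2)$ is a generalized K\"ahler structure.
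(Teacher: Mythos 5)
The paper does not prove this theorem: it is stated as background and quoted directly from Gualtieri's thesis \cite[Ch.~6]{GualtieriThesis}, so there is no internal proof to compare against. Your proposal is a correct reconstruction of Gualtieri's original argument — extracting $(g,b)$ from the $\pm 1$-eigenbundles $C_\pm$ of $\GG=-\J_1\J_2$, reading $I,J$ off the restrictions of $\J_1$ to $C_\pm$, and matching involutivity of the lifts $\ell_\pm$ with classical integrability of $I,J$ plus the torsion identity \eqref{f:gk_torsion}. One simplification worth noting for the step you flag as the main obstacle: the mixed brackets $[\ell_+,\ell_-]$ require no type bookkeeping at all, since the compatibility of the Dorfman bracket with $\la{\cdot,\cdot}$ together with $C_+\perp C_-$ and isotropy of $L_1, L_2$ shows these brackets automatically land in $L_1$ (resp.\ $L_2$) once $\ell_+$ and $\ell_-$ are separately involutive, so the entire content of integrability sits in the two "pure" brackets.
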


This work concerns a special class of generalized K\"ahler structures which are globally described by spinors:

\begin{defn} \label{d:htcb} We say that a generalized K\"ahler structure $(\JJ_1, \JJ_2)$ has \emph{holomorphically trivial canonical bundles} if there exist  closed pure spinors $\psi_i$ such that $\JJ_i = \JJ_{\psi_i}$.  In this setting we will always use the notation $\psi_i$ for the spinors without explicit mention.
\end{defn}

\begin{rmk}
 The hypothesis of having holomorphically trivial canonical bundles is strictly stronger than assuming that the canonical bundles of $\JJ_i$ are topologically trivial, evidenced by generalized K\"ahler structures on the standard (primary) Hopf surface.  Since the second cohomology with integer coefficients of the Hopf surface vanishes, the canonical bundles of $\JJ_i$ are topologically trivial. On the other hand the associated real Poisson tensor has degeneracy along two elliptic curves, and here there is type jumping of the generalized complex structures. In the holomorphic coordinates $(z,w)$ in a neighborhood of a point on an elliptic curve $\{z=0\}$ an underlying spinor is $z + dz\wedge dw$, which is not equivalent to a closed spinor.  A priori necessary and sufficient conditions for a generalized complex structure to have  holomorphically trivial canonical bundle are given in (\cite{heluani2010generalized} Proposition 3.7)
\end{rmk}

The first example of such structures are K\"ahler metrics on Calabi-Yau backgrounds, i.e., when $I=J$ and $(M, I)$ admits a holomorphic volume form $\Theta \in H^0(M, \Lambda^{n, 0}(T^*M_{\C})$.  It follows that the closed pure spinors 
\begin{equation}\label{f:CY-spinor}
\psi_1 = \bar{\Theta}, \qquad \psi_2=e^{\i \gw_I},\end{equation}
where $\gw_I$ is the K\"ahler form of $(g, I)$, determines $(g, I, J)$ in this case.  Also observe that the pair $(\psi_2, \psi_1)$  determines the  generalized K\"ahler structure $(g, I, J=-I)$.  A further key class of examples of these structures are the nondegenerate GK manifolds:

\begin{defn} \label{d:nondeg} A generalized K\"ahler structure $(\J_1, \J_2)$ is \emph{nondegenerate} if $\J_i$ are both symplectic type.  Equivalently $I \pm J$ are invertible, and equivalently the Poisson tensor $\gs=\tfrac{1}{2} g^{-1} [I, J]$ is nondegenerate.
\end{defn}

For a nondegenerate structure the endomorphisms $I \pm J\in\End(TM)$ are invertible, and there are canonically associated symplectic forms
\begin{align*}
    F_{\pm} = -2 g (I \pm J)^{-1}.
\end{align*}
If $\J_{1/2}$ are given by~\eqref{f:gualtieri_map}, then setting $b'=-b+g(I+J)^{-1}(I-J)$ we observe that
\[
\J_1=e^{-b'-2\Omega}
\left(\begin{matrix}0 & -F_-^{-1} \\ F_- & 0\end{matrix}\right)
e^{b'+2\Omega},\qquad 
\J_2=e^{-b'}\left(\begin{matrix}0 & -F_+^{-1} \\ F_+ & 0\end{matrix}\right)e^{b'}
\]
where $\Omega=2g(I+J)^{-1}(I-J)^{-1}$ is a closed 2-form, see~\cite[\S 3.1]{ASnondeg}. It furthermore follows that the underlying $d_{H_0}$-closed spinors are
\begin{equation}\label{f:nondege} \psi_1= e^{b + 2\Omega + \i F_-}, \qquad \psi_2=e^{b + \i F_+}.
\end{equation}
Thus nondegenerate generalized K\"ahler structures have holomorphically trivial canonical bundles.

\subsection{Spinor formulation of K\"ahler Calabi-Yau structures} 

As discussed in the introduction, we expect all GK manifolds with holomorphically trivial canonical bundles to be naturally deformable to a K\"ahler Calabi-Yau structure.  Thus it is important to our discussion to canonically identify the possible pure spinors underlying the GK description of such a structure.  The proposition below shows that they are described as a combination of the nondegenerate and classic K\"ahler Calabi-Yau cases described in the previous subsection.

\begin{prop} \label{p:KCYspinors}Let $(M, g, b, I, J)$ be a compact generalized K\"ahler structure such that $(g, I)$ and $(g, J)$ are K\"ahler, Ricci-flat.  Then there exists a (possibly trivial) finite cover, still denoted $(M, g, b, I, J)$ which is determined by global closed pure spinors
    \begin{align*}
        \psi_1 =&\ e^b\wedge \psi_1^{\gs} \wedge \psi_1^+ \wedge \psi_1^- = e^{b+2 \Omega + \i F_-} \wedge \bar{\Theta}_+\wedge e^{\i \gw_-},\\
        \psi_2 =&\ e^b\wedge \psi_2^{\sigma} \wedge \psi_2^+ \wedge \psi_2^- = e^{b+\i F_+} \wedge e^{\i \gw_+}\wedge \bar{\Theta}_-.
    \end{align*}
\end{prop}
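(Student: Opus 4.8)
The plan is to reduce to an honest, untwisted Kähler situation, split the tangent bundle into parallel pieces according to the pointwise relation between $I$ and $J$, and then observe that the defining pure spinor of the whole structure is the exterior product of the pure spinors attached to the pieces, each of which is of one of the three model types recorded in \S\ref{s:bckground}.

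First I would exploit that both $(g,I)$ and $(g,J)$ are Kähler. By the torsion identity (\ref{f:gk_torsion}) this forces $H_0+db=-d^c_I\gw_I=0$, so the torsion vanishes and the Bismut connections both coincide with the Levi-Civita connection $\N$. In particular $\N I=\N J=0$, hence the Poisson tensor $\gs=\tfrac12 g^{-1}[I,J]$ and the $g$-orthogonal endomorphism $IJ\in\End(TM)$ are $\N$-parallel with constant eigenvalues. Performing the $b$-field transform relating the $\J_i$ to the untwisted Kähler data $(g,0,I,J)$ on the background $H_0+db=0$ contributes the overall factor $e^{b}$, exactly as in the nondegenerate formula (\ref{f:nondege}); it therefore suffices to exhibit the reduced pure spinors of the untwisted structure as products $\psi_i^{\gs}\wedge\psi_i^{+}\wedge\psi_i^{-}$.

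Next I would decompose $TM$ using $IJ$. Its $(+1)$- and $(-1)$-eigenbundles are real, parallel, $I$-holomorphic, $g$-orthogonal distributions $V_-$ (where $IJ=+\Id$, i.e.\ $J=-I$) and $V_+$ (where $IJ=-\Id$, i.e.\ $J=I$), both contained in $\{\gs=0\}$, while the sum of the eigenbundles with genuinely complex eigenvalues is a parallel distribution $V_{\gs}$ on which $I\pm J$ are invertible, i.e.\ the structure is nondegenerate. Since $I,J,\gw_I,\gw_J$ all respect the orthogonal decomposition $TM=V_+\oplus V_-\oplus V_{\gs}$, the Gualtieri map (\ref{f:gualtieri_map}) shows $\J_1,\J_2$ are block diagonal on $(V_+\oplus V_+^*)\oplus(V_-\oplus V_-^*)\oplus(V_{\gs}\oplus V_{\gs}^*)$. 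Hence the untwisted structure is a direct sum of three generalized Kähler structures and its pure spinor is the exterior product of the three factor spinors: on $V_{\gs}$ the nondegenerate description (\ref{f:nondege}) gives $\psi_1^{\gs}=e^{2\Omega+\i F_-}$ and $\psi_2^{\gs}=e^{\i F_+}$; on $V_+$ the classical Calabi-Yau model (\ref{f:CY-spinor}) gives $\psi_1^{+}=\bar\Theta_+$, $\psi_2^{+}=e^{\i\gw_+}$; and on $V_-$, where $J=-I$, the roles of the two spinors are interchanged as noted after (\ref{f:CY-spinor}), giving $\psi_1^{-}=e^{\i\gw_-}$, $\psi_2^{-}=\bar\Theta_-$. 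Assembling these yields exactly the stated expressions.

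The remaining point, which I expect to be the main obstacle, is making the holomorphic volume forms $\Theta_\pm$ globally well defined, since a priori the complex-type factors only carry flat canonical line bundles whose holonomy need not be trivial. Here I would invoke the Bochner technique and the Beauville-Bogomolov decomposition theorem: as $(M,g,I)$ is compact Kähler Ricci-flat, a finite cover is a Kähler product of a complex torus with irreducible Calabi-Yau and hyperKähler factors, each with trivial canonical bundle and a parallel holomorphic volume form. Analyzing the commutant of the holonomy of each factor shows that on a strict Calabi-Yau factor the only parallel compatible complex structures are $\pm I$, so such a factor lies entirely in $V_\pm$; on an irreducible hyperKähler factor $IJ$ is the action of a unit quaternion, hence either equals $\pm\Id$ (placing the factor in $V_\pm$) or has purely complex eigenvalues (placing it in $V_{\gs}$); and the flat torus directions distribute among $V_+,V_-,V_{\gs}$ through the constant eigenspaces of $IJ$, where the relevant forms are parallel and so carry no monodromy. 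Consequently each of $V_+,V_-,V_{\gs}$ is the tangent bundle of a Kähler factor with its own globally defined parallel holomorphic volume form, so $\Theta_+$ and $\Theta_-$ exist globally. Finally, because $H=0$ and all of $\Omega,F_\pm,\gw_\pm,\Theta_\pm$ are $\N$-parallel, every factor spinor is $d$-closed and pure, so the products $\psi_1,\psi_2$ are global closed pure spinors, completing the proof.
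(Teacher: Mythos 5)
Your proposal is correct and follows essentially the same route as the paper's proof: reduce by a $b$-field transform to the untwisted K\"ahler data, split $TM$ into the parallel pieces $V_{\sigma}\oplus V_+\oplus V_-$ (which the paper writes as $\im \sigma \oplus \ker(I-J)\oplus \ker(I+J)$), take wedge products of the three model spinors, and invoke the Beauville--Bogomolov decomposition on a finite cover to obtain the global parallel forms $\Theta_{\pm}$. The only difference is one of detail: you spell out the holonomy-commutant analysis determining how $J$ restricts to each Beauville--Bogomolov factor, a point the paper asserts without elaboration.
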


\begin{proof} We first observe that it follows from a standard Bochner argument that $\sigma$, $I$, and $J$ are all parallel with respect to the Levi-Civita connection of $g$.  Thus we obtain a parallel $g$-orthogonal splitting
\begin{equation}\label{f:splitting}
TM = \im \sigma \oplus \ker (I - J) \oplus \ker (I + J) =: V_{\sigma} \oplus V_+ \oplus V_-,
\end{equation}
This splitting further determines parallel $2$-forms $\Omega = \sigma^{-1}$ and $F_{\pm}$ on $\Lambda^2(V_{\sigma})$ corresponding to a nondegenerate generalized structure $(g_{\sigma}, I_{\sigma}, J_{\sigma})$ on $V_{\sigma}$, as well as to generalized K\"ahler structures of the form $(g_+, I_+, J_+=I_+)$  and $(g_-, I_-, J_-=-I_-)$ respectively defined on $V_+$ and $V_-$.  We further let $\gw_+ =g_+ I_+$ and $\gw_-=g_-I_-$ the corresponding K\"ahler forms on $V_+$ and $V_-$.  All of these objects are defined algebraically at one point and correspond to parallel tensors on $M$.  Thus we recover $(g, I, J)$ as the product of these (local) generalized K\"ahler structures.  The final step to obtain a spinorial description of $(g, I, J)$, will be to show that, after passing to a finite cover, the canonical bundles $\Lambda^{n_+, 0} V_+^*$ and $\Lambda^{n_-,0} V_-^*$ are holomorphically trivial, i.e., that there exists parallel holomorphic $n_{\pm}$-forms $\Theta_+$ and $\Theta_-$ over $M$, trivializing the respective bundles.  Given such forms, we obtain the spinors generating $(g, b=0, I, J)$ as the product of the respective spinors generating the generalized K\"ahler structures on $V_\sigma$, $V_+$ and $V_-$, i.e.,
\begin{align*}
    \psi_1 =&\ \psi_1^{\gs} \wedge \psi_1^+ \wedge \psi_1^- = e^{2 \Omega + \i F_-} \wedge \bar{\Theta}_+\wedge e^{\i \gw_-},\\
        \psi_2 =&\ \psi_2^{\sigma} \wedge \psi_2^+ \wedge \psi_2^- = e^{\i F_+} \wedge e^{\i \gw_+}\wedge \bar{\Theta}_-.
\end{align*}
After a $b$-field transform we get the claimed spinor description of $(g,b,I,J)$.

We now determine the relevant finite cover yielding the forms $\Theta_{\pm}$.  Using the Beauville-Bogomolov decomposition theorem~\cite{Beauville,bogomolov} and the fact that $I$ and $J$ are parallel, one can see that, up to a passing to a finite covering,  $(M, g, I)$ can be written as a product
\begin{equation}\label{f:splitting-BB}
(M, g, I)= (S, g_S, I_S) \times  (X, g_X, I_X) \times (Y, g_Y, I_Y) \times ({\mathbb T}^{2k}, g_0, I_0), \end{equation}
 where $(S, g_S, I_S)$ is a simply connected holomorphic symplectic manifold with symplectic structure $\Omega_S+iI_S\Omega_S$ and a Calabi-Yau metric $g_S$, $(X, g_X, I_X)$ and $(Y, g_{Y}, I_Y)$ are simply connected Calabi-Yau manifolds (possibly containing HK factors), and  $({\mathbb T}^{2k}, g_0, I_0)$ is a flat complex torus. Furthermore, $I$ and $J$ are of the form
 \[ I= I_S \oplus I_X \oplus I_Y\oplus I_0, \qquad J = J_S \oplus I_X \oplus -I_Y \oplus J_0, \]
 with $(g_S, I_S, J_S)$ being a nondegenerate generalized K\"ahler structure with Poisson tensor $\sigma_S=\Omega_S^{-1}$ and $(g_0, I_0, J_0)$ being a flat generalized K\"ahler structure on ${\mathbb T}^{2k}$ with corresponding Poisson tensor $\sigma_0$.  Note that also there is a parallel decomposition of the tangent space
 \[T\mathbb{T}^{2k} =  V_{\sigma_0} \oplus V_{+}^0 \oplus V_{-}^0, \]
 where $V_{\sigma_0}:=\im\sigma_0, \, V_{+}^0 = \ker(I_0-J_0), \, V_{-}^0 = \ker(I_0+J_0)$.  It follows that $V_{\sigma}= TS \oplus V_{\sigma_0}$, $V_+ = TX \oplus V_{+}^0$ and $V_-= TY\oplus V_{ -}^0$.  Thus the canonical bundles of $V_+$ and $V_-$ are trivialized by parallel forms, as required.
 \end{proof}

\section{Local genericity of nondegenerate GK structures} \label{s:perturbations}

Above we discussed the quite different different data $(\J_1,\J_2)$ and $(g,b,I,J)$ describing generalized K\"ahler structures.  While the correspondence between $(\J_1,\J_2)$ and $(g,b,I,J)$ given in Theorem~\ref{thm:gualtieri_map} is rather explicit, it is often quite difficult to translate various higher order differential-geometric objects presented in terms of $(\J_1,\J_2)$ into the classical bihermitian data $(g,b,I,J)$ and vice versa.  In this section we provide a toolbox allowing for such translation, Theorem \ref{thm:nondeg_perturb}, which says that any generalized K\"ahler structure with both $\J_i$ of even type can be generically locally approximated by nondegenerate structures.  The proof relies principally on a local version of Goto's stability result \cite{GotoDef} (cf.\,also \cite{CavalcantiGoto}), a generalization of the classic Kodaira-Spencer stability theorem to the setting of generalized K\"aher geometry.  

Goto's proof is similar in spirit to the original result, relying on Hodge theory for the $H_0$-twisted differential and the generalized K\"ahler identities for the associated Laplacian \cite{GualtieriHodge} (cf.\,also \cite{CavalSKT}).  We adapt this proof to the local setting, where we must use Hodge theory for manifolds with boundary.  We prove fundamental results on the Green's operator for the $H_0$-twisted Laplacian in the appendix.  After checking that the absolute boundary conditions for the twisted deRham differential are elliptic, the construction of the Green's operator follows standard lines.  Using this Green's operator for the boundary value problem, we are able to adapt the strategy of \cite{GotoDef, CavalcantiGoto} to localize the stability result.  Using this stability we are able to generically construct local perturbations which yield the approximation by nondegenerate structures.  We note that all of our constructions yield tensors of some arbitrary but fixed $C^{k,\ga}$ regularity.  The proofs of Kodaira-Spencer/Goto rely on solving a formal power series, initially with tensors of fixed $C^{k,\ga}$ regularity, and then a further elliptic problem is solved to yield analytic regularity.  As this is not necessary for our purposes we do not pursue this improved regularity.

\subsection{Hodge theory of generalized K\"ahler manifolds}

Here we recall some further results on generalized K\"ahler geometry, in particular natural extensions of the K\"ahler identities.  We refer generally to \cite{GualtieriThesis, GualtieriHodge, CavalSKT} for further background.  We first note that a generalized complex structure $\J$ induces a decomposition of the space of complex-valued differential forms via
\begin{align*}
    C^{\infty}(M,\Lambda^*(T^*M)_{\C}) = \bigoplus_{p=-n}^n U_\J^p,
\end{align*}
where $U^p$ is the $\i k$-eigenspace of the Lie algebra action of $\J$ on the space of spinors $\Lambda^*(T^*M)_\C$.  More concretely, if $\varphi\in C^\infty(M,\Lambda^*(T^*M)_{\C})$ is a local pure spinor defining $\J$, then $U^n_\J=C^\infty(M,\C)\cdot\varphi$ and
\[
U_\J^{n-k}=\Lambda^k(L^{0,1})\cdot U_\J^n.
\]

Given a generalized K\"ahler structure $(\J_1, \J_2)$, since $\J_1$ and $\J_2$ commute, we obtain a joint decomposition
\[
C^{\infty}(M,\Lambda^*(T^*M)_\C)=\bigoplus U^{p,q}, \qquad U^{p,q} = U^p_{\J_1} \cap U^q_{\J_2}.
\]
This bigrading does not respect the usual grading on $\Lambda^*(T^*M)_\C$ but respects the decomposition into odd/even forms.  According to this decomposition, the differential $d$ has 4 components of bidegree $(\pm 1,\pm 1)$:
\[
d=d^{1,1}+d^{1,-1}+d^{-1,1}+d^{-1,-1}.
\]
We adopt the notation of \cite{GualtieriHodge, CavalSKT} and denote
\begin{align*}
d^{1,1}=\delta_+,\ \ d^{1,-1}=\delta_-,\ \ d^{-1,1}=\bar{\delta_-},\ \ d^{-1,-1}=\bar{\delta_+}.
\end{align*}

A generalized K\"ahler structure $(\J_1, \J_2)$ determines a generalized metric $\GG = - \J_1 \J_2$, which can be expressed as an operator on $TM\oplus T^*M$
\[
\GG=e^b\left(\begin{matrix}0 & g^{-1}\\g & 0\end{matrix} \right)e^{-b}
\]
for 2-form $b\in\Lambda^2(T^*M)$ and a Riemannian metric $g$.
The relevant extensions of the K\"ahler identities use the adjoint operators $\delta_\pm^*:=\star_\GG\circ\delta_\pm\circ \star_\GG^{-1}$, where the linear operator $\star_\GG\colon\Lambda^*(T^*M)_\C\to\Lambda^*(T^*M)_\C$ is determined by
\[
(\alpha,\star_\GG\beta)_M=(e^b\wedge\alpha)\wedge \star_g(e^b\wedge\beta),
\]
and $\star_g$ is the usual Riemannian Hodge star. It was proved by Gualtieri in~\cite{GualtieriHodge} (see also~\cite{CavalcantiBook}) that the generalized K\"ahler structures satisfy the following identities:

\begin{prop}[Generalized K\"ahler identities] \label{p:GKidentities} Given a smooth manifold $(M, H_0$) with closed three-form and a generalized K\"ahler structure $(\J_1, \J_2)$, one has
\[
\delta_+^*=-\bar\delta_+,\quad \delta_-^*=\bar{\delta_-}.
\]
In particular the Laplace operators $ \Delta_{H_0}:=d_{H_0}d^*_{H_0}+d^*_{H_0}d_{H_0}$, $\Delta_{\delta_+}:=\delta_+\delta_+^*+\delta_+^*\delta_+$ and $\Delta_{\delta_-}:=\delta_-\delta_-^*+\delta_-^*\delta_-$ coincide up to a constant factor:
\[
\Delta_{H_0}=4\Delta_{\delta_+}=4\Delta_{\delta_+}.
\]
\end{prop}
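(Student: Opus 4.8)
The plan is to transport the proof of the classical Kähler identities to the spinorial setting; the heart of the matter is the interaction of the generalized Hodge star $\star_\GG$ with the bigrading $U^{p,q}$. First I would record how the four operators sit inside $d_{H_0}$. Integrability of $\J_1$ splits $d_{H_0}=\partial_1+\bar\partial_1$ with $\partial_1\colon U^p_{\J_1}\to U^{p+1}_{\J_1}$ and $\bar\partial_1\colon U^p_{\J_1}\to U^{p-1}_{\J_1}$, and likewise $d_{H_0}=\partial_2+\bar\partial_2$ for $\J_2$. Comparing bidegrees in the joint decomposition identifies
\[
\partial_1=\delta_++\delta_-,\quad \bar\partial_1=\bar{\delta}_-+\bar{\delta}_+,\quad \partial_2=\delta_++\bar{\delta}_-,\quad \bar\partial_2=\delta_-+\bar{\delta}_+,
\]
so that $\delta_\pm,\bar{\delta}_\pm$ are exactly the $(\pm1,\pm1)$ bigraded components of $d_{H_0}$. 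Since $\overline{d_{H_0}\alpha}=d_{H_0}\bar\alpha$ (as $H_0$ is real) and conjugation interchanges $U^{p,q}$ with $U^{-p,-q}$, the operator $\bar{\delta}_\pm$ is the complex conjugate of $\delta_\pm$.

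Given the definition $\delta_\pm^*:=\star_\GG\delta_\pm\star_\GG^{-1}$, the content of the Proposition is the pair of operator identities $\star_\GG\delta_+\star_\GG^{-1}=-\bar{\delta}_+$ and $\star_\GG\delta_-\star_\GG^{-1}=\bar{\delta}_-$, asserting that a bigraded component of $d_{H_0}^*$ coincides, up to sign, with the \emph{conjugate} component of $d_{H_0}$ itself — precisely the generalized analogue of the classical Kähler identities, which express the adjoint $\partial^*$ through $\bar\partial$ and the Lefschetz operator $\Lambda$. I would establish these first on the flat linear model. There the generalized metric $\GG$ together with $\J_1,\J_2$ reduces the structure group of $E\otimes\C$ to $U(n)\times U(n)\subset\mathrm{Spin}(2n,2n)$, under which the spinor module decomposes into the weight spaces $U^{p,q}$; in this frame $\star_\GG$ and all four operators $\delta_\pm,\bar{\delta}_\pm$ become explicit constant-coefficient Clifford operators, and the two identities reduce to a direct computation in the Clifford/exterior algebra (a generalized Weil formula giving the phase of $\star_\GG$ on each $U^{p,q}$).

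To promote the flat identities to a general generalized Kähler manifold, I would use that the condition \eqref{f:gk_torsion} ensures the existence of a canonical generalized Bismut connection $D$ on $E$ with skew torsion built from $H$, for which $\GG,\J_1,\J_2$ are all parallel (its restrictions to $TM$ are the classical Bismut connections $\N^I=\N+\tfrac12 g^{-1}H$, $\N^J=\N-\tfrac12 g^{-1}H$ making $I$, $J$ respectively parallel). Consequently $\star_\GG$ is $D$-parallel, and in a local $D$-parallel frame $d_{H_0}$ agrees to first order with its flat-model expression, its torsion contribution being carried by the $H$-twist; the constant-coefficient identities therefore persist verbatim. With the two identities in hand, the Laplacian statement is bookkeeping: one has $d_{H_0}^*=-\delta_++\delta_-+\bar{\delta}_--\bar{\delta}_+$, and extracting bidegree components of $d_{H_0}^2=0$ yields $\delta_\pm^2=\bar{\delta}_\pm^2=0$, $\{\delta_+,\delta_-\}=0$, and the $(0,0)$-relation $\{\delta_+,\bar{\delta}_+\}+\{\delta_-,\bar{\delta}_-\}=0$, i.e. $\Delta_{\delta_+}=\Delta_{\delta_-}$. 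Substituting into $\Delta_{H_0}=\{d_{H_0},d_{H_0}^*\}$, all mixed-bidegree terms cancel and one is left with $\Delta_{H_0}=2(\Delta_{\delta_+}+\Delta_{\delta_-})=4\Delta_{\delta_+}=4\Delta_{\delta_-}$.

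The main obstacle is exactly this promotion step. \emph{A priori}, conjugating the first-order operator $\delta_+$ by the zeroth-order automorphism $\star_\GG$ produces correction terms involving derivatives of $\star_\GG$, and the clean identity holds only because these are absorbed by the torsion encoded in $d_{H_0}$; making this precise is the generalized counterpart of invoking $\N^g\gw=0$ to reduce the classical Kähler identities to the flat model, and it is where the generalized Kähler integrability is genuinely used. I emphasize that one cannot circumvent this by approximating with nondegenerate structures via Theorem~\ref{thm:nondeg_perturb}, since the proof of that theorem itself relies on the present identities.
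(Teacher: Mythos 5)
The paper itself offers no proof of Proposition \ref{p:GKidentities}: it is quoted verbatim from \cite{GualtieriHodge} (see also \cite{CavalSKT}), so your attempt must be measured against Gualtieri's argument, and against that standard your promotion step contains a genuine gap. The flat linear model you propose to compute in is necessarily torsion-free: a constant-coefficient generalized K\"ahler structure has $d\gw_I=d\gw_J=0$, hence $H=-d^c_I\gw_I=0$ and the twist is trivial. Verifying the identities there therefore gives no information about the zeroth-order, $H$-dependent terms by which $\delta_+^*+\bar\delta_+$ could fail to vanish in general; and unlike the classical case these terms cannot be normalized away at a point, because $H$ is a tensor (a first-order invariant of the structure), so no choice of $D$-parallel frame or coordinates makes a curved generalized K\"ahler structure osculate the flat model to first order wherever $H\neq 0$. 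The classical reduction to $\C^n$ works precisely because the Levi-Civita connection is torsion-free, so normal coordinates kill the entire first-order discrepancy; here the assertions that the torsion is ``carried by the $H$-twist'' and that the ``constant-coefficient identities persist verbatim'' are exactly the theorem to be proved, not a frame-choice observation. The cited proof never confronts this via connections at all: it is global and algebraic, combining (i) orthogonality of the $U^{p,q}$-decomposition for the Hermitian pairing, (ii) the Weil-type phase formula for $\star_\GG$ on each $U^{p,q}$, and (iii) Stokes' theorem for the Mukai pairing, in which the twist contributions cancel by the mechanism the paper itself records in Definition \ref{d:topCY} (if $\psi$ is $d_{H_0}$-closed then $s(\psi)$ is $d_{-H_0}$-closed, since $s(H_0)=-H_0$); the two identities then drop out by comparing bidegree components. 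That integration-by-parts argument is the correct completion of your outline, and it is not interchangeable with a parallel-frame argument.

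There is also an internal inconsistency you would hit immediately upon executing the flat-model computation: you assert both the Weil formula ($\star_\GG$ acts by a phase on each $U^{p,q}$) and the literal identity $\star_\GG\delta_+\star_\GG^{-1}=-\bar\delta_+$. These are incompatible. If $\star_\GG$ is a scalar on each $U^{p,q}$, then conjugation by it preserves bidegree; indeed the phase formula gives $\star_\GG\delta_+\star_\GG^{-1}=-\delta_+$ and $\star_\GG\delta_-\star_\GG^{-1}=\delta_-$, which have bidegrees $(1,1)$ and $(1,-1)$, whereas $-\bar\delta_+$ and $\bar\delta_-$ have bidegrees $(-1,-1)$ and $(-1,1)$. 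The paper's displayed ``definition'' $\delta_\pm^*:=\star_\GG\circ\delta_\pm\circ\star_\GG^{-1}$ is sloppy shorthand and taking it at face value is not your fault, but a correct proof must work with the honest Hermitian $L^2$-adjoint, into which complex conjugation enters exactly as in the classical identity $\del^*=-\star\delb\star$ (note the conjugate); the statement compatible with the Weil formula is $\delta_\pm^*=\star_\GG\circ\bar\delta_\pm\circ\star_\GG^{-1}$. Your remaining ingredients are sound: the identification of $\delta_\pm,\bar\delta_\pm$ as the $(\pm1,\pm1)$-components of $d_{H_0}$, the anticommutation relations extracted from $d_{H_0}^2=0$, and the bookkeeping yielding $\Delta_{H_0}=2(\Delta_{\delta_+}+\Delta_{\delta_-})=4\Delta_{\delta_+}=4\Delta_{\delta_-}$ are all correct once the two identities are granted, and your closing observation that invoking Theorem \ref{thm:nondeg_perturb} here would be circular is also correct.
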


\subsection{Local version of Goto's stability theorem}

Now let us concentrate on the case when $M=B^{2n}$ is equipped with a GK structure $(\J_1, \J_2)$ determined by even closed pure spinors $\varphi_1$ and $\varphi_2$ (in this case $n$ is necessarily even, but it is not relevant for the proof).  Let $\varphi_1^t$ be an analytic deformation of $\varphi_1$ through closed spinors.  We want to reprove Goto's stability result in this setting and find a corresponding deformation $\varphi_2^t$ of $\varphi_2$ such that $\J_{\varphi_1^t}$, $\J_{\varphi_2^t}$ is a GK structure.

\begin{thm}[Local Goto's stability]\label{thm:local_stab}
Let $(\J_1^0,\J_2^0)$ be a generalized K\"ahler structure on a ball $M=B^{2n}$ such that $\J_1^0$ and $\J_2^0$ are both even type, defined by closed pure spinors $\varphi_1^0$ and $\varphi_2^0$. Let $\varphi_1^t$ be any analytic deformation of $\varphi_1^0$ through closed pure spinors.  Then given $k \geq 2, 0 < \ga < 1$ there exists an analytic in $t$ deformation $\varphi_2^t$ of $\varphi_2$ such that $(\J_{\varphi_1^t}$, $\J_{\varphi_2^t})$ is a family of $C^{k,\ga}$ generalized K\"ahler structures for $|t|\leq \epsilon$ sufficiently small.
\end{thm}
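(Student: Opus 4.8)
The plan is to reprove Goto's stability in the local setting by setting up the deformation of $\varphi_2$ as a formal power series in $t$ whose coefficients are determined by solving an elliptic boundary value problem at each order. The underlying principle, following \cite{GotoDef, CavalcantiGoto}, is that the generalized K\"ahler condition for the pair $(\J_{\varphi_1^t}, \J_{\varphi_2^t})$ — namely that the two structures commute and pair to a positive-definite generalized metric — can be encoded infinitesimally in terms of the bigrading $U^{p,q}$ and the twisted differential operators $\delta_\pm, \bar\delta_\pm$ from Proposition \ref{p:GKidentities}. First I would recall Hitchin's observation (as in \cite{HitchinGCY} \S 4.2) that deformations of a closed pure spinor defining a generalized complex structure are governed by the operator $d_{H_0}$ acting on the appropriate graded piece, so that the infinitesimal deformations of $\J_{\varphi_1}$ are represented by $d_{H_0}$-closed forms, and the obstruction to integrating them lives in a twisted cohomology group.

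The key technical input is Hodge theory for the $H_0$-twisted Laplacian $\Delta_{H_0}$ on the ball $B^{2n}$, i.e.\ on a \emph{manifold with boundary}, which is exactly the content deferred to the appendix. I would invoke the Green's operator $G$ for $\Delta_{H_0}$ under absolute boundary conditions, established there, together with the ellipticity of those boundary conditions for the twisted deRham complex. The generalized K\"ahler identities of Proposition \ref{p:GKidentities}, which give $\Delta_{H_0} = 4\Delta_{\delta_+} = 4\Delta_{\delta_-}$, are what allow the standard Hodge-theoretic bookkeeping to go through with the twisted operators: they let me split forms into harmonic, $\delta_+$-exact, and $\delta_+^*$-exact pieces compatibly with the $U^{p,q}$-bigrading. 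With $G$ in hand, the formal power series scheme proceeds order by order: given the deformation $\varphi_1^t$ through closed pure spinors and the coefficients $\varphi_2^{(0)}, \dots, \varphi_2^{(m-1)}$ already determined, the obstruction to solving for $\varphi_2^{(m)}$ is a term built from lower-order data that one must show is exact; applying $G$ and the adjoint $\delta_+^*$ then produces the next coefficient, with the commuting/positivity conditions preserved at each stage. The $C^{k,\alpha}$ regularity statement comes from the standard elliptic estimates for the boundary value problem, which is why the theorem is phrased at finite regularity rather than pursuing the analytic bootstrap of the classical Kodaira-Spencer/Goto argument.

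The final step is convergence of the formal series for $|t| \leq \epsilon$. Here I would employ the usual majorant-series argument of Kodaira-Spencer type: the elliptic estimates on $G$ (with a uniform constant coming from the fixed geometry of the ball and the absolute boundary conditions) give bounds on each coefficient $\varphi_2^{(m)}$ in the $C^{k,\alpha}$ norm that are dominated by the coefficients of an explicit convergent power series, yielding a positive radius of convergence. One then checks that the limiting $\varphi_2^t$ is genuinely a closed pure spinor of even type nearby $\varphi_2^0$ (purity and the type being open conditions), so that $(\J_{\varphi_1^t}, \J_{\varphi_2^t})$ is an honest generalized K\"ahler structure.

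The main obstacle I expect is the boundary analysis: the whole scheme rests on having a well-behaved Green's operator for $\Delta_{H_0}$ on a domain with boundary, and the twisted differential $d_{H_0}$ together with the absolute boundary conditions must be shown to form an elliptic boundary value problem so that the Hodge decomposition and the elliptic estimates hold with a \emph{uniform} constant, independent of the order $m$. This is precisely the nonstandard ingredient — on a closed manifold it is automatic from \cite{GualtieriHodge}, but localizing it requires the appendix results on boundary-value Hodge theory. A secondary subtlety is ensuring that solving the linear problem at each order genuinely preserves the nonlinear generalized K\"ahler constraints (commutation of $\J_1^t, \J_2^t$ and positivity of $\GG_t$); this is handled, as in Goto's argument, by arranging the power series within the correct $U^{p,q}$-components so that the algebraic constraints are automatic once the differential equations are satisfied, with positivity holding for small $|t|$ by openness.
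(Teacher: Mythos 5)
Your proposal follows the same overall architecture as the paper's proof: localize Goto's power-series argument by replacing closed-manifold Hodge theory with the Green's operator $G^A$ for the twisted Laplacian under absolute boundary conditions on the ball (the appendix result), and use the generalized K\"ahler identities $\Delta_{H_0}=4\Delta_{\delta_+}=4\Delta_{\delta_-}$ to make the Hodge-theoretic bookkeeping compatible with the bigrading $U^{p,q}$. You also correctly isolate the boundary-value elliptic theory as the nonstandard ingredient, and the need to keep the series inside the correct graded pieces.

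However, there is a gap at precisely the step you pass over in one sentence. The inductive scheme requires, at each order, a $d_{H_0}$-primitive $u$ of the exact obstruction form $\rho\in U^{-1,n-1}\oplus U^{1,n-1}\oplus U^{-1,n-3}\oplus U^{1,n-3}$ lying in the \emph{pure} bidegree $U^{0,n-2}$; your recipe of ``applying $G$ and the adjoint $\delta_+^*$'' does not deliver this. The form $d_{H_0}^*G^A\rho$ (or $\delta_+^*G^A\rho$) has no reason to be of pure type; indeed $\delta_+^*=-\bar{\delta_+}$ maps, for instance, $U^{-1,n-1}$ into $U^{-2,n-2}$ rather than $U^{0,n-2}$. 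The paper's proof is devoted entirely to repairing this. One first shows that on the ball there are no odd $d_{H_0}$-harmonic forms satisfying the absolute boundary conditions (because $H^{\mathrm{odd}}_{d_{H_0}}(B^{2n})=0$ after a $b$-field transform), which gives both that $P_h^A$ vanishes on odd forms and, crucially, that $G^A$ preserves the bigrading on odd forms: any off-type component of $G^Au^{p,q}$ would be harmonic, hence zero. Then, decomposing the identity $d_{H_0}G^A\rho=0$ into its bidegree components, one takes the specific combination
\begin{align*}
u=4\left(\delta_- G^A\rho^{-1,n-1}+\bar{\delta_-}\,G^A\rho^{1,n-3}\right)\in U^{0,n-2},
\end{align*}
and verifies $\rho=d_{H_0}u$ component by component using the generalized K\"ahler identities. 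Without the cohomological vanishing (which is what makes localization to a ball work at all, and which your sketch never invokes) and without this explicit pure-type primitive, the induction cannot close, so your proposal is incomplete at its central step.
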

\begin{proof} The proof follows a well-known inductive procedure for constructing the solution via power series.  In particular the proof follows along the lines of \cite{GotoDef,CavalcantiGoto} once a certain key local elliptic problem is solved.  We spell out the solution of this problem, omitting the remaining details.

Given an exact form
\[
\rho\in U^{-1,n-1}\oplus U^{1,n-1}\oplus U^{-1,n-3}\oplus U^{1,n-3}
\]
we wish to find its primitive $u$ of pure bidegree $(0,n-2)$:
\[
\rho=d_{H_0}u,\quad u\in U^{0,n-2}.
\]
The form $\rho$ has the same parity as $d_{H_0}\psi_t$, which is odd in our case.

To solve this problem, we are going to use the properties of the Green's operator $G^A$ constructed in the Appendix (cf. Theorem~\ref{thm:app_green}).  First note that in our setting of a ball $B^{2n}$, the $d_{H_0}$ cohomology $H^*_{d_{H_0}}(B^{2n})$, is isomorphic to $H^*(B^{2n})$ by an appropriate $b$-field transform with $H_0=db$, since the action of such $b$-field establishes an isomorphism $(\Lambda^*(T^*M)_\C,d)\simeq (\Lambda^*(T^*M)_\C,d_{H_0})$. In particular,
\[
\dim H^{\mathrm{odd}}_{d_{H_0}}(B^{2n})=0.
\]
We claim that there are no odd forms $u\in C^\infty(M,\Lambda^{\mathrm{odd}}(T^*M)_\C)$ satisfying $d_{H_0}u=d_{H_0}^*u=0$ and the absolute boundary conditions $(\star_\GG u)\big|_{\del B^{2n}}=(\star_\GG d_{H_0}u)\big|_{\del B^{2n}}=0$. Indeed, if $u$ is such a form, it must be $d_{H_0}$-exact: $u=d_{H_0}v$, and integrating $\int_M (u,\star_\GG u)$ by parts we conclude that $u\equiv 0$.

Hence for any 
$u\in C^{\infty}(\Lambda^{\mathrm{odd}}(T^*M)_{\C})$, we have $P_h^A(u)=0$, so that by Lemma~\ref{lm:app_hamonic_twisted}
\[
u = \Delta  G^Au.
\]
We claim that for an odd form $u^{p,q}\in U^{p,q}$ of pure bidegree, the form $G^Au^{p,q}$ must be of type $(p,q)$. Indeed $u^{p,q} = \Delta  G^Au^{p,q}$ and $\Delta =4\Delta_{\delta_+}$ preserves the bigrading of $U^{p,q}$. Thus any component of $G^Au^{p,q}$ of bidegree other than $(p,q)$ must be harmonic, while the odd harmonic forms on $B^{2n}$ satisfying the boundary conditions vanish.

Now, given an exact form $\rho$ as above, consider the form $G^A\rho$. From Lemma~\ref{lm:app_hamonic_twisted} in the Appendix we know that
\[
\rho=d_{H_0}(d_{H_0}^*G^A\rho),\qquad d_{H_0}G^A\rho=0.
\]
At this point we cannot yet claim that $d^*_{H_0}G^A\rho$ is the desired primitive of $\rho$ since it might not be of pure type. 
The form $G^A\rho$ has the same bigrading decomposition as $\rho$ hence from $d_{H_0}G^A\rho=0$ we have
\begin{equation*}
	\begin{split}
		0 =&\ \delta_+(G^A\rho^{1,n-1})\\
		0 =&\ \delta_-(G^A\rho^{1,n-3})\\
		0 =&\ \bar{\delta_+}(G^A\rho^{-1,n-3})\\
		0 =&\ \bar{\delta_-}(G^A\rho^{1,n-1})\\
		0 =&\ \delta_-(G^A\rho^{-1,n-1})+\bar\delta_-(G^A\rho^{1,n-3}) + \delta_+(G^A\rho^{-1,n-3})+\bar\delta_+(G^A\rho^{1,n-1}).
	\end{split}
\end{equation*}
Now we define
\[
u=4(\delta_-G^A\rho^{-1,n-1}+\bar\delta_-G^A\rho^{1,n-3}).
\]
Clearly $u\in U^{0,n-2}$ is of the desired pure type, and it only remains to check that $\rho=du$. We check it by each bigrading component using Proposition \ref{p:GKidentities}, e.g.,
\[
\begin{split}
(d_{H_0}u)^{1,n-3}&=\delta_-u=4(\delta_-\bar\delta_-G^A\rho^{1,n-3})=4(\delta_-\bar\delta_-+\bar\delta_-\delta_-)G^A\rho^{1,n-3}\\
&=\Delta_{d_{H_0}}G^A\rho^{1,n-3}=\rho^{1,n-3}.
\end{split}
\]
The remaining components of $d_{H_0}u$ are computed analogously.
\end{proof}

\subsection{Nondegenerate perturbations of GK structures}

Here we finish the proof of Theorem \ref{thm:nondeg_perturb}, which claims that generalized K\"ahler structures such that $(\J_1, \J_2)$ are both even type can be approximated by nondegenerate structures.  The first step is to show that in a neighbourhood of a generic point $x\in M$, if $\J_1$ is even type, then one can approximate by GK structures such that $\J_1$ is approximated by generalized complex structures of symplectic type.

\begin{prop}[Symplectic perturbation]\label{prop:symp_perturb} Let $(\J_1,\J_2)$ be a generalized K\"ahler structure on $M$ such that $\J_1$ has \textit{even type}. Let $U\subset M$ be the open dense set of points where both $\J_1$ and $\J_2$ have locally constant type. Then for any point $x\in U\subset M$ and $k \geq 2, 0 < \ga < 1$, there exists a closed ball $B(x)$ and an analytic in $t$ family of $C^{k,\alpha}$ generalized K\"ahler structures $(\J_1^t, \J_2^t)$, on $B(x)$ such that 
    \[
    \J_1^0=\J_1,\quad \J_2^0=\J_2,
    \]
    and $\J_1^t$ is of symplectic type for $t\neq 0$. Furthermore, both $\J_1^t$ and $\J_2^t$ are determined by analytic families of $C^{k,\alpha}$ closed spinors.
\end{prop}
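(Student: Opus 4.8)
The plan is to prove Proposition~\ref{prop:symp_perturb} by combining Hitchin's explicit interpolation between complex and symplectic type (as in \cite{HitchinGCY} \S4.2) with the local Goto stability result, Theorem~\ref{thm:local_stab}. Fix a generic point $x \in U$ where both $\J_1$ and $\J_2$ have locally constant type. Since $\J_1$ is even type, by the normal form theorem (Theorem~\ref{thm:darboux}) we may, after a $b$-field transform and a Darboux diffeomorphism, assume on a small closed ball $B(x)$ that $\J_1$ is the product $\J_{\mathrm{cpx}} \oplus \J_{\mathrm{symp}}$ on $\C^k \times \R^{2n-2k}$, defined by a closed pure spinor of the form $\varphi_1 = \bar\Theta \wedge e^{\i\omega_0}$, where $\bar\Theta = d\bar z^1 \wedge \cdots \wedge d\bar z^k$ and $\omega_0$ is the standard symplectic form on the $\R^{2n-2k}$ factor.

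The first key step is to write down an explicit analytic deformation $\varphi_1^t$ of $\varphi_1$ through \emph{closed} pure spinors, which for $t \neq 0$ defines a generalized complex structure of symplectic type. Following Hitchin, on each holomorphic coordinate factor one replaces the complex-type spinor $d\bar z^j = d\bar x^j - \i\, d\bar y^j$ (abusing notation with real coordinates) by an interpolating family: concretely one deforms $\bar\Theta$ so that the type-$k$ spinor acquires a degree-zero component, e.g.\ via $\varphi_1^t = e^{t\beta} \wedge \varphi_1$ for a suitable closed $2$-form $\beta$ whose restriction mixes the complex and symplectic directions, or more precisely by the standard family that rotates $\bar\Theta \rightsquigarrow e^{\i\omega}$ in Hitchin's construction. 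The point is that this family is closed by construction ($d_{H_0}\varphi_1^t = 0$), it is analytic in $t$, and the type of $\J_{\varphi_1^t}$ drops to $0$ for all $t \neq 0$, so $\J_{\varphi_1^t}$ is symplectic type. I would verify the type drop by computing the lowest-degree component of $\varphi_1^t$ and checking it is a nonzero function for $t \neq 0$, together with the purity and nondegeneracy conditions (which are open and hold for small $t$).

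Having produced the closed deformation $\varphi_1^t$ of the first spinor, I invoke Theorem~\ref{thm:local_stab} directly: it guarantees an analytic-in-$t$ deformation $\varphi_2^t$ of $\varphi_2^0 = \varphi_2$ through closed pure spinors such that $(\J_{\varphi_1^t}, \J_{\varphi_2^t})$ is a family of $C^{k,\alpha}$ generalized K\"ahler structures for $|t| \leq \epsilon$. This immediately gives the commuting and compatibility conditions for the pair, with $\J_1^0 = \J_1$, $\J_2^0 = \J_2$, and both families determined by analytic families of $C^{k,\alpha}$ closed spinors, as claimed. The only remaining point is to confirm that the generalized K\"ahler condition is preserved, i.e.\ that positivity $\langle -\J_{\varphi_1^t}\J_{\varphi_2^t}\,\cdot\,,\cdot\rangle > 0$ persists; this is an open condition holding at $t=0$, hence for $\epsilon$ small, possibly after shrinking $B(x)$.

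The main obstacle is ensuring that Hitchin's interpolation is compatible with the hypotheses of Theorem~\ref{thm:local_stab} — namely that the deformed spinors $\varphi_1^t$ remain genuinely \emph{pure} and \emph{closed} for all small $t$, and that the type does drop at $t \neq 0$ uniformly on $B(x)$ rather than only at the center $x$. This is precisely why one restricts to the open dense set $U$ of locally-constant-type points and passes to a small ball: on such a ball the normal form is available and Hitchin's model interpolation can be transported by the Darboux chart without the type-jumping pathologies that occur, for instance, along the degeneracy loci of the Poisson tensor $\sigma$. I expect the purity and closedness to be straightforward from the explicit exponential form of the deformation, so the real care is in the bookkeeping that matches the output of the model construction to the bidegree hypotheses ($\J_1^t$, $\J_2^t$ both even type) required to apply the local stability theorem, and in tracking the fixed finite regularity $C^{k,\alpha}$ throughout.
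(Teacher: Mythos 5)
Your overall architecture is exactly the paper's: reduce to the Darboux normal form (Theorem~\ref{thm:darboux}) on a small ball, write down an explicit Hitchin-type analytic deformation of the first spinor through closed pure spinors which becomes symplectic type for $t\neq 0$, and then invoke Theorem~\ref{thm:local_stab} to follow it with a deformation of the second spinor. The gap is in the one step you were required to make explicit: the deformation of $\varphi_1$. Your concrete formula $\varphi_1^t = e^{t\beta}\wedge\varphi_1$ with $\beta$ a closed $2$-form cannot work for \emph{any} choice of $\beta$: wedging with $e^{t\beta}=1+t\beta+\cdots$ only adds components of degree at least that of each existing component, so the lowest-degree part of the spinor --- hence the type --- is unchanged. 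A $(B+\i\omega)$-field transform never lowers type, so it can never convert the complex-type factor $\bar\Theta$ into something of symplectic type. What the paper does instead is \emph{contract} rather than wedge: it chooses a nondegenerate holomorphic Poisson \emph{bivector} $\beta\in\Lambda^2 T^{1,0}\C^k$ on the complex factor and sets
\begin{align*}
\varphi_1^t = e^{-t\i\bar\beta}\lrcorner\,\bar\Theta,
\end{align*}
wedged with the unchanged symplectic-factor spinor; equivalently it deforms $\J_1$ by inserting $2t\Re(\beta)$ in the upper-right block of its matrix. The contraction produces a nonvanishing degree-zero component for $t\neq 0$ (in the surface model, $d\bar z^1\wedge d\bar z^2 - t\i$), so the type drops to zero; closedness holds because all coefficients are constant in the Darboux chart, and purity is automatic since this is by construction the spinor of the deformed structure $\J_1^t$.

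This is also exactly where the even-type hypothesis on $\J_1$ enters, which your proposal never actually uses: a nondegenerate bivector in $\Lambda^2 T^{1,0}\C^k$ exists only when $k$ is even. You instead attribute the parity hypotheses to the application of Theorem~\ref{thm:local_stab}; that theorem does require parity control in its Hodge-theoretic step, but the evenness of $\mathrm{type}(\J_1)$ is what makes the symplectic perturbation of the first factor possible at all. The remaining parts of your outline --- restricting to the locally-constant-type set $U$ so the normal form applies, invoking the local stability theorem on a closed ball, and noting that positivity is an open condition (in fact redundant, since Theorem~\ref{thm:local_stab} already outputs generalized K\"ahler pairs) --- agree with the paper's proof.
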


\begin{proof}
By the local Darboux theorem for generalized complex structures, Theorem \ref{thm:darboux}, there exists a coordinate neighbourhood $U(x)$ isomorphic to an open subset of $\C^k\times \R^{2(n-k)}$ such that after a $b$-field transform $\J_1$ is given by the standard complex structure on $\C^k$ and by the standard symplectic form on $\R^{2(n-k)}$. We have that $\J_1\Big|_{\C^k}\in \End(\C^k\oplus (\C^k)^*)$ is
\[
\J_1\Big|_{\C^k}=\left(
    \begin{matrix}
    J & 0 \\ 0 & -J^*
    \end{matrix}
\right),
\]
where $J$ is the usual standard complex structure on $\C^k$. Next, since $\J_1$ is of even type, we have that $k$ is even. Let $\beta\in \Lambda^{2,0}(\C^k)$ be a nondegenerate holomorphic Poisson tensor on $\C^k$. Then we can analytically deform $\J_1^0=\J_1$ by varying its restriction on $\C^k$ as follows:
\[
\J_1^t\Big|_{\C^k}=\left(
    \begin{matrix}
    J & 2t\Re(\beta) \\ 0 & -J^*
    \end{matrix}
\right).
\]
If we denote by $\varphi_1=\bar\Theta=d\bar z_1\wedge\dots\wedge d\bar z_k$ a local closed pure spinor determining $\J_1$, then the pure spinor for $\J_1^t$ is given by
\[
\varphi_1^t = e^{-t\i\bar\beta}\lrcorner\,\bar\Theta.
\]
Given that $\beta$ is nondegenerate and of type $(2,0)$ it implies that $\J_1^t$ is of symplectic type as long as $t\neq 0$.

It remains to follow this prescribed variation of $\J_1$ by an appropriate variation of $\J_2$. We can restrict ourselves to a possibly smaller neighbourhood of $x\in M$, where $\J_2$ also admits Darboux coordinates, and is given by a closed pure spinor $\psi_0$:
\[
\J_2=\J_{\psi_0}
\]
Fix a closed ball $B(x)$ in this open neighbourhood. Then by Theorem~\ref{thm:local_stab} we can find an analytic family of closed pure spinors $\psi_t$ on $B(x)$ such that the underlying pair of generalized complex structures
\[
\J_1^t, \J_2^t:=\J_{\psi_t}
\]
is a generalized K\"ahler structure for $t$ sufficiently small.
\end{proof}

Now we use the result of Proposition~\ref{prop:symp_perturb} twice to locally approximate any generalized K\"ahler structure with both $\J_i$ even by sequences of nondegenerate generalized K\"ahler structures.

\begin{thm} \label{thm:nondeg_perturb}
    Let $(\J_1,\J_2)$ be a generalized K\"ahler structure on $M$.  Assume that generalized complex structures $\J_1$ and $\J_2$ of \textit{even type}. Let $U\subset M$ be the open dense set of points where both $\J_1$ and $\J_2$ have locally constant type. Then for any point $x\in U\subset M$ and $k \geq 2, 0 < \ga < 1$, there exists a closed ball $B(x)$, spinors $\psi_1$, $\psi_2$ on $B(x)$ defining $\J_1$ and $\J_2$ and a sequence of \emph{nondegenerate} generalized K\"ahler structures $(\J_1^j, \J_2^j)$ of regularity $C^{k,\ga}$ defined on $B(x)$ such that in $C^{k,\ga}$ topology
    \[
    \lim_{j\to \infty} \J_1^j=\J_1,\quad \lim_{j\to \infty} \J_2^j=\J_2.
    \]
    Furthermore, for each of $\J_1^j$, $\J_2^j$ there exist closed pure spinors $\psi_1^j$, $\psi_2^j$ of regularity $C^{k,\alpha}$ such that in $C^{k,\ga}$
    \[
    \lim_{j\to \infty} \psi_1^j=\psi_1,\quad \lim_{j\to \infty} \psi_2^j=\psi_2.
    \]
\end{thm}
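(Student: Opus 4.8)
The plan is to apply the Symplectic Perturbation Proposition (Proposition~\ref{prop:symp_perturb}) twice in succession, in order to deform \emph{both} generalized complex structures to symplectic type, thereby arriving at a nondegenerate structure. The key conceptual point is that ``nondegenerate'' means precisely that $\J_1$ \emph{and} $\J_2$ are both of symplectic type (Definition~\ref{d:nondeg}), while Proposition~\ref{prop:symp_perturb} only makes a single one of them symplectic while keeping the other fixed. So the first step is to fix a point $x \in U$, and invoke Proposition~\ref{prop:symp_perturb} to obtain on some closed ball $B_1(x)$ an analytic family $(\J_1^s, \J_2^s)$ of $C^{k,\ga}$ generalized K\"ahler structures with $(\J_1^0,\J_2^0) = (\J_1,\J_2)$ and with $\J_1^s$ of symplectic type for $s \neq 0$. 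Both members of this family are determined by closed pure spinors by the final sentence of that proposition.

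The second step is to pick a small parameter value $s_0 \neq 0$ and treat the structure $(\J_1^{s_0}, \J_2^{s_0})$ as new initial data, now with $\J_1^{s_0}$ of symplectic (hence even) type and $\J_2^{s_0}$ still of even type. Applying Proposition~\ref{prop:symp_perturb} again --- but this time to $\J_2^{s_0}$, i.e.\ with the roles of the two structures interchanged --- produces on a possibly smaller closed ball $B_2(x) \subset B_1(x)$ an analytic family $(\widetilde\J_1^{\,r}, \widetilde\J_2^{\,r})$ with $\widetilde\J_2^{\,r}$ of symplectic type for $r \neq 0$, again through closed pure spinors. The essential observation making the whole scheme work is that symplectic type is an \emph{open} condition: since $\J_1^{s_0}$ is symplectic, it remains symplectic under the small $C^{k,\ga}$ perturbation $\widetilde\J_1^{\,r}$ for $r$ sufficiently small. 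Hence for $r \neq 0$ small we have \emph{both} $\widetilde\J_1^{\,r}$ and $\widetilde\J_2^{\,r}$ of symplectic type, so $(\widetilde\J_1^{\,r}, \widetilde\J_2^{\,r})$ is nondegenerate.

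The final step is to assemble the approximating sequence by a diagonal argument. Choosing a sequence $s_j \to 0$ and, for each $j$, a corresponding $r_j \to 0$ small enough that $\widetilde\J_1^{\,r_j}$ stays symplectic, we set $\J_i^j := \widetilde\J_i^{\,r_j}$ (depending on $s_j$). By analyticity of both families in their parameters and continuous dependence of the output spinors of Proposition~\ref{prop:symp_perturb} on the input data, one gets $\J_i^j \to \J_i$ and $\psi_i^j \to \psi_i$ in $C^{k,\ga}$ as $j \to \infty$, where $\psi_i$ are the closed pure spinors defining $\J_i$ on $B(x) := B_2(x)$ produced along the way. This yields the desired nondegenerate approximating sequence together with convergence of the underlying closed pure spinors.

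I expect the main obstacle to be bookkeeping rather than conceptual: one must verify that the second application of Proposition~\ref{prop:symp_perturb} is legitimate, namely that the perturbed point $x$ still lies in the open dense set $U$ of locally constant type for the \emph{new} structure $(\J_1^{s_0}, \J_2^{s_0})$, and that the domains nest consistently ($B_2 \subset B_1$) as the parameters shrink. A subtle point requiring care is the interchange of the roles of $\J_1$ and $\J_2$: Proposition~\ref{prop:symp_perturb} is stated for deforming $\J_1$, so its application to $\J_2^{s_0}$ presupposes the symmetry of the hypotheses under swapping the two structures (both even type, generalized K\"ahler condition symmetric in $\J_1,\J_2$), which holds but should be noted. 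Finally, one must ensure the two parameter families can be jointly controlled so that the $C^{k,\ga}$ smallness needed to keep $\widetilde\J_1^{\,r}$ symplectic is compatible with the $s_j \to 0$ limit --- this is where the openness of the symplectic-type condition is doing the real work.
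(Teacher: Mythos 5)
Your proposal is correct and follows essentially the same route as the paper's own proof: a double application of Proposition~\ref{prop:symp_perturb} (first making $\J_1$ symplectic, then perturbing $\J_2$ along a sequence of parameter values tending to $0$), with openness of the symplectic-type condition keeping the first structure symplectic, and a diagonal choice of parameters yielding the nondegenerate approximating sequence together with the convergent closed pure spinors.
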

\begin{proof}
By Proposition~\ref{prop:symp_perturb} there exists a closed ball $B(x)$ and an analytic family of generalized K\"ahler structures $(\J_1^t,\J_2^t)$ in $B(x)$, such that $\J_1^t$ is symplectic for $t\neq 0$, $\J_1^0=\J_1$ and both families are given by an analytic family of closed even spinors. Now, pick a sequence $t_i\to 0$, $t_i>0$, and apply Proposition~\ref{prop:symp_perturb} again for each $t_i$ perturbing $\J_2^{t_i}$ into a symplectic generalized complex structure. We obtain an analytic perturbation
\[
\J_1^{t_i,s},\J_2^{t_i,s}, s\in (-\epsilon_i,\epsilon_i)
\]
of $\J_1^{t_i},\J_2^{t_i}$ such that $\J_2^{t_i,s}$ is symplectic for $s\neq 0$. Since being symplectic is an open condition for a generalized complex structure, for $s=s_i>0$ small enough, the other generalized complex structure $\J_1^{t_i,s}$ is still symplectic. Thus we have constructed a sequence $(\J_1^{t_i,s_i},\J_2^{t_i,s_i})$ of nondegenerate generalized K\"ahler structures converging to $(\J^0_1,\J^0_2)$ as claimed. It remains to observe that both perturbations are achieved by variation of the closed pure spinors $\psi_1,\psi_2$ as in Proposition~\ref{prop:symp_perturb}.
\end{proof}

\section{A Calabi conjecture in generalized K\"ahler geometry}
\subsection{The Ricci potential and transgression formula}

Throughout this section $(M^{2n},\JJ_1,\JJ_2)$ is a generalized K\"ahler structure on $(M,H_0)$ with holomorphically trivial canonical bundles. We denote the underlying $d_{H_0}$-closed pure spinors by $\psi_1$ and $\psi_2$. The underlying bihermitian data will be denoted by $(M^{2n}, g, b, I, J)$.

\begin{defn} Let $(M^{2n}, g, b, I, J)$ be a generalized K\"ahler manifold with holomorphically trivial canonical bundles.  The operators $\J_1$ and $\J_2$ coincide on the $+1$-eigenspace of $\mathbb G=-\J_1\J_2$, inducing the same orientation on $TM$ via the isomorphism $C_+\simeq TM$. It follows that the real volume forms $(\psi_1,\bar \psi_1)$ and $(\psi_2,\bar \psi_2)$ are both positively oriented. We will choose the same orientation for the Riemannian volume $dV_g$. Define
\begin{align*}
    \Psi_i = - \log \frac{ (\psi_i, \bar{\psi}_i)}{dV_g}, \qquad \Phi = - \log\frac{ (\psi_1, \bar{\psi}_1)}{(\psi_2, \bar{\psi}_2)} = \Psi_1 - \Psi_2.
\end{align*}
The functions $\Psi_i$ are called \emph{partial Ricci potentials}, and the function $\Phi$ is called the \emph{Ricci potential}.
\end{defn}

\begin{ex}
Let $(M^{2n},g,b,I,J)$ be a nondegenerate generalized K\"ahler manifold. Then up to a $b$-field transform the corresponding closed spinors are $\psi_1= e^{2\Omega + \i F_-}$, $\psi_2=e^{\i F_+}$, and the corresponding partial Ricci potentials are
\[
\Psi_1=\frac{1}{2}\log\det(I-J),\quad \Psi_2=\frac{1}{2}\log\det(I+J).
\]
\end{ex}
The terminology above is justified by the following proposition, which generalizes the classic transgression formula for the Ricci curvature in K\"ahler geometry.

\begin{prop} \label{p:transgression}
Let $(M^{2n}, g, b, I, J)$ be a generalized K\"ahler manifold with holomorphically trivial canonical bundles.  Then we have the following identities:
\begin{equation} \label{f:PsiLeeform}
    (I-J)d\Psi_1= I\theta_I-J\theta_J,\quad (I+J)d\Psi_2= I\theta_I + J\theta_J.
\end{equation}
\begin{equation} \label{f:PhiPoisson}
    \sigma d \Phi = \theta^{\sharp}_I-\theta^{\sharp}_J.
\end{equation}
\begin{equation} \label{f:PhiBismut}
    \rho_I= -\tfrac{1}{2}dJd \Phi,\quad \rho_J= - \tfrac{1}{2}dId \Phi.
\end{equation}
\begin{proof} We prove this via the perturbation method developed in \S \ref{s:perturbations}.  To begin we establish the formulas in the nondegenerate setting.  In particular, for a nondegenerate GK structure, up to conjugation by a closed $b$-field, one has the canonical form $\psi_1 = \psi_- = e^{2 \Omega + \i F_-}$, $\psi_2 = \psi_+ = e^{\i F_+}$.  Using this identification the first two identities follow from \cite[Lemma 3.8]{ASnondeg} and its proof, and the third is \cite[Proposition 3.9]{ASnondeg}.

The strategy now is to apply the perturbation argument of Theorem~\ref{thm:nondeg_perturb} and reduce the proof to the nondegenerate case.  First, let us assume that both $\J_1$ and $\J_2$ are of even type. Let $U\subset M$ be the open dense subset where $\J_1$ and $\J_2$ have constant type, and fix $x\in U$. Then by Theorem~\ref{thm:nondeg_perturb}, in a closed ball $B(x)$ we have find a sequence of nondegenerate generalized K\"ahler structures $(\J_1^j,\J_2^j)$ such that
\[
\lim_{j\to\infty}|\J_1^j-\J_1|_{C^{k,\alpha}}=0,\quad \lim_{j\to\infty}|\J_2^j-\J_2|_{C^{k,\alpha}}=0.
\]
Let $(M,g^j,I^j,J^j)$ be the corresponding bihermitian data. Since the tensors $(g^j,b^j,I^j,J^j)$ depend algebraically on $(\J_1^j,\J_2^j)$, we also have
\begin{align*}
\lim_{j\to\infty} |g^j-g|_{C^{k,\alpha}}=&\ 0,\quad \lim_{j\to\infty} |b^j-b|_{C^{k,\alpha}}=0,\\ 
\lim_{j\to\infty} |I^j-I|_{C^{k,\alpha}}=&\ 0,\quad \lim_{j\to\infty} |J^j-J|_{C^{k,\alpha}}=0.
\end{align*}
Furthermore, as $j\to \infty$ the spinors defining $\J_1^j$ and $\J_2^j$ on $B(x)$ converge in $C^{k,\alpha}$ respectively to $\psi_1$ and $\psi_2$, therefore so do the partial Ricci potentials.  Thus both sides of the identities (\ref{f:PsiLeeform}), (\ref{f:PhiPoisson}) and (\ref{f:PhiBismut}) converge as $j \to \infty$, thus they hold for the original structure in $B(x)$.  Since $x$ is an arbitrary point in an open dense set $U\subset M$, the same identity holds globally on $M$.

If both $\J_1$ and $\J_2$ are of odd type, then we can consider a product of $(M,g,b,I,J)$ with a flat factor $X=(\C, g_{\mathrm{flat}}, I,-I)\times (\C, g_{\mathrm{flat}}, I,I)$, where $I$ is the standard complex structure on $\C$. Thus $X=\C^2$ is equipped with a flat generalized K\"ahler structure of split type, so that $(M\times X,\til{\J_1},\til{\J_2})$ is naturally a generalized K\"ahler manifold with both generalized complex structures of even type.  We can now apply the above argument to conclude that the relevant formulas hold on $M\times X$.  Finally using that $X$ is flat, and both sides of the claimed equations are additive with respect to the Cartesian product, we conclude that the same formula holds on $M$.

If only one of generalized complex structures $\J_1$ and $\J_2$ is odd, say $\J_1$, we perform the same stabilization trick as above, but with the factor $X=\C$ equipped with the standard flat K\"ahler structure, with underlying spinors are $d\bar{z}$ and $e^{\i dx\wedge dy}$.
\end{proof}
\end{prop}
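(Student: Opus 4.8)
The plan is to reduce the general statement to the nondegenerate case, where both $\JJ_i$ are of symplectic type and the defining spinors have the clean exponential form $\psi_1 = e^{2\Omega + \i F_-}$, $\psi_2 = e^{\i F_+}$ of \eqref{f:nondege}. The conceptual obstacle to a direct attack is that $\psi_1$ and $\psi_2$ are in general adapted to the two distinct complex structures $I$ and $J$, with no transparent dictionary between the two descriptions, so there is no workable formula for $\Phi = \Psi_1 - \Psi_2$ in terms of the bihermitian data. The virtue of the symplectic-type case is that both spinors are honest exponentials of $2$-forms, so the Mukai pairings $(\psi_i,\bar\psi_i)$, and hence the potentials $\Psi_i$, are computable explicitly from $(g,I,J)$.

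First I would prove the three identities for nondegenerate structures. Here one uses the explicit potentials $\Psi_1 = \tfrac{1}{2}\log\det(I-J)$ and $\Psi_2 = \tfrac{1}{2}\log\det(I+J)$ recorded in the Example preceding the Proposition. Differentiating these determinant expressions and matching against the Lee forms $\theta_I = I d^*_g \gw_I$, $\theta_J = J d^*_g \gw_J$ yields the first-order identities \eqref{f:PsiLeeform} and \eqref{f:PhiPoisson}; applying $dJd$ and $dId$ and identifying the outcomes with the Bismut Ricci forms then gives \eqref{f:PhiBismut}. This is where the geometric content lives, and I would carry it out following \cite{ASnondeg}.

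Second, I would upgrade to arbitrary structures with both $\JJ_i$ of even type using the approximation result, Theorem \ref{thm:nondeg_perturb}. On the open dense set $U$ where both structures have locally constant type, fix $x \in U$ and a ball $B(x)$ carrying nondegenerate structures $(\JJ_1^j, \JJ_2^j) \to (\JJ_1, \JJ_2)$ in $C^{k,\alpha}$ with converging defining spinors. Because the bihermitian data depend algebraically on $(\JJ_1, \JJ_2)$ through the Gualtieri map \eqref{f:gualtieri_map}, the tensors $g^j, b^j, I^j, J^j$ and the potentials $\Psi_i^j$ all converge in $C^{k,\alpha}$. Each side of \eqref{f:PsiLeeform}, \eqref{f:PhiPoisson}, \eqref{f:PhiBismut} is continuous in this data --- note the curvature identity costs two derivatives, which is precisely why $k \geq 2$ is required --- so the identities pass to the limit on $B(x)$ and, $U$ being dense, hold on all of $M$ by continuity.

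Finally I would remove the even-type hypothesis by stabilization. If both $\JJ_i$ are odd, replace $M$ by the product with the flat split-type factor $(\C, g_{\mathrm{flat}}, I, -I) \times (\C, g_{\mathrm{flat}}, I, I)$, which renders both generalized complex structures even; if exactly one is odd, use a single flat $\C$ with its standard K\"ahler structure. Applying the even-type result to the product and then using that all three identities are additive under Cartesian products while the auxiliary factor is flat recovers the identities on $M$. Having granted Theorem \ref{thm:nondeg_perturb}, I expect the main obstacle to be the nondegenerate base case: it carries the real geometric substance, and it is tractable at all only because the symplectic-type normal form trivializes the otherwise intractable comparison between the two spinors.
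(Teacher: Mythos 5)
Your proposal is correct and follows essentially the same route as the paper's proof: establish the identities in the nondegenerate case via the explicit potentials $\Psi_1 = \tfrac{1}{2}\log\det(I-J)$, $\Psi_2 = \tfrac{1}{2}\log\det(I+J)$ (as in \cite{ASnondeg}), then pass to general even-type structures by the $C^{k,\alpha}$ approximation of Theorem \ref{thm:nondeg_perturb} on balls around points of locally constant type, and finally remove the even-type hypothesis by taking products with flat factors and using additivity of both sides of the identities. Your observation that the curvature identity \eqref{f:PhiBismut} consumes two derivatives, which is exactly why $k \geq 2$ is needed in the approximation, is a correct and worthwhile point that the paper leaves implicit.
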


An interesting further geometric consequence of Proposition \ref{p:transgression} is that there are canonically associated flat metrics for the canonical bundles $K_I$ and $K_J$.

\begin{prop} \label{p:trivialcanonical} Let $(M,g,b,I,J)$ be a generalized K\"ahler manifold with holomorphically trivial canonical bundles.  Then the volume form
\begin{equation*}
e^{- \Psi_1 - \Psi_2}dV_g=\frac{(\psi_1,\bar{\psi_1}) (\psi_2,\bar{\psi_2})}{dV_g}
\end{equation*}
induces Chern-flat Hermitian metrics on the (classic) canonical bundles $K_I$ and $K_J$.  %In particular, up to a finite cover of $M$, $K_I$ and $K_J$ are trivial.
\begin{proof} This is an elementary consequence of items (1) and (3) of Proposition \ref{p:transgression} together with the fact that for a pluriclosed structure $(g, I)$ one has $\rho^{C,I} = \rho_I + d I \theta_I$. \end{proof}
\end{prop}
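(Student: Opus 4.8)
The plan is to reduce the assertion to the vanishing of a single $(1,1)$--form, namely the Chern--Ricci form attached to the volume $\mu := e^{-\Psi_1-\Psi_2}\,dV_g$, and then to establish that vanishing by feeding the transgression identities of Proposition~\ref{p:transgression} into the pluriclosed Chern--Bismut comparison $\rho^{C,I}=\rho_I+dI\theta_I$. First I would recall the standard fact that a positive volume form $\mu$ on the complex manifold $(M,I)$ canonically determines a Hermitian metric $h^I_\mu$ on the canonical bundle $K_I$: for a local nonvanishing holomorphic section $\Theta$ of $K_I$ one sets $|\Theta|^2_{h^I_\mu}=c_n\,\Theta\wedge\bar{\Theta}/\mu$, a positive function. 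Since $K_I$ is a line bundle, the Chern connection of $(K_I,h^I_\mu)$ is flat if and only if its curvature vanishes, and this curvature is precisely $-\i\del\delb$ of the local density of $\mu$, i.e. the Chern--Ricci form of the volume $\mu$. Comparing $\mu$ with $dV_g$ expresses this curvature as $\rho^{C,I}+\tfrac12 dId(\Psi_1+\Psi_2)$, where $\rho^{C,I}$ is the Chern--Ricci form of $g$ itself. Hence it suffices to prove $\rho^{C,I}=-\tfrac12 dId(\Psi_1+\Psi_2)$, and symmetrically $\rho^{C,J}=-\tfrac12 dJd(\Psi_1+\Psi_2)$ for $K_J$.

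For the $K_I$ identity I would begin from the comparison $\rho^{C,I}=\rho_I+dI\theta_I$, substitute the Bismut transgression $\rho_I=-\tfrac12 dJd\Phi$ from item~(3) of Proposition~\ref{p:transgression}, and rewrite the Lee--form term using item~(1). Adding the two identities in \eqref{f:PsiLeeform} gives $I\theta_I=\tfrac12\big((I-J)d\Psi_1+(I+J)d\Psi_2\big)$, so $dI\theta_I$ is $\tfrac12 d$ of this expression; subtracting them instead produces $J\theta_J$ for the analogous $K_J$ computation. Expanding everything in terms of $dId\Psi_i$ and $dJd\Psi_i$ and using $\Phi=\Psi_1-\Psi_2$, the $dJd$--contribution coming from $\rho_I$ is arranged to cancel against the $dJd$--contribution coming from $dI\theta_I$, while the surviving $dId$--terms assemble into exactly $-\tfrac12 dId(\Psi_1+\Psi_2)$. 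The $K_J$ case is identical after interchanging $I$ and $J$ and using $\rho_J=-\tfrac12 dId\Phi$ together with $J\theta_J$.

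The main point requiring care is the exact sign bookkeeping that makes this cancellation work: one must fix mutually compatible conventions for the action of $I$ and $J$ on $1$--forms, for the normalization relating $dId$ to $\i\del\delb$, for the orientation of the Chern--Ricci form of a volume, and for the precise form of the Chern--Bismut comparison, so that the $\Phi$-- and $\theta$--dependent terms annihilate rather than reinforce one another (the Kähler special case $I=J$, $\theta_I=\theta_J=0$, $\Psi_2\equiv 0$ is a useful consistency check, since there $\mu$ is proportional to $\Theta\wedge\bar\Theta$ and flatness is manifest). Once the local transgression identities of Proposition~\ref{p:transgression} are in hand, this final step is purely pointwise and algebraic, so no further analysis is needed and, in particular, none of the perturbation machinery of \S\ref{s:perturbations} enters this proof.
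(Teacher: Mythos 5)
Your proposal is correct and follows essentially the same route as the paper, whose entire proof is exactly the combination of items (1) and (3) of Proposition \ref{p:transgression} with the pluriclosed Chern--Bismut comparison; you have simply written out the reduction to $\rho^{C,I}=-\tfrac{1}{2}dId(\Psi_1+\Psi_2)$ (and its $J$-analogue) and the cancellation that produces it. Your caution about sign bookkeeping is warranted: with the single convention $(I\alpha)(X)=-\alpha(IX)$ under which \eqref{f:PhiBismut} reduces to the classical K\"ahler transgression and $\i\del\delb f=\tfrac{1}{2}dIdf$, adding the two identities of \eqref{f:PsiLeeform} gives $2I\theta_I=Id(\Psi_1+\Psi_2)-Jd\Phi$, and the $dJd\Phi$ terms cancel (rather than reinforce) precisely when the comparison is read as $\rho^{C,I}=\rho_I-dI\theta_I$ in these conventions --- i.e.\ the ``$+$'' in the paper's stated comparison tacitly carries the opposite convention for $I\theta_I$, which is exactly the pitfall you flagged.
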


We will also need the following identity relating the scalar curvature to the partial Ricci potentials $\Psi_i$.

\begin{lemma}\label{lm:curvature_lognondeg}
Let $(M^{2n},g,b,I,J)$ be a generalized K\"ahler manifold with holomorphically trivial canonical bundles.  Then
\begin{equation}\label{f:trace_bismut_diff}
	\left(\tr_{\gw_I}\rho_J^B-\tr_{\gw_J}\rho_J^B\right)= \Delta^{C,J}\Psi_2+\frac{1}{6}|H|^2,
\end{equation}
\begin{equation}\label{f:scalar_curvature_spinors}
    R - \frac{1}{12} \brs{H}^2 = -\Delta(\Psi_1+\Psi_2)+\la{d\Psi_1,d\Psi_2},
\end{equation}
where $H=d^c_J\gw_J=-d^c_I\gw_I\in\Lambda^3(M)$ is the intrinsic torsion of $(M,g,b,I,J)$.
\end{lemma}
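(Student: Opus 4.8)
The plan is to follow the same philosophy as in the proof of Proposition \ref{p:transgression}: establish both identities first in the nondegenerate setting, where the partial Ricci potentials have the explicit form $\Psi_1 = \tfrac12 \log\det(I-J)$ and $\Psi_2 = \tfrac12 \log\det(I+J)$, and then transfer the general case by the approximation result of Theorem \ref{thm:nondeg_perturb}. The approximation step is essentially free here since both sides of \eqref{f:trace_bismut_diff} and \eqref{f:scalar_curvature_spinors} are built from the bihermitian data $(g,b,I,J)$ and the spinors $\psi_i$, which all converge in $C^{k,\alpha}$ along the nondegenerate approximating sequence; the odd-type cases are handled by the same stabilization by a flat factor as in Proposition \ref{p:transgression}, using that both sides are additive under Cartesian product and vanish on flat factors. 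So the real content is the identity in the nondegenerate (or at least the classically accessible) case.

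For \eqref{f:trace_bismut_diff}, I would work with the Bismut Ricci form $\rho_J^B$ of the pluriclosed structure $(g,J)$ and use the transgression relation $\rho_J = -\tfrac12 dId\Phi$ from \eqref{f:PhiBismut} together with the companion identity $\rho_I = -\tfrac12 dJd\Phi$. The key is to relate the two traces $\tr_{\gw_I}\rho_J^B$ and $\tr_{\gw_J}\rho_J^B$ to the complex Laplacian $\Delta^{C,J}\Psi_2$. Here I expect to invoke the standard relation between the Bismut Ricci form $\rho_J^B$ and the Chern Ricci form $\rho^{C,J}$ for a pluriclosed metric, namely that they differ by a $dJ\theta_J$-type term, and then use \eqref{f:PsiLeeform} to express the Lee forms $I\theta_I \pm J\theta_J$ in terms of $d\Psi_1, d\Psi_2$. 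Taking the appropriate trace and organizing the torsion contributions should produce the curvature term $\tfrac16|H|^2$; this bookkeeping of the torsion terms is where I expect the main technical difficulty to lie, since one must carefully track the difference between the various connections and ensure the coefficient of $|H|^2$ comes out correctly.

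For \eqref{f:scalar_curvature_spinors}, I would start from the classical identity relating the Riemannian scalar curvature $R$ of $g$ to the torsion $H$ and the Chern scalar curvatures of the two pluriclosed structures. A clean route is to write $R - \tfrac{1}{12}|H|^2$ in terms of the Bismut-Ricci traces and then substitute the transgression formula \eqref{f:PhiBismut} and the Lee-form identities \eqref{f:PsiLeeform} to express everything through $\Psi_1$ and $\Psi_2$. After expanding, the cross term $\langle d\Psi_1, d\Psi_2\rangle$ should emerge naturally from the product of the two gradient terms appearing when one symmetrizes the contributions of $\rho_I$ and $\rho_J$, while the Laplacian term $-\Delta(\Psi_1+\Psi_2)$ collects the second-order pieces. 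The main obstacle I anticipate is correctly reconciling the several scalar curvature conventions (Riemannian $R$, Chern scalar curvatures $\tr_{\gw_I}\rho^{C,I}$ and $\tr_{\gw_J}\rho^{C,J}$, and the Bismut analogues) and verifying that the torsion correction assembles precisely into the stated $\tfrac{1}{12}|H|^2$ coefficient; getting this bookkeeping right, rather than any deep conceptual point, is where the care is needed.
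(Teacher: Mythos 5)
Your reduction strategy---prove both identities for nondegenerate structures, where $\Psi_1=\tfrac12\log\det(I-J)$, $\Psi_2=\tfrac12\log\det(I+J)$, then transfer via Theorem \ref{thm:nondeg_perturb} with flat-factor stabilization for odd types---is exactly the paper's, and that part is fine. The gap is that the nondegenerate computation, which you defer to ``bookkeeping,'' is the entire content of the lemma, and the ingredients you list do not suffice as proposed. For \eqref{f:trace_bismut_diff} you want to use the transgression formula $\rho_J^B=-\tfrac12 dId\Phi$; its trace against $\gw_I$ is indeed essentially $-\tfrac12\Delta^{C,I}\Phi$, but the other term $\tr_{\gw_J}\rho_J^B$ becomes the trace against $\gw_J$ of a $dd^c_I$-type form, a mixed quantity with no direct Laplacian interpretation---computing it is precisely the hard point, and no combination of the Bismut--Chern comparison and \eqref{f:PsiLeeform} resolves it by itself. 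The paper does not attempt this: it instead imports the symplectic-type trace identity of \cite[Lemma~5.13]{ASU2}, which expresses $2\left(\tr_{F_+}\rho_J-\tr_{\gw_J}\rho_J\right)$ through $\Delta\Psi_2$, Lee forms and torsion, converts $\tr_{F_+}$ via the nondegenerate-only relation $2\tr_{F_+}=\tr_{\gw_I}+\tr_{\gw_J}$, and closes using the Ivanov--Papadopoulos identities \eqref{f:pf_norm_H} together with the pairing computation $\la{d\Psi_2,\theta_I-\theta_J}=\tfrac12\left(\brs{\theta_I}^2-\brs{\theta_J}^2\right)$ derived from \eqref{f:PsiLeeform}. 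Note also that if you instead evaluated $\tr_{\gw_J}\rho_J^B$ through the Bismut scalar identity \eqref{f:scal_bismut} to avoid the mixed trace, your derivation of \eqref{f:trace_bismut_diff} would become circular with \eqref{f:scalar_curvature_spinors}.

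For the second identity, your expectation that the cross term $\la{d\Psi_1,d\Psi_2}$ ``emerges naturally'' from symmetrizing hides the one genuinely non-obvious algebraic step. The paper starts from $R-\tfrac14\brs{H}^2=2\tr_{\gw_J}\rho_J^B+d^*\theta_J$, substitutes \eqref{f:trace_bismut_diff} and the transgression identity $2\tr_{\gw_I}\rho_J^B=\Delta^{C,I}(\Psi_2-\Psi_1)$, and then the Lee-form terms cancel only because of the orthogonality
\begin{equation*}
\la{d\Psi_2-\theta_J,\, d\Psi_1-\theta_J}=-\tfrac14\la{[I,J]d\Phi,d\Phi}=0,
\end{equation*}
which holds since $g^{-1}[I,J]$ is skew (equivalently, using \eqref{f:PhiPoisson} that $\theta_I-\theta_J$ is $g$-orthogonal to $d\Phi$). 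Without this identity or an equivalent, the expansion leaves uncancelled terms of the form $\la{d\Psi_1\pm d\Psi_2,\theta_{I/J}}$ and $\brs{\theta_J}^2$ and does not reduce to $-\Delta(\Psi_1+\Psi_2)+\la{d\Psi_1,d\Psi_2}$. So your proposal correctly locates where the difficulty lies but does not supply the two inputs---the \cite{ASU2} trace identity and the skewness orthogonality---that actually resolve it.
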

    \begin{proof}
    Following the perturbation method of Proposition \ref{p:transgression} we are going to prove this identity on a nondegenerate background, i.e., assuming that operators $I\pm J$ are invertible. To simplify the notation we denote the partial Ricci potentials by $\Psi_{+}:=\Psi_2=\frac{1}{2}\log\det(I+J)$ and $\Psi_{-}:=\Psi_1=\frac{1}{2}\log\det(I-J)$.

    We will need the several identities on a generalized K\"ahler manifold $(M, g, I, J)$. First we recall that the Chern and the Riemannian Laplacians are related by an identity
    \begin{equation}\label{f:pf_chern_laplace}
    \Delta^{C,I} f=\Delta f-\la{df,\theta_I}.
    \end{equation}
	Using ~\cite[\S 3]{Ivanovstring}, it follows from the pluriclosed condition for $(g,I)$ and $(g, J)$ that
	\begin{equation}\label{f:pf_norm_H}
	\frac{1}{6}|H|^2=d^*\theta_J+|\theta_J|^2 = d^* \theta_I + \brs{\theta_I}^2
	\end{equation}
	and
	\begin{equation}\label{f:scal_bismut}
	R-\frac{1}{4}|H|^2=2\tr_{\gw_J}\rho_J^B+d^*\theta_J.
	\end{equation}
	Next, according to the proof of \cite[Lemma~5.13]{ASU2}, we have the following general equality  on a GK manifold of symplectic type:
    \[ 2\left({\rm tr}_{F_+} \rho_J - {\rm tr}_{\gw_J} \rho_J \right) = \Delta \Psi_+ - \left\langle d\Psi_+, \theta_I\right\rangle + \frac{1}{2}\left(d^* \theta_J + d^* \theta_I\right) + |\theta_I|^2. \]
    Since $d\Psi_+=(I+J)^{-1}(I\theta_I+J\theta_J)=\frac{1}{2}\left(\theta_I+\theta_J+(I+J)^{-1}(I-J)(\theta_J-\theta_I)\right)$, we have $\la{d\Psi_+,\theta_I-\theta_J}=\frac{1}{2}(|\theta_I|^2-|\theta_J|^2)$. 
    Combining these formulas with~\eqref{f:pf_norm_H} yields
    \begin{align*}
    2\left({\rm tr}_{F_+} \rho_J  - {\rm tr}_{\gw_J} \rho_J \right) &= \Delta \Psi_+ - \left\langle d\Psi_+, \theta_J \right\rangle + \frac{1}{2}\left(d^* \theta_J + d^* \theta_I\right) + \frac{1}{2}\left(|\theta_J|^2 + |\theta_I|^2\right)\\
    &= \Delta^{C, J} \Psi_+ + \frac{1}{6} \brs{H}^2.
    \end{align*}
    Since $2\tr_{F_+}=\tr_{\gw_I}+\tr_{\gw_J}$ this implies equation (\ref{f:trace_bismut_diff}).
	
	From identity~\eqref{f:PsiLeeform}, $2\theta_J=J(I-J)d\Psi_+-J(I-J)d\Psi_-$ and we thus derive
	\begin{equation}\label{f:pf_id6}
	\begin{split}
	    \la {d\Psi_+-\theta_J, d\Psi_--\theta_J} &=
	    \frac{1}{4}\la{J(I-J)(d\Psi_+-d\Psi_-), J(I+J)(d\Psi_+-d\Psi_-)}\\
	   % \la {(I+J)^{-1}I(\theta_I-\theta_J), (I-J)^{-1}I(\theta_I-\theta_J)}\\
	   % &= - \la {I(\theta_I-\theta_J), [I,J]^{-1}I(\theta_I-\theta_J)}\\
	    &=-\frac{1}{4}\la{[I,J]d\Phi,d\Phi}=0,
	\end{split}
	\end{equation}
	where we used that the operator $[I,J]^{-1}$ is skew with respect to $g$.  With~\eqref{f:pf_id6} at hand, we compute,
    \begin{align*}
            R-\frac{1}{12}|H|^2 = &\ 
            2\tr_{\gw_J}\rho_J^B+d^*\theta_J+\frac{1}{6}|H|^2 && \\
            =&\
            2\left(
                \tr_{\gw_I}\rho_J^B-\Delta^{C,J}\Psi_+-\frac{1}{6}|H^2|
            \right)+d^*\theta_J+\frac{1}{6}|H|^2\\
            =&\ 2\tr_{\gw_I}\rho_J^B-2\Delta^{C,J}\Psi_++d^*\theta_J-\frac{1}{6}|H|^2\\
            =&\ 
            2\tr_{\gw_I}\rho_J^B-2\Delta^{C,J}\Psi_+-|\theta_J|^2\\
            =&\ 
            \Delta^{C,I}(\Psi_+-\Psi_-)-2\Delta^{C,J}\Psi_+-|\theta_J|^2\\
            =&\ - \Delta (\Psi_++\Psi_-)+\la{d\Psi_--d\Psi_+,\theta_I}+2\la{d\Psi_+,\theta_J}-|\theta_J|^2
            \\
            =&\ 
            - \Delta (\Psi_++\Psi_-)+\la{d\Psi_++d\Psi_-,\theta_J}+ \la{d\Psi_+-d\Psi_-,\theta_J-\theta_I} -|\theta_J|^2 && \\
            =&\ - \Delta (\Psi_++\Psi_-)+\la{d\Psi_++d\Psi_-,\theta_J} -|\theta_J|^2\\
            =&\ 
            - \Delta (\Psi_++\Psi_-)+\la{d\Psi_+,d\Psi_-},
\end{align*}
as claimed.
\end{proof}

\subsection{The space of generalized K\"ahler structures} \label{s:SOGK}

As discussed in the introduction, the overall space of generalized K\"ahler structures on a given manifold can be quite delicate.  To simplify matters it is natural to fix as background a holomorphic Poisson structure.

\begin{defn} Let $(M^{2n}, I, \sigma)$ be a holomorphic Poisson manifold.  That is, $I$ is an integrable complex structure that $\sigma$ is the real part of an $I$-holomorphic Poisson tensor.  Let
\begin{align*}
 \mathcal{GK}^{I,\sigma} = \left\{ (g,b, I, J)\ \mbox{generalized K\"ahler}\ |\ \tfrac{1}{2} g^{-1} [I, J] = \sigma \right\}.
\end{align*}
Given $m = (g, b, I, J) \in \mathcal {GK}^{I,\sigma}$, let
\begin{align*}
    \mathcal {GK}^{I,\sigma}_m = (\mbox{path component of $\mathcal{GK}^{I,\sigma}$ containing $m$}).
\end{align*}
\end{defn}

Against the background of a holomorphic Poisson manifold, there is a natural extension of the idea of K\"ahler classes in generalized K\"ahler geometry.  The construction naturally unifies the usual notion of K\"ahler class together with the Hamiltonian flow construction of deformations of generalized K\"ahler structures \cite{AGG, GualtieriBranes}.

\begin{defn} \label{d:canonicalfamily} A one-parameter family of generalized K\"ahler structures $\JJ_1^t, \JJ_2^t$ is a \emph{canonical family} if there exists a smooth family $K_t \in \Lambda^2(T^*M)$ such that for all $t$ one has
\begin{align*}
    \dt \JJ_1 = [\JJ_1, K \JJ_1], \qquad \dt \JJ_2 = [\JJ_2, K \JJ_2].
\end{align*}
We further define an equivalence relation where $(\JJ_1, \JJ_2) \sim (\til{\JJ}_1, \til{\JJ}_2)$ if and only if they are connected by a canonical family.  The equivalence class of $(\JJ_1, \JJ_2)$ is denoted by $\mathcal{GK}^{I,\sigma}(m)$ for $m = (g, b, I, J)$.
\end{defn}

\begin{rmk} In \cite{gibson2020deformation} a characterization of canonical families was given in terms of the underlying bihermitian data.  In particular a canonical family will necessary have $I$ and $\sigma$ fixed, and furthermore $dK_t = 0$ and $K_t \in \Lambda^{1,1}_{J_t}(T^*M)$ for all $t$.  In particular this implies that
\begin{align*}
    m \in \mathcal {GK}^{I,\sigma} \implies \mathcal{GK}^{I,\sigma}(m) \subset \mathcal {GK}^{I,\sigma}_m.
\end{align*}
\end{rmk}

\begin{defn} \label{d:GKclass} A one-parameter family of generalized K\"ahler structures $\JJ_1^t, \JJ_2^t$ is an \emph{exact canonical family} if it is a canonical family which further satisfies $K_t = d a_t$.  
Define an equivalence relation where $(\JJ_1, \JJ_2) \sim (\til{\JJ}_1, \til{\JJ}_2)$ if and only if they are connected by an exact canonical family.  The equivalence class of $(\JJ_1, \JJ_2)$ is called the \emph{generalized K\"ahler class of $(\JJ_1, \JJ_2)$}, and is denoted equivalently by $[(\JJ_1, \JJ_2)]$ or $[(g, b, I, J)]$.
\end{defn}

To summarize, given a generalized K\"ahler structure $m = (g, b, I, J) \in \mathcal {GK}^{I,\sigma}$, we have a series of natural inclusions
\begin{align*}
    [m] \subset \mathcal{GK}^{I,\sigma}(m) \subset \mathcal {GK}^{I,\sigma}_m.
\end{align*}
\noindent Very little is known in general concerning the structure of these spaces and inclusions outside of the K\"ahler setting, or the case when $\sigma = 0$.

In the setting of this paper, where the GK structures have holomorphically trivial canonical bundles, we can  also give a specialized definition characterized in terms of twisted deRham classes.  To motivate this definition we first record a lemma showing that canonical deformations preserve the condition of the GK structure having holomorphically trivial canonical bundles, and furthermore preserve the classes of the underlying spinors.

\begin{lemma} \label{l:spinorinvariance} $(M^{2n}, g, b, I, J)$ be a generalized K\"ahler structure with holomorphically trivial canonical bundles, defined by generalized complex structures $\JJ_{\psi_i}$.  Suppose $(\JJ_1^t, \JJ_2^t)$ is a canonical family defined by a one-parameter family of closed two-forms $K_t$, such that $\JJ_i^0 = \JJ_{\psi_i}$.  Furthermore define
\begin{align*}
\psi_i^t = \psi_i + \int_0^t \i K_s \wedge \psi_i ds.
\end{align*}
Then $\JJ^t_i = \JJ_{\psi^t_i}$.
\begin{proof} Let $\psi_i^t$ be defined as above.  We first observe that it follows formally since $K_s$ is closed for all $s$ that $\psi_i^t$ remains $d_{H_0}$-closed.  Thus, for as long as these spinors satisfy the conditions of being pure, i.e., $(\psi^t_i, \psi^t_i) = 0$, and nondegenerate, i.e., $(\psi_i^t, \bar{\psi_i^t}) \neq 0$, they will define generalized complex structures.  We first claim that $\Ker \psi_i^t$ equals the $\i$-eigenspace for $\J_i^t$ for all times, which shows both that the spinors $\psi_i^t$ remain pure, and will satisfy $\JJ_i^t = \JJ_{\psi_i^t}$ as long as they remain nondegenerate.  To this end suppose $e_t$ is a one-parameter family of generalized tangent vectors such that $e_t \in \Ker \psi_i^t$ for all $t$.  Differentiating in $t$ it follows that for any time $t$ one has $\dot{e} + \i K e \in \Ker \psi_i$.  On the other hand this condition guarantees that $\dt (\pi^{0,1}_{\JJ_i} )e = 0$ using the formula for canonical deformations.  Thus we can choose a one-parameter family of bases for $\Ker \{\psi_i^t\}$ at any point and these will in turn be a basis for the $\i$-eigenspace for $\JJ_i^t$, finishing the claim.

It remains to show that the spinors $\psi_i^t$ as defined remain nondegenerate, i.e., $(\psi^t_i, \bar{\psi^t_i}) \neq 0$ for all time.  We first note that since $K_s$ is a smooth family of tensors it follows directly that $(\psi_i^t, \bar{\psi_i^t})$ is bounded above for all $t$.  Since the associated metrics $g_t$ are also controlled, it follows from Proposition \ref{p:trivialcanonical} that there is a bounded function $f_t$ such that $e^{-f_t} (\psi_1^t, \bar{\psi_1^t})$ 
induces a flat metric on the canonical bundle $K_I$.  As this metric is unique up to scale, and $\int_M (\psi_1^t, \bar{\psi_1^t}) = [\psi_1^t] \cdot [\bar{\psi_1^t}] = [\psi_1]\cdot [\bar{\psi_1}]$, it follows that $(\psi_1^t, \bar{\psi_1^t})$ can never vanish.  the argument is the same for $\psi_2^t$.
\end{proof}
\end{lemma}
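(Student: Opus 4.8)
The plan is to show directly that, for each $t$, the spinor $\psi_i^t$ is a closed pure spinor whose Clifford annihilator is exactly the $+\i$-eigenbundle $L^{1,0}_{\JJ_i^t}$ of the prescribed structure; granting that $\psi_i^t$ stays nondegenerate, i.e. $(\psi_i^t,\bar\psi_i^t)\neq 0$, the generalized complex structure $\JJ_{\psi_i^t}$ is then well defined and must equal $\JJ_i^t$. I would split the verification into closedness, identification of kernels, and nondegeneracy, treating the last as the genuinely global step.

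Closedness is immediate: since each $K_s$ is even and closed and $d_{H_0}\psi_i=0$, the product rule for $d_{H_0}$ on even forms gives $d_{H_0}(\i K_s\wedge\psi_i)=\i\,(dK_s)\wedge\psi_i+\i K_s\wedge d_{H_0}\psi_i=0$, hence $d_{H_0}\psi_i^t=d_{H_0}\psi_i+\int_0^t \i\, d_{H_0}(K_s\wedge\psi_i)\,ds=0$. For the kernels I would prove the inclusion $L^{1,0}_{\JJ_i^t}\subseteq\Ker\psi_i^t$; since $L^{1,0}_{\JJ_i^t}$ is maximal isotropic and the annihilator of any spinor is isotropic, this inclusion forces the equality $\Ker\psi_i^t=L^{1,0}_{\JJ_i^t}$ and, in particular, the purity of $\psi_i^t$. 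To obtain the inclusion, fix a point and a smooth curve $e_t$ with $e_t$ a section of $L^{1,0}_{\JJ_i^t}$, i.e. $\JJ_i^t e_t=\i e_t$, and consider the spinor $w_t:=e_t\cdot\psi_i^t$. At $t=0$ one has $w_0=e_0\cdot\psi_i=0$ because $L^{1,0}_{\JJ_i^0}=\Ker\psi_i^0$. Differentiating and using $\dot\psi_i^t=\i K_t\wedge\psi_i$ together with the constraint on $\dot e_t$ imposed by the canonical-deformation formula $\dt\JJ_i=[\JJ_i,K\JJ_i]$ (which dictates how $L^{1,0}_{\JJ_i^t}$ moves), one expects $w_t$ to satisfy a linear homogeneous ODE, so that $w_t\equiv 0$ by uniqueness. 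The bookkeeping identity $e\cdot(K\wedge\psi)=(i_X K)\wedge\psi+K\wedge(e\cdot\psi)$ for $e=X+\xi$ is the algebraic input that lets the Clifford-level derivative be matched to the endomorphism-level deformation.

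The main obstacle is nondegeneracy, namely ruling out that $(\psi_i^t,\bar\psi_i^t)$ acquires a zero for some $t$; this is the only step that cannot be settled by a local or formal computation, since pointwise the real volume form $(\psi_i^t,\bar\psi_i^t)$ could in principle degenerate. The plan here is to combine three facts. First, on the compact $M$ the smooth family $(\psi_i^t,\bar\psi_i^t)$ is uniformly bounded above. Second, by Proposition \ref{p:trivialcanonical} the associated volume form is, after a bounded conformal factor, a Chern-flat Hermitian metric on the classical canonical bundle $K_I$, and such a flat metric is unique up to a positive constant. Third, the total mass $\int_M(\psi_i^t,\bar\psi_i^t)$ is a fixed positive number, because $\psi_i^t$ stays in a fixed $d_{H_0}$-cohomology class (using that the correction $\i\int_0^t K_s\wedge\psi_i\,ds$ is $d_{H_0}$-closed), so the Mukai self-pairing it computes is unchanged. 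A vanishing of $(\psi_i^t,\bar\psi_i^t)$ at some point is then incompatible with representing this fixed positive mass by a normalized flat, hence zero-free, metric, so no such zero can occur. With nondegeneracy established the three parts combine to give $\JJ_i^t=\JJ_{\psi_i^t}$, and the case $i=2$ is handled identically to $i=1$.
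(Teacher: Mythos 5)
Your proposal is correct and follows essentially the same three-step strategy as the paper's proof: formal $d_{H_0}$-closedness from $dK_s=0$, identification of $\Ker\psi_i^t$ with the $\i$-eigenbundle of $\JJ_i^t$ by differentiating in $t$ and invoking the canonical-deformation formula (plus maximal isotropy to get purity), and nondegeneracy via Proposition \ref{p:trivialcanonical}, uniqueness of the flat metric on $K_I$ up to scale, and invariance of the cohomological mass $\int_M (\psi_i^t, \bar{\psi_i^t})$. The only cosmetic difference is the direction of the eigenbundle argument — you propagate sections of $L^{1,0}_{\JJ_i^t}$ and show $e_t\cdot\psi_i^t\equiv 0$ by ODE uniqueness, whereas the paper propagates elements of $\Ker\psi_i^t$ and shows they remain in the eigenbundle — which is a mirror image of the same computation.
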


\begin{defn} \label{d:spinorGKclass} Fix $(M^{2n}, I, \sigma)$ a holomorphic Poisson manifold, a closed $3$-form $H_0$, and let $\alpha$ and $\beta$ denote twisted deRham classes in $H^*_{d_{H_0}}(M, \mathbb C)$.  Define the associated \emph{cohomological generalized K\"ahler class} by
\[ {\mathcal {GK}}^{I, \sigma}_{\alpha, \beta}=\big\{(g, b, I, J) = (\J_{\psi_1}, \J_{\psi_2})\in {\mathcal {GK}}^{I, \sigma},\  \psi_1\in \alpha,\ \psi_2\in \beta\big\}.\]
If $m=(g,b,I, J)\in {\mathcal {GK}}^{I, \sigma}_{\alpha, \beta}$, it follows from Lemma~\ref{l:spinorinvariance} that any exact canonical deformation of $m$ stays in ${\mathcal {GK}}^{I, \sigma}_{\alpha, \beta}$, thus
\[ [m] \subset {\mathcal {GK}}^{I, \sigma}_{\alpha, \beta}. \]
\end{defn}

% In the setting of holomorphically trivial canonical bundles we conjecture that the cohomological generalized K\"ahler class agrees with the equivalence class definition above:

% \begin{conj} \label{c:TCBGKclass} Given $M$ a smooth manifold and $m = (g, b, I, J)$ a generalized K\"ahler structure with Poisson tensor $\gs$ and holomorphically trivial canonical bundles defined by closed spinors $\psi_1, \psi_2$, one has
% \begin{align*}
%     [m] = \mathcal{GK}^{I,\sigma}_{[\psi_1],[\psi_2]}.
% \end{align*}
% \end{conj}

%
%
\subsection{Rigidity and Uniqueness of Generalized Calabi-Yau geometries}

\begin{defn} \label{d:topCY} Let $(M^{2n},g,b,I,J)$ be a generalized K\"ahler manifold with holomorphically trivial canonical bundles.  We say that $(g,b,I,J)$ is a \emph{generalized Calabi-Yau geometry} if 
\begin{align*}
    \Phi := \log \frac{ (\psi_1, \bar{\psi}_1)}{(\psi_2, \bar{\psi}_2)} \equiv \gl
\end{align*}
for some constant $\gl$. 

Notice that if $\psi$ is a $d_{H_0}$-closed form, then its Clifford  involution $s(\psi)$ (see~Definition~\ref{d:mukai}) is $d_{-H_0}$-closed.  It then follows that the constant $\gl$ is determined by the spinor classes $[\psi_i]$:
\begin{align*}
\gl = \log \left[ \frac{\int_M (\psi_1, \bar{\psi}_1)}{\int_M (\psi_2, \bar{\psi}_2)} \right].
\end{align*}
Despite the cohomological interpretation of $\gl$, this constant is not really an invariant of the underlying generalized K\"ahler structure since the spinors $\psi_i$ can be scaled by any nonzero constant without changing the generalized K\"ahler structure.
\end{defn}

\begin{thm} \label{t:rigidity} Suppose $(g, b, I, J)$ is  a generalized Calabi-Yau geometry  on a compact manifold $M$.  Then:
\begin{enumerate}
    \item Both pairs $(g, I)$ and $(g, J)$ are K\"ahler, Ricci-flat.
    \item $(g, b, I, J)$ is the unique generalized Calabi-Yau geometry in  $\mathcal{GK}^{I,\sigma}_{[\psi_1],[\psi_2]}$, up to exact $b$-field transformation.
    \item $(g,b,I,J)$ is the unique generalized Calabi-Yau geometry in $[(g,b,I,J)]$.
\end{enumerate}

\begin{proof} We first prove the K\"ahler rigidity of item (1).  We first note using the second item of Proposition \ref{p:transgression} and the fact that $\Phi$ is constant that $\theta_I = \theta_J$.  Then it follows from the first item of Proposition \ref{p:transgression} and the fact that $d \Psi_1 = d \Psi_2$ that
\begin{align*}
    (I + J) d \Psi_1 = (I + J) \theta_I, \qquad (I - J) d \Psi_1 = (I - J) \theta_I.
\end{align*}
Adding these equations it follows easily that $\theta_I = \theta_J = d \Psi_1$.  Thus using \cite[(3.24)]{Ivanovstring} it follows that
\begin{align*}
    \tfrac{1}{6} \brs{H}^2 = d^*_g \theta_I + \brs{\theta_I}^2 = - \Delta \Psi_1 + \brs{d \Psi_1}^2.
\end{align*}
Since $M$ is closed, it follows from the strong maximum principle that $\Psi_1$ is constant and thus $H \equiv 0$.  Thus $(g, I)$ and $(g, J)$ are K\"ahler, and moreover Ricci-flat by Proposition \ref{p:transgression}.

We now turn to the uniqueness statement (2).   We suppose that $(g, b, I, J)$ defines a generalized K\"ahler structure such that $(g, I)$ and $(g, J)$ are both K\"ahler, Ricci flat, and which is determined by $d_{H_0}$-closed pure spinors $(\psi_1, \psi_2)\in (\alpha,\beta)$.  Using the K\"ahler assumption for $(M, g, I)$, it follows that $H_0 + db= d^c_I \gw_I= 0$. Applying a $b$-field transform with the $2$-form $b$, we can assume without loss of generality that $H_0=0$, $b=0$ and $d\psi_i=0$. We want to show that under these assumptions, $g$ and $J$ are unambiguously determined from the data $(I, \sigma, \alpha, \beta)$, thus establishing the uniqueness of $(g, b, I, J)$ up to a $b$-field transformation. By Proposition \ref{p:KCYspinors}, we can replace $M$ with a finite cover if
necessary and express
\begin{align*}
        \psi_1 =&\ \psi_1^{\gs} \wedge \psi_1^+ \wedge \psi_1^- = e^{2 \Omega + \i F_-} \wedge e^{\i \gw_+} \wedge \bar\Theta_-\\
        \psi_2 =&\ \psi_2^{\sigma} \wedge \psi_2^+ \wedge \psi_2^- = e^{\i F_+} \wedge \bar\Theta_+ \wedge e^{\i \gw_-}.
    \end{align*}
The  above forms $\psi_1$ and $\psi_2$ are $g$-parallel, and thus are the harmonic representatives of $\alpha$ and $\beta$ with respect to $g$. Notice  that by Bochner's theorem the $I$-holomorphic forms $\Theta_-$ and $\Theta_+$  are  respectively the harmonic representatives of the lower degrees of $\alpha$ and $\beta$ with respect to any Ricci-flat K\"ahler metric on $(M, I)$, and thus are determined by $\alpha$ and $\beta$. The triple $(\sigma, \Theta_+, \Theta_-)$ determines the factors $S, X, Y$ in the Beauville-Bogomolov decomposition \eqref{f:splitting-BB} of $(M, g, I)$. Writing \[\Theta_+ =\Theta^X \wedge \Theta^0_+, \qquad \Theta_-=\Theta^Y \wedge \Theta_-^0, \qquad \Omega={\Omega} ^{S} + \Omega^0, \]
\[F_+ =F_+^S +F_+^0, \qquad \gw_+ = \gw^X + \gw_+^0, \qquad \gw_-=\gw^Y + \gw^0, \] with respect to \eqref{f:splitting-BB}, we see that the deRham classes
$2[\Omega_S] + \sqrt{-1}([F_-^S] + [\gw^X])$ and $[F_+^S] + [\gw^Y]$ are determined by $(\alpha, \beta)$. Taking $(1,1)$-parts with respect to $I$ these determine respectively the Aeppli and K\"ahler classes of the Ricci-flat K\"ahler metrics $g_S$ and $g_X, g_Y$. Thus, $(\sigma, \alpha, \beta)$ uniquely determine the  Ricci-flat K\"ahler metrics on $S, X, Y$. We conclude that any other Calabi-Yau generalized K\"ahler structure in $\mathcal{GK}_{\alpha, \beta}^{I, \sigma}$ has a Ricci-flat K\"ahler metric of the form 
$g'=g_S + g_X + g_Y + g_0'$, where $g_0'$ is a flat metric on $\mathbb{T}^{2k}$. 
As $\psi_1$ and $\psi_2$ are parallel forms with respect any such  $g'$, it follows that $(\psi_1, \psi_2)$ are uniquely determined by $(\alpha, \beta)$, and thus so are $(\J_{\psi_1}, \J_{\psi_2})$ and hence $g$ and $J$.

It remains to prove that the $2$-form $b$ of $(g,b,I,J)\in \mathcal{GK}^{I,\gs}_{[\psi_1],[\psi_2]}$ is determined uniquely up to an exact form. As before we may assume that $H_0=0$ so that $b$ above is closed. We aim at proving that it is exact. If $\psi_1,\psi_2$ are the closed spinors corresponding to $(g,0,I,J)$ as above and $\psi_1',\psi_2'$ are the $d$-closed spinors corresponding to $(g,b,I,J)$, then $\psi_i'=e^{-b}\wedge \psi_i$. Since $[\psi_i]=[\psi_i']$ we conclude that $[b]\cup[\psi_i]=0$ in $H^*(M,\C)$.  Let $[b]=[b]_S+[b]_X+[b]_Y+[b]_T$ be the decomposition of $[b]$ according to the Beauville-Bogomolov decomposition. Then from $[b]\cup [\psi_i]=0$ we have
 \[
 -[b]_X + \sqrt{-1} [\omega^X]= \sqrt{-1}[\omega^{X}],
 \]
so $[b]_X=0$.  By a similar reasoning, we have  $[b]_Y=0$ and $[b]_S=0$. Finally, a similar argument yields that on the flat factor $\mathbb{T}^{2k}$, have  
\[
 [b]_T\cup [\Theta_{\pm}^0]=0.
 \] This implies that $[b]_T$ is of type $(2,0)$   and hence is zero as $[b]_T$ is real. We thus conclude   that $[b]=0$ in $H^2(M,\C)$, so that $b$ is exact as claimed.
 
 The proof of item (3) is now straightforward.  Any two generalized Calabi-Yau geometries in $[(g,b,I,J)]$ are by definition connected by a one-parameter family of exact canonical deformations.  Let $(g_s, b_s, I, J_s), s \in [0,1]$ denote such a path, connecting two generalized Calabi-Yau geometries, which are K\"ahler, Ricci-flat by item (1).  It follows from the equations for a canonical deformation \cite[Proposition 3.6]{gibson2020deformation}, and the fact that $g_1 = g_0$ by item (1), that $b_1 - b_0$ is an exact form of type $(2,0) + (0,2)$ with respect to $I$, which necessarily vanishes on the compact K\"ahler background $(M, I)$. 
\end{proof}
\end{thm}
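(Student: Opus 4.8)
The plan is to establish the three items in sequence, treating item (1) as the analytic heart and then deducing the two uniqueness statements from the spinorial normal form of Proposition~\ref{p:KCYspinors}. For item (1) I would begin from the defining equation $\Phi \equiv \gl$, so that $d\Phi = 0$. Plugging this into the Poisson transgression identity \eqref{f:PhiPoisson} gives $\theta_I = \theta_J$ at once, while $\Phi = \Psi_1 - \Psi_2$ constant gives $d\Psi_1 = d\Psi_2$. Substituting both facts into the two identities \eqref{f:PsiLeeform} and adding them yields $I\,d\Psi_1 = I\theta_I$, hence $\theta_I = \theta_J = d\Psi_1$ after applying $I^{-1}$. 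I would then invoke the pluriclosed identity $\tfrac{1}{6}|H|^2 = d^*_g\theta_I + |\theta_I|^2$ from \cite{Ivanovstring}, which now reads $\tfrac{1}{6}|H|^2 = -\Delta\Psi_1 + |d\Psi_1|^2$. The cleanest route to $H\equiv 0$ is to set $v = e^{-\Psi_1}$, for which a direct computation gives $\Delta v = \tfrac{1}{6} e^{-\Psi_1}|H|^2 \geq 0$; integrating over the compact manifold forces the nonnegative integrand to vanish, so $H \equiv 0$. Then both $(g,I)$ and $(g,J)$ are K\"ahler, and since $\rho_I = -\tfrac{1}{2} d J d\Phi = 0 = \rho_J$ by \eqref{f:PhiBismut} --- which for K\"ahler metrics are the Ricci forms --- the metrics are Ricci-flat.

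For item (2) I would use item (1) to assume the competitor is K\"ahler Ricci-flat, and perform a $b$-field transform to arrange $H_0 = 0$, $b = 0$, so the spinors are genuinely $d$-closed. Proposition~\ref{p:KCYspinors} then writes $\psi_1, \psi_2$, on a finite cover, as explicit parallel products involving $\Omega$, $F_\pm$, $\gw_\pm$ and the holomorphic volume forms $\Theta_\pm$. Being parallel, these spinors are harmonic and hence the harmonic representatives of the fixed classes $\alpha = [\psi_1]$, $\beta = [\psi_2]$; in particular the $\Theta_\pm$ are recovered as their lowest-degree harmonic components, and by the Bochner argument they are determined by $(\alpha,\beta)$ independently of the metric. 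The triple $(\sigma, \Theta_+, \Theta_-)$ then fixes the factors $S, X, Y$ in the Beauville--Bogomolov splitting \eqref{f:splitting-BB}, and taking $(1,1)$-parts of the distinguished classes read off from $\alpha, \beta$ determines the Aeppli/K\"ahler classes of the Ricci-flat metrics on each factor. Yau's theorem supplies uniqueness of the Ricci-flat metric in each such class, so $g$ and $J$ are pinned down by $(I,\sigma,\alpha,\beta)$. To conclude, writing the two descriptions as $\psi_i' = e^{-b}\wedge\psi_i$ and using $[\psi_i'] = [\psi_i]$ gives $[b]\cup[\psi_i] = 0$; decomposing $[b]$ along Beauville--Bogomolov forces every component to vanish, so $[b] = 0$ and $b$ is exact.

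For item (3) I would join two generalized Calabi-Yau geometries in $[(g,b,I,J)]$ by a path of exact canonical deformations $(g_s, b_s, I, J_s)$. By Lemma~\ref{l:spinorinvariance} the spinor classes are constant along the path, so both endpoints lie in the same $\mathcal{GK}^{I,\sigma}_{[\psi_1],[\psi_2]}$, and item (2) gives $g_1 = g_0$ and $J_1 = J_0$. Combining the explicit evolution of the bihermitian data under a canonical deformation from \cite{gibson2020deformation} with $g_1 = g_0$, the difference $b_1 - b_0$ is an exact $2$-form of type $(2,0)+(0,2)$ with respect to $I$. A short Hodge-theoretic argument on the compact K\"ahler manifold $(M,I)$ --- a $d$-closed form of pure type $(2,0)$ is harmonic, whereas an exact form has trivial harmonic part --- forces $b_1 - b_0 = 0$, giving strict uniqueness.

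The step I expect to be the main obstacle is item (2): the hypotheses fix only the cohomological data $(I,\sigma,\alpha,\beta)$, and the whole difficulty is to reconstruct the full geometry from these classes. This rests on two nontrivial inputs, the parallel spinor normal form of Proposition~\ref{p:KCYspinors} (which itself depends on the Beauville--Bogomolov decomposition), and the passage from the twisted deRham classes $\alpha,\beta$ to the individual K\"ahler/Aeppli classes on each factor, where Yau's uniqueness theorem is essential. By contrast items (1) and (3) are largely formal once the transgression formulas and the normal form are available.
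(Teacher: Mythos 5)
Your proposal is correct and follows essentially the same route as the paper's proof: the same transgression identities plus the Ivanov--Papadopoulos torsion identity for item (1), the same parallel-spinor normal form, Beauville--Bogomolov class determination, and $[b]\cup[\psi_i]=0$ argument for item (2), and the same canonical-deformation-plus-Hodge-theory argument for item (3). Your substitution $v=e^{-\Psi_1}$ with $\Delta v=\tfrac{1}{6}e^{-\Psi_1}|H|^2\ge 0$ and integration is just an explicit version of the paper's strong maximum principle step, and your explicit appeal to item (2) to obtain $g_1=g_0$, $J_1=J_0$ in item (3) is the correct reading of what the paper attributes loosely to item (1).
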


\section{GKRF with trivial canonical bundles}

In this section we establish a number of fundamental properties of the generalized K\"ahler-Ricci flow (GKRF) with holomorphically trivial initial data.  First we recast the flow as an evolution of closed pure spinors, which in particular shows that the cohomological generalized K\"ahler class is preserved.  Thus we have natural Ricci potential functions $\Psi_i$ and $\Phi$ associated to the flow.  Surprisingly, we show in Lemma \ref{l:Riccipotential} that the Ricci potential $\Phi$ solves the associated time-dependent heat flow, while the partial Ricci potentials $\Psi_i$ satisfy the dilaton flow motivated in part by the physical renormalization group flow underlying generalized Ricci flow (cf.\,\cite{Polchinski, Streetsscalar}).  Precursors of this general fact have appeared in \cite{StreetsSTB, StreetsND, ASnondeg}.  This leads to a series of estimates, including a gradient bound for the Ricci potential and a uniform bounds for the appropriate weighted scalar curvatures.  Finally, we define two natural functionals generalizing the Mabuchi energy to this setting, and show that they are monotone along GKRF and bounded, with generalized K\"ahler Calabi-Yau geometries as the only critical points.

\subsection{Background} \label{s:GKRFbackground}

\begin{defn} \label{d:GKRF} A one-parameter family $(g_t, b_t, I, J_t)$ is a solution of \emph{generalized K\"ahler-Ricci flow} (GKRF) if
\begin{gather} \label{f:GKRFpaper}
    \begin{split}
        \dt \gw_I =&\ - 2 \rho_I^{(1,1)}, \qquad \dt b = - 2 \rho_I^{(2,0) + (0,2)} I\qquad \dt J = L_{\theta_J^{\sharp} - \theta_I^{\sharp}} J.
    \end{split}
\end{gather}
This set of equations differs from that stated in (\ref{f:GKRFintro}) by a gauge transformation which fixes the complex structure $I$.  In prior works we referred to this as ``GKRF in the $I$-fixed gauge,'' but we will avoid this terminology here and a solution to GKRF will always mean a solution to (\ref{f:GKRFpaper}).  The equivalence of these formulations is shown in \cite{GKRF} (cf.\,\cite[Ch. 9]{GRFbook}).  As explained in \cite[Theorem 1.4]{gibson2020deformation}, GKRF can be equivalently described in terms of the generalized complex structures as
\begin{align} \label{f:GCflow}
    \dt \JJ_1 = - 2 [\JJ_1, \rho_I \JJ_1], \qquad \dt \JJ_2 = - 2 [\JJ_2, \rho_I \JJ_2].
\end{align}
Furthermore, it follows from Lemma \ref{l:spinorinvariance} that there is a naturally associated flow of pure spinors, namely
\begin{gather} \label{f:spinorflow}
    \begin{split}
        \dt \psi_1 = - 2 \i \rho_I \wedge \psi_1, \qquad \dt \psi_2 = - 2 \i \rho_I \wedge \psi_2.
    \end{split}
\end{gather}
In particular this shows that the property of having holomorphically trivial canonincal bundles is preserved by GKRF.
\end{defn}

\begin{rmk} For a given solution to GKRF, let
\begin{align*}
    \square := \dt - \Delta^{C,I}.
\end{align*}
denote the time-dependent Laplacian associated to the Chern connection.  All evolution equations for this flow are naturally expressed using this Laplacian.  Recall that it is related to the Riemannian Laplacian via
\begin{align*}
  \Delta^{C,I} =&\ \Delta - \theta_I^{\sharp},
\end{align*}
and this extra term precisely corresponds to the gauge relation between (\ref{f:GKRFintro}) and (\ref{f:GKRFpaper}) explained above.
\end{rmk}

\subsection{Ricci potential estimates}

\begin{lemma} \label{l:Riccipotential} Let $(M^{2n}, g, b, I, J)$ be a generalized K\"ahler manifold with holomorphically trivial canonical bundles.  Suppose $(g_t, b_t, I, J_t)$ is the solution to GKRF with this initial data.  Then one has
\begin{gather} \label{f:spinornormev}
\begin{split}
\square \Psi_i =&\ \tfrac{1}{6} \brs{H}^2,\\
\square \Phi =&\ 0.
\end{split}
\end{gather}
\begin{proof} We prove the formula for $\Psi_i$ using the perturbation result of Theorem \ref{thm:nondeg_perturb}.  In particular, it suffices to prove the formula for GKRF in the nondegenerate setting, and then the equation passes to the limit of the nondegenerate approximations.  

In the nondegenerate setting $\Psi_2 = \Psi_+ = \frac{1}{2} \log \det (I+J)$.  Up to a time independent  additive constant, $\Psi_+ = - \log\left(\frac{F_+^{n}}{\gw_I^{n}}\right)$.  We note that it follows from the discussion of the evolution of the underlying spinors above that $\frac{\partial}{\partial t} F_+ = - 2\rho_I$  (cf.\,also \cite[Lemma 5.2]{ASnondeg}).  Using this and $\frac{\partial}{\partial t} \gw_I = -2 \rho_I^{1,1}$ we have
\begin{equation}\label{e:evolution-Psi}
\frac{\partial}{\partial t} \Psi_+ = 2 \left(\tr_{F_+} \rho_I - \tr_{\gw_I} \rho_I \right)=\tr_{\gw_J}\rho_I-\tr_{\gw_I}\rho_I.
\end{equation}
According to~\eqref{f:trace_bismut_diff} (with the roles of $I$ and $J$ swapped) this implies
\[
\frac{\del}{\del t}\Psi_+=\Delta^{C,I}\Psi_++\frac{1}{6}|H|^2,
\]
as stated.  The case of $\Psi_1 = \Psi_-$ is analogous. This finishes the proof of the evolution of $\Psi_i$, and the formula for $\Phi$ is a formal consequence.
\end{proof}
\end{lemma}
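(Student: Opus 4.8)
The plan is to prove the single instantaneous identity $\dt\Psi_i = \Delta^{C,I}\Psi_i + \tfrac{1}{6}\brs{H}^2$ for each $i$; combined with the relation $\Delta^{C,I} = \Delta - \theta_I^{\sharp}$ this is exactly $\square\Psi_i = \tfrac16\brs{H}^2$, and the equation for $\Phi$ then drops out of $\Phi = \Psi_1 - \Psi_2$. The structural point I would exploit is that this is a \emph{pointwise, instantaneous} statement: by the spinor flow (\ref{f:spinorflow}) the derivative $\dt\Psi_i$ is an algebraic expression in the instantaneous Ricci form $\rho_I$ and the underlying bihermitian data, so both sides depend only on the generalized K\"ahler structure at a fixed time. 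This is precisely the situation to which the perturbation machinery applies. Thus I would first reduce to the case where both $\JJ_i$ are of even type (stabilizing by a flat split factor as in the proof of Proposition \ref{p:transgression} otherwise), invoke Theorem \ref{thm:nondeg_perturb} to produce nondegenerate approximations $(\JJ_1^j,\JJ_2^j)\to(\JJ_1,\JJ_2)$ in $C^{k,\ga}$, and verify the identity on the nondegenerate models; since $\rho_I^j\to\rho_I$ and $\psi_i^j\to\psi_i$ in $C^{k,\ga}$ with $k\geq 2$, every term — including the second-order term $\Delta^{C,I}\Psi_i$ — converges and the identity passes to the limit.

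On a nondegenerate background, up to a closed $b$-field, the defining spinors are $\psi_1 = e^{2\Omega + \i F_-}$ and $\psi_2 = e^{\i F_+}$, whence by $b$-field invariance of the Mukai pairing $(\psi_2,\bar\psi_2) = F_+^n/n!$ and $\Psi_2 = \Psi_+ = \tfrac12\log\det(I+J)$ agrees, up to a time-independent constant, with $-\log\big(F_+^n/\gw_I^n\big)$. Feeding $\psi_2 = e^{\i F_+}$ into (\ref{f:spinorflow}) and matching coefficients gives $\dt F_+ = -2\rho_I$; together with $\dt\gw_I = -2\rho_I^{(1,1)}$ from (\ref{f:GKRFpaper}) and the algebraic identity $2\tr_{F_+} = \tr_{\gw_I} + \tr_{\gw_J}$, differentiating $\Psi_+$ yields
\[
\dt\Psi_+ = 2\big(\tr_{F_+}\rho_I - \tr_{\gw_I}\rho_I\big) = \tr_{\gw_J}\rho_I - \tr_{\gw_I}\rho_I.
\]

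It then remains to recognize this right-hand side as $\Delta^{C,I}\Psi_+ + \tfrac16\brs{H}^2$. For this I would apply Lemma \ref{lm:curvature_lognondeg}, reading (\ref{f:trace_bismut_diff}) with the roles of $I$ and $J$ exchanged: because $\det(I+J)$ is symmetric in $I,J$ on an even-dimensional manifold the potential $\Psi_+$ is unchanged, while the left-hand side becomes $\tr_{\gw_J}\rho_I - \tr_{\gw_I}\rho_I$, producing exactly $\dt\Psi_+ = \Delta^{C,I}\Psi_+ + \tfrac16\brs{H}^2$. The case $\Psi_1 = \Psi_-$ is handled identically, now using $\psi_1 = e^{2\Omega + \i F_-}$, the consequences $\dt F_- = -2\rho_I$ and $\dt\Omega = 0$ of (\ref{f:spinorflow}), and the companion of (\ref{f:trace_bismut_diff}) for $F_-$. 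With $\square\Psi_1 = \square\Psi_2 = \tfrac16\brs{H}^2$ in hand, $\square\Phi = \square\Psi_1 - \square\Psi_2 = 0$ follows immediately, the torsion terms cancelling.

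The main obstacle I anticipate is the curvature bookkeeping in the nondegenerate step: verifying cleanly that $\dt F_{\pm} = -2\rho_I$ really follows from the spinor flow, and correctly matching the Bismut Ricci form $\rho_J^B$ appearing in Lemma \ref{lm:curvature_lognondeg} with the Ricci form $\rho_I$ driving the flow once $I$ and $J$ are interchanged — the several trace combinations $\tr_{F_+}$, $\tr_{\gw_I}$, $\tr_{\gw_J}$ must be tracked with care. A secondary point is confirming that the \emph{second-order} identity is stable under the merely $C^{k,\ga}$ perturbations furnished by Theorem \ref{thm:nondeg_perturb}, which is why one works with $k\geq 2$.
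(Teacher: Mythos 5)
Your proposal is correct and follows essentially the same route as the paper's proof: reduction to the nondegenerate case via Theorem \ref{thm:nondeg_perturb}, the identification of $\Psi_+$ with $-\log\left(F_+^n/\gw_I^n\right)$ together with $\frac{\partial}{\partial t} F_+ = -2\rho_I$ from the spinor flow, and then the trace identity \eqref{f:trace_bismut_diff} with the roles of $I$ and $J$ interchanged. The additional justifications you supply (the flat-factor stabilization for odd types, the algebraic identity $2\tr_{F_+} = \tr_{\gw_I} + \tr_{\gw_J}$, and the symmetry of $\det(I+J)$ under the swap) are precisely the details the paper leaves implicit.
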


\begin{lemma} \label{l:gradientPhi} Let $(M^{2n}, g, b, I, J)$ be a generalized K\"ahler manifold with holomorphically trivial canonical bundles.  Suppose $(g_t, b_t, I, J_t)$ is the solution to GKRF with this initial data.  Then
\begin{gather} \label{f:gradPhiev}
\begin{split}
\square \brs{\N \Phi}^2 =&\ -2 \brs{\N^2 \Phi}^2 - \tfrac{1}{2} \IP{H^2, \N \Phi \otimes \N \Phi}.
\end{split}
\end{gather}
\begin{proof} The result is a formal consequence of Lemma \ref{l:Riccipotential} and the Bochner formula (cf.\,\cite[Lemma 4.3]{StreetsND}).
\end{proof}
\end{lemma}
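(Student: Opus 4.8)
The plan is to obtain the evolution equation as a specialization, to the function $\Phi$, of a general Bochner-type commutation identity for the heat operator $\square = \dt - \Delta^{C,I}$ driving GKRF. The first input is the evolution of the metric in the $I$-fixed gauge. In the unreduced gauge (\ref{f:GKRFintro}) the metric satisfies $\dt g = -2\Rc + \tfrac{1}{2}H^2$, and passing to the gauge (\ref{f:GKRFpaper}) pulls back by the diffeomorphisms generated by $-\theta_I^{\sharp}$, which is precisely the gauge transformation converting $\Delta$ into $\Delta^{C,I}$ recorded in the Remark. Hence in the gauge used throughout,
\[
\dt g = -2\Rc + \tfrac{1}{2}H^2 - L_{\theta_I^{\sharp}} g.
\]
The second input is $\square\Phi = 0$ from Lemma \ref{l:Riccipotential}, i.e. $\dt\Phi = \Delta^{C,I}\Phi$.

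The core computation is to expand $\square\brs{\N\Phi}^2 = \dt\brs{\N\Phi}^2 - \Delta^{C,I}\brs{\N\Phi}^2$ term by term. For the time derivative I would write $\dt\brs{\N\Phi}^2 = (\dt g^{ij})\N_i\Phi\,\N_j\Phi + 2\IP{\N\Phi,\N(\dt\Phi)}$, substitute the metric evolution above (so that $\dt g^{ij}$ contributes $2\Rc - \tfrac{1}{2}H^2 + L_{\theta_I^{\sharp}}g$ paired with $\N\Phi\otimes\N\Phi$), and use $\dt\Phi = \Delta^{C,I}\Phi$ in the second term. For the spatial term I would apply the standard Riemannian Bochner formula $\Delta\brs{\N\Phi}^2 = 2\brs{\N^2\Phi}^2 + 2\IP{\N\Phi,\N\Delta\Phi} + 2\Rc(\N\Phi,\N\Phi)$ together with the identity $\Delta^{C,I} = \Delta - \theta_I^{\sharp}$ acting on functions, giving the extra term $-\IP{\theta_I, d\brs{\N\Phi}^2} = -2\N^2\Phi(\theta_I^{\sharp},\N\Phi)$.

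Assembling everything, the two occurrences of $2\Rc(\N\Phi,\N\Phi)$—one from $\dt g^{ij}$ and one from Bochner—cancel exactly, which is the structural signature of generalized Ricci flow and leaves the torsion term $-\tfrac{1}{2}\IP{H^2,\N\Phi\otimes\N\Phi}$ in their place, together with $-2\brs{\N^2\Phi}^2$. The remaining Lee-form contributions are the gauge term $(L_{\theta_I^{\sharp}}g)(\N\Phi,\N\Phi)$, the difference $2\IP{\N\Phi,\N(\Delta^{C,I}\Phi-\Delta\Phi)} = -2\IP{\N\Phi,\N\IP{\theta_I,d\Phi}}$, and $-2\N^2\Phi(\theta_I^{\sharp},\N\Phi)$. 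Expanding these with $(L_{\theta_I^{\sharp}}g)_{ki} = \N_k\theta_{I,i} + \N_i\theta_{I,k}$ and the symmetry of $\N^2\Phi$, the second-derivative pieces cancel against the Bochner remainder and the two copies of $(\N_k\theta_{I,i})\N^k\Phi\,\N^i\Phi$ cancel against the gauge term, leaving exactly $-2\brs{\N^2\Phi}^2 - \tfrac{1}{2}\IP{H^2,\N\Phi\otimes\N\Phi}$.

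The main obstacle is the bookkeeping of these Lee-form terms: a naive Bochner computation with $\Delta^{C,I}$ in place of $\Delta$ leaves several first- and second-order terms in $\theta_I$, and their vanishing is not a priori obvious. The point, which must be checked with care for signs, is that the gauge term $-L_{\theta_I^{\sharp}}g$ in the metric evolution is exactly calibrated to cancel the commutator terms generated by $\Delta^{C,I} - \Delta = -\theta_I^{\sharp}$. Alternatively, and more cleanly, one can invoke the general commutation identity $\square\brs{\N f}^2 = 2\IP{\N f,\N\square f} - 2\brs{\N^2 f}^2 - \tfrac{1}{2}\IP{H^2,\N f\otimes\N f}$ valid for any function $f$ along GKRF, which is the content of \cite[Lemma 4.3]{StreetsND}; the result then follows at once by taking $f = \Phi$ and applying $\square\Phi = 0$ from Lemma \ref{l:Riccipotential}.
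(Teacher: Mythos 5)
Your proposal is correct and is in substance the paper's own argument: the paper dispatches the lemma as a formal consequence of $\square \Phi = 0$ (Lemma \ref{l:Riccipotential}) together with the Bochner-type commutation identity $\square \brs{\N f}^2 = 2\IP{\N f, \N \square f} - 2\brs{\N^2 f}^2 - \tfrac{1}{2}\IP{H^2, \N f \otimes \N f}$ of \cite[Lemma 4.3]{StreetsND}, which is exactly the route you name in your final paragraph. The body of your proposal just verifies that identity by hand in the $I$-fixed gauge, and your bookkeeping is right — the Ricci terms cancel as in ordinary Ricci flow, and the gauge term $-L_{\theta_I^{\sharp}}g$ in the metric evolution exactly absorbs the Lee-form commutators produced by $\Delta^{C,I} - \Delta = -\theta_I^{\sharp}$.
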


\begin{prop} \label{p:gradientdecay} Let $(M^{2n}, g, b, I, J)$ be a generalized K\"ahler manifold with holomorphically trivial canonical bundles.  Suppose $(g_t, b_t, I, J_t)$ is the solution to GKRF with this initial data.  Then
\begin{gather} \label{f:gradientestimate}
\sup_{M \times \{t\}} \left( \Phi^2 + t \brs{\N \Phi}^2 \right) \leq \sup_{M \times \{0\}} \Phi^2.
\end{gather}
\begin{proof} Let $F(x,t) = t \brs{\N \Phi}^2 + \Phi^2$.  By Lemmas \ref{l:Riccipotential} and \ref{l:gradientPhi} it follows that $\square F \leq 0$, and the result follows by the maximum principle.
\end{proof}
\end{prop}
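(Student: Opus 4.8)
The plan is to combine the two evolution equations from Lemmas \ref{l:Riccipotential} and \ref{l:gradientPhi} into a single differential inequality for the auxiliary function
\[
F(x,t) = t \brs{\N \Phi}^2 + \Phi^2,
\]
and then invoke the parabolic maximum principle for the time-dependent Chern Laplacian $\square = \dt - \Delta^{C,I}$ on the compact manifold $M$. The point of this choice is that $F$ is engineered so that $F(\cdot,0) = \Phi^2(\cdot,0)$, matching the right-hand side of the desired bound, while the weight $t\brs{\N\Phi}^2$ simultaneously carries the gradient decay; it therefore suffices to show $\square F \leq 0$.

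First I would compute $\square(\Phi^2)$. Since the Chern Laplacian differs from the Riemannian one only by the first-order derivation $\N_{\theta_I^\sharp}$, which obeys the ordinary Leibniz rule, one has $\Delta^{C,I}(\Phi^2) = 2\Phi\,\Delta^{C,I}\Phi + 2\brs{\N\Phi}^2$, and hence
\[
\square(\Phi^2) = 2\Phi\,\square\Phi - 2\brs{\N\Phi}^2 = -2\brs{\N\Phi}^2,
\]
using $\square\Phi = 0$ from Lemma \ref{l:Riccipotential}. Next, writing $\dt(t\brs{\N\Phi}^2) = \brs{\N\Phi}^2 + t\,\dt\brs{\N\Phi}^2$ and applying Lemma \ref{l:gradientPhi} gives
\[
\square(t\brs{\N\Phi}^2) = \brs{\N\Phi}^2 + t\,\square\brs{\N\Phi}^2 = \brs{\N\Phi}^2 - 2t\brs{\N^2\Phi}^2 - \tfrac{t}{2}\IP{H^2, \N\Phi\otimes\N\Phi}.
\]
Adding the two displays yields
\[
\square F = -\brs{\N\Phi}^2 - 2t\brs{\N^2\Phi}^2 - \tfrac{t}{2}\IP{H^2, \N\Phi\otimes\N\Phi}.
\]

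The crucial observation is that each term on the right is non-positive for $t \geq 0$: the first two manifestly, and the third because $H^2$ is a non-negative symmetric $2$-tensor, being defined by $H^2(X,Y) = \IP{i_X H, i_Y H}$, so that $\IP{H^2,\N\Phi\otimes\N\Phi} = H^2(\N\Phi,\N\Phi) \geq 0$. Thus $\square F \leq 0$, i.e. $\dt F \leq \Delta^{C,I} F$, so $F$ is a subsolution of the Chern heat equation. On the compact manifold $M$ the parabolic maximum principle then shows that the spatial supremum of $F$ is non-increasing in $t$, whence $\sup_{M\times\{t\}} F \leq \sup_{M\times\{0\}} F = \sup_{M\times\{0\}}\Phi^2$, which is exactly (\ref{f:gradientestimate}).

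The only genuinely substantive step is the sign of the cross term $\IP{H^2,\N\Phi\otimes\N\Phi}$; once one recognizes that $H^2$ is pointwise non-negative, the estimate closes. Everything else is bookkeeping, provided one keeps track of the first-order torsion drift distinguishing $\Delta^{C,I}$ from the Riemannian Laplacian in the Leibniz computation of $\square(\Phi^2)$ — this produces no extra curvature contribution precisely because the drift is a derivation. I would therefore expect the main obstacle to lie not in this final Proposition but in the preceding Lemmas \ref{l:Riccipotential} and \ref{l:gradientPhi}, whose favourable signs this argument simply harvests.
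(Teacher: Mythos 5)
Your proposal is correct and is exactly the paper's argument: the paper defines the same function $F = t\brs{\N\Phi}^2 + \Phi^2$, cites Lemmas \ref{l:Riccipotential} and \ref{l:gradientPhi} to conclude $\square F \leq 0$, and finishes with the maximum principle. You have simply written out the bookkeeping (the Leibniz computation for $\Delta^{C,I}$ and the pointwise nonnegativity of $H^2(\N\Phi,\N\Phi) = \brs{i_{\N\Phi}H}^2$) that the paper leaves implicit.
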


\subsection{Scalar curvature estimates} \label{ss:scalar}

In \cite{Streetsscalar} the third named author proved a family of scalar curvature monotonicities along the generalized Ricci flow, with the key new ingredient being an auxiliary solution to a certain PDE, the \emph{dilaton flow}:
\begin{align} \label{f:dilatonflow}
\square \phi = \tfrac{1}{6} \brs{H}^2.
\end{align}
In \cite[Proposition 2.4]{Streetsscalar} it was shown that if $(g_t, H_t)$ is a solution to generalized Ricci flow and $\phi_t$ is a solution of the generalized Ricci flow, then the weighted scalar curvature
\begin{align*}
    R^{H,\phi} = R - \tfrac{1}{12} \brs{H}^2 + 2 \gD \phi - \brs{\N \phi}^2
\end{align*}
satisfies
\begin{align*}
    \square R^{H,\phi} =&\ 2 \brs{\Rc^{H,\phi}}^2,
\end{align*}
 where
 \begin{align*}
     \Rc^{H,\phi} =&\ \Rc^g - \tfrac{1}{4} H^2 + \N^2 \phi - \tfrac{1}{2} ( d^*_g H + i_{\N f} H).
 \end{align*}
 
\begin{prop} \label{p:scalarmon} Let $(M^{2n}, g, b, I, J)$ be a generalized K\"ahler manifold with holomorphically trivial canonical bundles.  Suppose $(g_t, b_t, I, J_t)$ is the solution to GKRF with this initial data.  Then
\begin{align*}
    \square R^{H,\Psi_{i}} = 2 \brs{\Rc^{H,\Psi_{i}}}^2.
\end{align*}
Moreover,
\begin{align*}
    \inf_{M \times \{t\}} R^{H,\Psi_i} \geq \inf_{M \times \{0\}} R^{H,\Psi_i}.
\end{align*}
\begin{proof} The evolution equation is a formal consequence of Lemma \ref{l:Riccipotential} and \cite[Proposition 2.4]{Streetsscalar}, and the estimate follows from the maximum principle.
\end{proof}
\end{prop}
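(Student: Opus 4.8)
The plan is to recognize each partial Ricci potential $\Psi_i$ as a solution of the dilaton flow and then invoke the general scalar curvature monotonicity of \cite[Proposition 2.4]{Streetsscalar}. By Lemma \ref{l:Riccipotential} we already have $\square \Psi_i = \tfrac{1}{6} \brs{H}^2$, which is precisely the dilaton flow equation (\ref{f:dilatonflow}). Thus $\Psi_i$ is an admissible dilaton, and the claimed evolution equation $\square R^{H,\Psi_i} = 2 \brs{\Rc^{H,\Psi_i}}^2$ is exactly the conclusion of \cite[Proposition 2.4]{Streetsscalar} applied with $\phi = \Psi_i$. So the first step is simply to pair Lemma \ref{l:Riccipotential} with this external result.

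The one point requiring care is the gauge. The monotonicity of \cite{Streetsscalar} is stated for generalized Ricci flow in the form (\ref{f:GKRFintro}), with the heat operator $\partial_t - \Delta$ built from the Riemannian Laplacian, whereas the GKRF used here is the $I$-fixed gauge (\ref{f:GKRFpaper}), obtained from (\ref{f:GKRFintro}) by a time-dependent diffeomorphism whose generator accounts for the relation $\Delta^{C,I} = \Delta - \theta_I^{\sharp}$ recorded in the Remark following Definition \ref{d:GKRF}. I would therefore observe that both $R^{H,\Psi_i}$ and $\brs{\Rc^{H,\Psi_i}}^2$ are geometric scalar quantities, so they simply pull back under this diffeomorphism; the evolution identity of \cite{Streetsscalar} then transforms into the stated one with $\square = \partial_t - \Delta^{C,I}$. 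Consistently, Lemma \ref{l:Riccipotential} already presents the dilaton flow for $\Psi_i$ in exactly this gauge, so the two inputs live in the same gauge and no further bookkeeping is needed.

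Finally, the lower bound follows from the maximum principle. Since the right-hand side satisfies $2\brs{\Rc^{H,\Psi_i}}^2 \geq 0$, we obtain $\square R^{H,\Psi_i} \geq 0$, i.e. $\partial_t R^{H,\Psi_i} \geq \Delta^{C,I} R^{H,\Psi_i}$. As $M$ is compact, at any spatial minimum of $R^{H,\Psi_i}(\cdot, t)$ the term $\Delta^{C,I} R^{H,\Psi_i}$ is nonnegative, so the spatial minimum is nondecreasing in $t$, which gives $\inf_{M\times\{t\}} R^{H,\Psi_i} \geq \inf_{M\times\{0\}} R^{H,\Psi_i}$.

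The only genuine subtlety, and hence the step I expect to be the main obstacle, is the gauge translation in the second paragraph: one must confirm that the scalar-curvature identity of \cite{Streetsscalar}, derived for the ungauged generalized Ricci flow, is unaffected --- beyond the replacement of $\Delta$ by $\Delta^{C,I}$ --- when one passes to the $I$-fixed gauge in which $\Psi_i$ and the curvature quantities are computed. Everything else is a direct citation combined with a standard parabolic minimum principle, which is why the author can reasonably describe the evolution equation as a formal consequence.
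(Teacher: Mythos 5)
Your proposal is correct and follows exactly the paper's route: identify $\Psi_i$ as a dilaton-flow solution via Lemma \ref{l:Riccipotential}, cite \cite[Proposition 2.4]{Streetsscalar} for the evolution equation, and conclude by the parabolic minimum principle. The gauge bookkeeping you spell out (the $\theta_I^{\sharp}$ drift in $\Delta^{C,I}$ matching the diffeomorphism relating (\ref{f:GKRFintro}) to (\ref{f:GKRFpaper})) is precisely what the paper treats implicitly in the remark following Definition \ref{d:GKRF}, which is why it can call the identity a formal consequence.
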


\begin{prop} \label{p:scalarupper} Let $(M^{2n}, g, b, I, J)$ be a generalized K\"ahler manifold with holomorphically trivial canonical bundles.  Suppose $(g_t, b_t, I, J_t)$ is the solution to GKRF with this initial data.  Then
\begin{align*}
    \sup_{M \times \{t\}} R^{H,\Psi_1} \leq - \inf_{M \times \{0\}} R^{H,\Psi_2}, \qquad \sup_{M \times \{t\}} R^{H,\Psi_2} \leq - \inf_{M \times \{0\}} R^{H,\Psi_1}.
\end{align*}
\begin{proof} By Lemma \ref{lm:curvature_lognondeg} it follows that
\begin{align*}
    R^{H,\Psi_1} = - \gD \Phi + \IP{d \Psi_1, d \Phi}, \qquad R^{H,\Psi_2} = \gD \Phi - \IP{d \Psi_2, d \Phi}.
\end{align*}
Thus
\begin{align*}
    R^{H,\Psi_1} + R^{H,\Psi_2} = - \brs{d \Phi}^2 \leq 0.
\end{align*}
Using the lower bound on $R^{H,\psi_i}$ from Proposition \ref{p:scalarmon}, the result follows.
\end{proof}
\end{prop}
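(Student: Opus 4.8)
The plan is to reduce both upper bounds to a single pointwise identity expressing the sum $R^{H,\Psi_1}+R^{H,\Psi_2}$ as a manifestly nonpositive quantity, and then to feed in the a priori lower bounds already furnished by Proposition \ref{p:scalarmon}.

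First I would extract convenient expressions for the two weighted scalar curvatures. Substituting $\phi=\Psi_i$ into the defining formula $R^{H,\phi}=R-\tfrac{1}{12}\brs{H}^2+2\gD\phi-\brs{\N\phi}^2$ and using the second identity of Lemma \ref{lm:curvature_lognondeg}, namely $R-\tfrac{1}{12}\brs{H}^2=-\gD(\Psi_1+\Psi_2)+\IP{d\Psi_1,d\Psi_2}$, to eliminate the unweighted scalar curvature, I would collect the Laplacian and gradient contributions and rewrite them using $\Phi=\Psi_1-\Psi_2$. This produces expressions of the form
\begin{align*}
R^{H,\Psi_1}=\gD\Phi-\IP{d\Psi_1,d\Phi},\qquad R^{H,\Psi_2}=-\gD\Phi+\IP{d\Psi_2,d\Phi}.
\end{align*}

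The key step is then the elementary cancellation obtained by adding these two relations: the Laplacian terms drop out, and the gradient terms combine through $d\Psi_2-d\Psi_1=-d\Phi$ to give
\begin{align*}
R^{H,\Psi_1}+R^{H,\Psi_2}=\IP{d\Psi_2-d\Psi_1,d\Phi}=-\brs{d\Phi}^2\leq 0.
\end{align*}
In particular $R^{H,\Psi_1}\leq -R^{H,\Psi_2}$ holds pointwise at every point of $M$ and every existence time $t$.

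Finally I would invoke the monotonicity of the spatial infimum from Proposition \ref{p:scalarmon}, which guarantees $R^{H,\Psi_2}(\cdot,t)\geq \inf_{M\times\{t\}}R^{H,\Psi_2}\geq \inf_{M\times\{0\}}R^{H,\Psi_2}$. Combining this with the pointwise bound, at each point of $M\times\{t\}$ one has $R^{H,\Psi_1}\leq -R^{H,\Psi_2}\leq -\inf_{M\times\{0\}}R^{H,\Psi_2}$; taking the supremum over $M\times\{t\}$ yields the first inequality, and interchanging the roles of $\Psi_1$ and $\Psi_2$ yields the second. I expect no genuine analytic difficulty here, since the curvature identities and the lower bound are supplied by the cited results; the only point demanding care is the sign bookkeeping in deriving the displayed expressions for $R^{H,\Psi_i}$ and verifying that the cross term collapses exactly to the perfect square $-\brs{d\Phi}^2$.
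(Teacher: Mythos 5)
Your proposal is correct and is essentially the paper's own proof: both derive pointwise expressions for $R^{H,\Psi_i}$ from Lemma \ref{lm:curvature_lognondeg}, add them so the Laplacian terms cancel and the gradient terms collapse to $-\brs{d\Phi}^2 \leq 0$, and then invoke the infimum monotonicity of Proposition \ref{p:scalarmon}. The only discrepancy is that your displayed formulas $R^{H,\Psi_1}=\gD\Phi-\IP{d\Psi_1,d\Phi}$ and $R^{H,\Psi_2}=-\gD\Phi+\IP{d\Psi_2,d\Phi}$ carry the opposite signs from the paper's, and yours are the correct ones: the paper's versions as written would sum to $+\brs{d\Phi}^2$ rather than $-\brs{d\Phi}^2$, so its display contains a sign typo which your derivation silently fixes.
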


\subsection{Mabuchi energy monotonicity}

Here we give an extension of the Mabuchi energy to the setting of generalized K\"ahler geometry with holomorphically trivial canonical bundles.  We show that it is monotone along GKRF, with the only possible critical points generalized Calabi-Yau geometries which by Theorem \ref{t:rigidity} are actually K\"ahler, Calabi-Yau.

\begin{defn} Let $(M^{2n}, g, b, I, J)$ be a generalized K\"ahler manifold with holomorphically trivial canonical bundles.  Define the associated \emph{Mabuchi energies} by
\begin{align*}
    \mathcal M_i(g, b, I, J) = \int_M \Phi e^{-\Psi_i} dV_g = \int_M \Phi (\psi_i, \bar{\psi}_i).
\end{align*}
\end{defn}

An elementary application of Jensen's inequality shows that these energies admit one-sided bounds:

\begin{lemma} \label{l:Mabuchibounds} Let $(M^{2n}, g, b, I, J)$ be a generalized K\"ahler manifold with holomorphically trivial canonical bundles.  Then
$$\mathcal M_1\leq\gl\int_M e^{-\Psi_1}dV_g, \qquad \mathcal M_2\geq \gl\int_M e^{-\Psi_2}dV_g.$$
\end{lemma}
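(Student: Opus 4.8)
The plan is to deduce both estimates from a single tool, Jensen's inequality, applied to the two probability measures obtained by normalizing the volume forms $(\psi_i,\bar{\psi}_i)$. All that is needed are the definitional identities $(\psi_i,\bar{\psi}_i)=e^{-\Psi_i}\,dV_g$ and $\Phi=\Psi_1-\Psi_2$, the latter giving the pointwise identity $e^{\Phi}(\psi_1,\bar{\psi}_1)=(\psi_2,\bar{\psi}_2)$, together with the cohomological nature of $\gl$. Writing $A_i:=\int_M(\psi_i,\bar{\psi}_i)$, which by Definition~\ref{d:topCY} are the topological Mukai pairings $[\psi_i]\cdot[\bar{\psi}_i]$, the constant $\gl$ is characterized by $e^{\gl}=A_2/A_1$: it is the value $\Phi$ takes on a generalized Calabi-Yau geometry in the same spinor classes, and integrating $e^{\Phi}(\psi_1,\bar{\psi}_1)=(\psi_2,\bar{\psi}_2)$ with $\Phi\equiv\gl$ gives $e^{\gl}A_1=A_2$.

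For $\mathcal M_1$ I would set $d\mu_1=A_1^{-1}(\psi_1,\bar{\psi}_1)$, a probability measure whose $\Phi$-average is $\mathcal M_1/A_1$. Jensen's inequality for the convex function $t\mapsto e^{t}$ then gives
$$
\exp\!\Big(\tfrac{1}{A_1}\mathcal M_1\Big)\ \le\ \int_M e^{\Phi}\,d\mu_1\ =\ \frac{1}{A_1}\int_M e^{\Phi}(\psi_1,\bar{\psi}_1)\ =\ \frac{1}{A_1}\int_M(\psi_2,\bar{\psi}_2)\ =\ \frac{A_2}{A_1}\ =\ e^{\gl},
$$
and taking logarithms yields $\mathcal M_1\le \gl A_1=\gl\int_M e^{-\Psi_1}\,dV_g$, the first bound.

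The bound for $\mathcal M_2$ is the mirror image. With $d\mu_2=A_2^{-1}(\psi_2,\bar{\psi}_2)$ and the identity $e^{-\Phi}(\psi_2,\bar{\psi}_2)=(\psi_1,\bar{\psi}_1)$, Jensen applied to the convex function $t\mapsto e^{-t}$ gives
$$
\exp\!\Big(-\tfrac{1}{A_2}\mathcal M_2\Big)\ \le\ \int_M e^{-\Phi}\,d\mu_2\ =\ \frac{1}{A_2}\int_M(\psi_1,\bar{\psi}_1)\ =\ \frac{A_1}{A_2}\ =\ e^{-\gl},
$$
whence $\mathcal M_2\ge \gl A_2=\gl\int_M e^{-\Psi_2}\,dV_g$.

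There is no analytic difficulty here; the argument is purely a matter of identifying the correct measures. The only point I would flag is the bookkeeping of sign conventions — one must use $\Phi=\Psi_1-\Psi_2$ and the matching value $e^{\gl}=A_2/A_1$ so that the two normalizations are consistent — after which the inequalities fall out of the strict convexity of the exponential. That same convexity shows each bound is an equality if and only if $\Phi$ is $\mu_i$-almost-everywhere constant, i.e. $\Phi\equiv\gl$, recovering precisely the generalized Calabi-Yau condition and confirming that these bounds are sharp exactly at the fixed points of the flow.
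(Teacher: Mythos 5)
Your proof is correct and is essentially identical to the paper's: the paper also applies Jensen's inequality to the probability measure obtained by normalizing $e^{-\Psi_1}dV_g=(\psi_1,\bar{\psi}_1)$, writing $e^{\gl}=\int e^{\Phi}e^{-\Psi_1}dV_g\,/\int e^{-\Psi_1}dV_g\geq\exp\bigl(\mathcal M_1/\int e^{-\Psi_1}dV_g\bigr)$, and handles $\mathcal M_2$ by the symmetric argument with $e^{-\Phi}$. Your added remarks on the equality case and the cohomological interpretation of the masses $A_i$ are consistent with the paper but not part of its proof.
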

\begin{proof} Using the definitions of $\gl$ and $\Psi_i$ and Jensen's inequality we obtain
\begin{align*}
e^{\gl} = \frac{\int e^{-\Psi_2}dV_g}{\int e^{-\Psi_1}dV_g} = \frac{\int e^{\Phi} e^{-\Psi_1}dV_g}{\int e^{-\Psi_1}dV_g} \geq \exp \left( \frac{\int \Phi e^{-\Psi_1}dV_g}{\int e^{-\Psi_1}dV_g} \right).
\end{align*}
This implies that $\mathcal M_1\leq \gl\int e^{-\Psi_1}dV_g.$ Similarly, one has $\mathcal M_2\geq\gl\int e^{-\Psi_2}dV_g$.
\end{proof}

The bounds of the Mabuchi energies above are in fact preserved along GKRF:

\begin{lemma}\label{l:ev-volume} Let $(M^{2n}, g, b, I, J)$ be a generalized K\"ahler manifold with holomorphically trivial canonical bundles.  Suppose $(g_t, b_t, I, J_t)$ is the solution to GKRF with this initial data.  Then
\begin{align*}\frac{d}{dt}\Big(\int e^{-\Psi_1}dV_g\Big)=0, \qquad 
\frac{d}{dt}\Big(\int e^{-\Psi_2}dV_g\Big)=0.\end{align*}
\end{lemma}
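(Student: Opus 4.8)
The plan is to recognize $\int_M e^{-\Psi_i}\,dV_g$ as a purely topological quantity. By the definition of the partial Ricci potential one has $(\psi_i,\bar{\psi}_i) = e^{-\Psi_i}\,dV_g$, so that $\int_M e^{-\Psi_i}\,dV_g = \int_M(\psi_i,\bar{\psi}_i)$. As observed in Definition \ref{d:topCY}, since $\psi_i$ is $d_{H_0}$-closed and $s(\bar{\psi}_i)$ is then $d_{-H_0}$-closed, the integral $\int_M(\psi_i,\bar{\psi}_i)$ descends to the twisted deRham cohomology and depends only on the class $[\psi_i]\in H^*_{d_{H_0}}(M,\C)$; concretely it is the cohomological pairing $[\psi_i]\cdot[\bar{\psi}_i]$. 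Thus it suffices to show that this class is unchanged along the flow.

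For the second step I would appeal to the spinor formulation of the flow. By (\ref{f:spinorflow}) the defining spinors evolve by $\dt \psi_i = -2\i\,\rho_I\wedge\psi_i$, which is exactly the canonical-deformation flow of Lemma \ref{l:spinorinvariance} with $K = -2\rho_I$. The key point is that this is in fact an \emph{exact} canonical family: by the transgression formula (\ref{f:PhiBismut}) of Proposition \ref{p:transgression} we have $\rho_I = -\tfrac{1}{2}dJd\Phi$, so, setting $a = Jd\Phi$, we get $da = -2\rho_I$. Using $d_{H_0}\psi_i = 0$ together with the fact that the one-form $a$ and the three-form $H_0$ are both odd (so their wedge terms against $\psi_i$ cancel), a short computation gives $\dt \psi_i = \i\,(da)\wedge\psi_i = \i\, d_{H_0}(a\wedge\psi_i)$, whence $\dt [\psi_i] = 0$ in twisted cohomology. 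Combined with the first step this proves $\tfrac{d}{dt}\int_M e^{-\Psi_i}\,dV_g = 0$.

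As an independent check, the same conclusion follows by direct differentiation. Using $\dt \bar{\psi}_i = 2\i\,\rho_I\wedge\bar{\psi}_i$ (as $\rho_I$ is real) one obtains $\tfrac{d}{dt}\int_M(\psi_i,\bar{\psi}_i) = \int_M(-2\i\,\rho_I\wedge\psi_i,\bar{\psi}_i) + \int_M(\psi_i, 2\i\,\rho_I\wedge\bar{\psi}_i)$. Invoking the skew-symmetry $(\eta\wedge\alpha,\beta) = -(\alpha,\eta\wedge\beta)$ of the Mukai pairing for a two-form $\eta$ (which comes from $s(\eta)=-\eta$ and the commutativity of even forms under $\wedge$) collapses this to $4\i\int_M(\psi_i,\rho_I\wedge\bar{\psi}_i)$. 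This vanishes because $\rho_I\wedge\bar{\psi}_i = d_{H_0}\!\left(-\tfrac{1}{2}Jd\Phi\wedge\bar{\psi}_i\right)$ is $d_{H_0}$-exact, and the Mukai pairing of the $d_{H_0}$-closed spinor $\psi_i$ against a $d_{H_0}$-exact form integrates to zero by the integration-by-parts adjointness of $d_{H_0}$.

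The computations involved are routine; the only inputs requiring care are the well-definedness of the Mukai pairing on twisted cohomology (standard, and already used in Definition \ref{d:topCY}) and the exactness of $\rho_I$, which is precisely the nontrivial transgression content of Proposition \ref{p:transgression}. I expect the main obstacle to be purely a matter of sign bookkeeping in the Clifford involution $s$ when verifying the skew-symmetry identity for the Mukai pairing and the exactness identity $da\wedge\psi_i = d_{H_0}(a\wedge\psi_i)$; once these are in place the statement is essentially an immediate consequence of the fact, already recorded in the proof of Lemma \ref{l:spinorinvariance}, that $\int_M(\psi_i^t,\bar{\psi}_i^t) = [\psi_i]\cdot[\bar{\psi}_i]$ is constant along canonical deformations.
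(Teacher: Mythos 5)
Your proposal is correct and follows essentially the same route as the paper: the paper's (very terse) proof likewise combines the spinor evolution (\ref{f:spinorflow}) with the transgression formula (\ref{f:PhiBismut}) to conclude that the twisted cohomology classes $[\psi_i]$ are preserved, and then invokes the identity $e^{-\Psi_i}dV_g=(\psi_i,\bar\psi_i)$ together with Stokes' theorem to see the integral is cohomological. Your write-up simply makes explicit the exactness computation $\dt\psi_i=\i\,d_{H_0}(Jd\Phi\wedge\psi_i)$ and adds a direct-differentiation cross-check, both of which are consistent with the paper's argument.
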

\begin{proof}
    By the evolution equation (\ref{f:spinorflow}) for the spinors and the transgression formula (\ref{f:PhiBismut}), it follows that the cohomology classes of $\psi_i$ are preserved. The the lemma is direct consequence of the definition
    $e^{-\Psi_1}dV_g=(\psi_1,\bar\psi_1)$ and Stokes Theorem.
\end{proof}

To show the monotonicity of the Mabuchi energies along GKRF, we require several key identities for the partial Ricci potentials which are used in the calculation, which are proved via the perturbation method.
\begin{lemma}\label{psi-theta} Let $(M^{2n}, g, b, I, J)$ be a generalized K\"ahler manifold with holomorphically trivial canonical bundles.  Then
\begin{align*}
&\langle d\Phi,d\Psi_2-\theta_I\rangle=\frac{1}{2}\Big(\langle  Id\Phi,Jd\Phi\rangle-|d\Phi|^2\Big),\\
&\langle Jd\Phi, Id\Psi_2-I\theta_I\rangle=\frac{1}{2}\Big( |d\Phi|^2-\langle Id\Phi,Jd\Phi\rangle\Big),\\
&\langle d\Phi,d\Psi_1-\theta_I\rangle=\frac{1}{2}\Big(\langle  Id\Phi,Jd\Phi\rangle+|d\Phi|^2\Big),\\
&\langle Jd\Phi, Id\Psi_1-I\theta_I\rangle=\frac{1}{2}\Big(\langle  Id\Phi,Jd\Phi\rangle+|d\Phi|^2\Big).
\end{align*}
\end{lemma}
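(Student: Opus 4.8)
The plan is to derive all four identities as pointwise algebraic consequences of the transgression formula \eqref{f:PsiLeeform}, together with $\Phi = \Psi_1 - \Psi_2$ and the fact that $I$ and $J$ are $g$-orthogonal complex structures. Although this lemma is grouped among the identities ``proved via the perturbation method,'' in fact no further perturbation is needed once \eqref{f:PsiLeeform} is in hand: the remaining content is linear algebra on $T^*M$. First I would eliminate $\theta_J$. Adding the two equations in \eqref{f:PsiLeeform} gives $2 I \theta_I = (I-J)d\Psi_1 + (I+J) d\Psi_2$, and applying $-I$ (using $I^2 = -\Id$, and $d\Phi = d\Psi_1 - d\Psi_2$) yields the clean expression
\[
2\theta_I = d\Psi_1 + d\Psi_2 + IJ\,d\Phi.
\]
This holds for an arbitrary generalized K\"ahler structure with holomorphically trivial canonical bundles, with no invertibility of $I \pm J$ required.

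From this the shifted potentials appearing in the lemma collapse to multiples of $d\Phi$:
\[
d\Psi_2 - \theta_I = -\tfrac12(\Id + IJ)\,d\Phi, \qquad d\Psi_1 - \theta_I = \tfrac12(\Id - IJ)\,d\Phi.
\]
I would then substitute these into the four inner products and simplify using three elementary facts: $I$ is skew-adjoint for $g$, so $\langle d\Phi, IJ\,d\Phi\rangle = -\langle I d\Phi, J d\Phi\rangle$; one has $I(\Id \pm IJ)\,d\Phi = I d\Phi \mp J d\Phi$ since $I^2 = -\Id$; and $J$ is a $g$-isometry, so $\brs{Jd\Phi}^2 = \brs{d\Phi}^2$. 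For instance the first identity reduces to $\langle d\Phi, -\tfrac12(\Id + IJ)d\Phi\rangle = -\tfrac12\big(\brs{d\Phi}^2 - \langle I d\Phi, J d\Phi\rangle\big)$, which is exactly the claim; the remaining three follow by the same manipulation, with the $I$ in front of $d\Psi_i - \theta_I$ in the second and fourth identities absorbed via $I(\Id \pm IJ)d\Phi = I d\Phi \mp J d\Phi$.

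I expect no genuine obstacle beyond careful bookkeeping of the signs produced by the skew-adjointness of $I$ and by $I^2 = J^2 = -\Id$, which is where an error would most easily creep in. As an internal consistency check, the analogous computation (subtracting the first equation of \eqref{f:PsiLeeform} from the second, then applying $-J$) gives $2\theta_J = d\Psi_1 + d\Psi_2 + JI\,d\Phi$, whence $2(\theta_I - \theta_J) = [I,J]\,d\Phi$; comparison with \eqref{f:PhiPoisson} reproduces $\theta_I^\sharp - \theta_J^\sharp = \sigma d\Phi$, confirming the normalization of the key intermediate formula and thereby validating the whole computation.
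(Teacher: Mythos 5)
Your proposal is correct and takes essentially the same route as the paper: the paper's proof also adds the two equations of \eqref{f:PsiLeeform} to solve for $\theta_I$ (recording it as $\theta_I = -\tfrac{1}{2}I\left((I-J)d\Psi_1+(I+J)d\Psi_2\right)$, which is exactly your $2\theta_I = d\Psi_1 + d\Psi_2 + IJ\,d\Phi$ in expanded form) and then declares the four identities a straightforward pointwise verification using $\Phi = \Psi_1 - \Psi_2$. You have simply filled in the linear-algebra details (skew-adjointness of $I$, the identities $I(\Id \pm IJ)d\Phi = Id\Phi \mp Jd\Phi$, and $|Jd\Phi| = |d\Phi|$) that the paper omits.
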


\begin{proof}
It follows from~\eqref{f:PsiLeeform} that 
\[
\theta_I=-\frac{1}{2}I\left(
    (I-J)d\Psi_1+(I+J)d\Psi_2
\right).
\]
It is now straightforward to verify all 4 identities, keeping in mind that $\Phi=\Psi_1-\Psi_2$.
\end{proof}

\begin{prop}\label{p:Mabuchimon} Let $(M^{2n}, g, b, I, J)$ be a generalized K\"ahler manifold with holomorphically trivial canonical bundles.  Suppose $(g_t, b_t, I, J_t)$ is the solution to GKRF with this initial data.  Then
\begin{align*}
\frac{d}{dt} \mathcal M_1 =\int |d\Phi|^2_ge^{-\Psi_1}dV_g, \qquad 
\frac{d}{dt} \mathcal M_2 =-\int |d\Phi|^2_ge^{-\Psi_2}dV_g.\end{align*}
\end{prop}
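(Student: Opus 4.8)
The plan is to differentiate the energy under the integral sign and feed everything into the two facts already in hand: the heat equation $\square\Phi=0$ from Lemma \ref{l:Riccipotential}, and the pointwise identities of Lemma \ref{psi-theta}. Since the involution $J\mapsto -J$ fixes $I$, preserves the GKRF system (\ref{f:GKRFpaper}) (because $\rho_I$, $\theta_I$ are unchanged and $\theta_{-J}=\theta_J$), and interchanges $(\psi_1,\Psi_1)\leftrightarrow(\psi_2,\Psi_2)$ while sending $\Phi\mapsto-\Phi$, it suffices to prove the formula for $\mathcal M_2$ and then apply the involution to obtain the one for $\mathcal M_1$. Writing $d\mu_2:=e^{-\Psi_2}dV_g=(\psi_2,\bar\psi_2)$, I would start from
\[
\frac{d}{dt}\mathcal M_2=\int_M(\dt\Phi)\,d\mu_2+\int_M\Phi\,\dt(d\mu_2).
\]

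For the first term I would use $\dt\Phi=\Delta^{C,I}\Phi$ and integrate by parts against the \emph{weighted} measure $d\mu_2$. Writing $\Delta^{C,I}f=\Delta f-\langle df,\theta_I\rangle$ and using $d\,e^{-\Psi_2}=-e^{-\Psi_2}d\Psi_2$ gives
\[
\int_M(\Delta^{C,I}\Phi)\,d\mu_2=\int_M\langle d\Phi,\,d\Psi_2-\theta_I\rangle\,d\mu_2,
\]
and the first identity of Lemma \ref{psi-theta} rewrites the integrand as $\tfrac12\big(\langle Id\Phi,Jd\Phi\rangle-|d\Phi|^2\big)$. So the first term contributes $\tfrac12\int_M(\langle Id\Phi,Jd\Phi\rangle-|d\Phi|^2)\,d\mu_2$.

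The heart of the matter is the second term, the evolution of the measure. Here I would compute $\dt d\mu_2=\dt(\psi_2,\bar\psi_2)$ directly from the spinor flow (\ref{f:spinorflow}) together with the transgression $\rho_I=-\tfrac12\,dJd\Phi$ of Proposition \ref{p:transgression}: since $d_{H_0}\psi_2=0$ one checks $dJd\Phi\wedge\psi_2=d_{H_0}(Jd\Phi\wedge\psi_2)$, whence $\dt\psi_2=-2\i\rho_I\wedge\psi_2=\i\,d_{H_0}(Jd\Phi\wedge\psi_2)$. Applying the twisted Stokes/Leibniz rule for the Mukai pairing (the $H_0$-terms cancel, and $d_{H_0}\bar\psi_2=0$) yields $\dt d\mu_2=2\i\,d\big[(Jd\Phi\wedge\psi_2,\bar\psi_2)_{2n-1}\big]$, where the subscript denotes the codimension-one part of the form-valued pairing, so that after one integration by parts
\[
\int_M\Phi\,\dt d\mu_2=-2\i\int_M d\Phi\wedge(Jd\Phi\wedge\psi_2,\bar\psi_2)_{2n-1}.
\]
The pairing on the right is, algebraically, a generalized-trace multiple of $d\mu_2$; equivalently, on a nondegenerate background $\dt\log d\mu_2=-(\tr_{\gw_I}\rho_I+\tr_{\gw_J}\rho_I)$ and substituting $\rho_I=-\tfrac12 dJd\Phi$ and integrating by parts produces exactly the combinations $\langle Jd\Phi,Id\Psi_2-I\theta_I\rangle$ and $\langle d\Phi,d\Psi_2-\theta_I\rangle$ appearing in Lemma \ref{psi-theta}. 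I expect this term to evaluate to $-\tfrac12\int_M(|d\Phi|^2+\langle Id\Phi,Jd\Phi\rangle)\,d\mu_2$.

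Adding the two contributions, the indefinite cross terms $\langle Id\Phi,Jd\Phi\rangle$ cancel and one is left with $\tfrac{d}{dt}\mathcal M_2=-\int_M|d\Phi|^2\,d\mu_2$, as claimed; the formula for $\mathcal M_1$ then follows from the $J\mapsto -J$ symmetry. The main obstacle is precisely the bookkeeping in the measure-evolution term: one must evaluate the Mukai pairing (equivalently, handle the \emph{mixed} curvature trace $\tr_{\gw_I}\rho_I$ coming from the volume evolution) and verify that the cross terms it produces are exactly the negatives of those from the first term. I anticipate the cleanest route is to establish the required pointwise identity on a nondegenerate background, where $\Psi_2=\tfrac12\log\det(I+J)$ and all pairings are explicit, and then pass to the general holomorphically-trivial case via Theorem \ref{thm:nondeg_perturb}, exactly as in the proofs of Proposition \ref{p:transgression} and Lemma \ref{psi-theta}.
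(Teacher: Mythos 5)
Your proposal is correct and takes essentially the same route as the paper: the paper likewise splits $\tfrac{d}{dt}\mathcal M_2$ into the $\partial_t\Phi=\Delta^{C,I}\Phi$ term and the measure-evolution term, establishes the latter on a nondegenerate background (where $\partial_t\Psi_2=\tr_{\omega_J}\rho_I-\tr_{\omega_I}\rho_I$, combined with $\rho_I=-\tfrac12\,dJd\Phi$) and extends by the perturbation argument of Theorem \ref{thm:nondeg_perturb}, then cancels the $\langle Id\Phi,Jd\Phi\rangle$ cross terms via weighted integration by parts and Lemma \ref{psi-theta} --- and your expected value $-\tfrac12\int_M\left(|d\Phi|^2+\langle Id\Phi,Jd\Phi\rangle\right)e^{-\Psi_2}dV_g$ for the measure term is exactly what the paper's identities (\ref{d}) and (\ref{e}) produce. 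The only cosmetic difference is your $J\mapsto -J$ involution to deduce the $\mathcal M_1$ formula, where the paper simply notes that the computation is analogous.
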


\begin{proof} We first claim that $\Psi_2$ satisfies
\begin{equation}\label{hhhhh}
    \frac{\partial\Psi_2}{\partial t}=-2\tr_{\omega_I}\rho_I-\frac{1}{2}\Delta^{C,J}\Phi-\frac{1}{2}\tr_{\omega_I}dd^c_J\Phi.
\end{equation}
Aiming to use the perturbation method, we first note that in the nondegenerate case,
by identity (\ref{e:evolution-Psi}) in Lemma \ref{l:Riccipotential}, one has 
$$\frac{\partial}{\partial t} \Psi_+ = 2 \left(\tr_{F_+} \rho_I - \tr_{\omega_I} \rho_I \right).$$ 
It follows from the identity $\rho_I=-\frac{1}{2}dJd\Phi$ (cf.\,Proposition \ref{f:PsiLeeform}) that
$$\frac{\partial\Psi_+}{\partial t}=-2\tr_{\omega_I}\rho_I-\frac{1}{2}\tr_{\omega_I+\omega_J}dd^c_J\Phi=-2\tr_{\omega_I}\rho_I-\frac{1}{2}\Delta^{C,J}\Phi-\frac{1}{2}\tr_{\omega_I}dd^c_J\Phi.$$Applying the perturbation argument equation (\ref{hhhhh}) follows.

We also calculate the evolution of $\mu_2:=\Phi e^{-\Psi_2}dV_g$, using the flow equations for $g$ and Lemma \ref{l:Riccipotential},
\begin{equation*}
\dt\mu_2=\Delta^{C,I}\Phi e^{-\Psi_2}dV_g+\Phi(-\frac{\partial\Psi_2}{\partial t}-2\tr_{\omega_I}\rho_I) e^{-\Psi_2}dV_g.
\end{equation*}
It follows from identity (\ref{hhhhh}) that
\begin{align*}
    \frac{d}{dt} {\mathcal M_2}=\int\Delta^{C,I}\Phi e^{-\Psi_2}+\Phi\Big(\frac{1}{2}\Delta^{C,J}\Phi+\frac{1}{2}\tr_{\omega_I}dd^c_J\Phi\Big)e^{-\Psi_2}dV_g.
\end{align*}
We further compute,
\begin{equation}\label{d}
    \begin{split}
        %\int \Delta^{C,J}\Phi e^{-\Psi_2}dV_g=&\ 
        \int \Delta^{C,I}\Phi e^{-\Psi_2}dV_g =&\ \int\Big( \Delta\Phi-\langle \theta_I,d\Phi\rangle\Big) e^{-\Psi_2}dV_g\\
        =&\ \int \langle d\Phi, d\Psi_2-\theta_I \rangle e^{-\Psi_2}dV_g\\
        =&\frac{1}{2}\int\Big(\langle  Id\Phi,Jd\Phi\rangle-|d\Phi|^2\Big)e^{-\Psi_2}dV_g\\
        =&\ \frac{1}{2}\int\langle Id\Phi, Jd\Phi\rangle e^{-\Psi_2}dV_g\\
        &\ \qquad +\frac{1}{2}\int \Phi\Big(\Delta^{C,I}\Phi+\langle d\Phi,\theta_I\rangle-\langle d\Phi,d\Psi_2\rangle\Big) e^{-\Psi_2}dV_g\\
        =&\ \frac{1}{2}\int\langle Id\Phi, Jd\Phi\rangle e^{-\Psi_2}dV_g\\
        &\ \qquad \frac{1}{2}\int \Phi\Big(\Delta^{C,I}\Phi +\frac{1}{2} |d\Phi|^2-\frac{1}{2}\langle Id\Phi, Jd\Phi\rangle\Big)e^{-\Psi_2}dV_g,
    \end{split}
\end{equation}
where  the third equation follows from the first identity from Lemma \ref{psi-theta}, the fourth equation follows from integration by part and the fifth equation  follows from the first identity of Lemma \ref{psi-theta}.   We also compute the further identity
\begin{equation}\label{e}
    \begin{split}
        \int\langle Id\Phi, & Jd\Phi\rangle e^{-\Psi_2}dV_g +\tr_{\omega_I}(dd^c_J\Phi)\Phi e^{-\Psi_2}dV_g\\
        &=\int\Big(\langle d\Phi\wedge Jd\Phi,\omega_I\rangle+\Phi\langle dJd\Phi,\omega_I\rangle\Big)e^{-\Psi_2}dV_g\\
        &=\int \langle d(\Phi Jd\Phi),\omega_I\rangle e^{-\Psi_2}dV_g\\
        &=\int \langle \Phi Jd\Phi, d^*(e^{-\Psi_2}\omega_I)\rangle dV_g\\
        &=\int \Phi\Big(\langle Jd\Phi, d^*\omega_I\rangle+\langle d\Psi_2\wedge Jd\Phi,\omega_I\rangle\Big)e^{-\Psi_2}dV_g\\
      &=\int \Phi \langle Jd\Phi, Id\Psi_2-I\theta_I\rangle e^{-\Psi_2}dV_g\\
      &=\frac{1}{2}\int \Phi\Big(|d\Phi|^2-\langle Id\Phi, Jd\Phi\rangle\Big)e^{-\Psi_2}dV_g.
    \end{split}
\end{equation}
where the last line follows from the second identity of Lemma \ref{psi-theta}.  Using the identity $\Delta^{C,J}\Phi=\Delta^{C,I}\Phi$ and combining identities (\ref{d}), (\ref{e}) yields
\begin{equation*}
    \frac{d}{dt} \mathcal M_2 =\int \Phi  \, \Big[\Delta_{g}^{C, I} \Phi  + \frac{1}{2}\left(|d\Phi|^2 - \langle Jd\Phi, I d\Phi \rangle\right)\Big]e^{-\Psi_2}dV_g.
\end{equation*}
 Finally we compute
\begin{equation*}
\begin{split}
\int\Phi \Delta_{g}^{C, I} \Phi e^{-\Psi_2}dV_g &=
\int\Phi \Delta_{g} \Phi e^{-\Psi_2}dV_g-\int\langle \theta_I,d\Phi\rangle e^{-\Psi_2}\Phi dV_g\\
&=-\int|d\Phi|^2e^{-\Psi_2}dV_g+\int\langle d\Phi, d\Psi_2-\theta_I\rangle e^{-\Psi_2}\Phi dV_g\\
&=-\int|d\Phi|^2e^{-\Psi_2}dV_g+\int\frac{1}{2}\Big(\langle  Id\Phi,Jd\Phi\rangle-|d\Phi|^2\Big)e^{-\Psi_2}dV_g,
\end{split}
\end{equation*}
where the last line uses the first identity of Lemma \ref{psi-theta}. Rearranging finishes the claim for $\mathcal M_2$.  The calculation for $\mathcal M_1$ is similar and left to the reader.
\end{proof}

\subsection{Main result}
At his point we have all the ingredients to prove our first main result on GKRF on a generalized K\"ahler manifold with holomorphically trivial canonical bundles.

\begin{thm} \label{t:GKRFprops} Let $(M^{2n}, g, b, I, J)$ be a compact generalized K\"ahler manifold with holomorphically trivial canonical bundles.  Let $(g_t, b_t, I, J_t)$ be the solution to generalized K\"ahler-Ricci flow with this initial data.  The following hold:
\begin{enumerate}
    \item The canonical bundles of $\JJ_i^t$ are holomorphically trivial for all time, defined by closed pure spinors $\psi_1^t \in [\psi_1^0] = \ga, \psi_2^t \in [\psi_2^0] = \gb$.  Furthermore
    \begin{align*}
    (g_t, b_t, I, J_t) \in [(g_0,b_0,I,J_0)] \subset \mathcal{GK}^{I,\sigma}_{\ga,\gb}.
    \end{align*}
    \item For any smooth existence time $t$ one has Ricci potential bounds
    \begin{align*}
        \sup_{M \times \{t\}} \left( \Phi^2 + t \brs{\N \Phi}^2 \right) \leq \sup_{M \times \{0\}} \Phi^2.
    \end{align*}
    \item For any smooth existence time $t$ one has scalar curvature bounds
    \begin{align*}
        \inf_{M \times \{0\}} R^{H,\Psi_1} \leq&\ R^{H,\Psi_1} (\cdot, t) \leq - \inf_{M \times \{0\}} R^{H,\Psi_2},\\
        \inf_{M \times \{0\}} R^{H,\Psi_2} \leq&\ R^{H,\Psi_2} (\cdot, t) \leq - \inf_{M \times \{0\}} R^{H,\Psi_1}.
    \end{align*}
    \item There exist Mabuchi-type functionals $\mathcal M_i := \int_M \Phi (\psi_i, \bar{\psi_i})$ whose only critical points are generalized Calabi-Yau geometries, and which are bounded and monotone along GKRF.
\end{enumerate}
\end{thm}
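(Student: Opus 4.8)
The plan is to assemble the theorem from the estimates and monotonicity formulas established throughout this section, with each item corresponding to a previously proven statement; no new analysis is required, only a careful collation together with one observation for item (1).

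For item (1) I would start from the spinor flow equation (\ref{f:spinorflow}), $\dt \psi_i = - 2 \i \rho_I \wedge \psi_i$. The key point is to invoke the transgression formula (\ref{f:PhiBismut}), which gives $\rho_I = - \tfrac{1}{2} d J d \Phi$, so that the driving two-form is $- 2 \rho_I = d J d \Phi = d(J d \Phi)$, which is \emph{exact}. Comparing with Lemma \ref{l:spinorinvariance} (taking $K_t = - 2 \rho_I$, so that $K_t = d a_t$ with $a_t = J d \Phi$), this identifies GKRF as an exact canonical family in the sense of Definition \ref{d:GKclass}. Hence the twisted deRham classes $[\psi_i^t]$ are constant, $\JJ_i^t = \JJ_{\psi_i^t}$ retains holomorphically trivial canonical bundles for all time, and $(g_t, b_t, I, J_t)$ remains in the generalized K\"ahler class $[(g_0, b_0, I, J_0)] \subset \mathcal{GK}^{I,\sigma}_{\ga,\gb}$. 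Item (2) is then verbatim Proposition \ref{p:gradientdecay}. For item (3), the lower bounds are exactly Proposition \ref{p:scalarmon}, which follows from the dilaton-flow structure of the $\Psi_i$ (Lemma \ref{l:Riccipotential}) and the maximum principle applied to $\square R^{H,\Psi_i} = 2 \brs{\Rc^{H,\Psi_i}}^2 \geq 0$; the upper bounds are Proposition \ref{p:scalarupper}, which exploits the pointwise identity $R^{H,\Psi_1} + R^{H,\Psi_2} = - \brs{d \Phi}^2 \leq 0$ to convert the lower bound on one weighted scalar curvature into an upper bound on the other.

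Item (4) requires combining three ingredients. Proposition \ref{p:Mabuchimon} gives $\frac{d}{dt} \mathcal M_1 = \int_M |d \Phi|^2 e^{-\Psi_1} dV_g \geq 0$ and $\frac{d}{dt} \mathcal M_2 = - \int_M |d \Phi|^2 e^{-\Psi_2} dV_g \leq 0$, so $\mathcal M_1$ is monotone nondecreasing and $\mathcal M_2$ nonincreasing. The critical-point characterization is then immediate: these derivatives vanish precisely when $d \Phi \equiv 0$, i.e.\ $\Phi$ is constant, which is the defining condition of a generalized Calabi-Yau geometry (Definition \ref{d:topCY}), and by Theorem \ref{t:rigidity} such a structure is in fact K\"ahler, Ricci-flat. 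For boundedness I would combine the one-sided Jensen bounds of Lemma \ref{l:Mabuchibounds}, namely $\mathcal M_1 \leq \gl \int_M e^{-\Psi_1} dV_g$ and $\mathcal M_2 \geq \gl \int_M e^{-\Psi_2} dV_g$, with Lemma \ref{l:ev-volume}, which shows the quantities $\int_M e^{-\Psi_i} dV_g$ are constant along the flow. Thus $\mathcal M_1$ is nondecreasing and bounded above by a fixed constant, while $\mathcal M_2$ is nonincreasing and bounded below by a fixed constant, yielding uniform two-sided bounds in each case.

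Since every analytic input is already in place, the one genuinely substantive step is the observation in item (1): recognizing the flow as an \emph{exact} canonical family rather than merely a cohomological one. This is exactly what the transgression formula (\ref{f:PhiBismut}) buys, and it is what upgrades the conclusion from mere preservation of the cohomological class $\mathcal{GK}^{I,\sigma}_{\ga,\gb}$ to the sharper statement that the structure stays in the generalized K\"ahler class $[(g_0, b_0, I, J_0)]$. The remaining effort is bookkeeping: tracking signs consistently between the two partial Ricci potentials so that the monotonicity directions and the Jensen bounds line up to give two-sided control of both Mabuchi energies.
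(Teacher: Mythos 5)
Your proposal is correct and follows essentially the same route as the paper: item (1) via the spinor flow, the transgression formula $\rho_I = -\tfrac{1}{2} dJd\Phi$ identifying GKRF as an \emph{exact} canonical family, and items (2)--(4) by direct citation of Propositions \ref{p:gradientdecay}, \ref{p:scalarmon}, \ref{p:scalarupper}, \ref{p:Mabuchimon} together with Lemmas \ref{l:Mabuchibounds} and \ref{l:ev-volume}. Your explicit spelling-out of the critical-point characterization ($d\Phi \equiv 0$ iff $\Phi$ constant) and of the sign bookkeeping for the two Mabuchi energies is slightly more detailed than the paper's proof but substantively identical.
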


\begin{proof}%[Proof of Theorem \ref{t:GKRFprops}]
As discussed at the start of this section the flow will be defined by global closed pure spinors as long as it is smooth.  By the evolution equation (\ref{f:spinorflow}) for the spinors and the transgression formula (\ref{f:PhiBismut}) it follows that the cohomology classes of $\psi_i$ are preserved.  Furthermore as explained in equation (\ref{f:GCflow}), the GKRF defines a canonical family as in Definition \ref{d:canonicalfamily}, which in fact by (\ref{f:PhiBismut}) is exact, thus by definition the generalized K\"ahler class is preserved.  In particular $I$ and $\gs$ are preserved, so the flow remains in $\mathcal{GK}^{I,\sigma}_{[\psi_1^0],[\psi_2^0]}$, as claimed, finishing the proof of item (1).  Item (2) is Proposition \ref{p:gradientdecay}, item (3) follows from Propositions \ref{p:scalarmon} and \ref{p:scalarupper}.  The monotonicity of the Mabuchi energies follows from Proposition \ref{p:Mabuchimon}, whereas the one-sided bounds follow from Lemma \ref{l:Mabuchibounds}.  These bounds are in terms of the spinor masses which are canonically associated to the spinor classes, which by Lemma \ref{l:ev-volume} are invariant under the GKRF.
\end{proof}

\section{Global existence on K\"ahler backgrounds}

The main goal of this section is to prove item $(1)$ of Theorem \ref{t:longcon}.  First, let we fix some notations. Let $(M^{2n}, g, b, I, J)$ be a compact generalized K\"ahler manifold with holomorphically trivial canonical bundles.  We further assume that $(M,I)$ is K\"ahler.  We shall prove the long time existence of GKRF under this setup.  We remark that if the initial data is nondegenerate, the long time existence is proved in \cite{ASnondeg}.  The proof here is similar to the proof of the nondegenerate case, where the key new input is the transgression formula for the Ricci curvature (Proposition \ref{p:transgression}), which yields a number of useful a priori estimates.  The reason for the restriction to K\"ahler backgrounds is that this yields a number of sharp one-sided differential inequalities for some delicate quantities in the $1$-form reduced flow (cf.\,Lemma \ref{l:reducedestimates}), which in general have reaction terms which are difficult to control.

\subsection{The \texorpdfstring{$1$}{1}-form/scalar reduction}\label{s:reducedflow}

In this subsection we record various evolution equations and estimates for a reduction of the pluriclosed flow to a certain system coupling a
$(1,0)$-form with a scalar.  We exploit the
Calabi-Yau background to simplify the background terms needed to define this
reduced equation.  To describe this reduction, first note that since $(M, I)$ is assumed K\"ahler it follows from the theorem of Demailly-Paun \cite{DemaillyPaun} (cf.\,\cite[Proposition 6.1]{ASnondeg}) that there exists a K\"ahler metric in the Aeppli class of $\gw_I$.  By Proposition \ref{p:trivialcanonical} we know that $c_1(M, I) = 0$, thus by Yau's theorem \cite{YauCC} there exists a Calabi-Yau metric $\gw_{CY}$ in the Aeppli class of $\gw_I$.  In particular, there exists a Calabi-Yau metric $\gw_{CY}$ and $\ga \in
\Lambda^{1,0}_I(T^*M)_{\C}$ such that
\begin{align*}
\gw_I = \gw_{CY} + \del \bga + \delb \ga.
\end{align*}

\begin{lemma} \label{oneformreduction} Let $(M, I, g_t)$ be a
solution to pluriclosed flow, and suppose $\ga_t \in \Lambda^{1,0}_I(T^*M_{\C})$
satisfies
\begin{gather} \label{alphaflow}
\begin{split}
\dt \ga =&\ \delb^*_{\gw_t} \gw_t - \frac{{\i}}{2}\del \log \tfrac{\det
g_t}{\det
g_{CY}}\\
\ga(0) =&\ \ga_0,
\end{split}
\end{gather}
then the one-parameter family of pluriclosed metrics $\gw_{\ga} =
\gw_{CY}
+
\delb
\ga + \del \bga$ is the given solution to pluriclosed flow.
\begin{proof} This is a simple modification of (\cite[Lemma 3.2]{StreetsPCFBI}).
\end{proof}
\end{lemma}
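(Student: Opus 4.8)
The plan is to verify directly that the family $\gw_{\ga}$ agrees with the given solution $\gw_t$ of pluriclosed flow by differentiating in $t$ and matching initial data, so that no separate uniqueness statement is required. The essential input is the reduced (Born--Infeld type) form of pluriclosed flow underlying \cite[Lemma 3.2]{StreetsPCFBI}: expressing the $(1,1)$-part of the Bismut--Ricci form through the Chern Laplacian of $\gw$ and the Chern--Ricci form, the flow reads
\[
\dt \gw_t = \del\del^*_{\gw_t}\gw_t + \delb\delb^*_{\gw_t}\gw_t + \i\,\del\delb \log \tfrac{\det g_t}{\det g_{CY}}.
\]
Here the fixed reference volume $\det g_{CY}$ is inserted at no cost: since $\gw_{CY}$ is Calabi--Yau one has $\del\delb\log\det g_{CY} = 0$, so the last term is unaffected, while the ratio $\det g_t/\det g_{CY}$, unlike $\det g_t$ alone, is a globally defined positive function. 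This is exactly what makes the right-hand side of \eqref{alphaflow} a well-defined global $(1,0)$-form.

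First I would differentiate $\gw_{\ga} = \gw_{CY} + \delb\ga + \del\bga$, using that $\gw_{CY}$ is time-independent, to obtain $\dt\gw_{\ga} = \delb(\dt\ga) + \del(\overline{\dt\ga})$. Substituting \eqref{alphaflow} and its conjugate, and using that $\gw_t$ is real so that $\overline{\delb^*_{\gw_t}\gw_t} = \del^*_{\gw_t}\gw_t$ and $\overline{-\tfrac{\i}{2}\del f} = \tfrac{\i}{2}\delb f$ for the real function $f = \log\tfrac{\det g_t}{\det g_{CY}}$, the two Laplacian terms reassemble to $\del\del^*_{\gw_t}\gw_t + \delb\delb^*_{\gw_t}\gw_t$, and the two $\del\delb$-contributions add rather than cancel. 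Indeed, applying $\delb\del = -\del\delb$ shows that the coefficient $\tfrac{1}{2}$ in \eqref{alphaflow} is precisely doubled upon symmetrizing $\delb(\dt\ga) + \del(\overline{\dt\ga})$, reproducing the full term $\i\,\del\delb f$. Thus $\dt\gw_{\ga}$ equals the displayed right-hand side of the reduced flow, evaluated at the given metric $\gw_t$.

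Since $g_t$ solves pluriclosed flow, that right-hand side is exactly $\dt\gw_t$, whence $\dt(\gw_{\ga} - \gw_t) \equiv 0$. At $t = 0$ the decomposition $\gw_I = \gw_{CY} + \del\bga_0 + \delb\ga_0$ fixed just before the statement gives $\gw_{\ga}(0) = \gw_I = \gw_t(0)$, so the two families coincide for all $t$; positivity and the pluriclosed condition for $\gw_{\ga}$ then follow automatically from $\gw_{\ga} = \gw_t$. The step most in need of care --- and the only thing separating this from the cited lemma --- is matching the precise constants in the reduced form of pluriclosed flow against the factor $\tfrac{1}{2}$ in \eqref{alphaflow}; once the conjugation identities above are in hand this is pure bookkeeping, with no analytic obstacle, which is why the result is a ``simple modification'' of \cite[Lemma 3.2]{StreetsPCFBI}.
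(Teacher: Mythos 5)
Your proof is correct and is essentially the computation that the paper's one-line citation to \cite[Lemma 3.2]{StreetsPCFBI} stands for: differentiate the ansatz $\gw_{\ga} = \gw_{CY} + \delb\ga + \del\bga$, use the expanded form of pluriclosed flow $\dt\gw = \del\del^*_{\gw}\gw + \delb\delb^*_{\gw}\gw + \i\,\del\delb\log\det g$ together with $\i\,\del\delb\log\det g_{CY}=0$ on the Calabi--Yau background, and match initial data via the decomposition $\gw_I = \gw_{CY} + \del\bga_0 + \delb\ga_0$ fixed just before the lemma. Your constant bookkeeping (the $\tfrac{1}{2}$ in \eqref{alphaflow} doubling under symmetrization to give the full $\i\,\del\delb$ term) is exactly right, and reading the right-hand side of \eqref{alphaflow} as driven by the given solution $\gw_t$, so that no separate uniqueness statement for pluriclosed flow is needed, is faithful to the lemma as stated.
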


We note that the natural local decomposition of a pluriclosed metric as $\gw =
\gw_{CY}+
\del \bga +
\delb \ga$ is not canonical, as one may observe that $\ga + \del f$ describes
the same K\"ahler form for $f \in C^{\infty}(M, \mathbb R)$.  Due to this invariance,  the equation (\ref{alphaflow}) is not parabolic, and there are infinite dimensional families of equivalent solutions.  In \cite{StreetsPCFBI} the third author
resolved this
ambiguity by giving a further reduced description of (\ref{alphaflow}) which is
parabolic.  In particular, as exhibited in \cite[Proposition 3.9]{StreetsPCFBI} in
the case when the background metric is fixed and K\"ahler, if one has a family
of
functions $f_t$ and $(1,0)$-forms $\gb_t$ which satisfy
\begin{gather} \label{decflow}
\begin{split}
\square \gb =&\ - T \circ \delb \gb,\\
\square f =&\ \tr_{g_t} g_{CY} + \log \tfrac{\det
g_t}{\det g_{CY}},\\
\ga_0 =&\ \gb_0 - \frac{\i}{2} \del f_0,
\end{split}
\end{gather}
then $\ga_t := \gb_t - \frac{\i}{2} \del f_t$ is a solution to (\ref{alphaflow}).  The
term $T \circ \delb \gb$ is defined by
\begin{align*}
(T \circ \delb \gb)_i = g^{\bl k} g^{\bq p} T_{ik\bq} \N_{\bl} \gb_p.
\end{align*}

\subsection{Global existence}

In this subsection we establish global
existence of the generalized K\"ahler-Ricci flow on a Calabi-Yau background following the argument of \cite{ASnondeg, StreetsND}.  We first record a lemma with useful differential inequalities satisfied by the reduced flow (\ref{decflow}), and a lemma with general evolution equations for pluriclosed flow.

\begin{lemma} \label{l:reducedestimates} (cf.\,\cite[\S 6]{ASnondeg}, \cite{StreetsPCFBI}) Given a
solution to (\ref{decflow})
as above, one has
\begin{gather*}
\begin{split}
\square \tfrac{\del f}{\del t} =&\ \IP{\tfrac{\del
g}{\del t}, \delb \gb + \del \bgb},\\
\square \brs{\gb}^2 =&\ - \brs{\N \gb}^2 -
\brs{\bar{\N} \gb}^2 -
\IP{Q, \gb \otimes \bar{\gb}} + 2 \Re \IP{\gb, T \circ \delb \gb} \leq 0,\\
\square \brs{\del \ga}^2 =&\ - \brs{\N \del \ga}^2 -
\brs{T}^2 - 2
\IP{Q, \del \ga \otimes \delb \bga} \leq 0.
\end{split}
\end{gather*}
where
\begin{align*}
Q_{i \bj} = g^{\bl k} g^{\bq p} T_{i k \bq} T_{\bj \bl p}.
\end{align*}
\end{lemma}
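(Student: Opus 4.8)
The plan is to derive each identity by a direct parabolic computation from the reduced flow (\ref{decflow}), following the method of \cite[\S 6]{ASnondeg} and \cite{StreetsPCFBI}. Two structural facts will be used repeatedly. First, $\del\ga = \del\gb$, since $\ga = \gb - \tfrac{\i}{2}\del f$ and $\del^2 f = 0$; consequently $\brs{\del\ga}^2 = \brs{\del\gb}^2$, and the $(2,1)$-torsion of $\gw_{\ga} = \gw_{CY} + \delb\ga + \del\bga$ satisfies $T = \del\gw_{\ga} = \del\delb\ga = -\delb(\del\ga)$, using $\del\gw_{CY} = 0$. Second, the Ricci-flatness of $\gw_{CY}$ is what guarantees that the scalar equation $\square f = \tr_{g_t}g_{CY} + \log\tfrac{\det g_t}{\det g_{CY}}$ and the $\gb$-equation $\square\gb = -T\circ\delb\gb$ carry no additional background forcing terms; it is precisely this that keeps the reaction terms below one-sided, as would fail on a general background.

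For the first identity I would set $u = \tfrac{\del f}{\del t}$, so that $u = \Delta^{C,I} f + \tr_{g_t}g_{CY} + \log\tfrac{\det g_t}{\det g_{CY}}$, and compute $\square u = \dt u - \Delta^{C,I} u$. The only subtlety is the commutator $[\dt,\Delta^{C,I}]f = (\dt g^{i\bj})\partial_i\partial_{\bj} f$, which combines with the time derivatives of $\tr_{g_t}g_{CY}$ and $\log\det g_t$. Using the relation $\gw_I - \gw_{CY} = \delb\gb + \del\bgb + \i\del\delb f$ to eliminate $\partial_i\partial_{\bj}f$, the $\gw_I$ and $\gw_{CY}$ contributions cancel in pairs, leaving $\IP{\tfrac{\del g}{\del t}, \delb\gb + \del\bgb}$. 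This is pure bookkeeping, with no sign to control.

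For $\square\brs{\gb}^2$ and $\square\brs{\del\ga}^2$ I would apply the Weitzenböck formula along the flow: schematically, for a form $\eta$ solving $\square\eta = \mathcal E$,
\begin{align*}
\square\brs{\eta}^2 = 2\Re\IP{\mathcal E, \eta} - \brs{\N\eta}^2 - \brs{\bar\N\eta}^2 + (\dt g^{i\bj})\eta_i\bar\eta_{\bj} - (\mbox{Chern curvature terms}).
\end{align*}
Substituting $\square\gb = -T\circ\delb\gb$ together with the pluriclosed evolution of $g$, the Chern-Ricci contributions from $\dt g$ and from the Weitzenböck term cancel, and the surviving torsion-curvature terms assemble into $-\IP{Q,\gb\otimes\bar\gb}$, giving the stated identity for $\brs{\gb}^2$. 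For $\brs{\del\ga}^2$ the same scheme applies to the $(2,0)$-form $\del\ga$, and here the term $-\brs{\bar\N\del\ga}^2$ equals $-\brs{T}^2$ exactly, since for the Chern connection $\bar\N(\del\ga) = \delb(\del\ga) = -T$.

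The main obstacle is the sign in the two inequalities. For $\brs{\del\ga}^2$ it is immediate once the terms are assembled: $Q_{i\bj} = g^{\bl k}g^{\bq p}T_{ik\bq}T_{\bj\bl p}$ is a non-negative Hermitian form, being of the shape $\sum_a V^a_i\overline{V^a_j}$ by the reality of $H$, so $-2\IP{Q,\del\ga\otimes\delb\bga}\leq 0$ and every term is non-positive. For $\brs{\gb}^2$ the reaction term $2\Re\IP{\gb, T\circ\delb\gb}$ is not individually signed, and the key step is a completion of squares: since $T\circ\delb\gb$ contracts $T$ against $\bar\N\gb = \delb\gb$, one can write $-\brs{\bar\N\gb}^2 - \IP{Q,\gb\otimes\bar\gb} + 2\Re\IP{\gb, T\circ\delb\gb} = -\brs{\bar\N\gb - B}^2$, where $B$ is the contraction of $T$ with $\gb$ normalized so that $\brs{B}^2 = \IP{Q,\gb\otimes\bar\gb}$ and $2\Re\IP{\bar\N\gb, B}$ reproduces the cross term. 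Together with $-\brs{\N\gb}^2\leq 0$ this yields $\square\brs{\gb}^2\leq 0$, which is exactly the feature that makes the reduced equation useful for the a priori estimates driving global existence.
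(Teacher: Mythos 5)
Your proposal is correct and follows essentially the same route as the paper's proof of this lemma, which consists of citing the computations of \cite[\S 6]{ASnondeg} and \cite{StreetsPCFBI}: a direct Bochner-type computation from (\ref{decflow}), cancellation of the Chern--Ricci contribution of $\tfrac{\del g}{\del t}$ against the commutator curvature terms, positivity of $Q$, and the completion of the square $-\brs{\bar\N\gb}^2 - \IP{Q,\gb\otimes\bar\gb} + 2\Re\IP{\gb, T\circ\delb\gb} = -\brs{\bar\N\gb - C}^2$ with $\brs{C}^2 = \IP{Q,\gb\otimes\bar\gb}$, exactly as in those references. The one inaccuracy is attributing the forcing-free, one-sided structure to Ricci-flatness of $\gw_{CY}$: what these identities actually use is that $\gw_{CY}$ is K\"ahler (so $\del \gw_{CY}=0$, giving $T=\del\delb\ga=-\delb\del\ga$ and the clean form of (\ref{decflow}) via \cite[Proposition 3.9]{StreetsPCFBI}), while Ricci-flatness enters only later, in Proposition \ref{lteprop}, through the choice $h=g_{CY}$ with $\rho_C(g_{CY})=0$ in Lemma \ref{volumeformev}.
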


\begin{lemma} \label{volumeformev} (cf.\,\cite[\S 6]{ASnondeg}, \cite{StreetsPCFBI}) Let $(M^{2n}, I, g_t)$ be a solution to
pluriclosed flow, and suppose $h$ is another Hermitian metric on $(M,I)$.  Then
\begin{align*}
\square \log \tfrac{\det g_t}{\det h} =&\ \brs{T}^2 -
\tr_g
\rho_C(h),\\
\square \log \tr_h g \leq&\ \brs{T}^2 + C \tr_g h.
\end{align*}
\end{lemma}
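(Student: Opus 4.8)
The plan is to read off both lines from the behavior of the Chern volume form $\det g$ and of $\tr_h g$ under pluriclosed flow, treating the auxiliary Hermitian metric $h$ as a $t$-independent background. For the first identity the only flow-specific input is the evolution of the Chern volume form, $\dt \log\det g = \tr_g \dt g = -\tr_g \rho_C(g) + \brs{T}^2$, where the torsion correction $\brs{T}^2$ comes from the quadratic term $Q_{i\bj} = g^{\bl k} g^{\bq p} T_{ik\bq} T_{\bj\bl p}$ of Lemma \ref{l:reducedestimates} via $\tr_g Q = \brs{T}^2$. Combined with this is the elementary pointwise identity $\Delta^{C,I}\log\det k = -\tr_g \rho_C(k)$, valid for every Hermitian metric $k$ and immediate from $\rho_C(k) = -\i\,\del\delb\log\det k$ together with the fact that $\Delta^{C,I}$ acts on functions as $g^{\bj i}\del_i\del_{\bj}$. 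Applying the latter with $k=g$ and with $k=h$ and using that $\log\det h$ is $t$-independent, I obtain
\begin{align*}
\square\log\tfrac{\det g}{\det h} = \dt\log\det g - \Delta^{C,I}\log\det g + \Delta^{C,I}\log\det h = \brs{T}^2 - \tr_g\rho_C(h),
\end{align*}
which is the first line.

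For the second line I would carry out the parabolic Schwarz-lemma (Aubin--Yau second-order) computation for $u := \tr_h g = h^{\bj i} g_{i\bj} > 0$. Differentiating in $t$ and commuting two $g$-Chern covariant derivatives through $\tr_h g$, the quantity $\square\tr_h g$ splits into three types of terms: curvature terms of the fixed metric $h$, controlled by $C\,(\tr_g h)(\tr_h g)$ with $C$ bounding the bisectional curvature and torsion of $h$; torsion terms of $g$, which are responsible for the $\brs{T}^2$ on the right-hand side; and first-order gradient terms that cannot be signed. The standard device is to pass to $\log u$ through $\square\log u = u^{-1}\square u + u^{-2}\brs{\del u}^2$: a Cauchy--Schwarz (Kato-type) inequality bounds the uncontrolled third-order contribution to $\Delta^{C,I}\tr_h g$ below by $u^{-1}\brs{\del u}^2$, so that its contribution to $-u^{-1}\Delta^{C,I}\tr_h g$ exactly cancels the positive $u^{-2}\brs{\del u}^2$ produced by the logarithm. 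After this cancellation one is left with $\square\log\tr_h g \leq \brs{T}^2 + C\tr_g h$.

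The first identity is a clean trace computation and presents no real difficulty. The main obstacle is the second estimate: one must carefully organize the Bochner-type expansion of $\square\tr_h g$, keeping track of the torsion contributions (which is precisely where the pluriclosed, rather than K\"ahler, nature of $g_t$ enters and distinguishes this from the classical Schwarz lemma) and verifying that the Cauchy--Schwarz bound on the third-order term genuinely produces the cancellation with the gradient term generated by the logarithm. Both computations follow the arguments of \cite{StreetsPCFBI} and \cite[\S 6]{ASnondeg}, so I would present the proof as a verification that those calculations carry over verbatim to the present Calabi--Yau-background setting.
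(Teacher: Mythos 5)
Your proposal is correct and is essentially the paper's own treatment: the paper states Lemma \ref{volumeformev} without proof, deferring entirely to \cite[\S 6]{ASnondeg} and \cite{StreetsPCFBI}, and the two arguments you sketch --- the trace computation $\dt \log\det g = -\tr_g\rho_C(g) + \brs{T}^2$ (via $\tr_g S = \tr_g\rho_C(g)$ and $\tr_g Q = \brs{T}^2$) combined with the pointwise identity $\Delta^{C,I}\log\det k = -\tr_g\rho_C(k)$ for the first line, and the torsion-corrected parabolic Schwarz lemma with the Cauchy--Schwarz cancellation of the logarithmic gradient term for the second --- are precisely the computations carried out in those references. No gap; at most one should note that the stated constants presume the Streets--Tian normalization of pluriclosed flow and of $\Delta^{C,I}$, so factors of $2$ must be tracked consistently when the lemma is applied to the GKRF normalization (\ref{f:GKRFpaper}) used in this paper.
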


\begin{prop} \label{lteprop} Let $(M^{2n}, g, b, I, J)$ be a compact generalized K\"ahler manifold with holomorphically trivial canonical bundles.   Suppose further $(M,I)$ is K\"ahler,
then 
\begin{enumerate}\item The generalized K\"ahler Ricci flow with
initial condition $(g,b,I,J)$ exists on $[0,\infty)$,
\item One has $\lim_{t \to \infty} \mathcal M_i(t) = \gl \int e^{-\Psi_i}dV_g = \gl \int_M (\psi_i, \bar{\psi}_i)$.
\end{enumerate}

\begin{proof} It follows from Proposition \ref{p:trivialcanonical} that $c_1(M, I) = 0$.   Since $(M, I)$ is also K\"ahler, we may construct a solution $(\gb_t,f_t)$ to (\ref{decflow}) as above.  The proof is similar to that of \cite[Proposition 6.10]{ASnondeg}, so we only sketch certain arguments.

From the general theory of pluriclosed flow (\cite{StreetsPCFBI, JordanStreets, JFS}), it suffices to establish uniform parabolicity of the flow and an upper bound for $\brs{\del \ga}^2$.  This latter upper bound follows directly from the maximum principle applied to the evolution equation from Lemma \ref{l:reducedestimates}.  The first step is to obtain a uniform estimate for the Riemannian volume form.  We apply
Lemma \ref{volumeformev} choosing $h = g_{CY}$, so that $\rho_C(h)
=
\rho_C(g_{CY}) = 0$, to obtain
\begin{align*}
\square \log \tfrac{\det g_t}{\det g_{CY}}
=
\brs{T}^2
\geq 0.
\end{align*}
The maximum principle then implies
\begin{align*}
\inf_{M \times \{t\}} \log \tfrac{\det g_t}{\det g_{CY}} \geq&\
\inf_{M \times
\{0\}} \log \tfrac{\det g_t}{\det g_{CY}}.
\end{align*}
Now define
\begin{align*}
W_1 = \log \tfrac{\det g_t}{\det g_{CY}} + \brs{\del \ga}^2.
\end{align*}
Combining Lemma \ref{l:reducedestimates} with Lemma \ref{volumeformev}
we obtain
\begin{align*}
\square W_1 \leq&\ 0.
\end{align*}
By the maximum principle we obtain
\begin{align*}
\sup_{M \times \{t\}} \log \tfrac{\det g_t}{\det g_{CY}} \leq
\sup_{M \times
\{t\}} W_1 \leq \sup_{M \times \{0\}} W_1 \leq C.
\end{align*}
This establishes uniform upper and lower bounds on the volume form.  

Next we establish a uniform metric upper bound in terms of a bound for the scalar potential $f$.  To that end fix some $A > 0$ and define
\begin{align*}
W_2 = \log \tr_{g_{CY}} g + \brs{\del \ga}^2 - A f.
\end{align*}
Using Lemma
\ref{l:reducedestimates}, \ref{volumeformev}, and equation (\ref{decflow}), we have
\begin{align*}
\square W_2 \leq&\ \left( C - A \right) \tr_{g}
g_{CY} -
A \log \tfrac{\det g_t}{\det g_{CY}}.
\end{align*}
Choosing $A=C$ and using the
uniform bound for the volume form then yields
\begin{align*}
\square W_2 \leq C.
\end{align*}
Applying the maximum principle one has
\begin{align*}
\sup_{M \times \{t\}} \log \tr_{g_{CY}} g - A f \leq&\ \sup_{M
\times \{t\}} W_2
\leq \sup_{M \times \{0\}} W_2 + C t \leq C(1 + t).
\end{align*}
Rearranging this we obtain
\begin{align*}
\sup_{M \times \{t\}} \tr_{g_{CY}} g \leq&\ \sup_{M \times \{t\}}
e^{C (1 + t +
f)}.
\end{align*}

To finish the proof it suffices to obtain an upper bound for $f$.  Since we are only
concerned with finite time intervals, it suffices to estimate $\tfrac{\del
f}{\del t}$.  To that end fix constants $A_1, A_2$ and let
\begin{align*}
 W_3 =&\ \tfrac{\del f}{\del t} + \brs{\gb}^2 - A_1
\log \tfrac{\det g}{\det g_{CY}} + A_2 \brs{\N \Phi}^2.
\end{align*}
Combining Lemmas \ref{l:reducedestimates} and \ref{volumeformev}
with Proposition \ref{l:gradientPhi} we obtain
\begin{gather*}
\begin{split}
 \square W_3 =&\ \IP{\tfrac{\del g}{\del t}, \delb \gb
+
\del \bgb} + \left[ - \brs{\N \gb}^2 -
\brs{\bar{\N} \gb}^2 -
\IP{Q, \gb \otimes \bar{\gb}} + 2 \Re \IP{\gb, T \circ \delb \gb} \right]\\
&\ - A_1 \brs{T}^2 + A_2 \left[ -2 \brs{\N^2 \Phi}^2 - \tfrac{1}{2} \IP{\HH, \N
\Phi \otimes \N \Phi}\right].
\end{split}
\end{gather*}
To estimate the right hand side, we first note that by applying the maximum principle to the evolution equation for $\brs{\gb}^2$ we obtain a uniform bound for this quantity.  Thus, by the Cauchy-Schwarz inequality we obtain
\begin{align*}
 2 \Re \IP{\gb, T \circ \delb \gb} \leq&\ C \brs{T} \brs{\bar{\N} \gb} \leq
\tfrac{1}{2} \brs{\bar{\N} \gb}^2 + C \brs{T}^2.
\end{align*}
Thus, choosing $A_1$ sufficiently large, applying the Cauchy-Schwarz
inequality to $\IP{\tfrac{\del g}{\del t}, \delb \gb + \del \bgb}$, and dropping some
negative terms we obtain
\begin{gather} \label{lteprop40}
 \begin{split}
 \square W_3 \leq&\ \brs{\tfrac{\del g}{\del t}}^2 -
\tfrac{A_1}{2} \brs{T}^2 - 2 A_2 \brs{\N^2 \Phi}^2.
\end{split}
\end{gather}
It follows from Proposition \ref{p:transgression} that $\tfrac{\del g}{\del t}$
can
be expressed as the $(1,1)$ projection of the $J$-Chern Hessian of $\Phi$.   Using the expression for the Chern connection it follows that there is a uniform
constant $C$ such that
\begin{align} \label{lteprop50}
 \brs{\tfrac{\del g}{\del t}}^2 \leq&\ C \left[ \brs{\N^2 \Phi}^2 + \brs{T}^2
\brs{\N \Phi}^2 \right].
\end{align}
Since $\brs{\N \Phi}^2$ is uniformly bounded by Proposition
\ref{f:gradientestimate},
we may use (\ref{lteprop50}) in (\ref{lteprop40}) to obtain, for $A_1$ and $A_2$
sufficiently large,
\begin{gather}
 \square W_3 \leq 0.
\end{gather}
An upper bound for $W_3$ follows by the maximum principle, giving then an upper bound for $\tfrac{\del f}{\del t}$, as required. The long time existence of GKRF is proved.

Now we prove the convergence of Mabuchi energy.  Arguing similarly to \cite[Proposition 6.11]{ASnondeg}, it follows that for any background metric $\til{g}$ one has
\begin{align*}
\brs{\brs{d \Phi}}_{L^2(\til{g})} \to 0.
\end{align*}
Thus, by the Poincar\'e inequality for $\til{g}$, one has \begin{align*}
\brs{\brs{ \Phi - \bar{\Phi}_{\til{g}}}}_{L^2(\til{g})} \to 0.
\end{align*}
where $\bar\Phi_{\til{g}}$ is the average of $\Phi$ with respect to $\til{g}$.
On the other hand, we estimate by Jensen's inequality that
\begin{align*}
e^{\gl} = \frac{\int e^{-\Psi_2}dV_g}{\int e^{-\Psi_1}dV_g} = \frac{\int e^{\Phi} e^{-\Psi_1}dV_g}{\int e^{-\Psi_1}dV_g} \geq \exp \left( \frac{\int \Phi e^{-\Psi_1}dV_g}{\int e^{-\Psi_1}dV_g} \right) = \exp \left( \frac{\int \left( \Phi - \bar{\Phi}_{\til{g}} \right) e^{-\Psi_1}dV_g}{\int e^{-\Psi_1}dV_g} + \bar{\Phi}_{\til{g}} \right).
\end{align*}
Rearranging this and using  $\int e^{-\Psi_1}dV_g$ is constant by Lemma \ref{l:ev-volume} yields
\begin{align*}
\bar{\Phi}_{\til{g}} \leq&\ \gl + C \int \brs{\Phi - \bar{\Phi}_{\til{g}}}e^{-\Psi_1}dV_g\\
\leq&\ \gl + C \int \brs{\Phi - \bar{\Phi}_{\til{g}}}e^{-\Psi_1}dV_g\\
\leq&\ \gl + O(t^{-1}),
\end{align*}
where the last inequality follows from that $\Psi_1$ is bounded below and $dV_g$ is uniformly bounded. A similar argument using Jensen's inequality yields
\begin{align*}
e^{-\gl} = \frac{\int e^{-\Psi_1}dV_g}{\int e^{-\Psi_2}dV_g} = \frac{\int e^{- \Phi} e^{-\Psi_2}dV_g}{\int e^{-\Psi_2}dV_g} \geq \exp \left( \frac{\int - \Phi e^{-\Psi_2}dV_g}{\int e^{-\Psi_2}dV_g} \right) = \exp \left( \frac{\int \left( - \Phi + \bar{\Phi}_{\til{g}} \right) e^{-\Psi_2}dV_g}{\int e^{-\Psi_2}dV_g} - \bar{\Phi}_{\til{g}} \right),
\end{align*}
thus similarly we have
\begin{align*}
\bar{\Phi}_{\til{g}} \geq&\ \gl - O(t^{-1}).
\end{align*}
Note that it follows easily from this and the invariance of $\int e^{-\Psi_2}dV_g$ from Lemma \ref{l:ev-volume} that
\begin{align*}
\mathcal M_1 =&\ \int_M \Phi e^{-\Psi_1}dV_g = \gl \int e^{-\Psi_1}dV_g + \int_M \left( \Phi - \gl \right) e^{-\Psi_1}dV_g = \gl \int e^{-\Psi_1}dV_g + O(t^{-1}).
\end{align*}
This finishes the case of $\mathcal M_1$, and $\mathcal M_2$ is similar.  The proposition is proved.
\end{proof}
\end{prop}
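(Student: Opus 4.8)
The plan is to exploit the K\"ahler hypothesis to reduce the flow to the parabolic $1$-form/scalar system of Lemma~\ref{oneformreduction} and equation~(\ref{decflow}), and then to feed the transgression formula of Proposition~\ref{p:transgression} and the gradient decay of Proposition~\ref{p:gradientdecay} into a sequence of maximum principle arguments. First I would observe that $c_1(M,I)=0$ by Proposition~\ref{p:trivialcanonical}, so that Yau's theorem~\cite{YauCC} (together with Demailly--Paun~\cite{DemaillyPaun} to produce a K\"ahler representative of the Aeppli class of $\gw_I$) supplies a Calabi-Yau metric $\gw_{CY}$ with $\gw_I=\gw_{CY}+\del\bar{\ga}+\delb\ga$. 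Running the reduced flow~(\ref{decflow}) for the pair $(\gb_t,f_t)$, the general theory of pluriclosed flow~\cite{StreetsPCFBI,JordanStreets,JFS} reduces the long-time existence claim of item (1) to establishing uniform parabolicity together with an upper bound on $\brs{\del\ga}^2$; the latter is immediate from the third inequality of Lemma~\ref{l:reducedestimates} and the maximum principle.

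For uniform parabolicity I would bootstrap three scalar quantities. Applying Lemma~\ref{volumeformev} with $h=g_{CY}$ kills the background Chern--Ricci term, since $\rho_C(g_{CY})=0$, giving $\square\log\tfrac{\det g_t}{\det g_{CY}}=\brs{T}^2\ge 0$ and hence a lower volume bound; pairing $\log\tfrac{\det g_t}{\det g_{CY}}$ with $\brs{\del\ga}^2$ produces a test function with $\square\le 0$, yielding an upper volume bound. Next, the quantity $\log\tr_{g_{CY}}g+\brs{\del\ga}^2-Af$ satisfies, after choosing $A$ comparable to the constant in Lemma~\ref{volumeformev}, a differential inequality $\square W_2\le C$, which converts a bound on the scalar potential $f$ into the desired metric upper bound.

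The crux, and the step I expect to be hardest, is bounding $f$ from above, for which it suffices on finite intervals to control $\tfrac{\del f}{\del t}$. Here the transgression formula~(\ref{f:PhiBismut}) is essential: it identifies $\tfrac{\del g}{\del t}$ with the $(1,1)$-projection of the $J$-Chern Hessian of the Ricci potential $\Phi$, yielding a pointwise bound $\brs{\tfrac{\del g}{\del t}}^2\le C(\brs{\N^2\Phi}^2+\brs{T}^2\brs{\N\Phi}^2)$. Since $\brs{\N\Phi}^2$ is already uniformly bounded by Proposition~\ref{p:gradientdecay}, the reaction term $\brs{\tfrac{\del g}{\del t}}^2$ arising when differentiating $\tfrac{\del f}{\del t}$ can be absorbed into the good negative terms $-\brs{\N^2\Phi}^2$ and $-\brs{T}^2$ coming from Lemmas~\ref{l:reducedestimates} and~\ref{l:gradientPhi}. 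Concretely I would test with
\begin{align*}
W_3=\tfrac{\del f}{\del t}+\brs{\gb}^2-A_1\log\tfrac{\det g}{\det g_{CY}}+A_2\brs{\N\Phi}^2,
\end{align*}
handling the cross term $2\Re\IP{\gb,T\circ\delb\gb}$ by Cauchy--Schwarz against the uniform bound on $\brs{\gb}^2$ (again from Lemma~\ref{l:reducedestimates}), and then choosing $A_1,A_2$ large so that $\square W_3\le 0$. The maximum principle then bounds $\tfrac{\del f}{\del t}$, completing the proof of uniform parabolicity and hence item (1).

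For item (2) I would first use the established estimates to show $\brs{\brs{d\Phi}}_{L^2(\til{g})}\to 0$ for a fixed background metric $\til{g}$, following~\cite{ASnondeg}, so that the Poincar\'e inequality forces $\Phi$ to approach its average $\bar{\Phi}_{\til{g}}$ in $L^2(\til{g})$. The constancy of $\int e^{-\Psi_i}dV_g$ from Lemma~\ref{l:ev-volume}, combined with two applications of Jensen's inequality to the identity $e^{\gl}=\int e^{-\Psi_2}dV_g\big/\int e^{-\Psi_1}dV_g$ and its reciprocal, then squeezes $\bar{\Phi}_{\til{g}}$ between $\gl-O(t^{-1})$ and $\gl+O(t^{-1})$. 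Hence $\bar{\Phi}_{\til{g}}\to\gl$, and unwinding the definition $\mathcal M_i=\int_M\Phi\,(\psi_i,\bar{\psi}_i)$ gives $\mathcal M_i\to\gl\int_M(\psi_i,\bar{\psi}_i)=\gl\int e^{-\Psi_i}dV_g$, which are precisely the extreme values identified in Lemma~\ref{l:Mabuchibounds}.
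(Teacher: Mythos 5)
Your proposal is correct and follows essentially the same route as the paper's proof: the same reduction to the parabolic system (\ref{decflow}), the same three test functions $W_1, W_2, W_3$ handled by the maximum principle (with the transgression formula and the a priori gradient decay of $\Phi$ absorbing the reaction term in $W_3$), and the same Poincar\'e--Jensen squeeze for the Mabuchi energies. No gaps to report.
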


\section{Convergence in generalized K\"ahler class}

In this section we establish item $(2)$ of Theorem \ref{t:longcon} as Proposition \ref{p:conv} below, and prove the various corollaries stated in the introduction.  We briefly describe the idea of the proof.  The key extra assumption is that our initial data is connected to a K\"ahler Calabi-Yau metric through a path $(g_s, b_s, I, J_s)$ in the generalized K\"ahler class.  Let $\mathcal I\subset [0,1]$ be the set of $s\in [0,1]$ such that the GKRF with initial data $(g_s,b_s,I,J_s)$ converges smoothly to the Calabi-Yau structure $(g_0,b_0,I,J_0)$.  This set is obviously nonempty since $(g_0, b_0, I, J_0)$ is K\"ahler Calabi-Yau by assumption.  The fact that $\mathcal I$ is open follows from a stability result of \cite{HCF} and the fact that the GKRF depends smoothly on the initial data.  The crucial point is to show that $\mathcal I$ is also closed.  The key point is to show that a sequence of convergent flows has a uniform upper bound for the first time the flow enters a small ball around the Calabi-Yau structure.  This requires a certain backward regularity result for GKRF (Proposition \ref{Backwardregularity}), which crucially depends on uniform a priori decay of the gradient of the Ricci potential $\Phi$ (Proposition \ref{p:gradientdecay}).  

First we record a stability result for pluriclosed flow with initial data sufficiently close to a K\"ahler-Einstein metric.  This was originally shown in \cite{HCF} for the general class of Hermitian curvature flows, and also follows from recent stability results for generalized Ricci flow (\cite{VezzoniGRF,KHLeeGRF}).

\begin{prop}\label{stability}\cite[Theorem 1.2]{HCF} Let $(M^{2n}, g_{CY}, I)$ be a compact K\"ahler Calabi-Yau manifold.  There exists $\epsilon=\epsilon(\tilde g,k,\alpha)$ such that if $g$ is a Hermitian metric
on $(M, I)$ satisfying $|g-\tilde g|_{C^{k,\alpha}}\leq \epsilon$, for some integer $k>10$ and $\alpha\in(0,1)$, then the solution to pluriclosed flow with initial condition
$g$ exists for all time and converges to a K\"ahler Calabi-Yau metric.
\end{prop}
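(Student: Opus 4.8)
The plan is to prove this as a \emph{dynamical stability} statement for pluriclosed flow, viewed as one of the Hermitian curvature flows treated in \cite{HCF}, regarding the given K\"ahler Ricci-flat metric $\til{g} = g_{CY}$ as a fixed point lying on a manifold of equilibria. First I would fix the complex structure $I$ and write the flow as a quasilinear system $\dt g = \mathcal P(g)$ for the Hermitian metric $g$. Since this evolution is degenerate in gauge directions — concretely, the $1$-form reduction (\ref{alphaflow}) is not parabolic while its further reduction (\ref{decflow}) is — I would pass to a gauge-fixed, strictly parabolic formulation whose solutions differ from those of the geometric flow only by a time-dependent gauge, so that global existence and convergence transfer between the two.

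Next I would identify the equilibria near $\til{g}$: on a Calabi-Yau background, with $I$ fixed, the stationary points of $\mathcal P$ are exactly the K\"ahler Ricci-flat metrics, which by Yau's theorem \cite{YauCC} are in bijection with their K\"ahler classes and hence form a finite-dimensional smooth manifold $\mathcal K$ of equilibria, parametrized by the nearby Aeppli classes, with $\til{g} \in \mathcal K$.

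The core of the argument is the linearized stability analysis. I would linearize $\mathcal P$ at $\til{g}$ to obtain a second-order elliptic operator $L$, which at this K\"ahler point is self-adjoint for the $L^2(\til{g})$ pairing, and then show $L \leq 0$ with $\Ker L = T_{\til{g}} \mathcal K$ and the remaining spectrum strictly negative and bounded away from zero. Here the Calabi-Yau geometry is decisive: Ricci-flatness and the holonomy reduction, through Bochner-type identities, force the neutral modes to be precisely the parallel deformations tangent to $\mathcal K$, while the genuinely non-K\"ahler, torsion-carrying directions are strictly contracted. This contraction is consistent with the sign structure already visible in Lemmas \ref{l:reducedestimates} and \ref{volumeformev}, where $\square \brs{\del \ga}^2 \leq 0$ and $\square \log \tfrac{\det g}{\det \til{g}} = \brs{T}^2$ drive the torsion $T$ to zero along the flow.

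Finally, since every element of $\Ker L$ is tangent to the smooth equilibrium manifold $\mathcal K$, there is no obstructed center direction, and I would invoke the general stability theorem for equilibria lying on a manifold of equilibria, via maximal regularity and analytic semigroup theory, as carried out in \cite{HCF} and in the parallel generalized Ricci flow results \cite{VezzoniGRF, KHLeeGRF}. This yields, for $g$ sufficiently $C^{k,\alpha}$-close to $\til{g}$, global existence of the gauge-fixed and hence the original flow, together with exponential convergence to a unique nearby element of $\mathcal K$, which is K\"ahler Ricci-flat. The main obstacle is precisely this spectral picture — the identification $\Ker L = T_{\til{g}}\mathcal K$ and the absence of positive spectrum — since one must exclude any unstable or spurious neutral mode transverse to $\mathcal K$, which requires the full force of the Calabi-Yau structure (parallel holomorphic volume form, vanishing Ricci, special holonomy).
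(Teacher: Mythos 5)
The paper offers no proof of this proposition at all: it is quoted directly from \cite[Theorem 1.2]{HCF} (with \cite{VezzoniGRF,KHLeeGRF} noted as alternative sources), and your sketch is essentially a reconstruction of that cited proof — linearization of the flow at the Calabi-Yau metric, identification of the kernel of the non-positive linearized operator with the tangent space to the finite-dimensional manifold of nearby Ricci-flat K\"ahler fixed points supplied by Yau's theorem \cite{YauCC}, and a Simonett-type maximal-regularity stability theorem for equilibria lying on a manifold of equilibria — so you are taking the same route as the result the paper invokes. One minor correction: no gauge-fixing step is needed, since pluriclosed flow with $I$ fixed is already a strictly parabolic system for the metric $g$ (this is exactly how short-time existence is established in \cite{PCF}); the failure of parabolicity of (\ref{alphaflow}) is an artifact of the gauge ambiguity $\ga \mapsto \ga + \del f$ in the potential-level description, not a degeneracy of the metric flow itself.
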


Now we proceed to prove the key backward regularity result. It will be helpful to introduce the so called harmonic radius  from Riemannian geometry, which is a useful tool to measure the regularity of Riemannian metric. 
%\section{Strong convergence of the GKRF}
\begin{defn}\label{d:harmonic_radius}
 Let $(M,g)$ be a Riemannian manifold and $x\in M$, 
 %we define the \emph{harmonic radius} $r_h(x,g)$ 
 %so that $r_h(x)=0$ if no neighborhood of $x$ is a Riemannian manifold. 
 %Otherwise, 
 we define the harmonic radius $r_h(x,g)$ at point $x$ to be the largest $r>0$ such that there exists a mapping $\Phi:B_r(0^n)\to X$ such that:
\begin{enumerate}
\item $\Phi(0)=x$ with $\Phi$ is a diffeomorphism onto its image.
\vskip1mm

\item $\Delta x^\ell = 0$, where $x^\ell$ are the coordinate functions and $\Delta$
 is the Laplace Beltrami operator.
\vskip1mm

\item If $g_{ij}=\Phi^*g$ is the pullback metric, then
\begin{align*}
||g_{ij}-\delta_{ij}||_{C^0(B_r(0^n))}+r\| \partial g_{ij}\|_{C^0(B_r(0^n))}
	 \leq 10^{-3}\, .
\end{align*}
\end{enumerate}
	We also define the harmonic radius $r_h(M,g)$ of $M$ as:  $$r_h(M,g):=\inf_{x\in M}r_h(x,g).$$
\end{defn}

Now we introduce the following useful concept, which measures how far a Hermitian metric is away from the standard flat K\"ahler metric on the Euclidean ball.

\begin{defn}
Let  $(M,g,I)$ be a Hermitian manifold and $H$ be the torsion tensor of the Bismut connection of $(g, I)$. The \emph{$k$-th \hr} of $M$ at $x$ is defined by
	\begin{align*}
		r^k(M,g,x):&= \min \left\{r_h(x,g)\,,\ \left( \sum_{i=0}^{k}|\nabla_g^i\Rm|^{\frac{1}{i+2}} (x) \right)^{-1},\ \left( \sum_{i=0}^{k} |\nabla_g^i H|^{ \frac{1}{i+1}}(x) \right)^{-1} \right\},\\
		r^k(M,g):&=\inf_{x\in M}r^k(M,g,x).
	\end{align*}
	We fix $k \geq 12$ and denote $r^{k}(M,g)$ by $r(M,g)$ for simplicity.
	\end{defn}
 
 Now we are ready to prove the estimate for the regularity scale.  It says that if a GKRF has sufficiently small gradient of Ricci potential and a lower bound on the regularity scale at some time, then it has a lower bound on the regularity scale for a definite amount of backwards time.
 
\begin{prop}\label{Backwardregularity}
%\textcolor{red}{revise} 
There exists a dimensional constant $\epsilon(n)$ such that 
if $(M^{2n}, g_t, b_t, I, J_t)$ is a solution to GKRF with holomorphically trivial canonical bundles on $[-4,0]$, satisfying
\begin{enumerate}
    \item $\sup_{M \times [-2,0]} \brs{\N \Phi} \leq \ge(n)$,
    \item $r(M, g_0) \geq 1$.
\end{enumerate}
	Then for every $t \in [-1, 0]$, we have $r(M,g_t) > \frac{1}{2}$.
\end{prop}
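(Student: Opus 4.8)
The plan is to argue by contradiction through a blow-up and compactness argument, exploiting the fact that the smallness of $\N\Phi$ forces any limiting flow to be \emph{static}. Suppose no dimensional constant $\ge(n)$ works. Then there is a sequence of GKRF solutions $(M_k, g^k_t, b^k_t, I, J^k_t)$ on $[-4,0]$ with holomorphically trivial canonical bundles such that $\sup_{M_k\times[-2,0]}\brs{\N\Phi_k}\to 0$ and $r(M_k,g^k_0)\ge 1$, yet for each $k$ there is some $t\in[-1,0]$ with $r(M_k,g^k_t)\le \tfrac12$. Since the regularity scale varies continuously in $t$ along a smooth flow, we may take $s_k\in[-1,0)$ to be the largest such time, so that $r(M_k,g^k_{s_k})=\tfrac12$ while $r(M_k,g^k_t)>\tfrac12$ for all $t\in(s_k,0]$, and we fix $x_k\in M_k$ realizing $r(M_k,g^k_{s_k},x_k)=\tfrac12$.

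First I would establish that the limiting time interval is nondegenerate, i.e. that $s_k$ stays bounded away from $0$. On $(s_k,0]$ the bound $r>\tfrac12$ supplies uniform bounds on $\brs{\N^i\Rm}$ and $\brs{\N^i H}$ for $0\le i\le k$ together with a uniform lower bound on the harmonic radius. Since along GKRF one has $\dt\gw_I=-2\rho_I^{(1,1)}$ and $\dt b=-2\rho_I^{(2,0)+(0,2)}I$, the tensors $\dt g$ and $\dt H$ are curvature quantities, hence uniformly bounded (together with their derivatives) on $[s_k,0]$. Consequently $t\mapsto r(M_k,g^k_t,x_k)$ is Lipschitz on $[s_k,0]$ with a constant $L=L(n)$ depending only on the dimension. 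As $r$ drops from at least $1$ to $\tfrac12$ across this interval, $\tfrac12\le L\,\brs{s_k}$, so $\brs{s_k}\ge 1/(2L)=:\delta_0>0$, and after passing to a subsequence $s_k\to s_\infty\le -\delta_0<0$.

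Next I would pass to the limit. The uniform geometry on $(s_k,0]$, combined with interior parabolic smoothing for GKRF, yields via Hamilton--Cheeger--Gromov compactness a pointed limit solution $(M_\infty, g^\infty_t, x_\infty)$ on $(s_\infty,0]$, with the regularity scale converging at the base point. Because $\sup_{[-2,0]}\brs{\N\Phi_k}\to 0$, the limit satisfies $\N\Phi_\infty\equiv 0$. Feeding this into the Bochner identity of Lemma \ref{l:gradientPhi}, namely $\square\brs{\N\Phi}^2=-2\brs{\N^2\Phi}^2-\tfrac12\IP{H^2,\N\Phi\otimes\N\Phi}$, the left-hand side and the last term both vanish, forcing $\N^2\Phi_\infty\equiv 0$. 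The transgression formula (Proposition \ref{p:transgression}) $\rho_I=-\tfrac12 dJd\Phi$ then shows $\rho_I^\infty\equiv 0$, so the flow equations give $\dt g_\infty\equiv 0$ and $\dt H_\infty\equiv 0$: the limiting flow is static.

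Finally, a static flow has $r(M_\infty,g^\infty_t,x_\infty)$ independent of $t$. On the one hand $r(M_\infty,g^\infty_0,x_\infty)=\lim_k r(M_k,g^k_0,x_k)\ge 1$; on the other, continuity of the regularity scale under the convergence, with $s_k\to s_\infty<0$, gives $r(M_\infty,g^\infty_{s_\infty},x_\infty)=\lim_k r(M_k,g^k_{s_k},x_k)=\tfrac12$. These contradict time-independence, which proves the Proposition for a suitable dimensional $\ge(n)$. I expect the \textbf{main obstacle} to be the compactness and rigidity step: one must verify that the regularity-scale bounds (of fixed finite order $k$), together with parabolic smoothing on $(s_k,0]$, give enough convergence to produce a genuine limiting GKRF solution on a nondegenerate time interval, and that $\N\Phi_\infty\equiv 0$ \emph{rigidly} forces the limit to be static via Lemma \ref{l:gradientPhi}. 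The preliminary step controlling the limiting interval (showing $s_k$ is bounded away from $0$, using the uniform boundedness of $\dt g$ and $\dt H$ from the regularity-scale bounds) is essential, since otherwise the interval could collapse and the static comparison would be vacuous.
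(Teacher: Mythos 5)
Your overall mechanism --- argue by contradiction, extract a Cheeger--Gromov limit, force the limit to be static from $\brs{\N\Phi}\to 0$ via the transgression formula $\rho_I=-\tfrac{1}{2}dJd\Phi$, and contradict staticness using continuity of the regularity scale under smooth convergence --- is exactly the paper's. However, your setup of the compactness step has two related gaps, both stemming from the absence of backward-in-time control at your chosen bad times $s_k$. First, the claim that $t\mapsto r(M_k,g^k_t,x_k)$ is Lipschitz on $[s_k,0]$ with a \emph{dimensional} constant is unjustified: the regularity scale involves $\brs{\N^i\Rm}$ for $i\le k$, so its time derivative involves $\dt \N^i\Rm \sim \N^{i+2}\Rm+\dots$, i.e.\ curvature derivatives up to order $k+2$, while the hypothesis $r>\tfrac{1}{2}$ on $(s_k,0]$ only bounds derivatives up to order $k$. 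The only source of the missing higher bounds is Shi-type smoothing, which degenerates like $(t-s_k)^{-1}$ as $t\downarrow s_k$ because there is no control at all before $s_k$; integrating such a bound produces a divergent logarithm, not $\tfrac{1}{2}\le L\brs{s_k}$, so you obtain no lower bound on $\brs{s_k}$. Second, even granting $s_\infty<0$, the same degeneration means your limit flow exists only on compact subsets of the half-open interval $(s_\infty,0]$; you therefore cannot evaluate the limit at $t=s_\infty$, and the asserted identity $r(M_\infty,g^\infty_{s_\infty},x_\infty)=\lim_k r(M_k,g^k_{s_k},x_k)=\tfrac{1}{2}$ is not available. The defect you are trying to contradict sits precisely at the time that escapes the region of convergence, and no contradiction survives from the times $t>s_\infty$, where you only know the one-sided bound $r>\tfrac{1}{2}$.

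The paper closes exactly this hole with an iterated point-picking plus parabolic rescaling. Starting from the latest time with $r=\tfrac{1}{2}$, it repeatedly selects earlier times within parabolic windows where the scale halves again; since $r$ is bounded below on the given compact smooth flow, the process terminates, yielding a time $t_{j+1}$ with $r(g_{t_{j+1}})=\tfrac{1}{2}r(g_{t_j})$ and, crucially, $r\ge \tfrac{1}{2}r(g_{t_{j+1}})$ on the entire backward window $[t_{j+1}-r(g_{t_{j+1}})^2,\,t_{j+1}]$. After rescaling by $r(g_{t_j})$, the flows satisfy $r(\til{g}_0)=1$, $r(\til{g}_{s_i},x^i)=\tfrac{1}{2}$, and $r\ge\tfrac{1}{4}$ on $[s_i-\tfrac{1}{4},s_i]$; this backward bound is what allows the Shi-type estimates to give uniform all-order bounds on $[s-\tfrac{1}{8},0]$, hence smooth convergence on a \emph{closed} time interval containing the limit of the bad times, after which the static-limit contradiction goes through. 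To salvage your argument you would need to build in this backward regularity-scale control (or an equivalent device) before passing to the limit; comparing the latest bad time directly with $t=0$ cannot work.
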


\begin{proof}  We argue by contradiction.  If the conclusion fails, we can find a sequence of solutions to GKRF $\{(M^i, g^i_t, b_t^i, I^i, J^i_t), -2 \leq t \leq 0\}$ with holomorphically trivial canonical bundles satisfying 
	\begin{align*}
		\sup_{M \times [-4,0]} \brs{\nabla\Phi_i} \le \epsilon_i\to 0, \qquad r(M, g^i_0) \geq 1,
	\end{align*}
	but which violates the conclusion.  We fix one element of the sequence and omit the index $i$.  We will pick a time interval from this flow by a point-picking argument.
By assumption there exists a time $t \in [-1,0]$ such that $r(g_t)\leq\frac{1}{2}$.  We can choose the latest time $t$ such that $r(g_t)=\frac{1}{2}$ and
	denote it by $t_1$.  Then we check the interval $[t_1-r(g_{t_1})^2, t_1]$.  If there is a time $t$ in this interval such that $r(g_t) \leq  \frac{1}{2} r(g_{t_1})$, then we pick the latest such time $t_2$ satisfying $r(g_{t_2})=\frac{1}{2^2}$.  We continue this process by induction to construct a sequence of times $t_k$ such that $r(g_{t_k}) = \frac{1}{2^k}$.  Observe that for arbitrary $k$ we have
		\begin{align*}
		& |t_{k+1}-t_k| \leq \frac{1}{2^k}, \qquad t_k \geq -\sum_{k=0}^{\infty}\frac{1}{2^k}\geq - 2.
	\end{align*}
 Since the regularity scale is bounded below on the smooth time interval $M\times [-2,0]$ and in each step the regularity scale drops by one half, this process will stop after finitely many steps.
So we may assume that the process stops at the $(j+1)$-step. Notice that $j\geq 0$ by our assumption.
	Therefore, for some time $t_{j+1} \in [t_j-r(g_{t_j})^2, t_j]$, we have
	\begin{align*}
		&r(g_{t_{j+1}})=\frac{1}{2}r(g_{t_j}), \qquad r(g_t) \geq \frac{1}{2} r(g_{t_{j+1}}), \quad \; \forall \; t \in \left[ t_{j+1}-r(g_{t_{j+1}})^2, t_{j+1} \right].
	\end{align*}
	Now we perform a rescaling as follows: let $\gl_j:=r(g_{t_j})$ and let $\tilde{g}(t)=\gl_j^{-2}g(t_j+\gl_j^2 t), \; s=\gl_j^{-2}(t_{j+1}-t_j)\in[-1,0)$.
	Then for the rescaled GKRF flow $\tilde{g}$, we have
	\begin{align}%\label{inequality}
		&r(\tilde{g}_0)=1, \label{E1}\\
		&r(\tilde{g}_s)=\frac{1}{2}, \label{E2} \\
		&r(\tilde{g}_t) \geq \frac{1}{2}, \quad \forall \; t \in [s,0], \label{E3}\\
		&r(\tilde{g}_t) \geq \frac{1}{4}, \quad \forall \; t \in \left[s-\frac{1}{4},s \right],\label{E4} \\
		&\sup_{M \times [-1,0]} \brs{\nabla\tilde\Phi}_{\til{g}} \le \epsilon \gl_j\le \epsilon,\label{E5}
	\end{align}
	where the last inequality is due to the scaling invariance  of $\Phi$.
	
	Now for each flow violating the conclusion fixed at the beginning of the proof, we perform this point picking process and also rescale the flow according to the above.  Denote the new GKRFs
	by $$\{(M^i, \tilde{g}^i_t, \tilde{b}^i_t, I^i, J^{i}_t), -1 \leq t \leq 0\}.$$  Then the  inequalities in (\ref{E1}-\ref{E5}) hold for each $\tilde{g}^i$ with some $s_i \in [-1,0)$ and $\epsilon_i \to 0$.
	Let $x^i$ be the point where $r(\til{g}^i_{s_i})$ achieves minimum value on $(M^i,\tilde g_i(s_i))$, so that in particular
	\begin{equation}\label{a}
	r(\tilde g^i_{s_i},x^i)=\frac{1}{2}.
	\end{equation}
	By choosing a further subsequence we may assume $s_i \to s$.  Then by (\ref{E4}), on the interval $[{s}-\frac{1}{4},0]$ we have a uniform lower bound of the regularity scale.  It follows from the Shi-type smoothing estimates for pluriclosed flow (cf.\,\cite{HCF, Streetsexpent}) that for any $l \geq 0$ one has
	\begin{align*}
		\sup_{M^i \times [s-\frac{1}{8}, 0]}  \sum_{j+k=l} \left( \brs{\nabla^j\frac{\partial^k}{\partial t^k} Rm} + \brs{\nabla^j\frac{\partial^k}{\partial t^k} H}\right) \leq C_l,
	\end{align*}
	where $C_l$ is independent of $i$.	
	Notice that since $J^i_t$ and $I^i$ are orthogonal with respect to $\tilde g^i_t$, we have that all covariant derivatives of $J^i_t$ and $I^i$ are bounded in terms of the torsion tensor and its derivatives.
	
	Combining all these facts, there is a subsequence converging smoothly in the pointed generalized Cheeger-Gromov sense on time interval $[{s}-\frac{1}{8}, 0]$:
	\begin{align*}
		\left\{ \left(M^i,x^i,\tilde{g}^i_t,\tilde{b}^i_t, I^i,J^i_t \right) \right\} \longrightarrow{}
		%\left\{
		\left(M^{\infty},x^{\infty},\tilde g^{\infty}_t,\tilde{b}_t^{\infty}, I^{\infty}, J^{\infty}_t \right).
		%\right\}.
	\end{align*}
	We note that the convergence above is up to generalized gauge transformation, allowing for diffeomorphism and $b$-field shift (cf. \cite[Ch. 5]{GRFbook}).
    Furthermore, by construction the limiting structure satisfies $\brs{\N \Phi^{\infty}}_{\til{g}^{\infty}} \equiv 0$, thus the limit flow is static.  Since the harmonic radius  is continuous under smooth convergence \cite{AC}, so is the regularity scale.
	By construction, $r(\tilde g^i_0)=1$, thus $r(\tilde{g}^{\infty}_0)=1$.
	Furthermore, since the limit flow $\tilde g^{\infty}_t$ is static, we have $r(\tilde g^{\infty}_t)=1$ for all $t\in[s,0]$.
	On the other hand, by (\ref{a}) and the smooth convergence, we have  $r(\tilde g^{\infty}_s,x^{\infty})<1$. This is a contradiction and hence the proposition is proved.
\end{proof}

\begin{prop} \label{p:conv}
Let $(g^s,b^s,I,J^s), s\in [0,1]$ be a smooth path in the generalized K\"ahler class $[(g^0, b^0, I, J^0)]$, where $(g^0,b^0,I,J^0)$ is K\"ahler Calabi-Yau.  Then for any $s\in [0,1]$, the GKRF with initial data $(g^s,b^s,I,J^s)$ exists on $[0,\infty])$ and converges to $(g^0,b^0,I,J^0)$.
\end{prop}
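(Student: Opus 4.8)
The plan is to run a continuity method along the given path. Write $g_{CY}:=g^0$ and normalize constants so that any metric in the $C^{k,\alpha}$ stability ball $B_\epsilon(g_{CY})$ of Proposition \ref{stability} has regularity scale $\ge 1$ (permissible by the parabolic scaling covariance of Proposition \ref{Backwardregularity}, under which $|\nabla\Phi|$ is invariant). Let
\[
\mathcal I := \{ s\in[0,1] : \text{GKRF from } (g^s,b^s,I,J^s) \text{ exists for all } t\ge 0 \text{ and converges smoothly to } (g^0,b^0,I,J^0)\}.
\]
Since $(g^0,b^0,I,J^0)$ is Kähler Calabi-Yau it is Ricci-flat, so $\rho_I\equiv 0$ and the spinor flow (\ref{f:spinorflow}) is stationary; hence $0\in\mathcal I$ and $\mathcal I\neq\emptyset$. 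I will show $\mathcal I$ is open and closed in $[0,1]$, so $\mathcal I=[0,1]$ and in particular $s=1$ converges. Two structural facts are used throughout. First, $(M,I)$ is Kähler (true at $s=0$ and preserved along the path, which fixes $I$), and each $(g^s,b^s,I,J^s)$ has holomorphically trivial canonical bundles by Lemma \ref{l:spinorinvariance}; thus Proposition \ref{lteprop} already gives \emph{global existence} for every $s$, so only convergence is at issue. Second, GKRF preserves the generalized Kähler class (Theorem \ref{t:GKRFprops}(1)), and by the uniqueness statement Theorem \ref{t:rigidity}(3) the only generalized Calabi-Yau geometry in $[(g^0,b^0,I,J^0)]$ is $(g^0,b^0,I,J^0)$ itself; hence any subsequential limit that is a generalized Calabi-Yau geometry must be this fixed structure.

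For \textbf{openness}, suppose $s_0\in\mathcal I$. The flow from $(g^{s_0},\ldots)$ converges, so it enters $B_\epsilon(g_{CY})$ at some finite time $T$. By continuous dependence of GKRF on its initial data over the compact interval $[0,T]$, for $s$ near $s_0$ the flow from $(g^s,\ldots)$ also lies in $B_\epsilon(g_{CY})$ at time $T$; Proposition \ref{stability} then forces convergence to a Kähler Calabi-Yau metric, which by the class-preservation and uniqueness facts above is $(g^0,b^0,I,J^0)$. Thus a neighborhood of $s_0$ lies in $\mathcal I$.

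The main work, and the principal obstacle, is \textbf{closedness}. The key a priori input is the universal gradient decay of Proposition \ref{p:gradientdecay}: since $C_0:=\sup_{s}\sup_{M\times\{0\}}(\Phi^s)^2<\infty$ along the compact path, one gets $|\nabla\Phi^s|^2(\cdot,t)\le C_0/t$ \emph{uniformly in} $s$, so there is a single $T_0$ with $\sup_{M\times[T_0,\infty)}|\nabla\Phi^s|\le\epsilon(n)$ for all $s$, where $\epsilon(n)$ is the constant of Proposition \ref{Backwardregularity}. I then claim a \emph{uniform entry time}: there is $T^*<\infty$ so that for every $s\in\mathcal I$ the flow lies in $B_\epsilon(g_{CY})$ at some time $\le T^*$. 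Granting the claim, closedness follows as in openness: if $s_i\in\mathcal I$ with $s_i\to s_\infty$, then continuous dependence on $[0,T^*]$ places the flow from $s_\infty$ in $B_\epsilon(g_{CY})$ at a time $\le T^*$, whence Proposition \ref{stability} and Theorem \ref{t:rigidity}(3) give $s_\infty\in\mathcal I$.

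It remains to prove the uniform entry-time claim, and this is where backward regularity is essential. Arguing by contradiction, suppose $s_i\in\mathcal I$ with first entry times $\tau_i\to\infty$. At $\tau_i$ the metric lies in $B_\epsilon(g_{CY})$, so $r(M,g^{s_i}_{\tau_i})\ge 1$ by the normalization; since $\tau_i\to\infty>T_0+2$ and $|\nabla\Phi^{s_i}|\le\epsilon(n)$ on $[\tau_i-2,\tau_i]$, Proposition \ref{Backwardregularity} gives $r(M,g^{s_i}_t)\ge\tfrac12$ on $[\tau_i-1,\tau_i]$. Shi-type smoothing estimates (as in Proposition \ref{Backwardregularity}) then yield uniform $C^\infty$ bounds on a slightly smaller backward window, so after the translation $\tau_i\mapsto 0$ a subsequence converges in the pointed generalized Cheeger--Gromov sense to a limit flow on $[-\tfrac12,0]$. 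Because $|\nabla\Phi^{s_i}|\to 0$, the limit satisfies $\nabla\Phi^\infty\equiv 0$; then $\rho_I=-\tfrac12 dJd\Phi\equiv 0$ by the transgression formula (\ref{f:PhiBismut}) forces the limit flow to be static with constant $\Phi$, hence a generalized Calabi-Yau geometry, hence Kähler Ricci-flat by Theorem \ref{t:rigidity}. Consequently, for $i$ large the flow at time $\tau_i-\tfrac14<\tau_i$ would already be $\epsilon$-close (up to generalized gauge) to a Kähler Calabi-Yau structure, necessarily $g_{CY}$ by class preservation, contradicting that $\tau_i$ is the \emph{first} entry time. This proves the claim, hence closedness, so $\mathcal I=[0,1]$. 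The two delicate points are precisely the uniform regularity-scale propagation furnished by Proposition \ref{Backwardregularity} and the identification of every gauge-equivalent Cheeger--Gromov limit with the single fixed structure $(g^0,b^0,I,J^0)$ via Theorems \ref{t:GKRFprops}(1) and \ref{t:rigidity}(3).
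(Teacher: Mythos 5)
Your overall strategy is exactly the paper's: a continuity method along the path, with openness from continuous dependence plus the stability result (Proposition \ref{stability}) and uniqueness (Theorem \ref{t:rigidity}), and closedness reduced to a uniform bound on the first entry time, proved by contradiction using the uniform gradient decay (Proposition \ref{p:gradientdecay}), backward regularity (Proposition \ref{Backwardregularity}), and Shi-type smoothing. However, the final step of your entry-time argument has a genuine gap. You extract the limit of the backward windows $[\tau_i-\tfrac12,\tau_i]$ in the \emph{pointed generalized Cheeger--Gromov} sense, i.e., only up to diffeomorphisms and $b$-field shifts, and then claim that for large $i$ the flow at time $\tau_i-\tfrac14$ is ``$\epsilon$-close (up to generalized gauge)'' to $g_{CY}$, contradicting minimality of $\tau_i$. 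This does not follow: the entry time $\tau_i$ is defined relative to the \emph{fixed} $C^{k,\alpha}$ ball around $(g^0,b^0,I,J^0)$, with no gauge freedom, so closeness modulo a (possibly large) diffeomorphism and $b$-field shift at time $\tau_i-\tfrac14$ is perfectly compatible with the flow lying far from that ball in the fixed topology. For the same reason, your identification of the Cheeger--Gromov limit with $g_{CY}$ ``by class preservation'' is unjustified: membership in $[(g^0,b^0,I,J^0)]$ and the spinor cohomology classes are preserved along the flow, but not under the gauge transformations implicit in Cheeger--Gromov convergence.

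The repair is what the paper does, and your setup already contains the needed ingredient: the slices at the anchor time $\tau_i$ lie on the boundary of the fixed $\epsilon$-ball, hence are uniformly equivalent to $g^0$ as tensors on $M$; combined with the uniform $C^\infty$ bounds furnished by backward regularity and smoothing on $[\tau_i-1,\tau_i]$, one can extract a subsequential limit of $(g^{s_i}_{\tau_i}, b^{s_i}_{\tau_i}, I, J^{s_i}_{\tau_i})$ \emph{as tensors}, in any $C^{k,\alpha}$ topology, with no gauge at all. The contradiction is then read off at time $\tau_i$ itself: since $\sup_M\brs{\N\Phi^{s_i}_{\tau_i}}\to 0$, the tensor limit has constant $\Phi$, hence is a generalized Calabi-Yau geometry, hence equals $(g^0,b^0,I,J^0)$ by Theorem \ref{t:rigidity} (the tensor limit preserves $I$, $\sigma$, and the spinor classes, which gauge-dependent limits need not); but it is also a limit of structures at $C^{k,\alpha}$-distance exactly $\epsilon$ from $(g^0,b^0,I,J^0)$, so it lies on the boundary of the ball. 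A point cannot be both the center and on the boundary, and this contradiction never requires looking at the earlier time $\tau_i-\tfrac14$ or invoking gauge-dependent convergence.
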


\begin{proof} 
It follows from Lemma \ref{l:spinorinvariance} that every GK structure $(g^s,b^s, I, J^s)$ has holomorphically trivial canonical bundles.  Thus the long time existence of solution to GKRF is established in Proposition \ref{lteprop}.  Let $\mathcal I\subset [0,1]$ be the set of $s\in I$ such that the GKRF with initial data $(g^s,b^s,I,J^s)$ converges up to exact $b$-field transformation to the K\"ahler Calabi-Yau structure $(g^0,b^0,I,J^0)$ along the GKRF.  We want to show that $\mathcal I=[0,1]$.  Trivially, we have $0\in \mathcal I$, so it suffices to show that $\mathcal I$ is both open and closed in $[0,1]$.

First we show that $\mathcal I$ is open.  Fix $s\in \mathcal I$ such that the GKRF with initial data $(g^s,b^s,I,J^s)$ converges smoothly to the K\"ahler Calabi-Yau structure $(g^0,b^0,I,J^0)$.  In particular, for any $\ge > 0$ there exists $t_0 > 0$ so that $(g^s_{t_0},b^s_{t_0},I,J^s_{t_0})$ lies in the $\epsilon$-ball in the $C^{k,\alpha}$ topology centered at $(g^0,b^0,I,J^0)$.  By the smooth dependence on the initial data,
there is a small constant $\eta>0$ such that when $s'\in(s-\eta,s+\eta)$, the GKRF with initial data $(g^{s'},b^{s'},I,J^{s'})$ will enter the $2\ge$-ball in the $C^{k,\ga}$ topology centered at $(g^0,b^0,I,J^0)$.  Then by the stability of Proposition \ref{stability}, the GKRF then converges smoothly to some Calabi-Yau structure $(\tilde g,\tilde{b},I,\tilde J)$.  By Theorem \ref{t:rigidity}, Calabi-Yau structures are unique in a fixed generalized K\"ahler class, thus $(\tilde g,\tilde{b},I,\tilde J) = (g^0,b^0,I,J^0)$.  This proves the openness.

Now we show that $\mathcal I$ is also closed.  Let $s_i$ be a sequence in $\mathcal I$ converging to $s_{\infty}$.  For each $i$, let $T_i$ be the first time that the GKRF starting at $(g^{s_i},b^{s_i},I,J^{s_i})$ hits the boundary of the $\epsilon$-ball in the $C^{k,\ga}$ topology centered at $(g^0,b^0,I,J^0)$.  The key point is to show that $\lim\sup {T_i} < \infty$.  Given this, by passing to a subsequence we may assume that $T_i\to T<\infty$.  We claim that $(g^{s_{\infty}}_T, b^{s_{\infty}}_T,I,J^{s_{\infty}}_T)$ is in the closed $2\epsilon$-ball in the $C^{k,\ga}$ topology centered at the Calabi-Yau structure $(g^0,b^0,I,J^0)$.  If not, then by continuity, there is a constant $\eta>0$ depending on $(g^{s_{\infty}}, b^{s_\infty}, I, J^{s_{\infty}})$ such that when $t\in (T-\eta,T+\eta)$, $(g^{s_0}_t, b^{s_0}_t, I, J^{s_0}_t)$ is not in the $2\epsilon$-ball. Then by the smooth dependence on the initial data, for $i$ sufficiently large, $(g^{s_i}_{T_i}, b^{s_i}_{T_i}, I, J^{s_i}_{T_i})$ is not in the $2\epsilon$-ball.  This is a contradiction, thus
$(g^{s_0}_T, b^{s_0}_T, I, J^{s_0}_T)$ is in the $2\epsilon$-ball, and we can apply Proposition \ref{stability} to finish the proof.

Finally we show that $\lim\sup T_i<\infty$ indeed holds.  We argue by contradiction and assume that $\lim T_i=\infty$ by passing to a subsequence.
Note that there is some uniform bound for $\Phi^s_0$ along the continuity path.  It then follows from Proposition \ref{p:gradientdecay} that there is a uniform constant $C$ such that for any $i$ and any time $t$ one has
$$\sup_{M \times \{t\}}|\nabla\Phi^{s_i}_t|<\frac{C}{t}.$$ Therefore if $T_i\to \infty$, we have that
\begin{align} \label{f:Phidecay}
\sup_{M \times \{T_i\}} \brs{\nabla\Phi^{s_i}_{T_i}}<\epsilon_i \to 0.
\end{align}
Given this, we may apply Proposition \ref{Backwardregularity} to obtain
	\begin{align*}
	    \inf_{M \times [T_i - \delta, T_i]} r_h(x) \geq c, \qquad \sup_{M \times [T_i-\delta, T_i]}  |\nabla^l Rm| \leq C_l,  \quad \forall \; l \geq 0,
	\end{align*}
for some uniform constants $c, C_l,\delta$ independent of $i$.  We note then that the sequence $(g^{s_i}_{T_i},b^{s_i}_{T_i},I,J^{s_i}_{T_i})$ converges in \emph{any} $C^{k,\ga}$ topology to a limit $(g^{\infty}, b^{\infty},I, J^{\infty})$.  On the one hand, it follows from (\ref{f:Phidecay}) that this limit is a Calabi-Yau structure, which must be $(g^0, b^0,I, J^0)$ by Theorem \ref{t:rigidity}.  On the other hand, by construction the structures $(g^{s_i}_{T_i}, b^{s_i}_{T_i}, I, J^{s_i}_{T_i})$ lie on the boundary of the $\ge$-ball in the  $C^{k,\alpha}$ topology centered at $(g^0,b^0,I,J^0)$.  This is a contradiction, thus $\lim\sup T_i<\infty$.
\end{proof}

\begin{thm}\label{t:longcon} Let $(M^{2n}, g, b, I, J)$ be a compact generalized K\"ahler manifold with holomorphically trivial canonical bundles.  
\begin{enumerate}
    \item Suppose $(M, I)$ is a K\"ahler manifold.  Then the solution to generalized K\"ahler-Ricci flow with initial data $(g, b, I, J)$ exists for all time, and the Mabuchi energies converge to their topologically determined extreme values.
    \item Suppose there exists a generalized Calabi-Yau geometry in $[(g, b, I, J)]$.  Then the solution to generalized K\"ahler-Ricci flow with initial data $(g, b, I, J)$ converges exponentially to this necessarily unique generalized Calabi-Yau geometry.
\end{enumerate}
\end{thm}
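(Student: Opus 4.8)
The plan is to assemble the machinery developed in the previous two sections, since the two items of the theorem reduce, respectively, to Proposition~\ref{lteprop} and Proposition~\ref{p:conv}, with a small amount of glue provided by Theorem~\ref{t:rigidity} and Lemma~\ref{l:Mabuchibounds}.

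For item (1), the hypothesis that $(M,I)$ is K\"ahler places us exactly in the setting of Proposition~\ref{lteprop}, which already delivers global existence on $[0,\infty)$ together with the limits $\lim_{t\to\infty}\mathcal M_i(t)=\gl\int_M(\psi_i,\bar\psi_i)$. It then remains only to identify these limiting values as the topologically determined extrema. First I would invoke Lemma~\ref{l:Mabuchibounds}, which shows $\gl\int_M e^{-\Psi_i}dV_g$ to be precisely the one-sided bounds for $\mathcal M_i$; combined with the monotonicity recorded in Proposition~\ref{p:Mabuchimon} (packaged in Theorem~\ref{t:GKRFprops}(4)), this pins down the direction of convergence. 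The values are topological because, as noted in Definition~\ref{d:topCY}, the constant $\gl$ and the spinor masses $\int_M(\psi_i,\bar\psi_i)$ depend only on the twisted deRham classes $[\psi_i]$. This closes item (1) with no further work.

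For item (2), I would begin by applying Theorem~\ref{t:rigidity}: the hypothesized generalized Calabi-Yau geometry in $[(g,b,I,J)]$ is necessarily K\"ahler Ricci-flat by item (1) of that theorem, and unique in its class by item (3); call it $(g_{CY},b_{CY},I,J_{CY})$. The structural point is that, by the very definition of the generalized K\"ahler class (Definition~\ref{d:GKclass}), any two structures in $[(g,b,I,J)]$ are joined by an exact canonical family. Reparametrizing so that $s=0$ is the Calabi-Yau endpoint and $s=1$ the given initial data produces a smooth path $(g^s,b^s,I,J^s)$ in $[(g,b,I,J)]$ whose $s=0$ end is K\"ahler Calabi-Yau. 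By Lemma~\ref{l:spinorinvariance} every structure along this path has holomorphically trivial canonical bundles, so the hypotheses of Proposition~\ref{p:conv} are satisfied. Applying that proposition at $s=1$ yields convergence of the GKRF with initial data $(g,b,I,J)$ to $(g_{CY},b_{CY},I,J_{CY})$, which is unique as noted.

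The one point requiring genuine care is upgrading this to an \emph{exponential} rate, and I expect this to be the main (though modest) obstacle. The exponential decay should be inherited from the stability mechanism driving Proposition~\ref{p:conv}: once the flow enters the small $C^{k,\alpha}$-ball around the K\"ahler Calabi-Yau fixed point, its subsequent evolution is governed by Proposition~\ref{stability}, i.e. the stability theorem of \cite{HCF}, and the linearized analysis underlying that result supplies exponential convergence toward the integrable Calabi-Yau fixed point. Thus the plan for this final step is simply to record that the stability result produces an exponential, not merely asymptotic, rate, which follows from the standard dynamical-stability framework for Hermitian curvature flows near such a fixed point.
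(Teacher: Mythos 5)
Your proposal is correct and follows essentially the same route as the paper, whose proof of this theorem consists precisely of citing Proposition~\ref{lteprop} for item (1) and Proposition~\ref{p:conv} for item (2), with the uniqueness supplied by Theorem~\ref{t:rigidity}. Your additional care in extracting the exponential rate from the stability mechanism of Proposition~\ref{stability} is consistent with (and slightly more explicit than) the paper's treatment, which leaves that point implicit in the \cite{HCF} stability result.
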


\begin{proof}%[Proof of Theorem \ref{t:longcon}]
The theorem follows from Propositions \ref{lteprop} and \ref{p:conv}.
\end{proof}

\begin{cor} \label{cor:contractibility} Let $(M^{2n}, g, I)$ be a K\"ahler Calabi-Yau manifold which is part of a generalized Calabi-Yau geometry $m = (g,b, I, J)$. Then
\begin{align*}
\mathcal{GK}^{I,\sigma}(m) \cong \star.
\end{align*} 
\end{cor}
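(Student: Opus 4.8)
The plan is to exhibit a deformation retraction of $\mathcal{GK}^{I,\sigma}(m)$ onto the single point $m$, built in two continuously-glued stages: first the generalized K\"ahler-Ricci flow contracts each generalized K\"ahler class (the ``nonlinear fiber'' direction) onto its unique Calabi-Yau representative, and then a linear interpolation of cohomological data contracts the resulting locus of Calabi-Yau structures (the ``linear base'' direction) onto $m$.

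First I would verify that every $m' = (g',b',I,J') \in \mathcal{GK}^{I,\sigma}(m)$ is connected to $m$ by a canonical family (Definition \ref{d:canonicalfamily}) and hence, by Lemma \ref{l:spinorinvariance}, again has holomorphically trivial canonical bundles, with twisted spinor classes $([\psi_1'],[\psi_2'])$ varying continuously along the family. Since $(M,I)$ is K\"ahler Calabi-Yau, Proposition \ref{lteprop} (equivalently Theorem \ref{t:longcon}(1)) gives global existence of GKRF from every such $m'$. The essential extension of Theorem \ref{t:longcon}(2) is to show that each generalized K\"ahler class inside $\mathcal{GK}^{I,\sigma}(m)$ actually contains a Calabi-Yau geometry: using the Beauville-Bogomolov product decomposition from Proposition \ref{p:KCYspinors} and Yau's theorem on each factor, I would construct, from the cohomological data $([\psi_1'],[\psi_2'])$, a K\"ahler Ricci-flat structure realizing those classes, checking that positivity of the genuine GK metric $g'$ forces the relevant classes to lie in the open convex K\"ahler cones of the factors so that Yau's theorem applies. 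Theorem \ref{t:longcon}(2) then yields that the GKRF from $m'$ converges exponentially to this necessarily unique Calabi-Yau point $\mu(m')$, which by Theorem \ref{t:rigidity} depends only on $([\psi_1'],[\psi_2'])$. Reparametrizing flow time to $[0,\tfrac12]$ (with $s=\tfrac12$ the limit) gives Stage A, a retraction onto the Calabi-Yau locus $\mathcal{CY}(m) := \mu(\mathcal{GK}^{I,\sigma}(m))$; its continuity in $m'$ follows from continuous dependence of GKRF on initial data together with the stability estimate of Proposition \ref{stability}, exactly as in the openness argument of Proposition \ref{p:conv}.

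For Stage B, I would contract $\mathcal{CY}(m)$ onto $m$. By Theorem \ref{t:rigidity}(2) and Proposition \ref{p:KCYspinors}, a Calabi-Yau geometry in $\mathcal{GK}^{I,\sigma}$ is determined, up to exact $b$-field, by its pair of twisted spinor classes $(\alpha,\beta)$, which in turn record the K\"ahler (Aeppli) classes of the Ricci-flat metrics on the factors $S, X, Y, \mathbb T^{2k}$. The set of such data achievable inside $\mathcal{GK}^{I,\sigma}(m)$ is a product of K\"ahler cones with a linear space, hence convex and star-shaped toward the data $(\alpha_0,\beta_0)$ of $m$; linearly interpolating $(1-t)(\alpha,\beta)+t(\alpha_0,\beta_0)$ and taking the corresponding (continuously varying, by uniqueness in Yau's theorem) Calabi-Yau structures produces a canonical family from $\mu(m')$ to $m$ lying in $\mathcal{GK}^{I,\sigma}(m)$. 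Concatenating Stages A and B gives the desired contraction $H:\mathcal{GK}^{I,\sigma}(m)\times[0,1]\to\mathcal{GK}^{I,\sigma}(m)$ with $H(\cdot,0)=\mathrm{id}$ and $H(\cdot,1)\equiv m$.

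The main obstacle, and the step requiring the most care, is the existence and convex parametrization of the Calabi-Yau locus: one must show that every class in $\mathcal{GK}^{I,\sigma}(m)$ contains a Calabi-Yau point (so that Stage A converges) and that these points are cut out by a convex, continuously-parametrized family of cohomology classes (so that Stage B is well defined and continuous). This hinges on transporting the Beauville-Bogomolov decomposition of Proposition \ref{p:KCYspinors} across the entire space $\mathcal{GK}^{I,\sigma}(m)$ and on the open, convex nature of the K\"ahler cones of the factors. A secondary technical point is the continuous gluing of the two stages at $s=\tfrac12$, which relies on the uniformity of the exponential convergence in Theorem \ref{t:longcon}(2) and on the stability of Proposition \ref{stability}.
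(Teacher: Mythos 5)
Your overall strategy---retract $\mathcal{GK}^{I,\sigma}(m)$ onto the locus of Calabi-Yau structures by running GKRF, then contract that locus using convexity of the relevant spaces of cohomology classes---is the same as the paper's, and your Stage B is essentially the paper's closing observation that the space of Calabi-Yau metrics on $(M,I)$ is contractible. However, Stage A as written has a genuine gap. Theorem \ref{t:longcon}(2) requires a generalized Calabi-Yau geometry \emph{in the generalized K\"ahler class} $[m']$, i.e.\ one joined to $m'$ by an \emph{exact} canonical family. Your Beauville-Bogomolov/Yau construction produces a Calabi-Yau geometry whose twisted spinor classes agree with those of $m'$, i.e.\ an element of $\mathcal{GK}^{I,\sigma}_{\alpha',\beta'}$; but by Definition \ref{d:spinorGKclass} one only has the inclusion $[m'] \subset \mathcal{GK}^{I,\sigma}_{\alpha',\beta'}$, which may a priori be proper, and neither your argument nor any result in the paper shows that the constructed structure is an exact canonical deformation of $m'$. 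Theorem \ref{t:rigidity}(2) gives uniqueness in $\mathcal{GK}^{I,\sigma}_{\alpha',\beta'}$ up to exact $b$-field, but it does not supply existence inside $[m']$---which is essentially the thing being proved---so Theorem \ref{t:longcon}(2) cannot be invoked as a black box.

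The paper sidesteps precisely this issue by not pre-constructing a fixed point of the flow in $[m']$ at all: it reruns the continuity-method proof of Proposition \ref{p:conv} along the (closed, not exact) canonical family joining $m$ to $m'$, pairing each point of the path with the unique Ricci-flat metric in the Aeppli class of its $\gw_I$ (using that Aeppli and deRham cohomology agree on a compact K\"ahler manifold, so these targets form a smooth family), and proves convergence to these varying targets directly; the existence of a Calabi-Yau geometry in each class is then an output of the flow rather than a hypothesis. Your proposal could be repaired in the same way---you already invoke the continuity-method ingredients (stability, smooth dependence, backward regularity) for continuity of the retraction---but the reduction to Theorem \ref{t:longcon}(2) as stated does not close. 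A secondary point: your claim that positivity of $g'$ places the extracted classes in the K\"ahler cones of the factors is not elementary, since for a non-Calabi-Yau $m'$ the form $\gw'_I$ is only pluriclosed; that its class contains a K\"ahler metric is the Demailly-Paun theorem \cite{DemaillyPaun} (cf.\ \cite{ASnondeg}), which is where the paper's global existence argument gets this positivity.
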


\begin{proof} The proof requires a minor modification of item (2) of Theorem \ref{t:longcon} to claim that for any element of $\mathcal{GK}^{I,\sigma}(m)$, the GKRF with this initial data converges exponentially to a Calabi-Yau structure.  The only difference is that a smooth path in $\mathcal{GK}^{I,\sigma}(m)$ may deform the Aeppli class of $\gw_I$, as the family of two-forms $K_t$ defining the path need only be closed, not exact (cf.\,\cite{gibson2020deformation}).  As the background manifold is K\"ahler, Aeppli cohomology is canonically identified with deRham cohomology, and thus there is a unique Calabi-Yau metric in every Aeppli class, and the moduli space of such metrics is smooth.  Thus a smooth path in $\mathcal{GK}^{I,\sigma}(m)$ comes equipped with a smooth family of Calabi-Yau metrics, and the proof of Proposition \ref{p:conv} is easily adapted to show that the flow starting with initial data along this path will converge to the relevant Calabi-Yau metric exponentially.  Using the exponential convergence, it follows easily that the one-parameter family of maps $F_s, s \in [0,1]$ sending a given element of $\mathcal{GK}^{I,\sigma}(m)$ to the time $t = \frac{s}{1-s}$ flow of the GKRF contracts $\mathcal{GK}^{I,\sigma}(m)$ to the space of Calabi-Yau metrics on $(M, I)$, which is itself contractible.
\end{proof}

\begin{cor} \label{c:HKcontract} Let $(M^{4n}, g, I, J, K)$ be a hyperK\"ahler manifold.  Let
\begin{align*}
    \Ham^+(\gw_K) := \{ \phi \in \Ham(\gw_K)\ |\ \phi^*\gw_I(X,IX)>0\ \mbox{for nonzero }X\in TM\}.
\end{align*}
Then the connected component of the identity $\Ham^+_0(\gw_K)\subset \Ham^+(\gw_K)$ is contractible:
\begin{align*}
    \Ham^+_0(\gw_K) \cong \star.
\end{align*}
Furthermore, $\Ham^+_0(\gw_K)\cap \mathrm{Aut}(M,J)=\{\mathrm{id}\}$.
\end{cor}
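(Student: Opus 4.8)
\emph{The plan is} to identify $\Ham^+_0(\gw_K)$ with the canonical deformation class of the hyperK\"ahler generalized K\"ahler structure and then invoke Corollary \ref{cor:contractibility}. First I would check that $m:=(g,0,I,J)$ is a \emph{nondegenerate} generalized Calabi-Yau geometry to which that corollary applies. Since $I$ and $J$ anticommute one has $(I\pm J)^2=-2$, so $I\pm J$ are invertible and $\sigma=\tfrac12 g^{-1}[I,J]=g^{-1}K$ is nondegenerate with $\Omega:=\sigma^{-1}=-\gw_K$. Both $(g,I)$ and $(g,J)$ are hyperK\"ahler, hence K\"ahler Ricci-flat, and the two spinors $\psi_i$ pair to constant multiples of the parallel volume form, so $\Phi\equiv\mathrm{const}$ and $m$ is a generalized Calabi-Yau geometry with $(M,g,I)$ Calabi-Yau. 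Thus Corollary \ref{cor:contractibility} gives $\mathcal{GK}^{I,\sigma}(m)\cong\star$.

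Next I would build the correspondence. By the description of canonical deformations in the nondegenerate case (\cite{gibson2020deformation,ASnondeg}), with $\sigma$ invertible the deformations comprising $\mathcal{GK}^{I,\sigma}(m)$ are generated by $\Omega=-\gw_K$-Hamiltonian vector fields acting on $J$, with $I$ and $\sigma$ held fixed and the metric adjusted. Integrating a $\gw_K$-Hamiltonian isotopy $\phi_t$ therefore produces a path $(g_t,b_t,I,J_t)$ of bihermitian data, and the positivity condition $\phi^*\gw_I(X,IX)>0$ is precisely the requirement that the associated $g_t$ be positive definite, i.e. that $(g_t,b_t,I,J_t)$ be a genuine generalized K\"ahler structure. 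This defines a continuous map $\Theta\colon\Ham^+_0(\gw_K)\to\mathcal{GK}^{I,\sigma}(m)$, $\phi\mapsto m_\phi$, surjective because every canonical family arises, by the cited structure theory, from such a Hamiltonian isotopy. The fibre of $\Theta$ over $m$ is exactly the stabilizer $\Ham^+_0(\gw_K)\cap\mathrm{Aut}(M,J)$, so the injectivity of $\Theta$ and the ``Furthermore'' clause coincide, and both are equivalent to triviality of this stabilizer.

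I would then prove that stabilizer trivial, finishing both statements at once. Let $\phi\in\Ham^+_0(\gw_K)\cap\mathrm{Aut}(M,J)$. From $\gw_I=\gw_K(\cdot,J\cdot)$ and the fact that $\phi$ preserves $\gw_K$ and $J$, it preserves $\gw_I$ as well, hence the holomorphic symplectic form $\Theta_J=\gw_K+\i\gw_I$ of $(M,J)$. Being Hamiltonian, $\phi$ acts trivially on $H^*(M;\R)$. Passing to the Beauville-Bogomolov decomposition (\cite{Beauville,bogomolov}), on each irreducible holomorphic-symplectic factor a holomorphic automorphism acting trivially on $H^2$ is the identity, while on the torus factors a Hamiltonian (zero-flux) translation is the identity; hence $\phi=\mathrm{id}$. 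Consequently $\Theta$ is a homeomorphism, and transporting the contraction of $\mathcal{GK}^{I,\sigma}(m)$ supplied by Corollary \ref{cor:contractibility} yields $\Ham^+_0(\gw_K)\cong\star$. As the explicit homotopy, running GKRF from $m_\phi$ gives, by Theorem \ref{t:longcon}(2), an exact canonical family converging exponentially to $m$, and lifting it through $\Theta^{-1}$ produces a canonical homotopy flowing $\phi$ to $\mathrm{id}$.

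\textbf{Main obstacle.} The delicate step is making $\Theta$ precise as a homeomorphism: the deformation theory only produces the infinitesimal/cohomological data $K_t=da_t$, so one must lift an exact canonical family of generalized K\"ahler structures to an honest $\gw_K$-Hamiltonian isotopy of diffeomorphisms, and then control regularity and topology so that $\Theta$ and $\Theta^{-1}$ are continuous and $\Theta$ is onto the whole component. The cohomological rigidity input used for the stabilizer on the irreducible factors is the second point requiring care.
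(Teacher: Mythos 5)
Your outline (identify $\Ham^+_0(\gw_K)$ with a deformation space of generalized K\"ahler structures and contract it using the flow) matches the paper's strategy, but two of your steps contain genuine gaps. The most serious one is the reduction of injectivity of $\Theta$ to triviality of the stabilizer of $m$: this equivalence fails because $\Ham^+_0(\gw_K)$ is \emph{not a group} --- the positivity condition $\pi^{1,1}_I\phi^*\gw_I>0$ is not preserved under composition or inversion --- so the fibre of $\Theta$ over $\Theta(\phi_0)$ cannot be translated to the fibre over $m$. Concretely, if $\phi_0^*J=\phi_1^*J$ then $\phi_1\circ\phi_0^{-1}$ preserves $J$ and lies in $\Ham(\gw_K)$, but it need not lie in $\Ham^+_0(\gw_K)$, so triviality of $\Ham^+_0(\gw_K)\cap\mathrm{Aut}(M,J)$ gives nothing. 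The paper proves injectivity directly, and this is the heart of its argument: connect $\phi_0$ to $\phi_1$ by a path $\phi_s$, run GKRF from each $\phi_s\cdot m_{\mathrm{HK}}$, use global existence, exponential convergence (Theorem \ref{t:longcon}) and smooth dependence on initial data to extract limiting Hamiltonian isotopies $\phi_{s,\infty}$, and then show $\phi_{s,\infty}\circ\phi_s=\mathrm{id}$ by a Bochner argument: the family is generated by vector fields that are simultaneously $J_{\mathrm{HK}}$-holomorphic (hence parallel, since $g_{\mathrm{HK}}$ is Ricci-flat) and Hamiltonian, hence zero. The ``Furthermore'' clause is then a \emph{consequence} of injectivity --- the reverse of your logical direction. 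Your substitute stabilizer argument also has independent problems: an element of $\Ham^+_0(\gw_K)\cap\mathrm{Aut}(M,J)$ preserves $\gw_K$ and $J$ but not a priori the metric $g$, so it need not respect the Beauville--Bogomolov splitting; and the claim that an automorphism of an irreducible holomorphic symplectic manifold acting trivially on $H^2$ is the identity is false in general (generalized Kummer varieties carry nontrivial such automorphisms), so one would need the action on the full cohomology together with a more delicate argument.

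The second gap is surjectivity: $\Theta$ does \emph{not} surject onto $\mathcal{GK}^{I,\sigma}(m)$. Hamiltonian isotopies produce exactly the \emph{exact} canonical families ($K_t=dd^c_{J_t}u_t$ by the $dd^c_{J_t}$-lemma), i.e.\ the generalized K\"ahler class $[m_{\mathrm{HK}}]$, whereas a general canonical family only requires $K_t$ closed and can change the Aeppli/spinor classes --- this is precisely the distinction the paper highlights in the proof of Corollary \ref{cor:contractibility}. So contractibility of $\mathcal{GK}^{I,\sigma}(m)$ cannot simply be transported through $\Theta$; the paper instead establishes the bijection $\Ham^+_0(\gw_K)\simeq [m_{\mathrm{HK}}]_J$ and contracts that space directly with the GKRF, again via Theorem \ref{t:longcon}.
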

\begin{proof}We fix a hyperK\"ahler manifold $(M,g,I,J,K)$ and consider the nondegenerate generalized K\"ahler structure $m_{\mathrm{HK}}:=(g,0,I,J)$ with Poisson tensor $\sigma=\gw_K^{-1}$.

According to~\cite[Prop.\,2.16]{ASnondeg} and~\cite[Ex.\,3.9]{gibson2020deformation} any element $\phi \in\Ham_0^+(\gw_K)$ defines a generalized K\"ahler structure $\phi\cdot m_{\mathrm{HK}}$ in the generalized K\"ahler class $[m_{\mathrm{HK}}]$
in the following way. We fix $I$, let $J_{\phi} = \phi^* J$, and define a metric $g$ by
\begin{align*}
    g_{\phi} = \left( \pi_{1,1}^I \phi^* \gw_I \right) I.
\end{align*}
The positivity condition for $\phi \in \Ham_0^+(\gw_K)$ is precisely equivalent to $g_{\phi}$ being positive definite, and the underlying $b$-field can be uniquely recovered from $I,J_\phi$ and $g_\phi$ as $b=-(I(F_+)_\phi)^{\mathrm{skew}}$, where $(F_+)_\phi = -2g(I+J_\phi)^{-1}$

It follows from the definition of the generalized K\"ahler class (Definition~\ref{d:GKclass}) that, conversely, any element in $[m_{\mathrm{HK}}]$ is obtained by the action of some $\phi\in \Ham_0^+(\gw_K)$. Indeed, any exact canonical family with $K_t=da_t$ must be of the form $K_t=dd^c_{J_t}u_t$, $u_t\in C^\infty(M,\R)$ by the $dd^c_{J_t}$-lemma. Any such canonical deformation has $J_t=\phi_t^*J$, where $\phi_t$ is the Hamiltonian $\gw_K$-symplectomorphism generated by $u_t$. Thus we have a surjective map $\Ham^+_0(\gw_K)\to [m_{\mathrm{HK}}]_J$,
\[ \phi\mapsto \phi\cdot J, \]
where $[m]_J$ denotes the $J$ component of $m$.
We claim that this map is a bijection:
\[
\Ham^+_0(\gw_K)\cdot m_{\mathrm{HK}}\simeq [m_{\mathrm{HK}}]_J.
\]
This in particular implies that $\Ham^+_0(\gw_K)\cap\mathrm{Aut}(M,J)=\{\mathrm{id}\}$.

Assume that $(\phi_0\cdot m_{\mathrm{HK}})_J=(\phi_1\cdot m_{\mathrm{HK}})_J$ and consider a path $\phi_s\in \Ham^+_0(\gw_K)$ between $\phi_0$ and $\phi_1$. We apply GKRF with the initial data $\phi_s\cdot m_{\mathrm{HK}}$, $s\in[0,1]$. The GKRF on a nondegenerate generalized K\"ahler manifold is given by an action of $\Ham(\gw_K)$, and smoothly depends on the initial data $\phi_s\cdot m_{\mathrm{HK}}$. Thus by Theorem~\ref{t:longcon} all the flows starting with $\phi_s\cdot m_{\mathrm{HK}}$ will converge to the unique Calabi-Yau geometry $m_{\mathrm{HK}}\in [m_{\mathrm{HK}}]$. Thus, the $J$-components of $\phi_s\cdot m_{\mathrm{HK}}$ will converge to a unique $J_{\rm HK}$ in the hyperK\"ahler family.

Let $\phi_{s,t}\in \Ham(\gw_K)$, $t\in[0,\infty)$ be the Hamiltonian isotopy provided by the GKRF connecting $\phi_s\cdot m_{\mathrm{HK}}$ to $m_\mathrm{HK}$. Since the GKRF flows through genuine generalized K\"ahler structure with positive definite $g$, we have $\phi_{s,t}\circ \phi_t\in \Ham_0^+(\gw_K)$. As the flow converges exponentially it follows that there exists a smooth limit $\phi_{s,\infty}$ satisfying
\[
(\phi_{s,\infty}\circ\phi_s)\cdot J_{\rm HK}=J_{\mathrm{HK}},
\]
implying that $\phi_{s,\infty}\circ\phi_s$ is a one-parameter family of $\gw_{K}$-Hamiltonian automorphisms of $(M,J_{\rm HK})$. A vector field generating such automorphism must be parallel with respect to the Levi-Civita connection of $g_{\rm HK}$, and cannot be Hamiltonian. Therefore $\phi_{s,\infty}\circ\phi_s=\mathrm{id}$. Since $\phi_{s,\infty}$ depends only on the initial data, we have $\phi_{0,\infty}=\phi_{1,\infty}$ and $\phi_0=\phi_1$ as claimed.

Now, once we have the identification
\[
\Ham^+_0(\gw_K)\simeq [m_{\mathrm{HK}}]_J,
\]
we can again invoke Theorem~\ref{t:longcon} and conclude that GKRF contracts $[m_{\mathrm{HK}}]_J$ onto $J_{\mathrm{HK}}$ and respectively $\Ham^+_0(\gw_K)$ onto $\mathrm{id}$.
\end{proof}

\begin{cor} \label{c:commutingcase} Let $(M^{2n}, g, b, I, J)$ be a compact generalized K\"ahler manifold satisfying
\begin{enumerate}
    \item $\gs = 0$,
    \item $(M, I)$ is K\"ahler and $c_1(M, I)=0$.
\end{enumerate}
Then the solution to generalized K\"ahler-Ricci flow with initial data $(g, b, I, J)$ exists for all time and converges to a K\"ahler Calabi-Yau metric.
\end{cor}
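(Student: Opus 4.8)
The plan is to deduce both assertions from Theorem~\ref{t:longcon}: long-time existence is immediate from its item (1) once we know $(M,I)$ is K\"ahler, so the entire content of the corollary is the production of a fixed point, after which convergence follows from item (2). Thus the whole proof reduces to exhibiting a generalized Calabi-Yau geometry inside $[(g,b,I,J)]$.

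First I would unpack the commuting hypothesis. The condition $\sigma=\tfrac12 g^{-1}[I,J]=0$ means $[I,J]=0$, so $Q=IJ$ is a $g$-orthogonal involution and $(M,I)$ carries an $I$-holomorphic, involutive, $g$-orthogonal splitting $TM=T_+\oplus T_-$, with $J=I$ on $T_+$ and $J=-I$ on $T_-$ (cf.\,\cite{ApostolovGualtieri}). In terms of this splitting the generalized complex structures are of constant type and are cut out by pure spinors of the shape $\psi_1=\bar\Theta_+\wedge e^{\i\gw_-}$, $\psi_2=e^{\i\gw_+}\wedge\bar\Theta_-$, exactly as in Proposition~\ref{p:KCYspinors}, so holomorphically trivial canonical bundles hold and the earlier machinery applies. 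Proposition~\ref{p:trivialcanonical} then gives $c_1(K_J)=0$, while $c_1(K_I)=-c_1(M,I)=0$ by hypothesis; writing $K_I=K_{T_+}\otimes K_{T_-}$ and $K_J=K_{T_+}\otimes\overline{K_{T_-}}$ forces $c_1(T_+)=c_1(T_-)=0$.

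Next I would build the target. Since $(M,I)$ is K\"ahler, the Aeppli class of $\gw_I$ is an honest K\"ahler class (Demailly--Paun, as in the proof of Proposition~\ref{lteprop}), and Yau's theorem produces the unique Ricci-flat K\"ahler metric $g_{CY}$ in it. On $(M,g_{CY},I)$ the tangent bundle is Hermitian--Einstein of slope zero, hence polystable, so the holomorphic direct-sum decomposition $TM=T_+\oplus T_-$ is automatically $g_{CY}$-orthogonal and parallel. Consequently $g_{CY}$ is Hermitian for both $I$ and $J$ and is K\"ahler Ricci-flat for $(g_{CY},I)$ and hence for $(g_{CY},J)$ (the K\"ahler and Ricci-flat conditions being blockwise insensitive to the sign of the complex structure), so $(g_{CY},0,I,J)$ is a generalized Calabi-Yau geometry. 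Because $g_{CY}$ lies in the Aeppli class of $\gw_I$ and the parallel splitting fixes the classes $[\gw_\pm]$, its spinors carry the same $d_{H_0}$-classes $[\psi_i]$ as the initial data; invoking the $\sigma=0$ convexity of the space of generalized K\"ahler metrics (positive linear combinations remain generalized K\"ahler, \cite{ApostolovGualtieri,gibson2020deformation}), the straight-line path of metrics keeps $[\psi_i]$ constant and is therefore an exact canonical deformation by the characterization underlying Lemma~\ref{l:spinorinvariance}. Thus $(g_{CY},0,I,J)\in[(g,b,I,J)]$.

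With a generalized Calabi-Yau geometry now located in $[(g,b,I,J)]$, item (2) of Theorem~\ref{t:longcon} yields exponential convergence of the GKRF to it, and Theorem~\ref{t:rigidity}(1) identifies it as a K\"ahler Calabi-Yau metric, completing the proof. I expect the main obstacle to be the construction step of the third paragraph—namely guaranteeing that the analytically produced Yau metric actually respects the holomorphic splitting (so that it is genuinely \emph{generalized} K\"ahler Calabi-Yau) and that it sits in the \emph{exact} canonical class of the initial data rather than merely in $\mathcal{GK}^{I,\sigma}_{[\psi_1],[\psi_2]}$; the polystability argument and the $\sigma=0$ convexity are precisely what bridge these two gaps.
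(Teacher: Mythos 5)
Your overall skeleton (long-time existence from Theorem \ref{t:longcon}(1), then produce a fixed point and conclude with Theorem \ref{t:longcon}(2)) matches the paper's, but the two steps carrying the real content both have genuine gaps. The central one is the claim in your third paragraph that polystability of $TM$ with respect to $g_{CY}$ makes the given holomorphic splitting $TM=T_+\oplus T_-$ ``automatically $g_{CY}$-orthogonal and parallel.'' What the slope-zero Hermitian--Einstein condition gives (via the vanishing theorem for holomorphic sections of $\End(TM)$) is that the projections $\pi_\pm$ are \emph{parallel}; it does not give orthogonality of the splitting. Parallel complements need not be orthogonal complements when stable summands occur with multiplicity, which is exactly what happens in the flat directions: on a flat torus the constant line fields spanned by $\del/\del z_1$ and $\del/\del z_1+\del/\del z_2$ give a holomorphic, involutive, parallel direct-sum splitting of the tangent bundle that is not orthogonal for the flat metric. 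Without orthogonality, $g_{CY}$ is not even $J$-Hermitian, so $(g_{CY},0,I,J)$ need not be a generalized K\"ahler structure at all. This is precisely where the paper works harder: it passes to the Beauville--Bogomolov cover, notes that on the irreducible factors $J=\pm I$ so there is nothing to prove, and on the torus factor it \emph{constructs} a compatible flat metric by hand (choosing a $J$-compatible inner product at one point and spreading it by a parallel frame), rather than taking the Yau metric in the Aeppli class of $\gw_I$. Your claim can in fact be repaired --- the mixed $T_+^*\wedge T_-^*$ component of $\gw_I$ vanishes pointwise, and this linear condition survives passage to the constant Aeppli representative on the torus factor --- but that is an averaging argument on the cover, not a consequence of polystability.

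The second gap is your assertion that the straight-line path from $(g,b,I,J)$ to $(g_{CY},0,I,J)$ is an \emph{exact} canonical deformation, so that Theorem \ref{t:longcon}(2) applies verbatim. A canonical family requires closed two-forms $K_t\in\Lambda^{1,1}_{J}$ driving the deformation, and exactness requires $K_t=da_t$; but $\tfrac{d}{ds}\gw_I^s=\gw_I-\gw_{CY}$ is not closed (only $\gw_{CY}$ is), so the linear path is not literally a canonical family, and placing $(g_{CY},0,I,J)$ in the exact class $[(g,b,I,J)]$ is essentially the commuting $\del\delb$-lemma of Corollary \ref{c:commutingddbar} --- which the paper \emph{deduces from} this corollary, so your route risks circularity. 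The paper avoids this by not invoking Theorem \ref{t:longcon}(2) directly: it only checks that every point of the linear path is a GK structure (convexity for $\sigma=0$) with holomorphically trivial canonical bundles (exhibiting explicit closed spinors), and then reruns the continuity-method argument of Corollary \ref{cor:contractibility}, which tolerates paths that are not exact and Aeppli classes that vary. A smaller issue of the same kind occurs in your second paragraph: you invoke Proposition \ref{p:trivialcanonical} to get $c_1(K_J)=0$, but that proposition presupposes holomorphically trivial canonical bundles, which is exactly what you are in the middle of establishing --- and which in general holds only after passing to a finite cover, a point your write-up also elides.
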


\begin{proof}
  To begin we recall some fundamental structure of GK manifolds with $\gs = 0$ (cf.\,\cite{ApostolovGualtieri}).  Here the operator $Q = IJ$ has eigenvalues $\pm 1$, and induces a holomorphic splitting of the tangent bundle into $TM = T_{+} \oplus T_-$ of $(M, I)$, with associated projection operators $\pi_{\pm} = \tfrac{1}{2} \left( 1 \pm Q \right)$.  Furthermore, since in this case $(M, I)$ is assumed K\"ahler we can assume without loss of generality that $H_0 = 0$.
  
  We first claim that there exists a GK structure $(g_{CY}, 0, I, J)$ such that both pairs $(g_{CY}, I)$ and $(g_{CY}, J)$ are K\"ahler, Calabi-Yau.  By assumption $(M, I)$ is K\"ahler and $c_1(M, I) = 0$ so there exist K\"ahler, Calabi-Yau metrics on $M$, and as in Proposition \ref{p:KCYspinors} we obtain a Beauville-Bogomolov decomposition on a finite cover, still denoted $M$, such that
  \begin{align*}
      M \cong X \times Y \times \mathbb T^{2k},
  \end{align*}
  where $X$ and $Y$ are compact simply-connected Calabi-Yau manifolds and $\mathbb{T}^{2k}$ is a torus.  This manifold also inherits a GK structure with $\gs = 0$, and furthermore
  \begin{align*}
      T_+ = TX \oplus V_+, \quad T_- = TY \oplus V_-, \quad T \mathbb{T}^{2k} = V_+ \oplus V_-.
  \end{align*}
  Note also that by a Bochner argument, for any K\"ahler Ricci flat metric on $M$, both projection operators $\pi_{\pm}$ are parallel, and thus so is $J$.  Thus it suffices to find a Calabi-Yau metric which is compatible with both $I$ and $J$.  Using the product decomposition above it suffices to find such a metric on the torus factor $\mathbb{T}^{2k}$.  Here since there exist generalized K\"ahler metrics on $M$ in the first place, we may pick any point $p$ and choose a metric on $T_p \mathbb{T}^{2k}$ which is compatible with both $I$ and $J$, and extend it using a global parallel frame to a flat metric on $\mathbb{T}^{2k}$ compatible with $I$.  However since $J$ is also parallel with respect to this metric, it follows that the metric is compatible with $J$ as well.  
  
  Now fix $(g, b, I, J)$ a generalized K\"ahler structure on $M$.  We first note that $(s g + (1 - s) g_{CY}, s b, I, J)$ is a one-parameter family of generalized K\"ahler structures.  We also claim that each of these has holomorphically trivial canonical bundles, after possibly passing to a finite cover.  Given this, the claim of global existence and convergence follows from the argument of Corollary \ref{cor:contractibility}.  After passing to a finite cover and using the splitting constructed above, we can find holomorphic volume forms $\Theta_{\pm}$ for the determinant bundles of $T_{\pm}$.  Any generalized K\"ahler metric of the above family splits orthogonally along $T_{\pm}$, so that $\gw_I^s = \gw_+^s + \gw_-^s$.  It follows that the spinors
\begin{align*}
    \psi_1 = \bar\Theta_+ \wedge e^{sb + \i \gw_-^s},\qquad \psi_2 = e^{sb + \i \gw_+^s} \wedge \bar{\Theta}_-
\end{align*}
algebraically determine the correct GK structure.  We claim that these are also closed.  Using the integrability conditions for generalized K\"ahler metrics of commuting type from \cite{ApostolovGualtieri}, one has
\begin{align*}
    d_{\pm} \gw_{\pm}^s = 0,
\end{align*}
where $d = d_+ + d_-$ is the splitting of $d$.  Furthermore, by construction
\begin{align*}
- s db = d^c_I \gw^s_I = \i \left( \delb_- \gw_+^s - \del_- \gw_+^s + \delb_+ \gw_-^s - \del_+ \gw_-^s \right),
\end{align*}
it follows from a straightforward computation that $d \psi_i = 0$.
\end{proof}

\begin{cor} \label{c:commutingddbar} Let $(M^{2n}, I)$ be a compact K\"ahler manifold with $c_1(M, I) = 0$ and suppose its tangent bundle  admits a holomorphic and involutive splitting $TM= T_+ \oplus T_-$.  Let  $J$ be the integrable complex structure which equals $I$ on $T_+$ and $-I$ on $T_-$.  Suppose $(g, b, I, J)$ and $(g',b',I,J)$ are generalized K\"ahler structures such that $[\gw_I]_A = [\gw'_I]_A$.  Then there exists $\phi \in C^{\infty}(M)$ such that
\begin{align*}
    \gw_I =&\ \gw_I' + \pi^{1,1}_I d J d \phi = \gw_I' + \i \left( \del_+ \delb_+ - \del_- \delb_- \right) \phi,
\end{align*}
where $\del_{\pm}, \delb_{\pm}$ are defined in terms of the $I$-invariant splitting  $TM=T_+ \oplus T_-$.
\end{cor}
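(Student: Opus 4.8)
The plan is to run the generalized K\"ahler-Ricci flow from both structures and to read off the potential from the time-integral of the Ricci potential. Since the splitting $TM=T_+\oplus T_-$ is $I$-holomorphic and involutive with $J=\pm I$ on $T_\pm$, one has $[I,J]=0$ and hence $\gs=\tfrac12 g^{-1}[I,J]=0$; thus both $(g,b,I,J)$ and $(g',b',I,J)$ satisfy the hypotheses of Corollary \ref{c:commutingcase}. I would first record that along GKRF in this commuting setting $I$, $J$, and $\gs$ are all fixed (Theorem \ref{t:GKRFprops}(1) together with the fact that canonical families preserve $J$ when $\gs=0$), so that the operator $\pi^{1,1}_I dJd$ is time-independent, and by the transgression formula \eqref{f:PhiBismut} the metric evolves by
\begin{equation*}
\dt \gw_I = -2\rho_I^{(1,1)} = \pi^{1,1}_I dJd\Phi .
\end{equation*}

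Next I would integrate this in time. By Corollary \ref{c:commutingcase} the flow from $(g,b,I,J)$ converges to a K\"ahler Calabi-Yau metric $\gw_{CY}$, and since a Calabi-Yau geometry exists in the generalized K\"ahler class the convergence is exponential by Theorem \ref{t:longcon}(2); in particular $\Phi(t)\to\gl$ exponentially in every $C^k$-norm. Because constants lie in the kernel of $\pi^{1,1}_I dJd$, the function $\phi_1:=\int_0^\infty(\Phi(t)-\gl)\,dt$ converges smoothly, and integrating the displayed equation gives $\gw_{CY}-\gw_I=\pi^{1,1}_I dJd\,\phi_1$. The identical argument applied to $(g',b',I,J)$ yields a smooth $\phi_2$ and a Calabi-Yau limit $\gw_{CY}'$ with $\gw_{CY}'-\gw_I'=\pi^{1,1}_I dJd\,\phi_2$.

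The crux is to show the two limits coincide, $\gw_{CY}=\gw_{CY}'$. For this I would prove that the flow preserves the Aeppli class, i.e.\ that $\pi^{1,1}_I dJd\,\psi=\i(\del_+\delb_+-\del_-\delb_-)\psi$ is an Aeppli coboundary for every $\psi\in C^\infty(M)$. Using only $d^2=0$ and the bigrading induced by the involutive splitting (which gives $\del_\pm^2=\delb_\pm^2=0$ and the anticommutation relations $\del_\pm\delb_\mp=-\delb_\mp\del_\pm$, $\del_+\del_-=-\del_-\del_+$, etc.), a short computation yields
\begin{equation*}
\del_+\delb_+\psi-\del_-\delb_-\psi=\tfrac12\left[\del(\delb_+\psi-\delb_-\psi)-\delb(\del_+\psi-\del_-\psi)\right],
\end{equation*}
which exhibits the left-hand side as an element of $\del(\Lambda^{0,1})+\delb(\Lambda^{1,0})$, hence Aeppli-exact. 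Consequently $[\gw_{CY}]_A=[\gw_I]_A=[\gw_I']_A=[\gw_{CY}']_A$. As $(M,I)$ is compact K\"ahler, Aeppli cohomology in bidegree $(1,1)$ is canonically identified with de Rham cohomology, so $\gw_{CY}$ and $\gw_{CY}'$ lie in the same K\"ahler class, and Calabi's uniqueness of Ricci-flat metrics forces $\gw_{CY}=\gw_{CY}'$.

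Finally, subtracting the two integrated identities gives $\gw_I-\gw_I'=\pi^{1,1}_I dJd(\phi_2-\phi_1)$, so $\phi:=\phi_2-\phi_1$ is the desired real smooth potential; the second equality in the statement is the operator identity $\pi^{1,1}_I dJd=\i(\del_+\delb_+-\del_-\delb_-)$, a routine pointwise computation from the action of $J$ on the $(1,0)_\pm$- and $(0,1)_\pm$-components using $J=I$ on $T_+$ and $J=-I$ on $T_-$. The main obstacle is the equality $\gw_{CY}=\gw_{CY}'$: everything hinges on the Aeppli-exactness computation above, where the involutivity of the splitting is used essentially, together with genuine (exponential) convergence of the flow so that the time integrals defining $\phi_1,\phi_2$ converge smoothly.
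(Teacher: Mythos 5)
Your proposal is correct and follows essentially the same route as the paper's proof: run the generalized K\"ahler--Ricci flow from both structures (Corollary \ref{c:commutingcase}), use the transgression formula $\rho_I = -\tfrac{1}{2}dJd\Phi$ to write $\dt \gw_I = \pi^{1,1}_I dJd\Phi$, integrate in time to produce potentials, and identify the two Calabi--Yau limits via preservation of the Aeppli class together with uniqueness of the Ricci-flat K\"ahler metric in a K\"ahler class. The main differences are that you make explicit two points the paper leaves implicit --- the convergence of the time integral (by subtracting the constant $\gl$ and invoking exponential convergence) and the Aeppli-exactness of $\pi^{1,1}_I dJd\psi$ via the anticommutation identity for $\del_\pm,\delb_\pm$ --- which are correct and genuinely strengthen the write-up; note only that the exponential convergence here is more precisely justified by the argument of Corollary \ref{cor:contractibility} (as used in Corollary \ref{c:commutingcase}) rather than by a literal application of Theorem \ref{t:longcon}(2), since the path to the Calabi--Yau structure need not be an exact canonical family.
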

\begin{proof} We note that since $(M, I)$ is K\"ahler there is a canonical identification between deRham and Aeppli cohomology.  With the given hyotheses, it follows from Corollary \ref{c:commutingcase} that the GKRF with initial data $(g, b, I, J)$ exists for all time and converges to a generalized K\"ahler Calabi-Yau geometry $(g_{CY}, b_{CY}, I, J)$, where $\gw_{CY} \in [\gw_I]_A$.  As discussed in \S \ref{s:GKRFbackground}, the GKRF flows along a canonical family driven by $\rho_I$, which by Proposition \ref{p:transgression} is of the form $\rho_I = -\tfrac{1}{2} d J d \Phi$.  Furthermore, it follows from the induced evolution equation for $g$ \cite[Proposition 3.6]{gibson2020deformation} that along the flow one has
\begin{align*}
    \dt \gw_I = - 2 \pi^{1,1}_I \rho_I = \pi^{1,1}_I d J d \Phi.
\end{align*}
Thus there exists a smooth function $f$ such that
\begin{align*}
    \gw_I = \gw_{CY} + \pi^{1,1}_I d J d f.
\end{align*}
Note that the argument applies equally to the data $(g',b',I,J)$, with limiting data $(g', b', I, J)$ satisfying $\gw_{CY}' = \gw_{CY}$ since $[\gw_I]_A = [\gw'_I]_A$.  Thus we obtain a potential $\phi$ as claimed.  Using the holomorphic and involutive splitting of the tangent bundle $TM = T_+ \oplus T_-$ %into directions where $I = \pm J$ (see \cite{ApostolovGualtieri}), 
we get a splitting $d = d_{+} + d_-$ which yields the second claimed equation for the potential $\phi$.
\end{proof}

\section{Appendix} \label{a:harmonictheory}
\subsection{Harmonic theory on manifolds with boundary}\label{ss:harmonic_bdry}

Here we recall some basic properties of Hodge theory with boundary, following~\cite{TaylorPDE1}.  Fix $(M, \del M, g)$ a Riemannian manifold with boundary, and let ${\mc H_l}(M,\Lambda^*(T^*M))$ be the space of differential forms on $M$ with finite Sobolev ${\mc H_l}$-norm, $l\geq 2$.  Furthermore define the space of forms satisfying the \emph{absolute} boundary conditions:
\begin{equation*}%\label{t:f:bdry}
	\begin{split}
	\mc H_l^A(M,\Lambda^*(T^*M))&=\{u\in {\mc H_l}(M,\Lambda^*(T^*M))\ |\ (\star_gu)|_{\del M}=0,\ (\star_gdu)|_{\del M}=0\}.
	\end{split}
\end{equation*}
Equivalently, $i_\nu u=i_{\nu}du=0$ where $\nu$ is the outer unit normal vector for $\del M$. This boundary condition is an appropriate modification of the Neumann boundary condition for scalar PDEs.  Next we say that a form $u\in \mc H_l^A(M,\Lambda^*(T^*M))$ is \emph{harmonic} if
\[
du=d^*u=0.
\]
It follows from standard elliptic estimates~\cite[Prop.\,9.7]{TaylorPDE1} that the space of harmonic forms is finite dimensional. For a form $u\in L^2(M,\Lambda^*(T^*M))$ we denote by $P_h^A(u)$ its projection onto the space of harmonic forms.

The key result from general elliptic theory~\cite[Ch.5 \S 9]{TaylorPDE1} is the existence of the Green's operator
\[
G^A\colon {\mc H_l}(M,\Lambda^*(T^*M))\to \mc H_{l+2}^A(M,\Lambda^*(T^*M))
\]
which inverts $\Delta$ on $(\mathrm{Im}\ P_h^A)^{\perp}$, i.e.,
\begin{equation}\label{t:f:hodge}
	u=\Delta G^A u+P_h^A(u)
\end{equation}
for any $u\in {\mc H_l}(M,\Lambda^*(T^*M))$.  A further important property is that with absolute boundary conditions, the Hodge decomposition preserves the space of exact forms.
\begin{lemma}\label{lm:app_harmonic}
	If $u=dv\in {\mc H_l}(M,\Lambda^*(T^*M))$ is exact, then 
	\[
	dG^Au=0,\quad P_h^A(u)=0
	\]and the Hodge decomposition reduces to
	\[
	u=dd^*G^Au.
	\]
\end{lemma}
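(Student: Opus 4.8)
The plan is to read everything off from the master Hodge identity $u = \Delta G^A u + P_h^A(u)$ recorded in \eqref{t:f:hodge}, specialized to the exact form $u = dv$. Once we know that $P_h^A(u)=0$ and that $dG^A u = 0$, the asserted reduction is automatic: $P_h^A(u)=0$ gives $u = \Delta G^A u = dd^* G^A u + d^* d\,G^A u$, and $dG^A u = 0$ kills the last term, leaving $u = dd^* G^A u$. So the whole argument collapses to these two vanishing statements, both of which I would extract from integration by parts, with the absolute boundary conditions arranged precisely so as to annihilate the boundary integrals.

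First I would show $P_h^A(u)=0$. Since $P_h^A$ is the $L^2$-orthogonal projection onto the finite-dimensional (and, by elliptic regularity, smooth) space of harmonic forms, it suffices to check $\la{u,h}_{L^2}=0$ for every harmonic $h$. Writing $u=dv$ and integrating by parts,
\[
\la{dv,h}_{L^2} = \la{v,\, d^*h}_{L^2} + \int_{\del M}\la{v,\, i_\nu h},
\]
where, schematically, the boundary term pairs the tangential part of $v$ with the normal part $i_\nu h$ of $h$. A harmonic form satisfies $d^*h=0$, so the interior term vanishes; and it lies in $\mc H_l^A$, so it obeys the absolute condition $i_\nu h = (\star_g h)|_{\del M} = 0$ and the boundary term vanishes as well. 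Hence $\la{u,h}_{L^2}=0$ for all harmonic $h$, giving $P_h^A(u)=0$ and therefore $u=\Delta G^A u$.

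Next I would prove $dG^A u = 0$. The starting observation is that $\Delta$ commutes with $d$, so
\[
\Delta\big(dG^A u\big) = d\big(\Delta G^A u\big) = du = d(dv) = 0,
\]
the middle equality using $\Delta G^A u = u$ just established. The crucial point is that $dG^A u$ itself satisfies the absolute boundary conditions: $i_\nu(dG^A u)=0$ is precisely the second absolute condition $(\star_g d\,G^A u)|_{\del M}=0$ built into $G^A u\in \mc H^A_{l+2}$, while $i_\nu d(dG^A u)=i_\nu 0=0$ is automatic. Thus $dG^A u$ is a $\Delta$-null form obeying absolute boundary conditions; pairing $\Delta(dG^A u)=0$ against $dG^A u$ and integrating by parts, with all boundary terms killed by these same conditions and the $d\,dG^A u$ term vanishing identically, yields $\nm{d^*(dG^A u)}{L^2}^2=0$, so $dG^A u$ is co-closed and hence harmonic. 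Finally, being both harmonic and exact it must vanish: one last integration by parts gives
\[
\nm{dG^A u}{L^2}^2 = \la{G^A u,\, d^*(dG^A u)}_{L^2} + \int_{\del M}\la{G^A u,\, i_\nu(dG^A u)} = 0,
\]
since $d^*(dG^A u)=0$ by harmonicity and $i_\nu(dG^A u)=0$ by the absolute condition. Hence $dG^A u=0$, and $u=dd^* G^A u$ follows as above.

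The only real obstacle is the consistent bookkeeping of the boundary integrals across the three integrations by parts; every one of them is forced to vanish by the absolute boundary conditions, and the single clean observation making the argument work is that the second absolute condition on $G^A u$, namely $i_\nu(dG^A u)=0$, simultaneously promotes $dG^A u$ into $\mc H^A$ and annihilates the relevant boundary terms. No genericity, curvature, or type input is needed: the statement is purely formal once the Green's operator $G^A$ of Theorem~\ref{thm:app_green} and the absolute-boundary Hodge identity \eqref{t:f:hodge} are in hand.
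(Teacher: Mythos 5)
Your proposal is correct, and it runs on exactly the same engine as the paper's proof --- the Hodge identity \eqref{t:f:hodge} plus integration by parts, with the absolute boundary conditions annihilating every boundary term --- but the logical organization is genuinely different. The paper works in the opposite order: it first shows $w := d^*dG^Au = 0$ by pairing $w$ against itself (the key input being $dw=0$, which follows because $w = dv - dd^*G^Au - P_h^A(u)$ is a difference of closed forms, together with $i_\nu dG^Au = 0$); it then kills $dG^Au$ by a second integration by parts; and only at the end does it handle the harmonic projection, by observing that $P_h^A(u) = d(v - d^*G^Au)$ is itself exact and self-pairs to zero against the boundary condition $i_\nu P_h^A(u)=0$. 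You instead dispose of $P_h^A(u)$ first, via the cleaner and more conceptual observation that any exact form is $L^2$-orthogonal to the space of harmonic fields satisfying absolute conditions (using only $d^*h=0$ and $i_\nu h = 0$, with no reference to $G^A$ at all), and you then reach $d^*(dG^Au)=0$ through the commutation $\Delta d = d\Delta$ applied to $\Delta G^A u = u$, identifying $dG^Au$ as an exact harmonic field with absolute boundary conditions, which must vanish. What your route buys is modularity: the vanishing of the harmonic projection of exact forms is isolated as a self-contained fact, and the rest follows formally; what the paper's route buys is that it never needs the commutation identity or the prior knowledge of $P_h^A(u)=0$, extracting $d^*dG^Au=0$ directly from the structure of the decomposition. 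Both proofs share the same implicit regularity assumption on the primitive $v$ needed for the integrations by parts, and both transfer verbatim to the twisted setting of Lemma~\ref{lm:app_hamonic_twisted}.
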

\begin{proof}
	By the Hodge decomposition (\ref{t:f:hodge}) applied to $u=dv$, we have
	\[
	dv=dd^*G^Au+d^*dG^Au+P_h^A(u).
	\]
	Denote $w=dv-dd^*G^Au-P_h^A(u)=d^*dG^Au$. Clearly, $dw=0$. Now, integrating by parts we find
	\[
	||w||^2=\int_M \la {w,d^*dG^Au} dV_g = \int_{\del M}\la {w, i_{\nu}dG^Au} dS_g.
	\]
	Now, the range of $G^A$ is $\mc H_{l+2}^A$, so $i_\nu dG^A u$ vanishes.  Thus $0 = w = d^*dG^Au$.
	Again using $i_{\nu} d G^A u = 0$ we obtain
	\[
	||dG^Au||^2= \int_{\del M}\la{ i_{\nu}dG^Au,G^Au} dS_g=0.
	\]
	Finally, we prove that the harmonic part $P_h^A(u)$ vanishes:
	\[
	||P_h^A(u)||^2=\int_M \la {d(v-d^*G^Au),P_h^A(u)} dV_g = \int_{\del M}\la {(v-d^*G^Au),i_\nu P_h^A(u)} dV_g=0,
	\]
	where the last term vanishes since by definition $i_\nu P_h^A=0$.
\end{proof}

\subsection{Harmonic theory for the twisted Laplacian on manifolds with boundary}\label{ss:harmonic_gk_bdry}

The purpose of this section is to extend the setup of Section~\ref{ss:harmonic_bdry} to the \emph{twisted} differential operators  underlying a manifold $(M,H_0)$ equipped with a generalized metric $\GG\in \End(T\oplus T^*)$
\[
\GG=e^b\left(\begin{matrix}0 & g^{-1}\\g & 0\end{matrix} \right)e^{-b},
\]
where $b$ is a two form and $g$ is a Riemannian metric.  To this end, throughout Section ~\ref{ss:harmonic_bdry} we need to replace the usual de Rham differential $d$ with the twisted differential $d_{H_0}=d+H_0\wedge\cdot$. The relevant adjoint operator is defined as $d_{H_0}^*:=\star_{\GG} \circ d_{H_0}\circ \star_{\GG}^{-1}$, where the linear operator $\star_{\GG}\colon\Lambda^*(T^*M)\to\Lambda^*(T^*M)$ is determined using the generalized metric and Mukai pairing by
\[
(2\i)^n(\alpha,\star_{\GG}\beta) = (e^b\wedge\alpha)\wedge \star_g(e^b\wedge\beta)
\]
(factor $(2\i)^n$ appears due to a non-standard normalization of Mukai pairing in Definition~\ref{d:mukai}).  Finally, we define the twisted Laplace operator
\begin{align*}
\Delta_{H_0}=d_{H_0}d_{H_0}^*+d_{H_0}^*d_{H_0}.
\end{align*}

Below we sketch the argument establishing the elliptic theory for the twisted differential operators on a manifold with boundary. The argument is based on the general elliptic theory for the systems of PDEs developed in~\cite[\S 5.11]{TaylorPDE1}.  To simplify the notation,  we will apply a $b$-field transform to the metric $\GG$ (and the underlying spinors $\Lambda^*(T^*M)$).  This makes the operator $\star_{\GG}$ equal to the standard Riemannian Hodge star $\star_g$:
\[
(2\i)^n(u,\star_{\GG} v)=u\wedge \star_g v = \IP{u, v} dV_g
\]
Of course, this will change the background $H_0$ to $H_0-db$, nevertheless by abuse of notation, we will still denote the background 3-form by $H_0$. We observe that there is still the standard integration by parts identity
\[
\int_M \IP{u,d^*_{H_0}v} dV_g=\int_M \IP{d_{H_0}u,v} dV_g+\int_{\del M} \IP{u,i_\nu v} dV_g.
\]

We note that the operator 
\[
\Delta_{H_0}\colon C^\infty(M,\Lambda^*(T^*M))\to C^\infty(M,\Lambda^*(T^*M))
\] and the usual Hodge Laplacian
\[
\Delta_{g}\colon C^\infty(M,\Lambda^*(T^*M))\to C^\infty(M,\Lambda^*(T^*M))
\]
have the same principal symbol. Similarly, the boundary operators
\[
B_{H_0}\colon C^\infty(M,\Lambda^*(T^*M))\to C^\infty(\del M,\Lambda^*(T^*M))\oplus C^\infty(\del M,\Lambda^*(T^*M))
\]
\[
B_{H_0}(u)=(i_\nu u, i_\nu d_{H_0}u)
\]
and
\[
B_{g}\colon C^\infty(M,\Lambda^*(T^*M))\to C^\infty(\del M,\Lambda^*(T^*M))\oplus C^\infty(\del M,\Lambda^*(T^*M))
\]
\[
B(u)=(i_\nu u, i_\nu d u)
\]
also share the same principal symbol. By the general theory developed in~\cite[Ch.5 \S 11]{TaylorPDE1}, the operators $(\Delta_{H_0},B_{H_0})$  determine the \emph{regular elliptic boundary problem} (see \cite[Ch.5 Prop.\,11.9]{TaylorPDE1} for the precise definition). Now, it follows form \cite[Ch.5 Prop.\,11.16]{TaylorPDE1} that the map 
\[
T\colon \mc H_{l+2}(M,\Lambda^*(T^*M))\to \mc H_{l}(M,\Lambda^*(T^*M))\oplus \mc H_{l+3/2}(\del M,\Lambda^*(T^*M))\oplus \mc H_{l+1/2}(\del M,\Lambda^*(T^*M))
\]
defined by
\[
Tu=(\Delta_{H_0}u, B_{H_0}u)
\]
is Fredholm. This in turn implies the  elliptic regularity estimates necessary for the constructions of the Green's operator $G^A$ as well as the projection onto harmonic forms $P_h^A$ as in Section~\ref{ss:harmonic_bdry}. To conclude, we have the following result.

\begin{thm}\label{thm:app_green}
    Given a compact manifold $(M,H_0)$ with boundary and a generalized metric $\GG$ on $(M,H_0)$, let
    \[
    \mc H_l^A(M,\Lambda^*(T^*M)):=\{u\in\mc H_l(M,\Lambda^*(T^*M))\ |\ (\star_{\GG} u)\big|_{\del M}=(\star_{\GG} d_{H_0}u)\big|_{\del M}=0\}
    \]
    be the space of $\mc H_l$-regular differential form satisfying the \textit{absolute} boundary condition. Then there exists an operator
    \[
    G^A\colon \mc H_l(M,\Lambda^*(T^*M))\to \mc H_{l+2}^A(M,\Lambda^*(T^*M))
    \]
    such that for any $u\in \mc H_l(M,\Lambda^*(T^*M))$
    \[
    u=\Delta G^A u+P_h^A(u),
    \]
    where $v=P_h^A(u)\in C^\infty(M,\Lambda^*(T^*M))$ satisfies the same boundary condition and
    \[
    d_{H_0}v=d^*_{H_0}v=0.
    \]
\end{thm}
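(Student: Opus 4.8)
The plan is to assemble the Green's operator directly from the regular elliptic boundary value problem machinery of \cite[Ch.\,5]{TaylorPDE1}, exploiting the fact that passing from the untwisted operators to the $H_0$-twisted ones alters nothing at the level of principal symbols. First I would perform the $b$-field transform described above, reducing to the normalization $\star_\GG = \star_g$ at the cost of replacing $H_0$ by $H_0 - db$, which we continue to denote $H_0$. In this gauge $d_{H_0} = d + H_0 \wedge\cdot$ differs from the de Rham differential $d$ by the zeroth-order operator $H_0 \wedge \cdot$, and consequently its formal adjoint $d_{H_0}^* = \star_g d_{H_0} \star_g^{-1}$ differs from $d^*$ by a zeroth-order operator as well. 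Therefore $\Delta_{H_0}$ and the Hodge Laplacian $\Delta_g$ agree to leading order: both have principal symbol $|\xi|^2 \Id$. Likewise the twisted boundary operator $B_{H_0}(u) = (i_\nu u, i_\nu d_{H_0} u)$ and the untwisted $B_g(u) = (i_\nu u, i_\nu d u)$ share their principal symbols, since $i_\nu(H_0 \wedge u)$ is zeroth order in $u$.

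The key step is then that regularity of an elliptic boundary value problem---the complementing condition in the sense of \cite[Ch.\,5, Prop.\,11.9]{TaylorPDE1}---depends only on the principal symbols of the interior operator and of the boundary operators. Since $(\Delta_g, B_g)$ is a regular elliptic boundary problem, precisely the setup underlying Section~\ref{ss:harmonic_bdry}, the equality of principal symbols forces $(\Delta_{H_0}, B_{H_0})$ to be a regular elliptic boundary problem as well. Invoking \cite[Ch.\,5, Prop.\,11.16]{TaylorPDE1}, the map
\[
T u = (\Delta_{H_0} u, B_{H_0} u)
\]
is Fredholm on the relevant scale of Sobolev spaces, and one obtains the a priori estimate $\nm{u}{\mc H_{l+2}} \le C(\nm{\Delta_{H_0} u}{\mc H_l} + \nm{u}{L^2})$ for $u$ satisfying the absolute boundary conditions, together with the interior and boundary elliptic regularity gaining two derivatives.

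To produce $G^A$ and $P_h^A$ from the Fredholm property I would next use self-adjointness of $\Delta_{H_0}$ on the domain of forms obeying the absolute boundary conditions. This follows from the integration-by-parts identity recorded above: for $u, v$ with $i_\nu u = i_\nu d_{H_0} u = 0$ and likewise for $v$, the boundary terms in Green's formula vanish and $\int_M \IP{\Delta_{H_0} u, v}\, dV_g = \int_M \IP{u, \Delta_{H_0} v}\, dV_g$. Self-adjointness identifies the finite-dimensional cokernel with the kernel $\mc H^A$ of harmonic forms, yielding the $L^2$-orthogonal decomposition into $\mc H^A$ and the image of $\Delta_{H_0}$. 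Defining $P_h^A$ as the orthogonal projection onto $\mc H^A$ and $G^A$ as the two-derivative-gaining inverse of $\Delta_{H_0}$ on $(\mc H^A)^\perp$, extended by zero on $\mc H^A$, gives precisely $u = \Delta G^A u + P_h^A(u)$, with $G^A u$ lying in $\mc H_{l+2}^A$ and $P_h^A(u)$ smooth by elliptic regularity.

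The main obstacle, conceptually, is the principal-symbol reduction: one must check that none of the twisting terms enters the ellipticity or the complementing condition, which is transparent once one notes that $H_0 \wedge \cdot$ and its adjoint are genuinely lower order. After that the argument is a direct transcription of the untwisted construction of Section~\ref{ss:harmonic_bdry}, with $d$ replaced throughout by $d_{H_0}$; in particular the analogue of Lemma~\ref{lm:app_harmonic}, that $d_{H_0}$-exact forms have vanishing harmonic part and satisfy $d_{H_0} G^A u = 0$, follows by the identical integration-by-parts computation.
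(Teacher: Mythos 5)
Your proposal is correct and follows essentially the same route as the paper: a $b$-field transform to normalize $\star_\GG=\star_g$, the observation that the twisting terms are zeroth order so $(\Delta_{H_0},B_{H_0})$ shares principal symbols with the untwisted problem and is therefore a regular elliptic boundary problem, and then the Fredholm property from \cite[Ch.~5, Prop.~11.16]{TaylorPDE1} to construct $G^A$ and $P_h^A$ as in the untwisted Section~\ref{ss:harmonic_bdry}. The only difference is that you spell out the final step (symmetry of $\Delta_{H_0}$ under the absolute boundary conditions, identification of the cokernel with the kernel, and the resulting $L^2$-orthogonal decomposition) which the paper leaves implicit by deferring to the untwisted construction.
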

Lastly, following the proof of Lemma~\ref{lm:app_harmonic} it follows that the Hodge decomposition preserves the space of exact forms.
\begin{lemma}\label{lm:app_hamonic_twisted}
	If $u=d_{H_0}v\in {\mc H_l}(M,\Lambda^*(T^*M))$ is exact, then 
	\[
	d_{H_0}G^Au=0,\quad P_h^A(u)=0
	\]and the Hodge decomposition reduces to
	\[
	u=d_{H_0}d^*_{H_0}G^Au.
	\]
\end{lemma}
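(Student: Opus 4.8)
The plan is to adapt the proof of Lemma~\ref{lm:app_harmonic} verbatim, replacing the de Rham differential $d$ with the twisted differential $d_{H_0}$ and the Riemannian Hodge star $\star_g$ with the generalized star $\star_{\GG}$ throughout. The only structural facts needed are already in hand: the Hodge decomposition $u=\Delta_{H_0}G^Au+P_h^A(u)$ from Theorem~\ref{thm:app_green}, the integration-by-parts identity recorded just above (after the $b$-field normalization making $\star_{\GG}=\star_g$), and the defining property that the range of $G^A$ lies in $\mc H_{l+2}^A$, so that forms in the image satisfy the absolute boundary conditions $i_\nu(\cdot)=i_\nu d_{H_0}(\cdot)=0$.

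Concretely, I would start from the Hodge decomposition applied to the exact form $u=d_{H_0}v$, namely
\[
d_{H_0}v=d_{H_0}d^*_{H_0}G^Au+d^*_{H_0}d_{H_0}G^Au+P_h^A(u),
\]
and set $w=d^*_{H_0}d_{H_0}G^Au=d_{H_0}v-d_{H_0}d^*_{H_0}G^Au-P_h^A(u)$. Since $P_h^A(u)$ is harmonic and the first and third terms on the right are manifestly $d_{H_0}$-closed, one checks $d_{H_0}w=0$. Then integrating by parts,
\[
\|w\|^2=\int_M\IP{w,d^*_{H_0}d_{H_0}G^Au}\,dV_g=\int_{\del M}\IP{w,i_\nu d_{H_0}G^Au}\,dS_g=0,
\]
because $i_\nu d_{H_0}G^Au$ vanishes by the absolute boundary condition on the range of $G^A$. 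Hence $d^*_{H_0}d_{H_0}G^Au=0$, and using this together with $i_\nu d_{H_0}G^Au=0$ a second integration by parts gives
\[
\|d_{H_0}G^Au\|^2=\int_{\del M}\IP{i_\nu d_{H_0}G^Au,G^Au}\,dS_g=0,
\]
so $d_{H_0}G^Au=0$. Finally, for the harmonic part,
\[
\|P_h^A(u)\|^2=\int_M\IP{d_{H_0}(v-d^*_{H_0}G^Au),P_h^A(u)}\,dV_g=\int_{\del M}\IP{(v-d^*_{H_0}G^Au),i_\nu P_h^A(u)}\,dS_g=0,
\]
where the boundary term vanishes since $i_\nu P_h^A(u)=0$ by definition of the absolute boundary condition. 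Combining $P_h^A(u)=0$ and $d_{H_0}G^Au=0$ collapses the decomposition to $u=d_{H_0}d^*_{H_0}G^Au$, as claimed.

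I do not anticipate a genuine obstacle here, since the argument is purely formal once the three ingredients above are available; the statement of the lemma itself signals this by saying it follows the proof of Lemma~\ref{lm:app_harmonic}. The one point requiring a moment's care is verifying that the integration-by-parts formula applies with $d^*_{H_0}$ in place of $d^*$, i.e.\ that the boundary term takes the form $\int_{\del M}\IP{u,i_\nu v}\,dS_g$ for the twisted adjoint; but this is exactly the identity stated in the preceding paragraph of \S\ref{ss:harmonic_gk_bdry}, valid after the $b$-field normalization. Likewise, one must use that $d_{H_0}$ and $d$ have the same principal symbol (so that $d_{H_0}^2=H_0\wedge$ contributes no boundary difficulty and the absolute boundary conditions remain elliptic), which is precisely what Theorem~\ref{thm:app_green} already guarantees. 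With these in place, the proof is a line-by-line transcription and requires no new analysis.
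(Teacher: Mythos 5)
Your proof is correct and is precisely the paper's argument: the paper disposes of this lemma with the single remark that one follows the proof of Lemma~\ref{lm:app_harmonic}, and your line-by-line transcription (Hodge decomposition from Theorem~\ref{thm:app_green}, twisted integration by parts, vanishing of the boundary terms via the absolute conditions $i_\nu G^A u = i_\nu d_{H_0}G^A u = i_\nu P_h^A(u)=0$) is exactly that. One slip in your closing remark: $d_{H_0}^2 = dH_0\wedge\cdot = 0$ (not $H_0\wedge\cdot$), since $H_0$ is closed and of odd degree; this vanishing is in fact essential, as your step ``$d_{H_0}w=0$'' silently uses that $d_{H_0}v$ and $d_{H_0}d^*_{H_0}G^Au$ are $d_{H_0}$-closed.
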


%\bibliographystyle{abbrv}
%\bibliography{Streets_Master_Bib}

\end{document}